\documentclass{amsart}[a4]

\usepackage{amssymb}
\usepackage{colonequals}
\usepackage{comment}
\usepackage{dsfont}
\usepackage{tikz-cd}
\tikzset{labl/.style={anchor=south, rotate=270, inner sep=.5mm}}
\usepackage{eucal}
\usepackage{hyperref}
\hypersetup{
  bookmarksnumbered=true,
  colorlinks=true,
  linkcolor=blue,
  citecolor=blue,
  filecolor=blue,
  menucolor=blue,
  urlcolor=blue,
  bookmarksopen=true,
  bookmarksdepth=2,
  pageanchor=true
}

\newtheorem{theorem}{Theorem}[section]
\newtheorem{corollary}[theorem]{Corollary}
\newtheorem{lemma}[theorem]{Lemma}
\newtheorem{proposition}[theorem]{Proposition}

\theoremstyle{definition}
\newtheorem{definition}[theorem]{Definition}
\newtheorem{example}[theorem]{Example}

\theoremstyle{remark}
\newtheorem{remark}[theorem]{Remark}

\numberwithin{equation}{section}

\newcommand{\ann}{\operatorname{ann}}
\newcommand{\Coh}{\operatorname{Coh}}
\newcommand{\colim}{\operatorname{colim}}
\newcommand{\comp}{\operatorname{comp}}

\newcommand{\End}{\operatorname{End}}
\newcommand{\Ext}{\operatorname{Ext}}

\newcommand{\Hom}{\operatorname{Hom}}
\newcommand{\fHom}{\operatorname{\mathcal{H}\!\!\;\mathit{om}}}
\newcommand{\RHom}{\operatorname{{\mathbf R}Hom}}
\newcommand{\inc}{\operatorname{inc}}

\newcommand{\Inj}{\operatorname{Inj}}
\renewcommand{\Im}{\operatorname{Im}}
\newcommand{\Ker}{\operatorname{Ker}}
\newcommand{\loc}{\operatorname{loc}}
\newcommand{\Loc}{\operatorname{Loc}}
\newcommand{\MaxSpec}{\operatorname{MaxSpec}}
\renewcommand{\mod}{\operatorname{mod}}
\newcommand{\Mod}{\operatorname{Mod}}
\newcommand{\Proj}{\operatorname{Proj}}
\newcommand{\res}{\operatorname{res}}
\newcommand{\Spc}{\operatorname{Spc}}
\newcommand{\Spec}{\operatorname{Spec}}
\newcommand{\stmod}{\operatorname{stmod}}
\newcommand{\StMod}{\operatorname{StMod}}
\newcommand{\supp}{\operatorname{supp}}
\newcommand{\thick}{\operatorname{thick}}
\newcommand{\Thick}{\operatorname{Thick}}

\newcommand{\da}{{\downarrow}}
\newcommand{\iso}{\xrightarrow{\raisebox{-.6ex}[0ex][0ex]{$\scriptstyle{\sim}$}}}
\newcommand{\kos}[2]{{#1}/\!\!/{#2}} 
\newcommand{\leftiso}{\xleftarrow{\raisebox{-.6ex}[0ex][0ex]{$\scriptstyle{\sim}$}}}
\newcommand{\longiso}{\xrightarrow{\
    \raisebox{-.6ex}[0ex][0ex]{$\scriptstyle{\sim}$}\ }}
\newcommand{\lto}{\longrightarrow}
\newcommand{\lotimes}{\otimes^{\mathbf L}}
\newcommand{\ua}{{\uparrow}}

\newcommand{\Ab}{\mathrm{Ab}}
\newcommand{\fin}{\mathrm{fin}}
\newcommand{\id}{\mathrm{id}}
\newcommand{\op}{\mathrm{op}}
\newcommand{\perf}{\mathrm{perf}}

\newcommand{\cat}{\mathcal}
\newcommand{\one}{\mathds 1}

\newcommand{\bfD}{\mathbf D} 
\newcommand{\bfK}{\mathbf K}

\newcommand{\bbZ}{\mathbb Z}

\newcommand{\fa}{\mathfrak{a}}
\newcommand{\fb}{\mathfrak{b}}

\newcommand{\fp}{\mathfrak{p}}
\newcommand{\fq}{\mathfrak{q}}
\newcommand{\fz}{\mathfrak{z}}

\newcommand{\Si}{\Sigma}

\let\Gamma\varGamma
\let\Delta\varDelta
\let\Theta\varTheta
\let\Lambda\varLambda
\let\Xi\varXi
\let\Pi\varPi
\let\Sigma\varSigma
\let\Upsilon\varUpsilon
\let\Phi\varPhi
\let\Psi\varPsi
\let\Omega\varOmega

\begin{document}

\title{Tensor triangular geometry\\
 Notes for an Oberwolfach Seminar}

\author{Henning Krause}
\address{Fakult\"at f\"ur Mathematik\\
Universit\"at Bielefeld\\ D-33501 Bielefeld\\ Germany}
\email{hkrause@math.uni-bielefeld.de}

\thanks{\today}

\maketitle

\setcounter{tocdepth}{1} 
\tableofcontents


\section*{Overview}

These are the notes from lectures I gave at the Oberwolfach Seminar
‘Tensor Triangular Geometry and Interactions’ which was held in
October 2025.\footnote{These notes are complemented by notes of
  Nat\`alia Castellana and Greg Stevenson \cite{Stevenson:2026a}. They will be published
  together in a forthcoming volume of the `Oberwolfach Seminars' book
  series by Birkhäuser Science.}

The aim of these notes is twofold: We develop notions of support for
triangulated categories, and we apply them to classify thick and
localising tensor ideals of categories that arise in modular
representation theory of finite groups.

There are two basic situations: Either the triangulated category
admits a monoidal structure, or it admits an action of a
graded-commutative ring. In order to cover both we introduce the
notion of a triangulated category that is fibred over a space. The space may
be the prime ideal spectrum of a tensor triangulated category or the
prime ideal spectrum of a ring that is acting.

There is some further dichotomy: The triangulated categories we study
are either essentially small, or they admit set-indexed coproducts and
are compactly generated. Both settings are related; so we study the
support for a compactly generated category by introducing first the
support for its subcategory of compact objects.

The ultimate goal is to establish a stratification of a triangulated
category via an appropriate space, and to actually compute it. This
amounts to classifying thick and localising subcategories.  For
instance, we achieve this for derived categories of modular
representations. More specifically, for any finite group the
stratification of its derived category involves the spectrum of the
cohomology ring. This builds on Quillen's stratification of group
cohomology, and one may think of this process as a categorification of
the Quillen stratification. This prototypical example explains the
term `stratification' in the triangulated context.

These notes are not exhaustive, though fairly self-contained. In
particular, only few references are included. The guiding principle
has been to develop all concepts that are need for our applications in
modular representation theory, and to do it in sufficient generality
that the techniques can be easily used in other contexts.

\subsection*{Acknowledgements}
I am grateful to the participants of the Oberwolfach Seminar and also
to my co-organizers Nat\`alia Castellana and Greg Stevenson for
their inspiration and for many valuable comments. Thanks are also due to
the Oberwolfach Institute for their support of the seminar.
This work was supported by the Deutsche Forschungsgemeinschaft
(SFB-TRR 358/1 2023 - 491392403).

\section{The lattice of thick subcategories}

The thick subcategories of a triangulated category form a
lattice. This is a fundamental object of interest and we collect some
basic properties. A useful tool is the category of cohomological
functors from a triangulated category to abelian groups, and any thick
subcategory yields a localisation sequence for the category of
cohomological functors.  There is a notion of commutativity for pairs
of thick subcategories, which is equivalent to having Mayer--Vietoris
type exact sequences for cohomological functors.

References are \cite{Benson/Iyengar/Krause:2015a,Krause:2023a}.

\subsection*{Lattice structure}

Let $(L,\le)$ be a poset. For any subset $U\subseteq L$ we write
\[\bigvee_{x\in U}x\colonequals\sup U\] for the \emph{supremum} (or
\emph{join}) and
\[\bigwedge_{x\in U}x\colonequals\inf U\] for the \emph{infimum} (or
\emph{meet}) of the elements in $U$, provided they exist.

\begin{definition}
 A poset $L$ is a
\emph{lattice} if $x\vee y$ and $x\wedge y$ exists for any pair $x,y$
in $L$. A lattice is \emph{complete} if $\sup U$ and $\inf U$ exist for any
$U\subseteq L$. 
An element
$x\in L$ is \emph{finite} or \emph{compact} if $x\le \sup U$ implies
$x\le\sup V$ for some finite subset $V\subseteq U$.
\end{definition}

For the definition of a complete lattice it suffices to request the
existence of all suprema, since
\[\inf U=\sup\{x\in L\mid x\le u\text{ for all }u\in U\}.\]

Let $\cat T$ be a triangulated category. The thick subcategories of
$\cat T$ are partially ordered via inclusion.  For a class of objects
$\cat X\subseteq\cat T$ let $\thick(\cat X)$ denote the smallest thick
subcategory of $\cat T$ that contains $\cat X$. There is an inductive
procedure to construct the thick closure:
\[\thick(\cat X)=\bigcup_{n\ge 0}\cat X(n)\] where $\cat X(0)\colonequals\cat X$
and $\cat X(n+1)$ is obtained from  $\cat X(n)$ by adding
extensions, (de)suspensions, and direct summands of objects in  $\cat X(n)$.

We write $\Thick(\cat T)$ for the lattice of thick subcategories of
$\cat T$.  This may not be a set, but the operations $\vee$ and
$\wedge$ are well defined. In fact, for each pair of thick
subcategories $\cat U,\cat V$ one has
\[\cat U\wedge\cat V=\cat U\cap\cat V\qquad\text{and}\qquad \cat
  U\vee\cat V=\thick(\cat U\cup\cat V)\,.\] More generally, for each
family $(\cat U_i)_{i\in I}$ in $\Thick(\cat T)$
\[\bigwedge_{i\in I}\cat U_i=\bigcap_{i\in I}\cat
  U_i\qquad\text{and}\qquad \bigvee_{i\in I}\cat U_i=\thick\left(\bigcup_{i\in
    I}\cat U_i\right).\]

\begin{lemma}
  The lattice $\Thick(\cat T)$ is complete. An element
  $\cat U\in\Thick(\cat T)$ is finite if and only if
  $\cat U=\thick(x)$ for some object $x\in \cat U$.\qed
\end{lemma}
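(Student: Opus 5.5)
Completeness follows from the remark preceding the lemma: it suffices that every family $(\cat U_i)_{i\in I}$ of thick subcategories has a supremum. The candidate is $\thick(\bigcup_i\cat U_i)$, which is thick and contains every $\cat U_i$. Any thick subcategory containing all $\cat U_i$ contains the union, hence contains its thick closure. The infimum is the intersection, which is clearly thick. (Set-theoretic size issues do not matter, since we only use the operations.)

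For the finiteness criterion, first suppose $\cat U=\thick(x)$. Let $\thick(x)\subseteq\thick(\bigcup_{i\in I}\cat U_i)$. I will use the inductive description $\thick(\cat X)=\bigcup_n\cat X(n)$ with $\cat X=\bigcup_i\cat U_i$. The key claim is that every object of $\cat X(n)$ already lies in $\thick(\bigcup_{i\in V}\cat U_i)$ for some finite $V\subseteq I$. I prove this by induction on $n$. For $n=0$ this is clear. In the inductive step, an object of $\cat X(n+1)$ is an extension, (de)suspension, or summand of at most two objects of $\cat X(n)$. Taking the union of the two finite index sets works.

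Applying the claim to $x$ gives $x\in\bigvee_{i\in V}\cat U_i$ for a finite $V$. Since that join is thick, $\thick(x)\subseteq\bigvee_{i\in V}\cat U_i$. So $\thick(x)$ is finite.

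Conversely, suppose $\cat U$ is finite. Write $\cat U=\bigvee_{x\in\cat U}\thick(x)$, which holds since each object generates a subcategory of $\cat U$ and together they contain $\cat U$. Finiteness gives $x_1,\dots,x_n\in\cat U$ with $\cat U\subseteq\thick(x_1)\vee\dots\vee\thick(x_n)=\thick(x_1,\dots,x_n)$. Put $x=x_1\oplus\dots\oplus x_n\in\cat U$. Each $x_j$ is a summand of $x$, so $\thick(x_1,\dots,x_n)=\thick(x)$. Hence $\cat U=\thick(x)$. If one allows the empty subset, $\cat U=0=\thick(0)$ is covered as well.

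The only step needing care is the induction showing that an object of the thick closure involves only finitely many of the $\cat U_i$. It is routine once the stages $\cat X(n)$ are used, since each construction step involves finitely many inputs.
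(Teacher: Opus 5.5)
Your argument is correct, and it is the routine verification the paper leaves to the reader. The paper states the lemma without proof, immediately after describing meets as intersections, joins as $\thick(\bigcup_i\cat U_i)$, and the inductive construction $\thick(\cat X)=\bigcup_{n\ge 0}\cat X(n)$; these are exactly the ingredients you use. A slightly quicker route to your key claim is to observe that the union of the subcategories $\thick(\bigcup_{i\in V}\cat U_i)$ over finite $V\subseteq I$ is a directed union of thick subcategories, hence itself thick, so it already equals the join.
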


For an exact functor $f\colon\cat T\to\cat T'$ there is a pair of maps
\begin{equation*}
\begin{tikzcd}[column sep=large]
\Thick(\cat T) \ar[rr,yshift=2.5,"\Thick(f)"] &&  \ar[ll,yshift=-2.5,"\Thick^-(f)"]
\Thick(\cat T')
\end{tikzcd}
\end{equation*}
which are defined by
\[\Thick(f)(\cat U)\colonequals\thick(f(\cat U))\qquad\text{and}\qquad
  \Thick^-(f)(\cat V)\colonequals f^{-1}(\cat V)\]
for $\cat U\subseteq\cat T$ and $\cat V\subseteq\cat T'$. These maps
form an adjoint pair, because one has
\[\Thick(f)(\cat U)\subseteq\cat V\qquad\iff\qquad \cat
  U\subseteq\Thick^-(f)(\cat V).\]

We may think of a poset as category. The objects are the elements and
there is a unique morphism $x\to y$ if and only if $x\le y$. Then for
any set $U$ one has
\[\bigvee_{x\in U}x =\coprod_{x\in
    U}x\qquad\text{and}\qquad\bigwedge_{x\in U}x =\prod_{x\in U}x.\]
Thus a left adjoint preserves all joins, while a right adjoint
preserves all meets.

\subsection*{Localisation and cohomology}

Let $\cat T$ be a triangulated category with suspension
$\Si\colon\cat T\iso\cat T$. Given objects $x,y$ in $\cat T$ set
\[ \Hom_{\cat T}^*(x,y)\colonequals \bigoplus_{i\in\bbZ}\Hom_{\cat
T}(x,\Si^i y) \quad \text{and}\quad
\End_{\cat T}^{*}(x)\colonequals \Hom_{\cat T}^{*}(x,x)\,.
\] These are graded abelian groups. Composition makes $\End_{\cat
T}^{*}(x)$ a graded ring and $\Hom_{\cat T}^{*}(x,y)$ a
left-$\End_{\cat T}^{*}(y)$ right-$\End_{\cat T}^{*}(x)$ module.

Fix a triangulated subcategory $\cat U\subseteq\cat T$. The composite
of the inclusion $\cat U\to \cat T$ and the quotient functor
$\cat T\to\cat T/\cat U$ induces a long exact sequence which we
describe as follows.

For $x\in\cat T$ consider the \emph{slice category} $\cat U/x$; the
objects are pairs $(u,\phi)$ given by a morphism $\phi\colon u\to x$
with $u\in\cat U$ and a morphisms $(u,\phi)\to (u',\phi')$ is given by
a morphism $\alpha\colon u\to u'$ such that
$\phi'\alpha=\phi$. Analogously, one defines the category
$x\backslash\cat U$ of all morphisms $x\to y$ with cone in $\cat U$.

\begin{lemma}\label{le:loc-long} 
The canonical functor $\cat T\to\cat T/\cat U$ induces for each object
$x\in\cat T$ a functorial exact sequence:
\begin{multline*}
\cdots\lto \Hom^*_{\cat T/\cat U}(-,\Si^{-1}x)\lto \colim_{u\to
  x}\Hom^*_\cat T(-,u)\lto \\ \lto\Hom^*_\cat T(-,x)\lto
\Hom^*_{\cat T/\cat U}(-,x)\lto \cdots
\end{multline*}
\end{lemma}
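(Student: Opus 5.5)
The plan is to build, for each object $x$ and each $(u,\phi)\in\cat U/x$, an exact triangle $u\xrightarrow{\phi}x\to x_\phi\to\Si u$ in $\cat T$, and then pass to a filtered colimit of the associated long exact Hom sequences. The first step is to check that the slice category $\cat U/x$ is filtered. Given $(u,\phi)$ and $(u',\phi')$, the map $(\phi,\phi')\colon u\oplus u'\to x$ receives both. Given two parallel morphisms $\alpha,\beta\colon(u,\phi)\to(u',\phi')$, complete $\alpha-\beta$ to a triangle $u\to u'\xrightarrow{\gamma} u''\to$. Since $\phi'(\alpha-\beta)=0$, the map $\phi'$ factors through $\gamma$, giving an object $(u'',\phi'')$ that equalises $\alpha$ and $\beta$. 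Here we use that $\cat U$ is triangulated. Filtered colimits of exact sequences of abelian groups are exact. So for every $w\in\cat T$ the colimit over $\cat U/x$ of the long exact sequences
\[\cdots\to\Hom_{\cat T}(w,\Si^i u)\to\Hom_{\cat T}(w,\Si^i x)\to\Hom_{\cat T}(w,\Si^i x_\phi)\to\Hom_{\cat T}(w,\Si^{i+1}u)\to\cdots\]
is exact. The middle term is constant in $\phi$, so its colimit is $\Hom^*_{\cat T}(w,x)$.

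The main step, and the expected obstacle, is to identify $\colim_{(u,\phi)}\Hom_{\cat T}(w,x_\phi)$ with $\Hom_{\cat T/\cat U}(w,x)$. Here I use the calculus of fractions description of the Verdier quotient: morphisms $w\to x$ in $\cat T/\cat U$ are $\colim_{x\to x'}\Hom_{\cat T}(w,x')$, with the colimit over $x\backslash\cat U$, that is, over maps $s\colon x\to x'$ whose cone lies in $\cat U$. Taking cocones gives a functor $\cat U/x\to x\backslash\cat U$, $(u,\phi)\mapsto(x\to x_\phi)$, and rotating triangles gives an inverse up to isomorphism. I would check that this functor is cofinal. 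Every $s\colon x\to x'$ is isomorphic under $x$ to some $x\to x_\phi$, namely with $u=\Si^{-1}\mathrm{cone}(s)$. Morphisms between the indexing objects can be compared using the axiom (TR3) together with the filteredness already established. The subtlety is that cones are only functorial up to non-unique choice. To handle this I would define the colimit directly over $x\backslash\cat U$ and only use $\cat U/x$ to index the first term. The Hom sequence is natural in morphisms of triangles, and for any map in $x\backslash\cat U$ such a morphism of triangles exists with the $\cat U$-component determined up to filtered equivalence. This gives well-defined maps on colimits.

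Finally, the terms $\colim\Hom(w,\Si^{i+1}u)$ are $\colim_{u\to x}\Hom^*_{\cat T}(w,\Si^{i+1}u)$. After reindexing the grading, the degree shift gives exactly the term preceded by $\Hom^*_{\cat T/\cat U}(w,\Si^{-1}x)$ in the displayed sequence. The first connecting map is $\Hom^*_{\cat T/\cat U}(-,\Si^{-1}x)\to\colim\Hom^*_{\cat T}(-,u)$. Functoriality in $w$ is clear, since every construction is natural in the first variable. Functoriality in $x$ follows because a morphism $x\to y$ induces a functor $\cat U/x\to\cat U/y$ by composition, compatible with all the maps.
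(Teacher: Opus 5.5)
Your overall route is the paper's. You take the long exact $\Hom^*_{\cat T}(w,-)$ sequences of triangles $u\to x\to y\to\Si u$ with $u\in\cat U$, pass to a filtered colimit, identify the $u$-terms with $\colim_{u\to x}$, and identify the $y$-terms with $\Hom_{\cat T/\cat U}$ via the calculus of fractions. Your proof that $\cat U/x$ is filtered is correct. The gap is in the exactness step. You assert that the colimit over $\cat U/x$ of the long exact sequences is exact, but this presupposes that the sequences form a diagram on $\cat U/x$, and they do not, because $(u,\phi)\mapsto x_\phi$ is not a functor. For the same reason ``taking cocones'' is not a functor $\cat U/x\to x\backslash\cat U$, so there is nothing whose cofinality you could check. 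Your repair indexes the $y$-terms over $x\backslash\cat U$ and the $u$-terms over $\cat U/x$. That does give well-defined maps between the colimits, but then ``filtered colimits of exact sequences are exact'' no longer applies, since that principle needs all terms indexed by one category with the maps natural on it.

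The device the paper uses is the category $\mathcal E$ of exact triangles $e=(u\to x\xrightarrow{s}y\to\Si u)$ with $u\in\cat U$. Its morphisms are the morphisms of triangles that are the identity on $x$. Every term of the Hom sequence is functorial on $\mathcal E$. The forgetful functors $\mathcal E\to\cat U/x$ and $\mathcal E\to x\backslash\cat U$ are cofinal: every object of the target is in the image by (TR1) and rotation, and since both targets are filtered, an equalising map in the target lifts along the functor by (TR3). Hence the colimits over $\mathcal E$ compute $\colim_{u\to x}\Hom^*_{\cat T}(w,u)$ and $\Hom^*_{\cat T/\cat U}(w,x)$.

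Filteredness of $\mathcal E$ follows the same pattern, with one extra point: equalising two morphisms $e\to e'$ that already agree on $u$ and $x$. Their difference $\delta\colon y\to y'$ satisfies $\delta s=0$, so $\delta$ is zero in $\cat T/\cat U$. Hence $t\delta=0$ for some $t\colon y'\to z$ with cone in $\cat U$, and $t$ lifts by (TR3) to a morphism from $e'$ to a triangle on $ts'$.

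Alternatively, you can keep your mixed indexing and verify exactness at each spot by hand. For example, at the quotient term, suppose $g\colon w\to x'$ at stage $s\colon x\to x'$ satisfies $(\Si\alpha)\partial g=0$ for some $\alpha\colon(u_s,\phi_s)\to(u'',\phi'')$ in $\cat U/x$. Complete $\phi''$ to a triangle $u''\to x\xrightarrow{s''}x''\xrightarrow{\partial''}\Si u''$. Then (TR3) gives $t\colon x'\to x''$ with $ts=s''$ and $\partial''t=(\Si\alpha)\partial$, so $\partial''(tg)=0$ and $tg$ factors through $s''$.
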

\begin{proof}
  From the definition of $\cat T/\cat U$ we have
\[
\Hom^*_{\cat T/\cat U}(-,x)=\colim_{x\to y}\Hom^*_\cat T(-,y)
\]
where $x\to y$ runs through all morphisms with cone in $\cat U$, so
the objects in $x\backslash\cat U$.  Given such an exact triangle
$u\to x\to y\to\Si u$ in $\cat T$ yields an exact sequence:
\[
\cdots\to \Hom^*_{\cat T}(-,\Si^{-1}y)\to \Hom^*_\cat T(-,u)\to\Hom^*_\cat T(-,x)\to
\Hom^*_{\cat T}(-,y)\to \cdots
\]
Taking the colimit over the filtered category of all such
triangles yields the desired exact sequence, because there are
forgetful functors to $\cat U/x$ and $x\backslash\cat U$ which induce
isomorphisms when taking colimits.
\end{proof}

Now assume that $\cat T$ is essentially small and let $\Coh\cat T$
denote the category of cohomological functors $\cat T^\op\to\Ab$ into
the category of graded abelian groups. The assignment
\[x\longmapsto H_x\colonequals\Hom^*_\cat T(-,x)\] yields an embedding
$\cat T\to\Coh\cat T$.

\begin{lemma}
The cohomological functors $\cat T^\op\to\Ab$ are preciseley the
filtered colimits of representable functors.
\end{lemma}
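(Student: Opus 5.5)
Both inclusions have to be shown: every filtered colimit of representable functors is cohomological, and every cohomological functor arises this way.

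\emph{Filtered colimits of representables are cohomological.} Each representable functor $H_x=\Hom^*_{\cat T}(-,x)$ is cohomological. Filtered colimits of abelian groups are exact, so a filtered colimit of representables still turns exact triangles into exact sequences. This direction is routine.

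\emph{Every cohomological functor is such a colimit.} Let $F\colon\cat T^\op\to\Ab$ be cohomological. Consider the category of elements $\cat T/F$. Its objects are pairs $(x,\xi)$ with $\xi\in F(x)$, which by Yoneda is the same as a morphism $H_x\to F$. Its morphisms $(x,\xi)\to(x',\xi')$ are maps $\alpha\colon x\to x'$ with $F(\alpha)\xi'=\xi$. Since $\cat T$ is essentially small, this category is essentially small. By the standard density argument, the canonical map
\[\colim_{(x,\xi)\in\cat T/F}H_x\lto F\]
is an isomorphism; this holds for any additive functor, and here one uses the graded version of Yoneda. It therefore remains to show that $\cat T/F$ is filtered.

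\emph{$\cat T/F$ is filtered.} I expect this to be the main step, and it is where the cohomological hypothesis enters. There are three conditions to check.
\begin{enumerate}
\item Nonemptiness: take $(0,0)$.
\item Upper bounds for pairs of objects: given $(x,\xi)$ and $(y,\eta)$, use $x\oplus y$ with the element $(\xi,\eta)\in F(x\oplus y)=F(x)\oplus F(y)$. This only needs additivity, which follows from $F$ being cohomological.
\item Equalising parallel morphisms: given $\alpha,\beta\colon(x,\xi)\to(y,\eta)$, put $\gamma=\alpha-\beta$, so $F(\gamma)\eta=0$. Complete $\gamma$ to an exact triangle $x\xrightarrow{\gamma}y\xrightarrow{\delta}z\to\Si x$. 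Exactness of $F(z)\to F(y)\to F(x)$ gives some $\zeta\in F(z)$ with $F(\delta)\zeta=\eta$. Then $\delta\colon(y,\eta)\to(z,\zeta)$ is a morphism in $\cat T/F$, and $\delta\alpha=\delta\beta$ because $\delta\gamma=0$.
\end{enumerate}
Hence $\cat T/F$ is filtered, which completes the argument.

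The only bookkeeping issue is the grading: one should either allow objects $(x,\xi)$ with $\xi$ homogeneous of arbitrary degree, or note that $F$ is determined by its degree-zero part together with the shifts $\Si^i$. Either way the same three checks go through unchanged.
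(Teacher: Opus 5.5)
Your proposal is correct and takes the same route as the paper. One direction uses exactness of filtered colimits. The other uses the canonical presentation $F\cong\colim_{H_x\to F}H_x$ over $\cat T/F$ together with filteredness of $\cat T/F$. The paper only asserts that $\cat T/F$ is filtered when $F$ is cohomological, and your three checks, in particular equalising $\alpha,\beta$ via a cone of $\alpha-\beta$, supply exactly that omitted verification.
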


\begin{proof}
  Any filtered colimit of cohomological functors is again
  cohomological. On the other hand, any additive functor $F\colon\cat T^\op\to\Ab$
  equals the colimit of representable functors indexed by the category
  $\cat T/F$ of all morphisms $H_x\to F$, so
\[\colim_{H_x\to F}\Hom^*_{\cat T}(-,x)=F.\]
The category $\cat T/F$ is filtered when $F$ is cohomological.
\end{proof}

There are three essential properties of $\Coh\cat T$ that we are going
to use:
\begin{enumerate}
\item the category  $\Coh\cat T$ has filtered colimits,
\item the category $\Coh\cat T$ admits a canonical exact structure,
  and
\item any filtered colimit of exact sequences is again exact.
\end{enumerate}
By definition a sequence $0\to F\to G\to H\to 0$ in $\Coh\cat T$ is
\emph{exact} if $0\to F(x)\to G(x)\to H(x)\to 0$ is exact in $\Ab$ for
all $x\in\cat T$.

An exact functor $f\colon\cat T\to\cat T'$ between essentially small
triangulated categories induces an adjoint pair of functors
\begin{equation*}
\begin{tikzcd}
  \Coh\cat T \ar[rr,yshift=2.5,"f^*"] && \ar[ll,yshift=-2.5,"f_*"]
  \Coh\cat T'
\end{tikzcd}
\end{equation*}
where for $F\in\Coh\cat T$ and $F'\in\Coh\cat T'$ one defines
\[f^*(F)\colonequals\colim_{H_x\to F}\Hom^*_{\cat T'}(-,f(x))\qquad\text{and}\qquad
f_*(F')\colonequals F'\circ f.\]
Using Yoneda's lemma the adjointness is easily checked:
\begin{align*}
  \Hom(f^*(F),F')
  &\cong\lim_{H_x\to F}\Hom(\Hom^*_{\cat                T'}(-,f(x)),F')\\
  &\cong\lim_{H_x\to F} F'(f(x))\\
  &\cong\lim_{H_x\to F}\Hom(\Hom^*_{\cat T}(-,x),f_*(F'))\\
  &\cong\Hom(F,f_*(F')).
\end{align*}
Note that $f^*$ and $f_*$ both are exact and preserve filtered
colimits.

For a pair of exact functors
\[\cat U\xrightarrow{\ i\ }\cat T\xrightarrow{\ q\ }\cat T/\cat U\]
given by the inclusion of a triangulated subcategory and the quotient
functor, set
\[\Gamma_\cat U\colonequals i^*\circ i_*\qquad\text{and}\qquad L_\cat
  U\colonequals q_*\circ q^*\,.\] Both functors are idempotent, so the
adjunctions induce isomorphisms $\Gamma_\cat U^2 \iso \Gamma_\cat U$
and $L_\cat U\iso L^2_\cat U$.  For $x\in\cat T$ one computes:
\begin{align*}
  \Gamma_U(\Hom^*_\cat T(-,x))&=\colim_{u\to x}\Hom^*_\cat
                               T(-,u))\\
  L_U(\Hom^*_\cat T(-,x))&=\Hom^*_{\cat
                           T/\cat U}(-,x)
\end{align*}

The equivalence $\Si\colon \cat T\iso\cat T$ extends to an equivalence
$\Coh\cat T\iso\Coh\cat T$ by taking $F$ to
$\Si F\colonequals F\circ \Si^{-1}$.

\begin{proposition}
  The sequence of functors $\cat U\to\cat T\to\cat T/\cat U$
  induces an exact sequence of functors $\Coh\cat
T\to\Coh\cat T$
\begin{equation}\label{eq:coh-loc-seq}
 \cdots \lto\Si^{n-1} L_\cat U\lto \Si^n\Gamma_\cat U\lto \Si^n\lto
  \Si^nL_\cat U\lto \Si^{n+1}\Gamma_\cat U\lto\cdots
\end{equation}
which is functorial in $\cat U$.
\end{proposition}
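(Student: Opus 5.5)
The plan is to construct the sequence first on representable functors, where Lemma~\ref{le:loc-long} already gives it, and then pass to arbitrary $F\in\Coh\cat T$ by filtered colimits. Every term in \eqref{eq:coh-loc-seq} is built from $i^*,i_*,q^*,q_*$ and $\Si$, and all of these preserve filtered colimits. So each term commutes with filtered colimits as a functor of $F$.

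First, for $x\in\cat T$ the computations above identify
\[\Gamma_\cat U(H_x)=\colim_{u\to x}H_u \qquad\text{and}\qquad L_\cat U(H_x)=\Hom^*_{\cat T/\cat U}(-,x).\]
Lemma~\ref{le:loc-long} therefore gives an exact sequence
\[\cdots\to \Si^{-1}L_\cat U H_x\to\Gamma_\cat U H_x\to H_x\to L_\cat U H_x\to\Si\Gamma_\cat U H_x\to\cdots.\]
I will check that its maps are the canonical ones:
\begin{itemize}
\item $\Gamma_\cat U H_x\to H_x$ is the counit of $i^*\dashv i_*$;
\item $H_x\to L_\cat U H_x$ is the unit of $q^*\dashv q_*$;
\item the connecting map $L_\cat U H_x\to \Si\Gamma_\cat U H_x$ is the colimit over $x\backslash\cat U$ of the maps induced by the third morphisms $y\to\Si u$ of the triangles.
\end{itemize}
The first two are natural in all of $F$. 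The essential point is that the connecting map is natural in $x$: a morphism $x\to x'$ has to induce a compatible map of indexing triangles. I would argue this using the filtered category of triangles $u\to x\to y\to\Si u$ with $u\in\cat U$, which maps cofinally to both $\cat U/x$ and $x\backslash\cat U$. Given $x\to x'$ and a triangle for $x'$, I would form a homotopy pullback to obtain a triangle for $x$ mapping to it. Octahedral axiom arguments then give compatibility after passage to the colimit; the non-uniqueness of morphisms of triangles disappears in the colimit.

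Next, fix $F$ and write $F=\colim_{H_x\to F}H_x$ over the filtered category $\cat T/F$, using the earlier lemma. The representable sequences form a diagram indexed by $\cat T/F$, by the naturality in $x$ from the first step. Filtered colimits of exact sequences in $\Coh\cat T$ are exact, by property (3). Because the terms commute with filtered colimits, the colimit sequence is exactly the sequence \eqref{eq:coh-loc-seq} evaluated at $F$, shifted by $\Si^n$. Naturality in $F$ then follows because a morphism $F\to F'$ induces a functor $\cat T/F\to\cat T/F'$.

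Functoriality in $\cat U$ means that for $\cat U\subseteq\cat V$ there are maps $\Gamma_\cat U\to\Gamma_\cat V$ and $L_\cat U\to L_\cat V$ forming a morphism of sequences. On representables these are induced by the inclusions $\cat U/x\subseteq\cat V/x$ and $x\backslash\cat U\subseteq x\backslash\cat V$, and a triangle for $\cat U$ is also a triangle for $\cat V$. So the ladder commutes already at the level of individual triangles, hence after taking colimits, and then for general $F$ by the same colimit extension as before.

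The main obstacle is the first step, namely showing that the connecting morphism is well defined and natural in $x$ despite the non-functoriality of cones. I would handle this by working throughout in the filtered category of triangles and using the cofinality statement quoted in the proof of Lemma~\ref{le:loc-long}.
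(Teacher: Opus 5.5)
Your proposal follows the paper's proof. You build the sequence on representables $H_x$ from Lemma~\ref{le:loc-long} and extend to all $F\in\Coh\cat T$ via $F=\colim_{H_x\to F}H_x$. This uses that every term preserves filtered colimits and that filtered colimits of exact sequences are exact. Functoriality in $\cat U$ then comes from the inclusions $\cat U/x\subseteq\cat V/x$ and $x\backslash\cat U\subseteq x\backslash\cat V$.

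The one ingredient you add, the homotopy pullback for naturality in $x$, does not work as stated. Pulling $u'\to x'$ back along $f\colon x\to x'$ produces an object $w$ in a triangle $w\to x\to y'\to\Si w$. This $w$ need not lie in $\cat U$: for $\cat U=0$ it is the fibre of $f$. Moreover, to get a map from the colimit indexed by triangles for $x$ to the one indexed by triangles for $x'$, you must send triangles for $x$ to triangles for $x'$, not conversely.

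Push forward instead. Complete $u\to x\xrightarrow{f}x'$ to a triangle $u\to x'\to y''\to\Si u$; this is a homotopy pushout of $y\leftarrow x\to x'$. Then use (TR3) to obtain a morphism of triangles $(\id_u,f,g)$. Two admissible choices of $g$ differ by a map factoring through $y\to\Si u$, hence through $\Si u\in\cat U$, so they agree in $\cat T/\cat U$. The third square of the morphism of triangles gives compatibility of the connecting maps, so no octahedral argument is needed. Alternatively, simply invoke the functoriality already stated in Lemma~\ref{le:loc-long}, as the paper does.
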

\begin{proof}
  The morphism $\Gamma_\cat U\to\id$ is the counit which is given by
  the construction of $\Gamma_\cat U$. Analogously, $\id\to L_\cat U$
  is the unit.  For a representable functor $F=\Hom^*_{\cat T}(-,x)$ the
  sequence identifies with the one from Lemma~\ref{le:loc-long}. It
  remains to observe that any cohomological functor is a filtered
  colimit of representable functors and that each functor in the above
  sequence preserves filtered colimits.

  An inclusion $\cat U\subseteq\cat V$ of triangulated subcategories
  induces morphisms $\Gamma_\cat U\to \Gamma_\cat V$ and
  $L_\cat U\to L_\cat V$ which yield a morphism of exact sequences of
  functors $\Coh\cat T\to\Coh\cat T$.
\end{proof}

We capture the situation in the following diagram: 
\begin{equation*}
  \begin{tikzcd}
\cat U \arrow[tail]{d}\arrow[tail,"i"]{rr} &&\cat T  \arrow[tail]{d} \arrow[twoheadrightarrow,"q"]{rr}&&\cat T/\cat U \arrow[tail]{d}\\
\Coh\cat U \arrow[tail,yshift=0.75ex]{rr}{i^*} &&\Coh\cat T  \arrow[twoheadrightarrow,yshift=-0.75ex]{ll}{i_*}
\arrow[twoheadrightarrow,yshift=0.75ex]{rr}{q^*} &&\Coh(\cat T/\cat U) \arrow[tail,yshift=-0.75ex]{ll}{q_*}
\end{tikzcd}
\end{equation*}
The lower row is a \emph{localisation sequence}, which means the following:
\begin{enumerate}
\item[(L1)] $(i^*,i_*)$ and $(q^*,q_*)$ are adjoint pairs,
\item[(L2)] $i^*$ and $q_*$ are fully faithful (equivalently,
  $\id\iso i_*i^*$ and $q^*q_*\iso\id$),
\item[(L3)] $\Im i^*=\Ker q^*$ (equivalently, $i^*i_*(x)\iso x$ if and only if $q^*(x)=0$).
\end{enumerate}
An analogue of this localisation sequence arises when $i$ and $q$ admit right
adjoints; see \eqref{eq:tria-loc-seq}. However, for
essentially small triangulated categories it is rare that the
inclusion of a thick subcategory admits an adjoint, and the
above localisation sequence is a useful substitute.

\subsection*{Commutativity}

Let $\cat T$ be a triangulated category, and we assume that $\cat T$ is essentially small. 

\begin{definition}
  A pair of thick subcategories
  $\cat U,\cat V$ of $\cat T$ is called \emph{commuting} if the inclusions
  $\cat V\leftarrow \cat U\cap\cat V\to\cat U$ induce isomorphisms
\[\Gamma_{\cat U}\Gamma_{\cat V}\leftiso\Gamma_{\cat U\cap\cat
    V}\iso \Gamma_{\cat V}\Gamma_{\cat U}\,.\] 
A thick subcategory is \emph{central} if it commutes with all other
thick subcategories.
\end{definition}

There is an equivalent and more elementary way to describe
commutativity:  A pair $\cat U,\cat V$ of thick subcategories is
commuting if and only if all morphisms $\cat U\ni u\to v\in\cat V$ and
$\cat V\ni v\to u\in\cat U$ factor through objects in
$\cat U\cap\cat V$. The following lemma provides the explanation.

\begin{lemma}\label{le:U-V-central}
  For a pair $\cat U,\cat V\in\Thick(\cat T)$ the following are equivalent.
\begin{enumerate}
\item Every morphism $\cat U\ni u\to v\in\cat V$ factors through an
  object in $\cat U\wedge\cat V$.
\item The canonical morphism
  $\Gamma_{\cat U\wedge\cat V}\to\Gamma_\cat U\Gamma_\cat V$ is an
  isomorphism.
\end{enumerate}
\end{lemma}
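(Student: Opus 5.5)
The plan is to compute both functors on representable functors using the colimit formula $\Gamma_\cat U(H_x)=\colim_{u\to x}H_u$, with $u\to x$ ranging over the filtered slice category $\cat U/x$. Every functor in sight ($\Gamma_\cat U$, $\Gamma_\cat V$, $\Gamma_{\cat U\wedge\cat V}$ and their composites) preserves filtered colimits, and every cohomological functor is a filtered colimit of representables. Hence for (1)$\Rightarrow$(2) it suffices to check that $\Gamma_{\cat U\wedge\cat V}H_x\to\Gamma_\cat U\Gamma_\cat V H_x$ is an isomorphism for every $x\in\cat T$.

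For (1)$\Rightarrow$(2) I factor the problem in two steps. First, for $v\in\cat V$, I claim $\Gamma_{\cat U\wedge\cat V}H_v\to\Gamma_\cat U H_v$ is an isomorphism. This holds because the full subcategory $(\cat U\wedge\cat V)/v$ is cofinal in the filtered category $\cat U/v$, and cofinality needs two conditions.
\begin{itemize}
\item Every object $u\to v$ admits a morphism to an object of $(\cat U\wedge\cat V)/v$. This is exactly (1): factor $u\to w\to v$ with $w\in\cat U\wedge\cat V$.
\item Any two parallel maps into such an object are equalised by a further map into such an object. One first equalises them inside the filtered category $\cat U/v$ and then applies (1) again to the resulting object.
\end{itemize}
Second, since $\Gamma_\cat V H_x=\colim_{v\to x}H_v$ and $\Gamma_\cat U$ commutes with this colimit, the first step gives
\[\Gamma_\cat U\Gamma_\cat V H_x\cong\colim_{v\to x}\Gamma_{\cat U\wedge\cat V}H_v=\Gamma_{\cat U\wedge\cat V}\Gamma_\cat V H_x.\]
It remains to see that $\Gamma_{\cat U\wedge\cat V}\Gamma_\cat V\cong\Gamma_{\cat U\wedge\cat V}$. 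This holds because $\cat U\wedge\cat V\subseteq\cat V$: the double colimit over pairs $w\to v\to x$ reduces to the colimit over $w\to x$, since the pairs $(w\xrightarrow{=}w\to x)$ are cofinal. Finally, I must check that the composite isomorphism is the canonical morphism of the statement. This is a diagram chase through the counits, using functoriality in $\cat U$ from the preceding proposition.

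For (2)$\Rightarrow$(1), fix a morphism $\phi\colon u\to v$ with $u\in\cat U$ and $v\in\cat V$. Since $v\in\cat V$, the counit gives $\Gamma_\cat V H_v\cong H_v$ (the identity of $v$ is terminal in $\cat V/v$). Hence (2) yields an isomorphism $\Gamma_{\cat U\wedge\cat V}H_v\iso\Gamma_\cat U H_v$. Evaluating at $u\in\cat U$, we have $\Gamma_\cat U H_v(u)=(i^*i_*H_v)(u)\cong\Hom^*_\cat T(u,v)$ because $i^*$ is fully faithful and $u\in\cat U$; concretely, $\mathrm{id}_u$ is terminal in $\cat U/u$. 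On the other hand,
\[\Gamma_{\cat U\wedge\cat V}H_v(u)=\colim_{w\to v}\Hom^*_\cat T(u,w),\]
with $w\to v$ ranging over $(\cat U\wedge\cat V)/v$. Surjectivity of this isomorphism onto $\Hom^*_\cat T(u,v)$ says that $\phi$ is the composite of some $u\to w$ and $w\to v$ with $w\in\cat U\wedge\cat V$, which is (1).

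I expect the main obstacle to be the bookkeeping in (1)$\Rightarrow$(2). One must verify the cofinality conditions correctly in the filtered slice categories, in particular the equalising condition. One must also ensure that the isomorphism produced is indeed the canonical morphism induced by the inclusions, rather than merely some abstract isomorphism.
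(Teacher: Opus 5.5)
Your proof is correct. It is a more explicit version of the paper's argument, organised differently. Write $\cat W=\cat U\wedge\cat V$.

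The paper first notes that $F\in\Coh\cat T$ lies in $\Coh\cat W$ if and only if every morphism $H_x\to F$ factors through some $H_w$ with $w\in\cat W$. It then asserts that (1) and (2) are each equivalent to the intermediate condition that $\Gamma_\cat U\Gamma_\cat V F\in\Coh\cat W$ for all $F$, and leaves both verifications to the reader.

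You never use that membership criterion. For (1)$\Rightarrow$(2) you prove directly that $\cat W/v$ is cofinal in $\cat U/v$, so $\Gamma_\cat W\to\Gamma_\cat U$ is invertible on each $H_v$ with $v\in\cat V$. You then pass to $\Gamma_\cat V H_x=\colim_{v\to x}H_v$. For (2)$\Rightarrow$(1) you evaluate at $u$ and read off the factorisation from surjectivity onto $\Hom^*_\cat T(u,v)$.

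The paper's route is shorter and frames everything as a statement about the coreflection onto $\Coh\cat W$. Yours makes every step checkable and shows exactly where (1) is used. Your worry about canonicity is easily settled. Since $\Gamma_\cat W F$ lies in $\Coh\cat W\subseteq\Coh\cat U\cap\Coh\cat V$, a morphism $\Gamma_\cat W F\to\Gamma_\cat U\Gamma_\cat V F$ is uniquely determined by its composite with the counits down to $F$. So any two constructions compatible with the counits agree.

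One justification needs repair. The isomorphism $\Gamma_\cat W\Gamma_\cat V\cong\Gamma_\cat W$ holds, but not because the pairs $(w\xrightarrow{=}w\to x)$ are cofinal among the pairs $w\to v\to x$. In general they are not: take $v\in\cat V\smallsetminus\cat U$ and $x=v$. The pair $0\to v\xrightarrow{=}v$ admits no morphism to such a pair, since $v$ would then be a direct summand of an object of $\cat W$.

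What is true is that the composition functor $(w\to v\to x)\mapsto(w\to x)$ is final, and those pairs give a section of it. Simpler still: by (L2) the restriction of $\Gamma_\cat V F$ to $\cat V$ agrees with that of $F$, hence so does its restriction to $\cat W\subseteq\cat V$. Since $\Gamma_\cat W=i^*i_*$ depends only on the restriction to $\cat W$, the isomorphism follows.
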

\begin{proof}
  When $\Coh\cat U$ is viewed as a full subcategory of $\Coh\cat T$, then the
  objects in $\Coh\cat U$ are precisely the filtered colimits of objects
  $H_u$ with $u\in\cat U$. From this it follows that $F\in\Coh\cat T$ belongs
  to $\Coh\cat U$ if and only if every morphism $H_x\to F$ with $x\in\cat T$
  factors through $H_u$ for some $u\in\cat U$.

  The above general observation shows that the two conditions are
  equivalent to the property that each object of the form
  $\Gamma_\cat U\Gamma_\cat V F$ belongs to $\Coh\cat U\wedge\cat V$.
\end{proof}

For a pair of commuting subcategories $\cat U,\cat V$ there are further canonical
isomorphisms:
\begin{equation}\label{eq:commuting}
\begin{gathered}
  L_\cat U L_\cat V\iso L_{\cat U\vee\cat V}\leftiso L_\cat V L_\cat U\\
  \Gamma_\cat V L_\cat U\leftiso \Gamma_{\cat V} L_{\cat U\wedge\cat V}\cong L_{\cat U\wedge\cat V} \Gamma_{\cat V}
  \iso L_\cat U\Gamma_\cat V\\
  \Gamma_\cat V L_\cat U\iso \Gamma_{\cat U\vee\cat V} L_\cat U\cong L_\cat U \Gamma_{\cat U\vee\cat V}
  \leftiso L_\cat U\Gamma_\cat V
\end{gathered}
\end{equation}
It is straightforward to deduce these isomorphisms from the long exact sequence
\eqref{eq:coh-loc-seq}, and details are left to the reader.

A pair of commuting subcategories gives rise to a pair of
\emph{Mayer--Vietoris sequences}. It turns out that commutativity is
actually equivalent to having such long exact sequences.

\begin{proposition}\label{pr:MV}
  Let $\cat U,\cat V\in\Thick(\cat T)$ be commuting.  Then there are canonical exact
  sequences
  \[\cdots\lto\Si^{-1}L_{\cat U\vee\cat V}\lto L_{\cat U\wedge\cat V}\lto L_\cat U\oplus
   L_\cat V\lto L_{\cat U\vee\cat V}\lto\Si
     L_{\cat U\wedge\cat V}\lto\cdots 
   \] and
 \[\cdots\lto\Si^{-1}\Gamma_{\cat U\vee\cat V}\lto\Gamma_{\cat U\wedge\cat V}\lto\Gamma_\cat U\oplus\Gamma_\cat V\lto\Gamma_{\cat U\vee\cat V}\lto\Si
     \Gamma_{\cat U\wedge\cat V}\lto\cdots
   \]
   where the morphisms are induced by the inclusions \[\cat U\leftarrow
   \cat U\wedge\cat V\to\cat V\qquad\text{and} \qquad\cat U\to
   \cat U\vee\cat V\leftarrow\cat V.\] 
 \end{proposition}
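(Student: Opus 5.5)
The plan is to build both Mayer--Vietoris sequences from a morphism of localisation triangles, using the long exact sequence \eqref{eq:coh-loc-seq} and the commutation isomorphisms \eqref{eq:commuting}. The standard device is this: given a commutative square of exact sequences in which one set of terms maps isomorphically, there is a long exact Mayer--Vietoris sequence. More precisely, for a morphism of long exact sequences $\cdots\to A\to B\to C\to\Si A\to\cdots$ and $\cdots\to A'\to B'\to C'\to\Si A'\to\cdots$ in which $C\to C'$ is an isomorphism, a diagram chase yields an exact sequence
\[\cdots\lto A\lto A'\oplus B\lto B'\lto \Si A\lto\cdots.\]
This is the familiar ``Barratt--Whitehead'' argument. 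It works objectwise in $\Coh\cat T$, since exactness there is tested pointwise in $\Ab$, and so it produces exact sequences of functors.

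For the $\Gamma$-sequence, apply \eqref{eq:coh-loc-seq} for the subcategory $\cat U$ to the functor $\Gamma_{\cat V}F$, and also for the subcategory $\cat U\wedge\cat V$ inside $\cat V$. Concretely, compare the two sequences
\[\Gamma_{\cat U\wedge\cat V}\to\Gamma_{\cat V}\to L_{\cat U\wedge\cat V}\Gamma_{\cat V}\to\Si\Gamma_{\cat U\wedge\cat V}\]
and
\[\Gamma_{\cat U}\to\Gamma_{\cat U\vee\cat V}\to L_{\cat U}\Gamma_{\cat U\vee\cat V}\to\Si\Gamma_{\cat U}.\]
The first is \eqref{eq:coh-loc-seq} for $\cat U\wedge\cat V$ applied to $\Gamma_\cat V$, using $\Gamma_{\cat U\wedge\cat V}\Gamma_\cat V\cong\Gamma_{\cat U\wedge\cat V}$, which holds since $\cat U\wedge\cat V\subseteq\cat V$. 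The second is \eqref{eq:coh-loc-seq} for $\cat U$ applied to $\Gamma_{\cat U\vee\cat V}$, using $\Gamma_\cat U\Gamma_{\cat U\vee\cat V}\cong\Gamma_\cat U$.

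The inclusions $\cat U\wedge\cat V\subseteq\cat U$ and $\cat V\subseteq\cat U\vee\cat V$ give a morphism from the first sequence to the second, by functoriality of \eqref{eq:coh-loc-seq} in the subcategory. The third terms agree via the chain
\[L_{\cat U\wedge\cat V}\Gamma_\cat V\iso L_\cat U\Gamma_\cat V\iso L_\cat U\Gamma_{\cat U\vee\cat V},\]
which is exactly the content of the second and third lines of \eqref{eq:commuting}. Feeding this into the device gives
\[\Gamma_{\cat U\wedge\cat V}\to\Gamma_\cat U\oplus\Gamma_\cat V\to\Gamma_{\cat U\vee\cat V}\to\Si\Gamma_{\cat U\wedge\cat V},\]
with maps induced by the inclusions, up to the usual sign on one component.

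The $L$-sequence is dual. Here one compares
\[\Gamma_{\cat U}\to\id\to L_\cat U\]
with
\[\Gamma_{\cat U\vee\cat V}\to L_\cat V\to L_{\cat U\vee\cat V},\]
where the latter is \eqref{eq:coh-loc-seq} applied to $L_\cat V$ and uses $L_{\cat U\vee\cat V}L_\cat V\cong L_{\cat U\vee\cat V}$. Alternatively, compare
\[L_{\cat U\wedge\cat V}\to L_\cat U\]
with
\[L_\cat V\to L_{\cat U\vee\cat V}\]
via the identifications $\Gamma_\cat U L_{\cat U\wedge\cat V}\cong\Gamma_{\cat U\vee\cat V}L_\cat V$ coming from \eqref{eq:commuting}, with the roles of $\cat U$ and $\cat V$ swapped. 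One could also derive the $L$-sequence from the $\Gamma$-sequence together with \eqref{eq:coh-loc-seq} via a $3\times 3$/octahedral-style chase, since $\Gamma_\cat W\to\id\to L_\cat W$ is exact for each $\cat W$.

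The main obstacle is the bookkeeping. First, one must check that the comparison isomorphisms are genuinely the canonical ones induced by inclusions, so that the morphism of long exact sequences commutes. Second, one must check that the resulting maps in the Mayer--Vietoris sequence are the ones induced by the inclusions, as the statement asserts. I would verify this first on representables $H_x$, where everything reduces to Lemma~\ref{le:loc-long}, and then extend to all of $\Coh\cat T$ by filtered colimits, since all functors involved preserve filtered colimits and exact sequences.
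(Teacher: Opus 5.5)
Your argument is essentially the paper's: a commutative ladder of localisation sequences from \eqref{eq:coh-loc-seq}, in which every third vertical map is an isomorphism by \eqref{eq:commuting}, is turned into a Mayer--Vietoris sequence by the Barratt--Whitehead lemma (the ``standard argument'' the paper cites). The paper writes out the $L$-ladder, with rows $\Gamma_{\cat U}L_{\cat U\wedge\cat V}\to L_{\cat U\wedge\cat V}\to L_{\cat U}$ and $\Gamma_{\cat U}L_{\cat V}\to L_{\cat V}\to L_{\cat U}L_{\cat V}=L_{\cat U\vee\cat V}$, and calls the $\Gamma$ case analogous; you write out the $\Gamma$-ladder instead, which is fine, and your ``alternatively'' $L$-ladder is the paper's up to the isomorphism $\Gamma_{\cat U}L_{\cat V}\cong\Gamma_{\cat U\vee\cat V}L_{\cat V}$.

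Discard your first $L$-comparison, though. There the lower row must read $\Gamma_{\cat U\vee\cat V}L_{\cat V}\to L_{\cat V}\to L_{\cat U\vee\cat V}$. Against $\Gamma_{\cat U}\to\id\to L_{\cat U}$, none of the three vertical maps is an isomorphism in general; for instance $\Gamma_{\cat U}\to\Gamma_{\cat U}L_{\cat V}$ is invertible only if $\Gamma_{\cat U\wedge\cat V}=0$. So the Barratt--Whitehead device does not apply to that ladder.
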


 \begin{proof}
   The long exact sequence \eqref{eq:coh-loc-seq} given by $\cat U$ and composed
   with $L_{\cat U\wedge\cat V}\to L_\cat V$
   yields  a morphisms of  exact sequences.
\[\begin{tikzcd}[column sep = small]
    \cdots\arrow{r}&\Si^{-1}L_\cat U\arrow{r}\arrow{d}&\Gamma_\cat U
    L_{\cat U\wedge\cat V}\arrow{r}\arrow[d, "\sim" labl]&
    L_{\cat U\wedge\cat V}\arrow{r}\arrow{d}& L_\cat U\arrow{r}\arrow{d}&\Si\Gamma_\cat U
    L_{\cat U\wedge\cat V}\arrow{r}\arrow[d, "\sim" labl]&\cdots\\
    \cdots\arrow{r} &\Si^{-1}L_{\cat U\wedge\cat V}\arrow{r}&\Gamma_{\cat U}L_\cat V\arrow{r}&L_\cat V\arrow{r}&
    L_{\cat U\vee\cat V}\arrow{r}&\Si\Gamma_{\cat U}L_\cat V\arrow{r}&\cdots
  \end{tikzcd}\] Here we use the identification
$L_\cat U L_\cat V= L_{\cat U\vee\cat V}$ and the isomorphisms from
\eqref{eq:commuting}. Inverting them yields the connecting morphism
$L_{\cat U\vee\cat V}\to\Si L_{\cat U\wedge\cat V}$, and then a
standard argument turns the diagram into an exact sequence of the
above form; cf.\ \cite[Lemma~10.7.4]{Dieck:2008a}. The proof for the
second sequence is analogous.
\end{proof}

\begin{corollary}
   Let $\cat U,\cat V\in\Thick(\cat T)$ be commuting.  
Then for each object $x\in\cat T$ there is a canonical exact sequence  
\begin{multline*}
  \cdots\to \Hom^*_{\cat T/(\cat U\wedge\cat V)}(-,x)\to \Hom^*_{\cat
    T/\cat U}(-,x)\oplus
  \Hom^*_{\cat T/\cat V}(-,x)\to\\
  \to\Hom^*_{\cat T/(\cat U\vee\cat V)}(-,x)\to\Hom^*_{\cat T/(\cat
    U\wedge\cat V)}(-,\Sigma x)\to\cdots\,.
\end{multline*}
\end{corollary}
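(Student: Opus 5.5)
The plan is to evaluate the first Mayer--Vietoris sequence of Proposition~\ref{pr:MV} on the representable functor $H_x=\Hom^*_{\cat T}(-,x)$. That sequence is an exact sequence of functors $\Coh\cat T\to\Coh\cat T$. Exactness in $\Coh\cat T$ is defined pointwise, so applying it to the object $H_x$ yields an exact sequence of cohomological functors
\[\cdots\lto L_{\cat U\wedge\cat V}H_x\lto L_\cat U H_x\oplus L_\cat V H_x\lto L_{\cat U\vee\cat V}H_x\lto \Si L_{\cat U\wedge\cat V}H_x\lto\cdots,\]
and evaluating at any object of $\cat T$ gives an exact sequence of graded abelian groups.

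Next I will identify each term using the computation preceding Proposition~\eqref{eq:coh-loc-seq}: for every thick subcategory $\cat W$ one has $L_\cat W(H_x)=\Hom^*_{\cat T/\cat W}(-,x)$, where the functor $\cat T^\op\to\Ab$ is precomposed with the quotient functor. I apply this with $\cat W$ equal to $\cat U\wedge\cat V$, $\cat U$, $\cat V$ and $\cat U\vee\cat V$.

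It remains to treat the suspension term. By definition $\Si F=F\circ\Si^{-1}$, so
\[\Si L_{\cat W}H_x=\Hom^*_{\cat T/\cat W}(\Si^{-1}-,x)\cong\Hom^*_{\cat T/\cat W}(-,\Si x).\]
Here I use that the quotient functor commutes with suspension. This gives the term $\Hom^*_{\cat T/(\cat U\wedge\cat V)}(-,\Si x)$ in the statement.

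Canonicity follows because the maps are those of Proposition~\ref{pr:MV}, which are induced by the inclusions. Under the identification above they become the maps induced by the quotient functors $\cat T/\cat U\wedge\cat V\to\cat T/\cat U\to\cat T/\cat U\vee\cat V$, together with the connecting map. The only point that needs care is this bookkeeping of the suspension and of the signs in the maps; I expect no real obstacle.
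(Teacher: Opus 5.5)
Your proposal is correct and follows the paper's proof: evaluate the first Mayer--Vietoris sequence of Proposition~\ref{pr:MV} at $F=\Hom^*_{\cat T}(-,x)$ and use $L_{\cat W}F=\Hom^*_{\cat T/\cat W}(-,x)$ for each of the four thick subcategories. Your rewriting of the suspension term as $\Hom^*_{\cat T/(\cat U\wedge\cat V)}(-,\Si x)$, via $\Si F=F\circ\Si^{-1}$, is a detail the paper leaves implicit.
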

\begin{proof}
Let $F=\Hom^*_\cat T(-,x)$. Then $L_\cat U F=\Hom^*_{\cat T/\cat
  U}(-,x)$. Thus the sequence follows from the Mayer--Vietoris sequence of Proposition~\ref{pr:MV}.
\end{proof}

\subsection*{Distributivity}

Let $\cat T$ be a triangulated category. We consider the lattice
$\Thick(\cat T)$ of thick subcategories and recall the following definitions.

\begin{definition}
A lattice is \emph{distributive} if for all elements
$x,y,z$ there is an equality
\[(x\wedge z)\vee(y\wedge z)=(x\vee y)\wedge z.\] An equivalent
condition is that for all elements $x,y,z$ there is an equality
\[(x\vee z)\wedge(y\vee z)=(x\wedge y)\vee z.\] A \emph{frame} is a
complete lattice where arbitrary joins distribute over finite meets.
\end{definition}

The lattice $\Thick(\cat T)$ is rarely distributive; cf.\ Example~\ref{ex:distrib}. But
distributivity is obtained when one restricts to appropriate classes
of thick subcategories. For instance, these can be tensor ideals or
subcategories given via central localisation. For establishing
distributivity a crucial ingredient is commutativity.

Our goal is to show that the central subcategories form a sublattice
of $\Thick(\cat T)$ which is distributive. We begin with a sequence of
lemmas.

\begin{lemma}\label{le:fin-dist}
  Let $\cat U,\cat V,\cat W\in\Thick(\cat T)$ and suppose $\cat W\subseteq\cat U\wedge\cat V$.
If  $\Gamma_{\cat W}\iso\Gamma_\cat U\Gamma_\cat V$ then $\cat W=\cat U\wedge\cat V$.
\end{lemma}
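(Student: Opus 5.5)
We need to show that every $x\in\cat U\wedge\cat V$ lies in $\cat W$. The idea is to evaluate the isomorphism $\Gamma_{\cat W}\iso\Gamma_\cat U\Gamma_\cat V$ on the representable functor $H_x=\Hom^*_\cat T(-,x)$ and read off a factorisation of $\id_x$ through an object of $\cat W$. Thickness of $\cat W$ then gives $x\in\cat W$.

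First, for $x\in\cat V$ the counit $\Gamma_\cat V H_x\to H_x$ is an isomorphism. Indeed $\Gamma_\cat V H_x=\colim_{v\to x}H_v$, and this slice category has the terminal object $(x,\id_x)$. Likewise $\Gamma_\cat U H_x\cong H_x$ since $x\in\cat U$. Hence $\Gamma_\cat U\Gamma_\cat V H_x\cong H_x$, with the counit identifying it with $H_x$. The hypothesis then says that the counit $\Gamma_\cat W H_x\to H_x$ is an isomorphism. Here one checks that the canonical morphism $\Gamma_\cat W\to\Gamma_\cat U\Gamma_\cat V$ is compatible with the counits to the identity, which is clear from its construction via the inclusions $\cat W\subseteq\cat V$ and $\cat W\subseteq\cat U$.

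Next, $\Gamma_\cat W H_x=\colim_{w\to x}H_w$ is a filtered colimit over the slice category $\cat W/x$. Take the element $\id_x\in H_x(x)$ and pull it back along the isomorphism to an element of $(\colim_{w\to x}H_w)(x)=\colim_{w\to x}\Hom_\cat T(x,w)$. Filtered colimits of abelian groups are computed pointwise, so this element is represented by some $\psi\colon x\to w$ for an object $(w,\phi\colon w\to x)$ of $\cat W/x$. Its image under the counit is $\phi\psi$. Therefore $\phi\psi=\id_x$, so $x$ is a direct summand of $w\in\cat W$, and $x\in\cat W$ because $\cat W$ is thick. Together with the hypothesis $\cat W\subseteq\cat U\wedge\cat V$, this gives $\cat W=\cat U\wedge\cat V$.

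The main point needing care is the compatibility of the given isomorphism with the counits. If one only knew an abstract isomorphism $\Gamma_\cat W H_x\cong H_x$, the argument would need more. The hypothesis presumably refers to the canonical morphism, as in the definition of commuting pairs, and then compatibility holds by naturality of the counits $i^*i_*\to\id$ under the inclusions of subcategories.
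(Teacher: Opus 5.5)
Your proof is correct and is the same argument as the paper's: evaluate on $H_x$ to get $\Gamma_{\cat W}H_x\cong\Gamma_{\cat U}\Gamma_{\cat V}H_x\cong H_x$, so $H_x\in\Coh\cat W$ and hence $x\in\cat W$, with your factorisation $\phi\psi=\id_x$ being the explicit unpacking of that last step. Your caveat about compatibility with the counits is not needed, since any isomorphism $H_x\cong\colim_{w\to x}H_w$ already forces $\id_{H_x}$ to factor through some $H_w$ with $w\in\cat W$ (because $\Hom(H_x,-)$ commutes with filtered colimits, by Yoneda), and Yoneda then makes $x$ a direct summand of $w$.
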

\begin{proof}
Let $x\in \cat U\wedge\cat V$. Then $\Gamma_\cat W
H_x\cong\Gamma_\cat U\Gamma_\cat V H_x\cong H_x$. Thus $H_x\in\Coh\cat W$ and therefore
$x\in\cat W$.
\end{proof}

The next pair of lemmas shows that meet and join preserve central
elements.

\begin{lemma}\label{le:central-res}
  Let $\cat U,\cat V\in\Thick(\cat T)$ and suppose that $\cat U$ is central.
  \begin{enumerate}
  \item The element $\cat U\wedge\cat V$ is central in $\Thick(\cat V)$. 
  \item The element $(\cat U\vee\cat V)/\cat V$ is central in $\Thick(\cat T/\cat V)$.
\end{enumerate}
\end{lemma}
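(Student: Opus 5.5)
I plan to work with the elementary characterisation of commutativity in Lemma~\ref{le:U-V-central}: a pair $\cat A,\cat B$ of thick subcategories commutes if and only if every morphism $a\to b$ with $a\in\cat A$, $b\in\cat B$ (and every morphism in the opposite direction) factors through an object of $\cat A\cap\cat B$. This reduces both parts to factorisation statements about morphisms, which can be checked by hand.

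For (1), I take a thick subcategory $\cat W\subseteq\cat V$. A thick subcategory of $\cat V$ is also thick in $\cat T$, so $\cat U$ commutes with $\cat W$ in $\cat T$. A morphism $u\to w$ with $u\in\cat U\wedge\cat V$ and $w\in\cat W$ is in particular a morphism from $\cat U$ to $\cat W$. It therefore factors through some $z\in\cat U\cap\cat W$. Since $\cat W\subseteq\cat V$, we have $z\in(\cat U\wedge\cat V)\cap\cat W$. The reverse direction $w\to u$ is handled in the same way. Because the factorisation criterion only involves objects and morphisms, which are the same in $\cat V$ and in $\cat T$, Lemma~\ref{le:U-V-central} applied inside $\cat V$ shows that $\cat U\wedge\cat V$ is central in $\Thick(\cat V)$.

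For (2), the thick subcategories of $\cat T/\cat V$ correspond to thick subcategories $\cat W\supseteq\cat V$ of $\cat T$, via $\cat W\mapsto\cat W/\cat V$. I have to show that $(\cat U\vee\cat V)/\cat V$ commutes with $\cat W/\cat V$. I expect this to be the main obstacle, since morphisms in the quotient are roofs. I would argue as follows.

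\begin{enumerate}
\item Every object of $(\cat U\vee\cat V)/\cat V$ is isomorphic in $\cat T/\cat V$ to an object of $\cat U$, up to direct summands. To see this, build $\cat U\vee\cat V$ by the inductive thick closure: an extension of an object of $\cat U$ by an object of $\cat V$ becomes isomorphic to the $\cat U$-object in the quotient, and the image of $\cat U$ in $\cat T/\cat V$ is already closed under cones, because $\cat U$ is.
\item Consider a morphism $u\to w$ in $\cat T/\cat V$, with $u\in\cat U$ and $w\in\cat W$. Write it as a roof $u\xleftarrow{s}u'\to w$, where $s$ has cone in $\cat V$. Then $u'$ lies in $\cat U\vee\cat V$, so I am again dealing with an object outside $\cat U$ proper.
\item To handle this, I would instead use the functor-level criterion. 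Applying $q^*$ to the commutation isomorphisms for $\cat U$ and $\cat W$ in $\Coh\cat T$, and using that $q^*$ is exact, preserves filtered colimits, and satisfies $q^*\Gamma_{\cat A}\cong\Gamma_{\cat A/\cat V}q^*$ for $\cat A\supseteq\cat V$, gives an isomorphism $\Gamma_{(\cat U\wedge\cat W)\vee\cat V}\iso\Gamma_{\cat U\vee\cat V}\Gamma_{\cat W}$ modulo $\cat V$. This identity would follow from \eqref{eq:commuting} together with the commuting of $\cat U$ with $\cat V$.
\item Finally, check that $((\cat U\wedge\cat W)\vee\cat V)/\cat V\subseteq(\cat U\vee\cat V)/\cat V\wedge\cat W/\cat V$. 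Then Lemma~\ref{le:fin-dist} combined with Lemma~\ref{le:U-V-central} yields commutativity, and as a by-product the equality $(\cat U\vee\cat V)\wedge\cat W=(\cat U\wedge\cat W)\vee\cat V$.
\end{enumerate}

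If the functorial route becomes unwieldy, the fallback is to factor the roof directly. By centrality, the map $u'\to w$ restricted along $\cat U$-pieces factors through $\cat U\cap\cat W$, and the $\cat V$-pieces vanish in the quotient. This requires an induction on the number of steps needed to build $u'$.
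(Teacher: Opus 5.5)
Your proposal is correct. Part (1) is exactly the paper's argument. For part (2), the route you settle on in steps 3--4 uses the same two ingredients as the paper. The first is that $\Gamma_{\cat X}$ for $\cat X\supseteq\cat V$ is compatible with passing to $\cat T/\cat V$. The second is the identification of $\Gamma_{\cat U}$ with $\Gamma_{\cat U\vee\cat V}$ after localising at $\cat V$, taken from \eqref{eq:commuting}.

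The two arguments assemble these differently. The paper works with $L_{\cat V}=q_*q^*$ and writes a chain $\Gamma_{\cat X}\Gamma_{\cat U\vee\cat V}L_{\cat V}\cong\Gamma_{\cat X}\Gamma_{\cat U}L_{\cat V}\cong\Gamma_{\cat U}\Gamma_{\cat X}L_{\cat V}\cong\cdots\cong\Gamma_{\cat U\vee\cat V}\Gamma_{\cat X}L_{\cat V}$. So it shows that the two composites agree on $\Coh(\cat T/\cat V)$, without ever naming the meet. You instead push the isomorphism $\Gamma_{\cat U\wedge\cat W}\iso\Gamma_{\cat U}\Gamma_{\cat W}$ forward along $q^*$ and finish with Lemma~\ref{le:fin-dist}. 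This gives you more: the explicit meet $(\cat U\vee\cat V)/\cat V\wedge\cat W/\cat V=((\cat U\wedge\cat W)\vee\cat V)/\cat V$, hence the modular identity. It also gives commutativity as an isomorphism out of $\Gamma$ of the meet, closer to the form the definition asks for.

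Two repairs are needed. First, step 3 needs an input you did not state. To rewrite $q^*\Gamma_{\cat U\wedge\cat W}$ as $\Gamma_{((\cat U\wedge\cat W)\vee\cat V)/\cat V}\,q^*$ via \eqref{eq:commuting}, you need $\cat U\wedge\cat W$ itself, not just $\cat U$, to commute with $\cat V$. This follows from centrality of $\cat U$ exactly as in your part (1), because $\cat U\wedge\cat V=(\cat U\wedge\cat W)\wedge\cat V$ when $\cat V\subseteq\cat W$. You should also treat the other order $\Gamma_{\cat W}\Gamma_{\cat U}$, which is symmetric.

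Second, steps 1, 2 and the fallback can be dropped, but the justification in step 1 is not sufficient as written. Morphisms in $\cat T/\cat V$ are roofs, so closure of the image of $\cat U$ under cones does not follow from $\cat U$ being closed under cones. It does hold here, but only because $\cat U$ commutes with $\cat V$: that makes $\cat U/(\cat U\wedge\cat V)\to\cat T/\cat V$ fully faithful, by Verdier's criterion.
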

\begin{proof}
  (1) Let $\cat X\in\Thick(\cat V)$. Then any morphism between objects of $\cat U\wedge\cat V$
  and $\cat X$ (in either direction) factors through an object of
  $\cat U\wedge\cat X=(\cat U\wedge\cat V)\wedge\cat X$. 

  (2) Observe that for any $\cat X\in\Thick(\cat T)$ containing
  $\cat V$ the functor $\Gamma_\cat X$ restricts to a functor
  $\Coh\cat T/\cat V\to\Coh\cat T/\cat V$ where it identifies with
  $\Gamma_{\cat X/\cat V}$, since
  $\Gamma_\cat X L_\cat V\cong L_\cat V\Gamma_\cat X$.  Using
  \eqref{eq:commuting} we obtain
  \[\Gamma_\cat X\Gamma_{\cat U\vee\cat V} L_\cat V\cong \Gamma_\cat X \Gamma_{\cat U} L_\cat V\cong
    \Gamma_\cat U\Gamma_\cat X L_\cat V\cong \Gamma_\cat U L_\cat V \Gamma_\cat X \cong \Gamma_{\cat U\vee\cat V}
    L_\cat V \Gamma_\cat X \cong \Gamma_{\cat U\vee\cat V} \Gamma_\cat X L_\cat V,\] and therefore
  $\Gamma_\cat X \Gamma_{\cat U\vee\cat V}\cong \Gamma_{\cat U\vee\cat V}\Gamma_\cat X$ when restricted to
  $\Coh\cat T/\cat V$.
\end{proof}  

\begin{lemma}\label{le:finite-sup}
  Let $\cat U,\cat V\in\Thick(\cat T)$ be central. Then
  $\cat U\vee\cat V$ and   $\cat U\wedge\cat V$ are central. Moreover, we have  for all $\cat W\in\Thick(\cat T)$
  \[(\cat U\wedge \cat W)\vee(\cat V\wedge\cat W)=(\cat U\vee\cat V)\wedge\cat W\] 
and
  \[(\cat U\vee \cat W)\wedge(\cat V\vee\cat W)=(\cat U\wedge\cat V)\vee\cat W\]
\end{lemma}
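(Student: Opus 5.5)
Throughout I work with the functors $\Gamma_{\cat X}$ on $\Coh\cat T$ and use the factorisation criterion of Lemma~\ref{le:U-V-central} to test commutativity. I prove the two distributive laws first, since centrality of $\cat U\vee\cat V$ and $\cat U\wedge\cat V$ will follow from them together with Lemma~\ref{le:fin-dist}.

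\emph{First law.} The inclusion $(\cat U\wedge\cat W)\vee(\cat V\wedge\cat W)\subseteq(\cat U\vee\cat V)\wedge\cat W$ is automatic. For the reverse, I compute $\Gamma_\cat W\Gamma_{\cat U\vee\cat V}$ by means of the Mayer--Vietoris sequence of Proposition~\ref{pr:MV} for the commuting pair $\cat U,\cat V$ (central implies commuting):
\[\cdots\to\Gamma_{\cat U\wedge\cat V}\to\Gamma_\cat U\oplus\Gamma_\cat V\to\Gamma_{\cat U\vee\cat V}\to\Si\Gamma_{\cat U\wedge\cat V}\to\cdots.\]
Applying the exact functor $\Gamma_\cat W$ and using centrality gives $\Gamma_\cat W\Gamma_\cat U\cong\Gamma_{\cat U\wedge\cat W}$, $\Gamma_\cat W\Gamma_\cat V\cong\Gamma_{\cat V\wedge\cat W}$ and $\Gamma_\cat W\Gamma_{\cat U\wedge\cat V}\cong\Gamma_\cat W\Gamma_\cat U\Gamma_\cat V\cong\Gamma_{\cat U\wedge\cat V\wedge\cat W}$. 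Set $\cat Y=(\cat U\wedge\cat W)\vee(\cat V\wedge\cat W)$. Every term of the sequence other than $\Gamma_\cat W\Gamma_{\cat U\vee\cat V}$ lands in $\Coh\cat Y$, and $\Coh\cat Y$ is closed under extensions, kernels and cokernels in the appropriate sense: it is the image of $\Gamma_\cat Y$ and equals $\Ker L_\cat Y$, where $L_\cat Y$ is exact. Hence $\Gamma_\cat W\Gamma_{\cat U\vee\cat V}F\in\Ker L_\cat Y$ for all $F$ by a five-lemma argument. Taking $F=H_x$ with $x\in(\cat U\vee\cat V)\wedge\cat W$ gives $H_x\cong\Gamma_\cat W\Gamma_{\cat U\vee\cat V}H_x\in\Coh\cat Y$, so $x\in\cat Y$, as in the proof of Lemma~\ref{le:fin-dist}.

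\emph{Second law.} Again one inclusion is trivial. For $x\in(\cat U\vee\cat W)\wedge(\cat V\vee\cat W)$ I pass to $\cat T/\cat W$. By Lemma~\ref{le:central-res}(2), the subcategories $(\cat U\vee\cat W)/\cat W$ and $(\cat V\vee\cat W)/\cat W$ are central in $\Thick(\cat T/\cat W)$, so by Lemma~\ref{le:U-V-central} their meet has $\Gamma$ equal to the composite. Applying the first law inside $\cat T/\cat W$ is circular, so instead I argue directly: the image of $x$ in $\cat T/\cat W$ lies in $\big((\cat U\vee\cat W)\wedge(\cat V\vee\cat W)\big)/\cat W$. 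Computing via $\Gamma_\cat U L_\cat W\cong\Gamma_{\cat U\vee\cat W}L_\cat W$ from \eqref{eq:commuting}, one has $\Gamma_{\cat U\vee\cat W}\Gamma_{\cat V\vee\cat W}L_\cat W\cong\Gamma_\cat U\Gamma_\cat V L_\cat W\cong\Gamma_{\cat U\wedge\cat V}L_\cat W$. Hence $L_\cat W H_x\cong\Gamma_{\cat U\wedge\cat V}L_\cat W H_x$, and this lies in the image of $\Coh((\cat U\wedge\cat V)\vee\cat W)$, so $L_{(\cat U\wedge\cat V)\vee\cat W}H_x=0$ and $x\in(\cat U\wedge\cat V)\vee\cat W$.

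\emph{Centrality.} For $\cat U\wedge\cat V$, with any $\cat W$ I have $\Gamma_\cat W\Gamma_{\cat U\wedge\cat V}\cong\Gamma_\cat W\Gamma_\cat U\Gamma_\cat V\cong\Gamma_{\cat U\wedge\cat W}\Gamma_\cat V\cong\Gamma_{\cat U\wedge\cat V\wedge\cat W}$. The middle step moves $\Gamma_\cat V$ past $\Gamma_\cat W$ and $\Gamma_\cat U$, both of which commute with it, so the symmetric composite $\Gamma_{\cat U\wedge\cat V}\Gamma_\cat W$ gives the same answer. For $\cat U\vee\cat V$ I use the Mayer--Vietoris argument above, which exhibits $\Gamma_\cat W\Gamma_{\cat U\vee\cat V}F$ in $\Coh\cat Y$ with $\cat Y=(\cat U\vee\cat V)\wedge\cat W$ by the first law. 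Via the description of $\Coh\cat Y$ in the proof of Lemma~\ref{le:U-V-central}, this says that morphisms from $\cat W$ to $\cat U\vee\cat V$ factor through $\cat Y$. The other direction follows by applying $\Gamma_{\cat U\vee\cat V}$ to the sequence for $\Gamma_\cat W$ in the same way, using the symmetry of the commuting isomorphisms.

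The main obstacle is the first step: checking that membership in $\Coh\cat Y$ passes through the long exact Mayer--Vietoris sequence. Concretely, one must show that $\Ker L_\cat Y$ is closed under "middle terms" of exact sequences in $\Coh\cat T$. I would try exactness of $L_\cat Y$ combined with the five lemma.
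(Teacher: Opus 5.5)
Your proof is correct. It starts where the paper does: the Mayer--Vietoris sequence of the commuting pair $\cat U,\cat V$ with $\Gamma_\cat W$ applied, plus the identifications coming from centrality. Your treatment of $\cat U\wedge\cat V$ is the paper's. The way you conclude, however, is different.

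\textbf{The paper's route.} It stacks three Mayer--Vietoris sequences, the middle one for the pair $\cat U\wedge\cat W,\cat V\wedge\cat W$ (commuting by Lemma~\ref{le:central-res}(1)). The five lemma then gives isomorphisms $\Gamma_{(\cat U\wedge\cat W)\vee(\cat V\wedge\cat W)}\cong\Gamma_{\cat U\vee\cat V}\Gamma_\cat W\cong\Gamma_\cat W\Gamma_{\cat U\vee\cat V}$. From these, Lemma~\ref{le:fin-dist} yields the first law and the centrality of the join at once. The second law is handled ``similarly'' with the $L$-version of Mayer--Vietoris.

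\textbf{Your route.} You only establish membership. Since $\Coh\cat Y=\Ker L_\cat Y$ and $L_\cat Y$ is exact, the one unidentified term is forced into $\Coh\cat Y$, and evaluating at $H_x$ gives the first law. The criterion in the proof of Lemma~\ref{le:U-V-central} then turns membership of $\Gamma_\cat W\Gamma_{\cat U\vee\cat V}F$ and of $\Gamma_{\cat U\vee\cat V}\Gamma_\cat W F$ in $\Coh\cat Y$ into commutation. Your ``other direction'' should be read as evaluating the Mayer--Vietoris sequence at $\Gamma_\cat W F$.

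\textbf{What this buys.} You need no second Mayer--Vietoris sequence and no appeal to Lemma~\ref{le:central-res}(1). You also avoid checking that the vertical maps are compatible with the non-canonically constructed connecting morphisms, which the paper's five-lemma step tacitly requires. What you give up is an explicit isomorphism $\Gamma_{\cat Y}\cong\Gamma_\cat W\Gamma_{\cat U\vee\cat V}$, which the statement does not need.

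\textbf{Second law.} Your direct argument in $\cat T/\cat W$ is likewise lighter than an $L$-Mayer--Vietoris comparison. Two small points should be made explicit:
\begin{enumerate}
\item $L_\cat W H_x$ is fixed by $\Gamma_{\cat U\vee\cat W}$ and $\Gamma_{\cat V\vee\cat W}$, because $\Gamma_\cat X L_\cat W\cong L_\cat W\Gamma_\cat X$ for $\cat X\supseteq\cat W$, as in the proof of Lemma~\ref{le:central-res}(2).
\item The exact functor $L_{(\cat U\wedge\cat V)\vee\cat W}$ kills both outer terms of $\Gamma_\cat W H_x\to H_x\to L_\cat W H_x\to$, which is what forces $L_{(\cat U\wedge\cat V)\vee\cat W}H_x=0$.
\end{enumerate}

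\textbf{Your flagged obstacle.} It is harmless. One has $(L_\cat Y F)(z)\cong\colim F(z')$, a filtered colimit over morphisms $z'\to z$ with cone in $\cat Y$. Hence $L_\cat Y$ preserves pointwise exact long sequences.
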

\begin{proof}
It is clear that $\cat U\wedge\cat V$ is central since
\[\Gamma_{\cat U\wedge\cat V}\Gamma_\cat W\cong \Gamma_{\cat U}\Gamma_{\cat V}\Gamma_\cat W\cong \Gamma_\cat W\Gamma_{\cat U}\Gamma_\cat V\cong
  \Gamma_\cat W\Gamma_{\cat U\wedge\cat V}.\] 
Now we show that $\cat U\vee\cat V$ is central and combine this with the proof
  of the first identity.  Consider the following Mayer--Vietoris
  sequences, where the middle one uses that $\cat U\wedge\cat W$ and
  $\cat V\wedge\cat W$ are commuting by Lemma~\ref{le:central-res}.
  \[\begin{tikzcd}
      \cdots\arrow{r}&\Gamma_{\cat U\wedge\cat V}\Gamma_\cat
      W\arrow{r}&\Gamma_\cat U\Gamma_\cat W\oplus\Gamma_\cat V\Gamma_\cat W\arrow{r}&
      \Gamma_{\cat U\vee\cat V}\Gamma_\cat W\arrow{r}&\cdots\\
      \cdots\arrow{r}&\Gamma_{\cat U\wedge\cat V\wedge\cat W}\arrow{r}\arrow[u, "\sim" labl]\arrow[d, "\sim" labl]&
      \Gamma_{\cat U\wedge\cat W}\oplus\Gamma_{\cat V\wedge\cat W}\arrow{r}\arrow[u, "\sim" labl]\arrow[d, "\sim" labl]&
      \Gamma_{(\cat U\wedge
        \cat W)\vee(\cat V\wedge\cat W)}\arrow{r}\arrow{u}\arrow{d}&\cdots\\
      \cdots\arrow{r}&\Gamma_\cat W \Gamma_{\cat U\wedge\cat
        V}\arrow{r}&\Gamma_\cat W \Gamma_\cat U\oplus\Gamma_\cat
      W\Gamma_\cat V\arrow{r}&
      \Gamma_\cat W \Gamma_{\cat U\vee\cat V}\arrow{r}&\cdots
    \end{tikzcd}\] The vertical isomorphisms are clear since $\cat U$ and
  $\cat V$ central. The inclusion
  \[(\cat U\wedge \cat W)\vee(\cat V\wedge\cat W)\subseteq (\cat
    U\vee\cat V)\wedge\cat W\] is automatic. Thus we can apply
  Lemma~\ref{le:fin-dist}, and then the five lemma yields the
  assertion for $\cat U\vee\cat V$. The proof of the second identity
  is similar, using the other Mayer--Vietoris sequence involving functors of the form $L_\cat U$ instead of
  $\Gamma_\cat U$.
\end{proof}

An immediate consequence of Lemma~\ref{le:finite-sup} is the fact that
the central elements form a sublattice of $\Thick(\cat T)$ which is
distributive. The next step is to extend the above lemma and to
establish the same assertions for arbitrary joins, using some standard
arguments.  Thus the central subcategories form a frame.

\begin{proposition}\label{pr:centre}
  The central subcategories of $\cat T$ are closed under finite meets
  and arbitrary joins.  Let $(\cat U_i)_{i\in I}$ be a family of
  central elements in $\Thick(\cat T)$ and $\cat V\in\Thick(\cat
  T)$. Then the following holds:
 \begin{align*}
   \bigvee_{i\in I}(\cat U_i\wedge \cat V)&=\left(\bigvee_{i\in
   I}\cat U_i\right)\wedge\cat V\qquad (I\text{ arbitrary})\\
   \bigwedge_{i\in I}(\cat U_i\vee \cat V)&=\left(\bigwedge_{i\in
   I}\cat U_i\right)\vee\cat V\qquad (I\text{ finite})
 \end{align*}
\end{proposition}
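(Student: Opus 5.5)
The finite cases are already settled by Lemma~\ref{le:finite-sup}: closure under finite meets and joins, and both distributive identities for finitely many $\cat U_i$, follow by induction on $|I|$. So the only new content is passing to arbitrary joins. For that I would reduce to finite joins through a filtered union. Set $\cat U\colonequals\bigvee_{i\in I}\cat U_i$, and for each finite subset $J\subseteq I$ set $\cat U_J\colonequals\bigvee_{i\in J}\cat U_i$. Each $\cat U_J$ is central by the finite case. The family $(\cat U_J)_J$ is directed, so its union is already a thick subcategory, and therefore
\[\cat U=\bigcup_{J}\cat U_J .\]

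For the distributive identity I would argue elementwise. The inclusion $\bigvee_i(\cat U_i\wedge\cat V)\subseteq\cat U\wedge\cat V$ is automatic. Conversely, take $x\in\cat U\wedge\cat V$. Then $x\in\cat U_J$ for some finite $J$, so $x\in\cat U_J\wedge\cat V$. By the finite identity of Lemma~\ref{le:finite-sup},
\[\cat U_J\wedge\cat V=\bigvee_{i\in J}(\cat U_i\wedge\cat V)\subseteq\bigvee_{i\in I}(\cat U_i\wedge\cat V).\]
This proves the first identity. The second identity, for finite $I$, is exactly the finite statement of Lemma~\ref{le:finite-sup}, extended by induction on $|I|$.

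It remains to show that $\cat U$ is central, and I expect this to be the main obstacle, since centrality is a statement about functors rather than objects. I would use the elementary criterion of Lemma~\ref{le:U-V-central}: $\cat U$ and $\cat W$ commute if and only if every morphism between an object of $\cat U$ and an object of $\cat W$, in either direction, factors through an object of $\cat U\wedge\cat W$. So fix $\cat W\in\Thick(\cat T)$ and a morphism $u\to w$ with $u\in\cat U$ and $w\in\cat W$. Then $u$ lies in some $\cat U_J$. Since $\cat U_J$ is central, the morphism factors through an object of
\[\cat U_J\wedge\cat W\subseteq\cat U\wedge\cat W.\]
Morphisms $w\to u$ are handled the same way. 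Hence $\cat U$ commutes with every $\cat W$, that is, $\cat U$ is central.

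The only point needing care is that Lemma~\ref{le:U-V-central} is stated for a single direction and a single composite $\Gamma_{\cat U\wedge\cat W}\to\Gamma_\cat U\Gamma_\cat W$. Commutativity requires the two isomorphisms $\Gamma_{\cat U\wedge\cat W}\iso\Gamma_\cat U\Gamma_\cat W$ and $\Gamma_{\cat U\wedge\cat W}\iso\Gamma_\cat W\Gamma_\cat U$. I would obtain these by applying the lemma twice, once to the pair $(\cat U,\cat W)$ and once to the pair $(\cat W,\cat U)$, as in the remark preceding Lemma~\ref{le:U-V-central}.
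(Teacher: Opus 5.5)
Your proposal is correct and follows the paper's proof essentially step by step. You show centrality of $\bigvee_i\cat U_i$ by the factorisation criterion applied to a finite sub-join $\cat U_J$ containing the given object. You get the infinite distributive law by placing an object of $\cat U\wedge\cat V$ in some $\cat U_J\wedge\cat V$ and invoking Lemma~\ref{le:finite-sup}, and the finite meet identity comes directly from that lemma. The one difference is that you make explicit what the paper leaves implicit: the directed union $\bigcup_J\cat U_J$ is already thick and hence equals $\bigvee_i\cat U_i$.
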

\begin{proof}
  Set $\cat U=\bigvee_{i\in I}\cat U_i$. The element
  $\cat U_J\colonequals\bigvee_{i\in J}\cat U_i$ is central for all finite $J\subseteq I$
  by Lemma~\ref{le:finite-sup}.  Now choose objects $u\in\cat U$ and
  $v\in\cat V$. Then $u\in\cat U_J$ for some finite $J\subseteq I$ and
  therefore any morphism between $u$ and $v$ factors through an object
  in $\cat U_J\wedge \cat V\subseteq \cat U\wedge \cat V$. Thus $\cat U$ is central.

  The inclusion
  $\bigvee_{i\in I}(\cat U_i\wedge \cat V)\subseteq\cat U\wedge\cat V$
  is clear. For the other inclusion we use again
  Lemma~\ref{le:finite-sup}. In fact any object
  $x\in\cat U\wedge \cat V$ belongs to
  $\cat U_J\wedge V=\bigvee_{i\in J}(\cat U_i\wedge \cat V)$ for some
  finite $J\subseteq I$, and therefore $x$ belongs to
  $\bigvee_{i\in I}(\cat U_i\wedge \cat V)$.

  The second equality is already covered by Lemma~\ref{le:finite-sup}.
\end{proof}

In specific examples we may restrict to a sublattice
$T\subseteq \Thick(\cat T)$ which is closed under arbitrary joins, and then
look only at elements which commute with all other elements from $T$.
For instance, $T$ could be the lattice of thick tensor ideals when
$\cat T$ is a tensor triangulated category; cf.\
Proposition~\ref{pr:tensor-central}. The above Proposition~\ref{pr:centre}
generalises to this situation; so the central elements in $T$ form a
frame.

\begin{example}
  For the category of perfect complexes over a commutative ring all
  thick subcategories are central. One way of seeing this is that the
  category of perfect complexes is tensor triangulated. Then one uses
  that each thick subcategory is a tensor ideal, because the tensor
  unit is generating, and that tensor ideals commute with each other
  by Proposition~\ref{pr:tensor-central}.
\end{example}

\begin{example}\label{ex:distrib}
  Consider the path algebra $A=k(\circ\to\circ)$ over a field $k$ and
  the bounded derived category $\cat T=\mathbf D^{\mathrm b}(\mod A)$
  of finitely generated $A$-modules, which identifies with the
  category of perfect complexes over $A$.  There are three
  isoclasses of indecomposable $A$-modules and each generates a proper thick
  subcategory. Thus the Hasse diagram of the
  lattice of thick subcategories is the following:
\[\begin{tikzcd}[column sep = tiny, row sep = small]
    &\circ\arrow[dash]{d}\arrow[dash]{ld}\arrow[dash]{rd}\\
    \circ \arrow[dash]{rd}&\circ\arrow[dash]{d}&\circ \arrow[dash]{ld}\\
    &\circ
  \end{tikzcd}\]
In particular, the lattice is not distributive. Central subcategories are $\{0\}$ and $\cat T$.
\end{example}

\section{Support via central actions}

We consider the action of a graded-commutative ring on a triangulated
category. This provides a notion of \emph{cohomological support} for
objects, and we collect its basic properties. A useful tool are Koszul
objects. For a tensor triangulated category there is a natural notion of
\emph{tensor triangulated support} which is based on its spectrum of
prime ideals. For any ring action on a tensor triangulated category
there is a canonical comparison map which relates both notions of
support.

References are \cite{Balmer:2005a, Benson/Iyengar/Krause:2008a, Hochster:1969a}.

\subsection*{Support for modules}

Let $R$ be a \emph{graded-commutative ring}; thus $R$ is $\bbZ$-graded
and satisfies \[rs=(-1)^{|r||s|}sr\quad \text{for}\quad r,s\in R,\]
where $|r|$ denotes the degree of a homogeneous element $r$.
We consider only homogeneous elements or ideals
of $R$, and $\Spec R$ denotes the set of homogeneous prime ideals of
$R$ with the Zariski topology. Analogously, only graded $R$-modules
are considered. We write $\Mod R$ for the category of graded
$R$-modules with morphisms given by degree zero $R$-linear maps.

For $R$-modules $M,N$ we set
\[ \Hom_{R}^*(M,N)\colonequals \bigoplus_{i\in\bbZ}\Hom_{R}(M,N[i]) \quad \text{and}\quad
  \End_{R}^{*}(M)\colonequals \Hom_{R}^{*}(M,M)\,.\] Scalar
multiplication yields a  homomorphism
$\phi_M\colon R\to \End^*_R(M)$ such that the induced left and right
actions of $R$ on $\Hom_R^{*}(M,N)$ are compatible: For any $r\in R$
and $\alpha\in\Hom^*_R(M,N)$, one has
\[ \phi_N(r)\alpha=(-1)^{|r||\alpha|}\alpha\phi_M(r)\,.
\]

For a multiplicative set $S\subseteq R$ of homogeneous elements we
write $R[S^{-1}]$ for the localisation and set
$M[S^{-1}]\colonequals M\otimes_R R[S^{-1}]$ for any $R$-module
$M$. This yields an exact functor $\Mod R\to \Mod R[S^{-1}]$, and an
$R$-module is \emph{$S$-local} if the canonical map $M\to M[S^{-1}]$
is an isomorphism.

For $\fp\in\Spec R$ set $R_\fp\colonequals R[(R\smallsetminus \fp)^{-1}]$
and $M_\fp\colonequals M\otimes_R R_\fp$ for any $R$-module $M$. An
$R$-module $M$ is called \emph{$\fp$-local} if the canonical map
$M\to M_\fp$ is an isomorphism.

\begin{definition}
  The \emph{support} of an $R$-module $M$ is the
  subset \[\supp_R(M)\colonequals\{\fp\in\Spec R\mid M_\fp\neq 0\}.\]
\end{definition}

The support of a module is a subset $V\subseteq\Spec R$ that is
\emph{specialisation closed}, so for primes $\fp\subseteq\fq$ we have
that $\fp\in V$ implies $\fq\in V$. Further basic properties of
$\supp_R$ are derived from the fact that $(-)_\fp$ is exact and
preserves all coproducts.

For a multiplicative set $S\subseteq R$ of homogeneous elements the
canonical map $R\to R[S^{-1}]$ identifies $\Spec R[S^{-1}]$ with the
subset $\{\fq\in\Spec R\mid \fp\cap S=\varnothing\}$.  In particular,
for $\fp\in\Spec R$ the canonical map $R\to R_\fp$ identifies
$\Spec R_\fp$ with the subset $\{\fq\in\Spec R\mid
\fq\subseteq\fp\}$. This has the following consequence.

\begin{lemma}\pushQED{\qed} 
  For an $R$-module $M$, we have
  \[M[S^{-1}]=0\quad\iff \quad\supp_R(M)\subseteq\{\fp\in\Spec R\mid \fp\cap S\neq\varnothing\}.\qedhere\]
\end{lemma}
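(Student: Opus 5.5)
The plan is to prove both implications by localising $M[S^{-1}]$ further at primes. The main tool is the identification of $\Spec R[S^{-1}]$ with the set of primes $\fp$ with $\fp\cap S=\varnothing$. A key observation: for such $\fp$ the elements of $S$ lie in $R\smallsetminus\fp$, so the localisation at $\fp$ factors through $R[S^{-1}]$, and
\[
M_\fp\cong (M[S^{-1}])\otimes_{R[S^{-1}]} R[S^{-1}]_{\fp R[S^{-1}]}=(M[S^{-1}])_{\fp}.
\]
This is just transitivity of localisation (composition of tensor products), which holds equally in the graded setting.

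($\Rightarrow$) Suppose $M[S^{-1}]=0$ and let $\fp\in\supp_R(M)$. If $\fp\cap S=\varnothing$, the isomorphism above gives $M_\fp\cong (M[S^{-1}])_\fp=0$. This contradicts $\fp\in\supp_R(M)$, so every prime in the support meets $S$.

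($\Leftarrow$) Suppose every $\fp\in\supp_R(M)$ meets $S$, and set $N=M[S^{-1}]$, a graded $R[S^{-1}]$-module. Every homogeneous prime of $R[S^{-1}]$ has the form $\fp R[S^{-1}]$ with $\fp\cap S=\varnothing$. For such $\fp$ we have $\fp\notin\supp_R(M)$, so $N_{\fp R[S^{-1}]}\cong M_\fp=0$. Hence $N$ has empty support over $R[S^{-1}]$.

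It remains to use the graded local-global principle: a graded module over a graded-commutative ring with empty support is zero. This is the main obstacle, since the paper has not recorded it explicitly. I would prove it directly. Take a homogeneous $0\neq x\in N$. Its annihilator is a proper homogeneous ideal $\ann(x)$. In the graded-commutative setting one first reduces modulo the ideal generated by odd-degree elements, which are nilpotent because $r^2=-r^2$ forces $2r^2=0$; handling the characteristic-$2$ case carefully is part of this step. Zorn's lemma then gives a homogeneous prime $\fq\supseteq\ann(x)$, maximal among homogeneous ideals containing it. Then $x/1\neq 0$ in $N_\fq$, because $sx=0$ with $s\notin\fq$ homogeneous would force $s\in\ann(x)\subseteq\fq$. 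This contradicts $N_\fq=0$, so $N=0$.
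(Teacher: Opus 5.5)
Your overall route is the one the paper intends. The lemma is stated there as a direct consequence of identifying $\Spec R[S^{-1}]$ with the primes disjoint from $S$, together with the tacit fact that a graded module with empty support is zero. Both of your directions, including the transitivity of localisation, are fine.

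The flaw is in your proof of that tacit fact: odd-degree elements are \emph{not} nilpotent in general. The relation $2r^2=0$ gives $r^2=0$ only when $2$ is invertible, and no care can rescue the characteristic-$2$ case. Take $\operatorname{char}k=2$ and $R=k[\eta]$ with $|\eta|=1$; this is $H^*(\mathbb{Z}/2,k)$, one of the paper's own examples, and $\eta$ is not nilpotent. For $S=\{\eta^i\mid i\ge 0\}$ the ring $R[S^{-1}]=k[\eta,\eta^{-1}]$ has the odd unit $\eta$. Reducing modulo the ideal generated by odd-degree elements therefore yields the zero ring. Yet the only homogeneous prime of $R[S^{-1}]$ is $(0)$, which contains no odd element. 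So for the nonzero module $N=R[S^{-1}]$ your argument would find no prime at all. That reduction must be dropped, not refined.

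Fortunately it is not needed. Graded-commutativity gives $ra=\pm ar$ for homogeneous $r,a$, so homogeneous one-sided ideals are two-sided. Zorn's lemma then yields a homogeneous ideal $\mathfrak{m}\supseteq\ann(x)$ that is maximal among \emph{proper} homogeneous ideals. Such an $\mathfrak{m}$ is prime. Suppose $a,b$ are homogeneous with $ab\in\mathfrak{m}$ and $a\notin\mathfrak{m}$. Then $\mathfrak{m}+Ra=R$, and comparing degree-zero components gives $1=m+ra$ with $m\in\mathfrak{m}$ and $r$ homogeneous of degree $-|a|$. Hence $b=mb+r(ab)\in\mathfrak{m}$. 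With $\fq=\mathfrak{m}$ your last step ($x/1\neq 0$ in $N_\fq$) goes through unchanged, and the proof is complete.
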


For an ideal $\fa$ of $R$ set
\[V(\fa)\colonequals\{\fp\in\Spec R\mid \fa\subseteq\fp\}.\] If $\fa$ is
generated by a single element $r$ we write $V(r)$ for $V(\fa)$. Let
$\ann_R(M)$ denote the kernel of the canonical map $R\to \End^*_R(M)$.

\begin{lemma}
  We have $\supp_R(M)\subseteq V(\ann_R(M))$, and equality holds when
  $M$ is finitely generated.\qed
\end{lemma}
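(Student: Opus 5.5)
We prove the inclusion $\supp_R(M)\subseteq V(\ann_R(M))$ for arbitrary $M$ directly, and then the reverse inclusion when $M$ is finitely generated.

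\emph{The inclusion.} Let $\fp\in\Spec R$ with $\fp\notin V(\ann_R(M))$. Then there is a homogeneous element $r\in\ann_R(M)$ with $r\notin\fp$. Multiplication by $r$ is zero on $M$, hence zero on $M_\fp=M\otimes_R R_\fp$. On the other hand, $r$ acts invertibly on $R_\fp$ and therefore on $M_\fp$. Thus the identity of $M_\fp$ equals zero, so $M_\fp=0$ and $\fp\notin\supp_R(M)$. Homogeneity causes no trouble: $\ann_R(M)$ is a homogeneous ideal, being the kernel of a graded homomorphism $R\to\End^*_R(M)$, so it is generated by homogeneous elements. Hence, if it is not contained in $\fp$, some homogeneous element of it lies outside $\fp$.

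\emph{Equality for finitely generated $M$.} Choose homogeneous generators $m_1,\dots,m_n$ of $M$, and let $\fp\in V(\ann_R(M))$. Suppose for contradiction that $M_\fp=0$. Then each $m_i/1$ vanishes in $M_\fp$. The multiplicative set $R\smallsetminus\fp$ consists of homogeneous elements, so there is a homogeneous $s_i\notin\fp$ with $s_im_i=0$. Put $s=s_1\cdots s_n$. This is homogeneous and lies outside $\fp$, since $\fp$ is prime.

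We claim $sM=0$. An arbitrary element of $M$ has the form $\sum_i r_im_i$, and graded-commutativity gives
\[s r_i m_i=\pm r_i s m_i .\]
Moreover $s m_i=0$: the factors of $s$ commute with $s_i$ up to sign, so $s_i$ can be moved next to $m_i$. Hence $sM=0$, so $s\in\ann_R(M)\subseteq\fp$, a contradiction. Therefore $M_\fp\neq0$, which gives $V(\ann_R(M))\subseteq\supp_R(M)$.

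The only delicate point is the bookkeeping of signs in the graded-commutative setting. Since annihilation is insensitive to signs, this is harmless. The key step is that a finitely generated module localising to zero is killed by a single element outside $\fp$.
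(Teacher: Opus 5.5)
Your proof is correct. The paper states this lemma with \qed and gives no proof, treating it as standard, and your argument is the standard one it takes for granted. In one direction, a homogeneous element of $\ann_R(M)$ outside $\fp$ becomes a unit in $R_\fp$ that kills $M_\fp$. In the other, when $M$ has finitely many homogeneous generators, the product of the denominators killing them is an element of $\ann_R(M)\smallsetminus\fp$.
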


An $R$-module $M$ is called \emph{$\fa$-torsion} if for each
homogeneous $x\in M$ we have $\fa^nx=0$ for $n\gg 0$. An equivalent
condition is $\supp_R(N)\subseteq V(\fa)$ for each finitely generated
submodule $N\subseteq M$, which means that $\supp_R(M)\subseteq
V(\fa)$.

Let $\alpha\colon R\to S$ be a homomorphism of graded-commutative
rings. The assignment $\fp\mapsto\alpha^{-1}(\fp)$ yields  a map
\[ \alpha^*\colon\Spec S\lto\Spec R\]
which is also denoted by $\Spec\alpha$.

\begin{lemma}\label{le:supp-ring-hom}
<  We have $\supp_R(S)=V(\Ker\alpha)$.
\end{lemma}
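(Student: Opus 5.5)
The plan is to view $S$ as an $R$-module via $\alpha$ and apply the preceding lemma, which says $\supp_R(M)\subseteq V(\ann_R(M))$ with equality when $M$ is finitely generated. The point is that $S$ is cyclic as a module over itself, but usually not finitely generated over $R$. So I will not quote the equality case directly; instead I will compute $\ann_R(S)$ and prove the reverse inclusion by hand, using the ring structure of $S$.

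\emph{Annihilator.} The annihilator of $S$ as an $R$-module is exactly $\Ker\alpha$. Indeed, $r\in R$ acts on $S$ by multiplication with $\alpha(r)$, and this is zero on all of $S$ if and only if $\alpha(r)=\alpha(r)\cdot 1=0$. The preceding lemma then gives $\supp_R(S)\subseteq V(\Ker\alpha)$. This can also be checked directly: if $\fp\not\supseteq\Ker\alpha$, pick a homogeneous $r\in\Ker\alpha\smallsetminus\fp$. Then $r$ becomes invertible in $R_\fp$ and acts as zero on $S_\fp$, so $S_\fp=0$.

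\emph{Reverse inclusion.} Let $\fp\supseteq\Ker\alpha$ and suppose, for a contradiction, that $S_\fp=S\otimes_R R_\fp=0$. Then $1\in S$ maps to zero, so some homogeneous $s\in R\smallsetminus\fp$ satisfies $\alpha(s)\cdot 1=0$. Hence $s\in\Ker\alpha\subseteq\fp$, which is a contradiction. To make this step rigorous I need the standard description of the kernel of $M\to M_\fp$: an element $x$ dies in $M_\fp$ if and only if $tx=0$ for some homogeneous $t\notin\fp$. This holds in the graded setting exactly as in the ungraded one. Applied to $x=1\in S$, it gives precisely the step above.

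The only mild obstacle is that the general equality statement requires finite generation, which $S$ lacks over $R$. The argument avoids it because the question $S_\fp\neq 0$ is detected on the single element $1$: $S_\fp\cong S\otimes_R R_\fp$ is a ring, and it is zero precisely when its unit vanishes. Everything else is routine, and graded-commutativity causes no sign issues, since only annihilation of the unit is involved.
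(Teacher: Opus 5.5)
Your proof is correct and matches the paper's argument. The inclusion $\subseteq$ is exactly the paper's observation $\ann_R(S)=\Ker\alpha$ combined with the preceding lemma. For $\supseteq$, the paper applies the finitely generated case of that lemma to the cyclic submodule $R/(\Ker\alpha)\cong\alpha(R)\cdot 1\subseteq S$, together with exactness of $(-)_\fp$; your element-wise argument with $1\in S$ is that same device written out directly.
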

\begin{proof}
This follows from the above lemma. For one inclusion observe that
  $\Ker\alpha=\ann_R(S)$. For the other inclusion observe that
  $R/(\Ker\alpha)$ identifies with an $R$-submodule of $S$.
\end{proof}

\subsection*{The graded centre}

For a triangulated category $\cat T$ with suspension $\Si\colon\cat
T\iso\cat T$ we introduce its graded centre and actions of graded-commutative rings.

\begin{definition}
  Let $R$ be a graded-commutative ring. We say that the triangulated category $\cat T$ is
\emph{$R$-linear}, or that $R$ \emph{acts} on $\cat T$, if for each
$x$ in $\cat T$ there is a homomorphism of graded rings
\[ \phi_x\colon R\lto \End_{\cat T}^{*}(x)
\] such that the induced left and right actions of $R$ on $\Hom_{\cat
T}^{*}(x,y)$ are compatible: For any $r\in R$ and
$\alpha\in\Hom^*_{\cat T}(x,y)$, one has
\[ \phi_y(r)\alpha=(-1)^{|r||\alpha|}\alpha\phi_x(r)\,.
\]
This means that the representable functors $\Hom^*_{\cat T}(x,-)$ and
$\Hom^*_{\cat T}(-,y)$ are $R$-linear cohomological functors $\cat
T\to \Mod R$.
\end{definition}

There is an alternative way to describe a central ring action on
$\cat T$ via its centre.

\begin{definition}
  The \emph{graded centre} of $\cat T$ is the  graded-commutative ring 
\[
Z^*(\cat T)=\bigoplus_{n\in\bbZ}\{\eta\colon\id_{\cat T}\to\Si^n\mid\eta\Si=(-1)^n\Si\eta\}
\]
with multiplication given by
composition of morphisms in $\cat T$. Note that $Z^*(\cat T)$ may not
be a set.
\end{definition}

If the ring $R$ acts on $\cat T$ via homomorphisms
$\phi_x\colon R\to \End_{\cat T}^{*}(x)$, then this yields a
homomorphism $\phi\colon R\to Z^*(\cat T)$ by setting
$\phi(r)_x\colonequals\phi_x(r)$ for $x\in\cat T$ and $r\in
R$. Conversely, any homomorphism $R\to Z^*(\cat T)$ gives an
$R$-action on $\cat T$.

For example, one has a homomorphism $\mathbb Z\to Z^*(\cat T)$ which sends $n\in\bbZ$
to $n\cdot \id \colon \id_{\cat T}\to\Si^0$. Thus any triangulated
category is $\mathbb Z$-linear.

\subsection*{Central localisation}

Let $R$ be a graded-commutative ring and $\cat T$ an $R$-linear
triangulated category. For a multiplicative set $S\subseteq R$ of
homogeneous elements we define a triangulated
category $\cat T[S^{-1}]$ as follows. The objects are the same as in
$\cat T$. For the morphisms set
\[\Hom^*_{\cat T[S^{-1}]}(x,y)\colonequals\Hom^*_{\cat T}(x,y)[S^{-1}].\]
The exact triangles in $\cat T[S^{-1}]$ are
given by the ones isomorphic to images of exact triangles in $\cat T$
under the canonical functor $\cat T\to \cat T[S^{-1}]$. The kernel of this
functor is the thick subcategory
\[\cat T_S\colonequals\{x\in\cat T\mid \End_{\cat T}^*(x)[S^{-1}]=0\}\]
and it is not difficult to check that   $\cat T\to \cat T[S^{-1}]$
induces a triangle equivalence
\begin{equation*}\label{eq:mult-closed}
  \cat T/\cat T_S\longiso \cat T[S^{-1}].
\end{equation*}
The notation $x\mapsto x[S^{-1}]$ is used for the canonical functor $\cat
T\to\cat T[S^{-1}]$.

\begin{lemma}\label{le:central-loc}
An $R$-linear cohomological functor $H\colon\cat T\to\Mod R$ factors through the
canonical functor $\cat T\to \cat T[S^{-1}]$ if and only if $H(x)\iso
H(x)[S^{-1}]$ for all $x\in\cat T$.
\end{lemma}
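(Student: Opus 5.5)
Both implications go through the description of $\cat T[S^{-1}]$ as the Verdier quotient $\cat T/\cat T_S$, together with the fact that $\Hom$-modules in $\cat T[S^{-1}]$ are obtained by inverting $S$. We interpret ``factors through'' as: there is an $R$-linear cohomological functor $\bar H\colon\cat T[S^{-1}]\to\Mod R$ with $H\cong\bar H\circ(-)[S^{-1}]$. The ring $R$ acts on $\cat T[S^{-1}]$, and every $s\in S$ acts invertibly on $\Hom^*_{\cat T[S^{-1}]}(x,y)$.

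\emph{Only if.} Suppose $H=\bar H\circ(-)[S^{-1}]$. For $s\in S$ of degree $n$ and any object $x$, the element $\phi_x(s)\colon x\to\Si^n x$ becomes invertible in $\cat T[S^{-1}]$, since $\End^*$ there is $S$-local and $\phi_x(s)$ is multiplication by $s$. Hence $\bar H$ maps it to an isomorphism. By $R$-linearity of $H$, multiplication by $s$ on the graded module $H(x)$ is exactly $H(\phi_x(s))$, up to the identification $H(\Si^n x)\cong H(x)[n]$. So every $s\in S$ acts bijectively on $H(x)$, and therefore $H(x)\to H(x)[S^{-1}]$ is an isomorphism.

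\emph{If.} Suppose each $H(x)$ is $S$-local. We show that $H$ vanishes on $\cat T_S$ and then use the universal property of the Verdier quotient. Let $x\in\cat T_S$, so $\End^*_\cat T(x)[S^{-1}]=0$. Then $\id_x$ is annihilated by some $s\in S$, that is, $\phi_x(s)=s\cdot\id_x=0$. Consequently $s$ acts as zero on $H(x)$. But $s$ also acts bijectively on $H(x)$, so $H(x)=0$.

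A cohomological functor vanishing on a thick subcategory $\cat U$ factors uniquely through $\cat T/\cat U$. Indeed, any morphism with cone in $\cat U$ is sent to an isomorphism by the long exact sequence, so $H$ inverts all such morphisms, and the resulting functor on the quotient is again cohomological. Composing with the equivalence $\cat T/\cat T_S\iso\cat T[S^{-1}]$ gives $\bar H$. It remains to check that $\bar H$ is $R$-linear. The $R$-action on $\cat T[S^{-1}]$ is induced from the action on $\cat T$ via the canonical functor, which is the identity on objects, so $\bar H$ inherits $R$-linearity from $H$.

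The main point requiring care is this final identification. Because the canonical functor is the identity on objects, $\bar H$ is determined on objects. On a morphism $\alpha/s$ it must be given by $H(\alpha)$ composed with the inverse of the action of $s$ on the relevant $H$-value; this inverse exists precisely because $H(x)$ is $S$-local. One could even define $\bar H$ directly by this formula, avoiding the Verdier quotient. The only verifications needed are then the sign bookkeeping with $(-1)^{|r||\alpha|}$ and the compatibility of triangles, both of which are routine.
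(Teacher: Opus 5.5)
Your proof is correct. Your ``only if'' direction is essentially the paper's argument, but your ``if'' direction takes a different route.

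\textbf{The paper's route.} It works entirely with graded $\Hom$-modules. It compares $\Hom^*_{\cat T}(x,y)\to\Hom^*_{\cat T}(x,y)[S^{-1}]$ with $\Hom^*_R(H(x),H(y))\to\Hom^*_R(H(x),H(y))[S^{-1}]$. It then observes that the map induced by $H$ extends along the localisation exactly when the target $\Hom$-modules are $S$-local, which amounts to $S$-locality of each $H(x)$. The factorisation is the explicit formula $\alpha/s\mapsto(\text{action of }s)^{-1}\circ H(\alpha)$ that you mention at the end. The equivalence $\cat T/\cat T_S\iso\cat T[S^{-1}]$ is never invoked.

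\textbf{Your route.} You show that $S$-locality forces $H$ to vanish on $\cat T_S$, because some $s\in S$ kills $\id_x$. You then obtain $\bar H$ from the universal property of the Verdier quotient, composed with that equivalence.

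\textbf{What each buys.} With your route, functoriality of $\bar H$, compatibility with composition and signs, and the cohomological property all come for free. It also makes explicit the intermediate statement ``$H$ factors through $\cat T[S^{-1}]$ if and only if $H$ annihilates $\cat T_S$''. That is precisely how the paper later uses this lemma in the proof of Lemma~\ref{le:Koszul-r}. The price is that your argument rests on the equivalence $\cat T/\cat T_S\iso\cat T[S^{-1}]$, which the paper only asserts as ``not difficult to check''. Its proof is essentially a cofinality argument with the morphisms $\phi_y(s)\colon y\to\Si^{|s|}y$ for $s\in S$. The paper's direct $\Hom$-level argument does not depend on that equivalence, but it leaves implicit the verification that the formula defines an exact functor.
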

\begin{proof}
  For any pair $x,y\in\cat T$ we have the following  commutative square:
  \[  \begin{tikzcd}
      \Hom^*_\cat T(x,y)\ar[r]\ar[d]& \Hom^*_\cat T(x,y)[S^{-1}]\ar[d]\\
\Hom^*_R(H(x),H(y))\ar[r]& \Hom^*_R(H(x),H(y))[S^{-1}]  
\end{tikzcd}\]
The assertion follows from the fact that  the map $\Hom^*_\cat T(x,y)\to
\Hom^*_R(H(x),H(y))$ induced by $H$ factors through 
the top horizontal map if and only if the bottom horizontal map is invertible.
\end{proof}
  
A special case of the above construction arises for each $\fp\in\Spec
R$. Let $\cat T_\fp$ denote the triangulated category given by
localising the morphisms at $\fp$,
so \[\Hom^*_{\cat T_\fp}(x,y)\colonequals\Hom^*_{\cat T}(x,y)_\fp.\]
The notation $x\mapsto x_\fp$ is used for the canonical functor $\cat
T\to\cat T_\fp$.

The next lemma provides a supply of central subcategories.

\begin{lemma}\label{le:central}
Let $R$ be a graded-commutative ring 
  acting on $\cat T$ and $S\subseteq R$ a multiplicative subset. Then
  $\cat T_S$ is a central thick subcategory of $\cat T$.
\end{lemma}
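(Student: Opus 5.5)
The plan is to use the elementary reformulation of commutativity from Lemma~\ref{le:U-V-central}. Fix an arbitrary $\cat V\in\Thick(\cat T)$. It suffices to show two things: every morphism $u\to v$ with $u\in\cat T_S$ and $v\in\cat V$ factors through an object of $\cat T_S\cap\cat V$, and likewise every morphism $v\to u$ does. Applying the lemma in both orders then gives $\Gamma_{\cat T_S\cap\cat V}\iso\Gamma_{\cat T_S}\Gamma_\cat V$ and $\Gamma_{\cat T_S\cap\cat V}\iso\Gamma_\cat V\Gamma_{\cat T_S}$, which is centrality. First I note that $\cat T_S$ is thick, as the kernel of the exact functor $\cat T\to\cat T[S^{-1}]$.

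\emph{Step 1 (annihilating elements).} For $u\in\cat T_S$ we have $\End^*_\cat T(u)[S^{-1}]=0$. Hence $\id_u$ becomes zero after inverting $S$, so there is a homogeneous $s\in S$ with $\phi_u(s)=0$.

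\emph{Step 2 (Koszul objects).} For $v\in\cat V$ and $s\in S$ of degree $d$, complete $\phi_v(s)\colon v\to\Si^d v$ to an exact triangle
\[v\xrightarrow{\ \phi_v(s)\ }\Si^d v\lto \kos{v}{s}\lto\Si v.\]
Then $\kos{v}{s}\in\cat V$, since $\cat V$ is thick. I will show $\kos{v}{s}\in\cat T_S$. The standard argument applies $\Hom^*_\cat T(w,-)$ to the triangle. Because $s$ acts by zero on the cokernel and kernel pieces of multiplication by $s$ on $\Hom^*_\cat T(w,v)$, the long exact sequence shows that $s^2$ annihilates $\Hom^*_\cat T(w,\kos{v}{s})$ for every $w$. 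Taking $w=\kos{v}{s}$, the element $s^2\in S$ kills $\End^*_\cat T(\kos{v}{s})$, so its localisation at $S$ vanishes. This is the only step with real content: it is the usual ``$s^2$ annihilates a Koszul object'' computation, and I need to keep track of the signs in the compatibility $\phi_y(r)\alpha=\pm\alpha\phi_x(r)$. Since the signs are irrelevant for vanishing, it should go through.

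\emph{Step 3 (factorisations).} Let $\alpha\colon u\to v$ with $u\in\cat T_S$, and choose $s$ as in Step 1. Then $\phi_v(s)\alpha=\pm\Si^d\alpha\,\phi_u(s)=0$. By the exact sequence obtained from the rotated triangle $\Si^{-1}\kos{v}{s}\to v\to\Si^d v$, the morphism $\alpha$ factors through $\Si^{-1}\kos{v}{s}$, which lies in $\cat T_S\cap\cat V$.

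Dually, let $\beta\colon v\to u$ and write $s'=\Si^{-d}\phi_v(s)\colon\Si^{-d}v\to v$. Then $\beta s'=\pm\Si^{-d}(\phi_u(s))\,\Si^{-d}\beta=0$, up to the appropriate shift. So $\beta$ factors through the cone of $s'$, which is a suspension of $\kos{v}{s}$ and hence again lies in $\cat T_S\cap\cat V$.

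Both factorisation conditions hold, so Lemma~\ref{le:U-V-central} applied twice gives that $\cat T_S$ commutes with $\cat V$. Since $\cat V$ was arbitrary, $\cat T_S$ is central.
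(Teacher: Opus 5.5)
Your proof is correct, and it takes a genuinely different route from the paper's.

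The paper works entirely inside $\Coh\cat T$. It identifies $L_{\cat T_S}F$ with the algebraic localisation $F[S^{-1}]$ and observes that this localisation maps $\Coh\cat U$ into itself for every thick $\cat U$, so that $L_{\cat T_S}\Gamma_\cat U\cong\Gamma_\cat U L_{\cat T_S}$. It then compares the long exact sequences \eqref{eq:coh-loc-seq} for $\cat T_S$ before and after applying $\Gamma_\cat U$, and the five lemma yields $\Gamma_{\cat T_S}\Gamma_\cat U\cong\Gamma_\cat U\Gamma_{\cat T_S}$.

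You instead check the elementary factorisation criterion of Lemma~\ref{le:U-V-central} in both directions. An element $s\in S$ with $\phi_u(s)=0$ forces every morphism between $u$ and $v$ to vanish after composing with (a shift of) $\phi_v(s)$. Each such morphism therefore factors through a shift of $\kos{v}{s}$. This object lies in $\cat V$ by thickness and in $\cat T_S$ because $s^2$ annihilates it.

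What your route buys:
\begin{itemize}
\item It is more elementary and explicit: it names the object of $\cat T_S\cap\cat V$ through which a given morphism factors.
\item It lands directly on the two isomorphisms $\Gamma_{\cat T_S\cap\cat V}\iso\Gamma_{\cat T_S}\Gamma_\cat V$ and $\Gamma_{\cat T_S\cap\cat V}\iso\Gamma_\cat V\Gamma_{\cat T_S}$ that the definition of a commuting pair requires.
\item It uses the same Koszul mechanism the paper only brings in later, in Lemma~\ref{le:Koszul-r}.
\end{itemize}

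What the paper's route buys: it is more structural. The identification $L_{\cat T_S}F\cong F[S^{-1}]$ is functorial and needs no element chasing or sign bookkeeping. It is also the form that reappears in the compactly generated setting, in Lemma~\ref{le:S-localisation}.
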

\begin{proof}
  To simplify notation we set $\Gamma_S\colonequals\Gamma_{\cat T_S}$ and
  $L_S\colonequals L_{\cat T_S}$. Let $F\in\Coh\cat T$. Then $F(x)$ is an
  $R$-module for all $x\in\cat T$ and we have an isomorphism
  \[(L_S F)\cong F[S^{-1}].\] This holds for a
  representable functor $F=\Hom^*_\cat T(-,x)$ by the definition of $\cat
  T[S^{-1}]$, and the general case follows since $L_S$ preserves
  filtered colimits.

  Now fix a thick subcategory
  $\cat U\subseteq\cat T$. We have canonical isomorphisms
  \[L_S \Gamma_\cat U\leftiso \Gamma_\cat U L_S \Gamma_\cat
    U\iso\Gamma_\cat U L_S\] since $L_S$ restricts to a functor
  $\Coh\cat U\to\Coh\cat U$.  Consider the following commutative
  diagram, where the rows are induced by the long exact sequence
  \eqref{eq:coh-loc-seq}.
  \[\begin{tikzcd}
      \cdots\arrow{r}&\Gamma_S\Gamma_\cat U\arrow{r}&\Gamma_\cat U\arrow{r}&
      L_S\Gamma_\cat U\arrow{r}&\cdots\\
      \cdots\arrow{r}&\Gamma_\cat U\Gamma_S\Gamma_\cat U\arrow{r}\arrow{u}\arrow{d}&
      \Gamma_\cat U\Gamma_\cat U\arrow{r}\arrow[u, "\sim" labl]\arrow[d, "\sim" labl]&
      \Gamma_\cat U L_S\Gamma_\cat U\arrow{r}\arrow[u, "\sim" labl]\arrow[d, "\sim" labl]&\cdots\\
      \cdots\arrow{r}&\Gamma_\cat U \Gamma_S\arrow{r}&\Gamma_\cat U\arrow{r}&
      \Gamma_\cat U L_S\arrow{r}&\cdots\\
    \end{tikzcd}\]
The five lemma yields the
  isomorphism $\Gamma_S\Gamma_\cat U\cong \Gamma_\cat U\Gamma_S$, and therefore
  $\cat T_S$ is central.
\end{proof}

\subsection*{Koszul objects}

Let $R$ be a graded-commutative ring and $\cat T$ an $R$-linear
triangulated category.

\begin{definition}
  Let $r\in R$ be homogeneous of degree $d$ and $x$ an object in
  $\cat T $.  A \emph{Koszul object of $r$ on $x$} is any object
  $\kos xr$ that appears in an exact triangle
\[ x\stackrel{r}\lto \Si^{d}x\lto \kos xr \lto \Si x\,.
\] 
For a sequence of elements $\mathbf r=r_1,\ldots,r_n$ in $R$, consider
objects $x_i$ defined by
\begin{equation*}
\label{eq:koszul} x_i = \begin{cases} x & \text{for $i=0$,}\\
\kos{x_{i-1}}{r_i} & \text{for $i\geq 1$.}
\end{cases}
\end{equation*} Set $\kos x{\mathbf r} = x_n$; this is a \emph{Koszul object
  of $\mathbf r$ on $x$}.
A Koszul object is well defined up to (nonunique) isomorphism.
\end{definition}

\begin{lemma}\label{le:Koszul-r}
  Let $r\in R$  and set $S=\{r^i\mid i\ge 0\}$.  If
  $\cat T=\thick(\cat X)$ for some class $\cat X\subseteq \cat T$,
  then we have \[\cat T_S=\thick(\{\kos xr\mid x\in\cat X\}).\]
\end{lemma}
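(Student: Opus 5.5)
We prove the two inclusions separately. Throughout, $\cat T_S$ consists of the objects $y$ with $\End^*_\cat T(y)[r^{-1}]=0$. Equivalently, the identity of $y$ is annihilated by some power $r^n$ acting through $\phi_y$, since the unit of a ring dies in a localisation exactly when some element of $S$ kills it. Because $\cat T_S$ is thick, it suffices for ``$\supseteq$'' to show $\kos xr\in\cat T_S$ for every $x$.

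\emph{Inclusion $\supseteq$.} Set $z=\kos xr$ and apply $\Hom^*_\cat T(z,-)$ to the triangle $x\xrightarrow{r}\Si^d x\to z\to\Si x$. This gives an exact sequence of $R$-modules
\[\Hom^*(z,x)\xrightarrow{r}\Hom^*(z,\Si^dx)\lto\Hom^*(z,z)\lto\Hom^*(z,\Si x)\xrightarrow{r}\Hom^*(z,\Si^{d+1}x).\]
Here $r$ acts via $\phi$ on the target, which agrees with the action on $z$ up to sign. Thus $\End^*(z)$ is an extension of a submodule of $\Ker(r)$ by a quotient of $\operatorname{Coker}(r)$, and both of these modules are killed by $r$. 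Hence $r^2$ annihilates $\End^*(z)$, in particular $\id_z$, so $z\in\cat T_S$.

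\emph{Inclusion $\subseteq$.} Let $\cat K=\thick(\{\kos xr\mid x\in\cat X\})$. I would consider the full subcategory $\cat T'$ of objects $y$ with $\kos yr\in\cat K$ for some (equivalently any) choice of Koszul object. The plan is to show that $\cat T'$ is thick and contains $\cat X$, so that $\cat T'=\cat T$.
\begin{itemize}
\item Closure under suspensions is clear.
\item Closure under summands holds because $\kos{(y\oplus y')}r\cong\kos yr\oplus\kos{y'}r$.
\item For closure under extensions, take a triangle $y'\to y\to y''\to$. Multiplication by $r$ gives a morphism of triangles, since $r$ is central. By the octahedral axiom (or the $3\times3$ lemma) one gets a triangle $\kos{y'}r\to\kos yr\to\kos{y''}r\to$, so $\cat K$ contains $\kos yr$ when it contains the outer terms.
\end{itemize}
The $3\times 3$ step, with its sign care, is the main technical point.

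It then remains to show that $y\in\cat T_S$ implies $y\in\thick(\kos yr)$. Suppose $r^n\phi_y=0$. Iterating the octahedral axiom on the composite $y\xrightarrow{r}\Si^dy\xrightarrow{r}\cdots$ shows that $\kos y{r^n}$ lies in $\thick(\kos yr)$: the cone of a composite is an extension of the cones. Since $r^n$ acts as zero on $y$, we have $\kos y{r^n}\cong\Si^{nd}y\oplus\Si y$, so $y$ is a summand of it and hence lies in $\thick(\kos yr)$. Combining, for $y\in\cat T_S$ we get $y\in\thick(\kos yr)\subseteq\cat K$, which proves ``$\subseteq$''.
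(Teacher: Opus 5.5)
Your proposal has a genuine gap in the step showing that $\cat T'$ is closed under extensions. Take a triangle $y'\xrightarrow{u}y\xrightarrow{v}y''\xrightarrow{w}\Si y'$. The octahedral axiom and the $3\times 3$ lemma only complete a \emph{commutative square}, here $r_y\circ u=(\Si^d u)\circ r_{y'}$, to a $3\times 3$ diagram. The third vertical morphism $f''\colon y''\to\Si^d y''$ is produced by the lemma; you cannot prescribe it.

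All you know is $f''v=(\Si^d v)r_y=r_{y''}v$. So $f''-r_{y''}$ factors as $h\circ w$ for some $h\colon\Si y'\to\Si^d y''$, and nothing forces it to vanish. Cones are not functorial: a morphism of triangles need not have a triangle as its mapping cone. Hence the third cone in the diagram need not be $\kos{y''}r$, and the claimed triangle $\kos{y'}r\to\kos yr\to\kos{y''}r\to$ is not established.

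The octahedral route has the same defect. The two octahedra attached to the two factorisations of $r_y\circ u=(\Si^d u)\circ r_{y'}$ give a map $y''\to\Si^d y''$ through the common cone. This map agrees with $r_{y''}$ only after precomposition with $v$. So the difficulty is not sign bookkeeping.

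The repair is to avoid triangles of Koszul objects altogether. You only need $\kos yr\in\thick(\kos{y'}r,\kos{y''}r)$.
\begin{enumerate}
\item Since $\cat K$ is thick, $\kos yr\in\cat K$ holds if and only if $r_y$ becomes invertible in $\cat T/\cat K$.
\item The triple $(r_{y'},r_y,r_{y''})$ is a morphism from the given triangle to its suitably signed $d$-fold suspension. Centrality and the sign $r_{\Si y'}=(-1)^d\Si r_{y'}$ enter here.
\item The image of this triple in $\cat T/\cat K$ is a morphism of triangles. The five lemma for triangulated categories then shows that if two components are invertible, so is the third.
\end{enumerate}
This gives closure under extensions, and closure under suspensions and summands becomes immediate as well.

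This five-lemma mechanism is exactly what the paper's proof uses, phrased through $R$-linear cohomological functors $H$ vanishing on a thick subcategory. There, $H(\kos xr)=0$ if and only if $H(r_x)$ is invertible, and invertibility propagates through $\thick(\cat X)$. Lemma~\ref{le:central-loc} then identifies the resulting condition with $H$ killing $\cat T_S$.

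The remaining parts of your proposal are correct. The fact that $r^2$ annihilates $\End^*_{\cat T}(\kos xr)$ gives the inclusion $\supseteq$. For $y\in\cat T_S$, the isomorphism $\kos y{r^n}\cong\Si^{nd}y\oplus\Si y$ together with $\kos y{r^n}\in\thick(\kos yr)$ gives $y\in\thick(\kos yr)$. With the fix above, you obtain a correct and more hands-on proof than the paper's functor-theoretic one.
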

\begin{proof}
  We use the fact that a thick subcategory $\cat U\subseteq\cat T$ is
  determined by the class of $R$-linear cohomological functors
  $H\colon \cat T\to\Mod R$ such that $H(\cat U)=0$. For $x\in\cat T$
  let $r_x$ denote the morphism $x\to\Si^{d} x$ given by
  multiplication with $r$, where $d=|r|$. Given a cohomological functor
  $H\colon \cat T\to\Mod R$, the exact triangle defining $\kos xr$
  induces an exact sequence
  \[\cdots \lto H(x)\stackrel{r}\lto H(x)[d]\lto H(\kos xr)\lto H(x)[1]\stackrel{r}\lto\cdots\]
  and therefore $H(\kos xr)=0$ if and only if $H(r_x)$ is
  invertible. Now $H(r_x)$ is invertible for all $x\in\cat X$ if and
  only if $H(x)\iso H(x)[S^{-1}]$ for all $x\in\cat T$. From
  Lemma~\ref{le:central-loc} it follows that the latter condition is
  equivalent to $H$ factoring through the canonical functor
  $\cat T\to \cat T[S^{-1}]$, which means that $H$ annihilates
  $\cat T_S$.
\end{proof}

For a finitely generated ideal $\fa$ of $R$ set
\[\cat T_{V(\fa)}\colonequals\{x\in\cat T\mid x_\fp=0 \text{ for } \fp\in (\Spec
  R)\smallsetminus V(\fa)\}.\] Choose a sequence of homogeneous generators
$\mathbf r =r_1,\ldots,r_n$ of $\fa$ and set $\kos x{\fa}\colonequals\kos
x{\mathbf r}$. This Koszul object depends on the choice of generators but
the thick subcategory generated by it does not.

\begin{proposition}\label{pr:koszul}
  Let $\cat T=\thick(\cat X)$ for some class $\cat X\subseteq \cat T$.
  Then we have
  \[\cat T_{V(\fa)}=\thick(\{\kos x{\fa}\mid x\in\cat X\}).\]
  Moreover, $\cat T_{V(\fa)}$ is a central thick subcategory.
\end{proposition}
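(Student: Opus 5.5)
The plan is to reduce to the one-element case, Lemma~\ref{le:Koszul-r}, by induction on the number of generators. The key observation is that $\cat T_{V(\fa)}$ is an intersection of kernels of central localisations. For a homogeneous $r$ set $S_r=\{r^i\mid i\ge 0\}$. I first claim
\[\cat T_{V(\fa)}=\bigcap_{i=1}^n \cat T_{S_{r_i}},\]
where $\fa=(r_1,\dots,r_n)$.

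To prove the claim I would argue as follows. An object $x$ lies in $\cat T_{S_r}$ if and only if $\End^*_\cat T(x)[S_r^{-1}]=0$. By the lemma on vanishing of $M[S^{-1}]$ this is equivalent to $\supp_R\End^*_\cat T(x)\subseteq V(r)$. Similarly, $x_\fp=0$ means $\End^*_\cat T(x)_\fp=0$, since the identity of $x$ is zero in $\cat T_\fp$. Hence $x\in\cat T_{V(\fa)}$ if and only if
\[\supp_R\End^*_\cat T(x)\subseteq V(\fa)=\bigcap_i V(r_i),\]
and the claim follows.

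Centrality then follows immediately. Each $\cat T_{S_{r_i}}$ is central by Lemma~\ref{le:central}, and finite meets of central subcategories are central by Lemma~\ref{le:finite-sup}.

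For the generation statement I would induct on $n$, with the case $n=1$ being Lemma~\ref{le:Koszul-r}. Put $\cat U=\cat T_{V(r_1,\dots,r_{n-1})}$. By induction $\cat U=\thick(\{\kos x{(r_1,\dots,r_{n-1})}\mid x\in\cat X\})$. Now apply Lemma~\ref{le:Koszul-r} to the $R$-linear category $\cat U$, generated by these Koszul objects, and the element $r_n$. This gives
\[\cat U_{S_{r_n}}=\thick(\{\kos x{\mathbf r}\mid x\in\cat X\}).\]
It remains to identify $\cat U_{S_{r_n}}$ with $\cat U\cap\cat T_{S_{r_n}}$. This holds because membership in $\cat T_S$ only depends on $\End^*_\cat T(x)$, which is the same computed in $\cat U$ since $\cat U$ is a full subcategory. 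Combined with the claim, this yields $\cat T_{V(\fa)}=\thick(\{\kos x\fa\mid x\in\cat X\})$.

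The step I expect to be the main obstacle is the support computation in the claim. One must check that $x_\fp=0$ in $\cat T_\fp$ is equivalent to $\End^*(x)_\fp=0$, which holds because $\End^*(x)_\fp$ is a ring that vanishes precisely when its unit vanishes. One must also check that the vanishing of $M[S_r^{-1}]$ translates exactly to support in $V(r)$. Both are routine applications of the earlier lemmas.

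As a by-product, this shows that the thick subcategory generated by the Koszul objects is independent of the chosen generators, as asserted before the proposition.
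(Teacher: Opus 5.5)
Your proof is correct and follows essentially the same route as the paper. You reduce to one generator via Lemma~\ref{le:Koszul-r} using $\cat T_{V(r)}=\cat T_{S_r}$, induct via $\cat T_{V(\fa)}=(\cat T_{V(\fb)})_{V(r_n)}$ and $\kos x\fa=\kos{(\kos x\fb)}{r_n}$, and get centrality from $\cat T_{V(\fa)}=\bigcap_i\cat T_{V(r_i)}$ together with Lemma~\ref{le:central} and closure of central subcategories under finite meets; the only difference is that you spell out the support computation behind $\cat T_{V(\fa)}=\bigcap_i\cat T_{S_{r_i}}$, which the paper leaves implicit.
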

\begin{proof}
  For $r\in R$ and $S=\{r^i\mid i\ge 0\}$  Lemma~\ref{le:Koszul-r} implies
  \[\cat T_{V(r)}=\cat T_S=\thick(\{\kos xr\mid x\in\cat X\})\,.\]
  Now choose a sequence of homogeneous generators $r_1,\ldots,r_n$ of
  $\fa$, so
  \[V(\fa)=V(r_1)\cap\cdots\cap V(r_n).\] Let $\fb$ denote the ideal
  generated by $r_1,\ldots,r_{n-1}$. Then
  $\cat T_{V(\fa)}=(\cat T_{V(\fb)})_{V(r_n)}$ and an induction on $n$
  yields the assertion, since $\kos x{\fa}=\kos{(\kos x{\fb})}{r_n}$.

  Finally, we have
  \[\cat T_{V(\fa)}=\cat T_{V(r_1)}\cap\cdots\cap \cat T_{V(r_n)}\]
  and therefore $\cat T_{V(\fa)}$ is central by
  Proposition~\ref{pr:centre} and Lemma~\ref{le:central}.
\end{proof}

\subsection*{Cohomological support}

Let $R$ be a graded-commutative ring and $\cat T$ an $R$-linear
triangulated category.  The subsets $V(\fa)\subseteq\Spec R$ given by finitely generated
ideals form a distributive lattice with
\[V(\fa)\cap V(\fb)=V(\fa+\fb)\qquad\text{and}\qquad V(\fa)\cup V(\fb)=V(\fa\fb).\]

\begin{lemma}\label{le:union}
  For $U=V(\fa)$ and $V=V(\fb)$ we have
  \[\cat T_{U\cap V}=\cat T_U\wedge\cat T_V\qquad\text{and}\qquad
  \cat T_{U\cup V}=\cat T_U\vee\cat T_V.\]
\end{lemma}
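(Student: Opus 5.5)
The intersection identity is immediate from the definition. By definition $\cat T_{V(\fa)}$ consists of the $x$ with $x_\fp=0$ for all $\fp\notin V(\fa)$. Since $(\Spec R)\smallsetminus(U\cap V)$ is the union of the two complements, $x$ lies in $\cat T_{U\cap V}=\cat T_{V(\fa+\fb)}$ if and only if $x$ lies in both $\cat T_U$ and $\cat T_V$. Here $U\cap V=V(\fa+\fb)$, and $\fa+\fb$ is again finitely generated, so the left-hand side makes sense. Hence $\cat T_{U\cap V}=\cat T_U\cap\cat T_V=\cat T_U\wedge\cat T_V$.

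For the union, note first that $U\cup V=V(\fa\fb)$ and that $\fa\fb$ is finitely generated, by the products $r_is_j$ of chosen generators $r_i$ of $\fa$ and $s_j$ of $\fb$. The same complement argument gives the easy inclusion $\cat T_U\vee\cat T_V\subseteq\cat T_{U\cup V}$: the right-hand side is thick and contains both $\cat T_U$ and $\cat T_V$.

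For the reverse inclusion I would use Koszul objects, via Proposition~\ref{pr:koszul} applied with $\cat X=\cat T$. This gives $\cat T_{U\cup V}=\thick(\{\kos x{\fa\fb}\mid x\in\cat T\})$, so it suffices to show $\kos x{\fa\fb}\in\cat T_U\vee\cat T_V$ for every $x$.

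Rather than manipulate explicit Koszul objects, I would argue with the localising functors on $\Coh\cat T$. All of $\cat T_U$, $\cat T_V$ and $\cat T_{U\cup V}$ are central by Proposition~\ref{pr:koszul}, so Lemma~\ref{le:finite-sup} and the Mayer--Vietoris sequence of Proposition~\ref{pr:MV} are available. Take $y\in\cat T_{U\cup V}$ and set $\cat W=\cat T_U\vee\cat T_V$. I want to show $L_{\cat W}H_y=0$, since then $H_y\in\Coh\cat W$ and $y\in\cat W$. By \eqref{eq:commuting}, $L_{\cat W}\cong L_{\cat T_U}L_{\cat T_V}$. The second ingredient is a pointwise description, extending the formula $L_SF\cong F[S^{-1}]$ from the proof of Lemma~\ref{le:central}: $L_{\cat T_{V(r)}}F\cong F[r^{-1}]$, and an iterated Koszul/Čech argument expresses $L_{\cat T_{V(\fa)}}F$ in terms of the localisations $F[r_i^{-1}]$. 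Concretely, I expect $L_{\cat T_{V(\fa)}}F=0$ if and only if $F[r_i^{-1}]=0$ for all $i$.

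Now $y\in\cat T_{U\cup V}$ means $\End^*(y)[(r_is_j)^{-1}]=0$ for all $i,j$, and so $H_y[(r_is_j)^{-1}]=0$. Writing $G=L_{\cat T_V}H_y$, which is built from the modules $H_y[s_j^{-1}]$, each $G[r_i^{-1}]$ is built from the modules $H_y[(r_is_j)^{-1}]=0$ and therefore vanishes. Hence $L_{\cat T_U}G=0$.

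The main obstacle is making this pointwise description rigorous. Beyond $L_{\cat T_{V(r)}}F\cong F[r^{-1}]$, I need the vanishing criterion for $L_{\cat T_{V(\fa)}}$ with more than one generator. I would obtain it by induction on the number of generators, using $\cat T_{V(\fa)}=\cat T_{V(r_1)}\cap\cdots\cap\cat T_{V(r_n)}$ together with the Mayer--Vietoris sequence for $L$ of these central subcategories.
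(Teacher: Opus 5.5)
Your argument is correct in outline, but it follows a different route from the paper.

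\textbf{The paper's route.} The paper first handles principal ideals with Koszul objects. By the octahedral axiom, $\kos{x}{rs}$ is an extension of $\kos{x}{r}$ and $\kos{\Si^{|r|}x}{s}$, so Lemma~\ref{le:Koszul-r} gives $\cat T_{V(rs)}=\cat T_{V(r)}\vee\cat T_{V(s)}$. The passage to finitely generated ideals is then purely lattice-theoretic. The $\cat T_{V(r)}$ are central by Lemma~\ref{le:central}, so they satisfy the distributive law of Proposition~\ref{pr:centre}, giving $\cat T_{V(\fa)}\vee\cat T_{V(\fb)}=\bigwedge_{i,j}(\cat T_{V(r_i)}\vee\cat T_{V(s_j)})=\bigwedge_{i,j}\cat T_{V(r_is_j)}=\cat T_{V(\fa\fb)}$.

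\textbf{Your route.} You avoid both the octahedral axiom and distributivity. Instead you compute in $\Coh\cat T$ with $L_{\cat T_U\vee\cat T_V}\cong L_{\cat T_U}L_{\cat T_V}$ and $L_{\cat T_{V(r)}}F\cong F[r^{-1}]$. This reduces membership of $y$ to the vanishing of the modules $H_y[(r_is_j)^{-1}]$. It is more direct: the principal case becomes one line, since $L_{\cat T_{V(r)}}L_{\cat T_{V(s)}}$ is localisation at $rs$. The cost is that it rests on the isomorphisms \eqref{eq:commuting}, whose proof the paper leaves to the reader. The paper's route only uses distributivity, which is already proved, and it also exhibits Koszul generators.

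\textbf{Points to tighten.}
\begin{itemize}
\item The step you call the main obstacle needs no induction. For commuting $\cat U,\cat V$ and $F\in\Coh\cat T$, one has $L_{\cat U\wedge\cat V}F=0$ if and only if $L_{\cat U}F=0=L_{\cat V}F$. This holds because $L_{\cat W}F=0$ exactly when $F\in\Coh\cat W$, and $\Gamma_{\cat U\wedge\cat V}\cong\Gamma_{\cat U}\Gamma_{\cat V}$.
\item Replace the vague ``built from'' by $(L_{\cat T_V}H_y)[r_i^{-1}]\cong L_{\cat T_V}(H_y[r_i^{-1}])$. This uses that $L_{\cat T_{V(r_i)}}$ and $L_{\cat T_V}$ commute. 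The same criterion then applies, because $H_y[r_i^{-1}][s_j^{-1}]=H_y[(r_is_j)^{-1}]=0$.
\item $\Coh\cat T$ and centrality are only set up for essentially small $\cat T$, whereas the lemma concerns an arbitrary $R$-linear $\cat T$. You need the reduction the paper makes explicit. For example, replace $\cat T$ by $\cat T'=\thick(y)$ and use $\cat T'_{V(\fa)}=\cat T_{V(\fa)}\cap\cat T'$.
\item Your preliminary reduction to the Koszul objects $\kos x{\fa\fb}$ is never used, since you then argue for arbitrary $y$. It can be dropped.
\end{itemize}
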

\begin{proof}
  The first equality is clear. For the second equality choose
  homogeneous generators
  $r_1,\ldots,r_m$ of $\fa$ and  $s_1,\ldots,s_n$ of $\fb$ such that
  \[V(\fa)=V(r_1)\cap\cdots\cap V(r_m)\qquad\text{and}\qquad\text
    V(\fb)=V(s_1)\cap\cdots\cap V(s_n).\] For a homogeneous element
  $r\in R$ one has
  \[\cat T_{V(r)}=\thick(\{\kos{x}{r}\mid x\in\cat T\})\]
  by Lemma~\ref{le:Koszul-r}. For homogeneous elements $r,s\in R$ an
  application of the octahedral axiom shows that $\kos{x}{rs}$ is an
  extension of $\kos{x}{r}$ and $\kos{\Si^{|r|}x}{s}$. Thus
  \[\cat T_{V(r)\cup V(s)}=\cat T_{V(rs)}=\cat T_{V(r)}\vee\cat
    T_{V(s)}.\] We may assume that $\cat T$ is essentially small
  because for $\cat T'\subseteq\cat T$  we have $ \cat T'_{V(\fa)}=
  \cat T_{V(\fa)}\cap\cat T'$. Thus applying the distributivity discussed in
  Proposition~\ref{pr:centre} together with Lemma~\ref{le:central} we
  obtain
\[    \cat T_{V(\fa)}\vee\cat T_{V(\fb)}=\bigwedge_{i,j}( \cat T_{V(r_i)}\vee  \cat T_{V(s_j)})
=\bigwedge_{i,j} \cat T_{V(r_i s_j)}=\cat T_{V(\fa\fb)}=\cat T_{V(\fa)\cup
  V(\fb)}\]
since
\[V(\fa\fb)=\bigcap_{i,j} V(r_i s_j).\qedhere\]
\end{proof}

\begin{definition}
  A subset $V\subseteq\Spec R$ is said to be \emph{Thomason} if it can
  be written as $V=\bigcup_i V_i$ such that each
  $(\Spec R)\smallsetminus V_i$ is Zariski open and quasi-compact. When $R$
  is noetherian, Thomason subsets are precisely the specialisation
  closed subsets of $\Spec R$.  In general, the Thomason subsets are
  by definition the open subsets for the \emph{Hochster dual topology}
  on $\Spec R$; see \cite{Hochster:1969a}.
\end{definition}

\begin{definition}
  The open subsets of any space form a \emph{frame}, that is to say, a
complete lattice where arbitrary joins distribute over finite meets.
A morphism of frames is a map that preserves arbitrary joins and
finite meets.  Given a frame $F$, one calls $x\in F$ \emph{finite} or
\emph{compact} if $x\le\bigvee_{i\in I}y_i$ implies
$x\le\bigvee_{i\in J}y_i$ for some finite subset $J\subseteq I$. A
frame is \emph{coherent} when every element can be written as a join
of finite elements and the finite elements form a sublattice.
\end{definition}

\begin{example}
  The Thomason sets of $\Spec R$ form a coherent frame and the finite
  elements are precisely the sets of the form $V(\fa)$ given by a
  finitely generated ideal of $R$.
\end{example}

We consider the lattice of thick subcategories in
$\cat T$.  For a Thomason subset $V\subseteq \Spec R$ we set
  \[
  \tau_R(V)\colonequals \bigvee_{V(\fa)\subseteq V} \cat T_{V(\fa)},
\]
where $\fa$ runs through all finitely generated ideals of $R$.

\begin{proposition}\label{pr:Thomason}
 Let $\cat T$ be an essentially small triangulated category and $R$ a ring acting on
  it.   Then the assignment $V\mapsto \tau_R(V)$  induces a lattice morphism
  \begin{equation*}
  \label{eq:tau}
  \{\text{Thomason subsets of }\Spec
  R\}\lto\Thick(\cat T)\,.
\end{equation*}  
which preserves arbitrary joins. Moreover, each $\tau_R(V)$ is a
central thick subcategory.
\end{proposition}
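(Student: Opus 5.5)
The plan is to reduce everything to the finite elements $V(\fa)$ and then use that the central subcategories form a frame. First I observe that the assignment is monotone: if $V\subseteq W$ are Thomason, then every $V(\fa)\subseteq V$ also lies in $W$, so $\tau_R(V)\subseteq\tau_R(W)$. Next I check that $\tau_R(V(\fa))=\cat T_{V(\fa)}$ for a finitely generated $\fa$. If $V(\fb)\subseteq V(\fa)$, then $\cat T_{V(\fb)}\subseteq\cat T_{V(\fa)}$ directly from the definition: $x_\fp=0$ for all $\fp\notin V(\fb)$ includes all $\fp\notin V(\fa)$. Hence the join defining $\tau_R(V(\fa))$ has largest term $\cat T_{V(\fa)}$.

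For centrality, each $\cat T_{V(\fa)}$ is central by Proposition~\ref{pr:koszul}. Proposition~\ref{pr:centre} says that central subcategories are closed under arbitrary joins, so each $\tau_R(V)$ is central. Here I use that $\cat T$ is essentially small, so that $\Coh\cat T$ and the commutativity notions make sense.

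For joins, let $V=\bigcup_i V_i$ with each $V_i$ Thomason. The inclusion $\bigvee_i\tau_R(V_i)\subseteq\tau_R(V)$ follows from monotonicity. For the converse, take $V(\fa)\subseteq V$. Since $V(\fa)$ is a finite (compact) element of the coherent frame of Thomason subsets, and each $V_i$ is a union of finite elements $V(\fb)\subseteq V_i$, we get $V(\fa)\subseteq V(\fb_1)\cup\cdots\cup V(\fb_k)$ with each $V(\fb_j)\subseteq V_{i_j}$. Finite unions of the $V(\fb)$ are again of the form $V(\fb_1\cdots\fb_k)$, so Lemma~\ref{le:union} gives
\[\cat T_{V(\fa)}\subseteq\cat T_{V(\fb_1)\cup\cdots\cup V(\fb_k)}=\cat T_{V(\fb_1)}\vee\cdots\vee\cat T_{V(\fb_k)}\subseteq\bigvee_i\tau_R(V_i).\]
Taking the join over all $V(\fa)\subseteq V$ yields equality. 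This covers finite joins as well.

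The remaining step, and the main obstacle, is finite meets: $\tau_R(V\cap W)=\tau_R(V)\wedge\tau_R(W)$. The inclusion $\subseteq$ follows from monotonicity. For the reverse, write $\tau_R(V)=\bigvee_{V(\fa)\subseteq V}\cat T_{V(\fa)}$, where each term is central. Applying Proposition~\ref{pr:centre} twice (arbitrary joins of central elements distribute over meets) gives
\[\tau_R(V)\wedge\tau_R(W)=\bigvee_{V(\fa)\subseteq V}\ \bigvee_{V(\fb)\subseteq W}\bigl(\cat T_{V(\fa)}\wedge\cat T_{V(\fb)}\bigr).\]
By Lemma~\ref{le:union}, each term equals $\cat T_{V(\fa)\cap V(\fb)}=\cat T_{V(\fa+\fb)}$. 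Since $V(\fa+\fb)\subseteq V\cap W$, every term lies in $\tau_R(V\cap W)$. The point needing care is that the distributivity in Proposition~\ref{pr:centre} is stated for a join of central elements met with an arbitrary $\cat V$. I use it first with $\cat V=\tau_R(W)$, and then again with each $\cat T_{V(\fa)}$ fixed and the join over $\fb$. Both uses are legitimate because all the elements involved are central. Together these steps show that $\tau_R$ is a lattice morphism preserving arbitrary joins with central values.
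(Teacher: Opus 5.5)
Your proof is correct. Centrality and the preservation of arbitrary joins are handled as in the paper: compactness of $V(\fa)$ among Thomason subsets followed by Lemma~\ref{le:union}, and Propositions~\ref{pr:koszul} and \ref{pr:centre} for centrality.

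Where you differ is finite meets. You stay at the level of the lattice and apply the infinite distributivity of Proposition~\ref{pr:centre} twice, which reduces $\tau_R(V)\wedge\tau_R(W)$ to a join of terms $\cat T_{V(\fa)}\wedge\cat T_{V(\fb)}=\cat T_{V(\fa+\fb)}$.

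The paper argues object by object. It takes $x\in\tau_R(V_1)\wedge\tau_R(V_2)$ and notes that $x$ already lies in a single $\cat T_{V(\fa_1)}$ with $V(\fa_1)\subseteq V_1$, and in a single $\cat T_{V(\fa_2)}$ with $V(\fa_2)\subseteq V_2$. Hence $x\in\cat T_{V(\fa_1+\fa_2)}\subseteq\tau_R(V_1\cap V_2)$. This step tacitly uses that the join defining $\tau_R(V)$ is a directed union. That holds by mere monotonicity: $\cat T_{V(\fa)}$ and $\cat T_{V(\fb)}$ both lie in $\cat T_{V(\fa\fb)}$, and $V(\fa\fb)=V(\fa)\cup V(\fb)$.

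The paper's route is more elementary, since meet preservation needs no centrality at all. Yours avoids choosing objects and makes the meet statement a formal consequence of the values being joins of central elements. The price is that it leans on Proposition~\ref{pr:centre}, whose proof contains exactly this directed-union argument plus the finite distributivity of Lemma~\ref{le:finite-sup}, which rests on the Mayer--Vietoris sequences. Since Proposition~\ref{pr:centre} is needed anyway for centrality and, through Lemma~\ref{le:union}, for joins, your argument assumes nothing beyond what the overall proof already uses.
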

\begin{proof}
We use the fact that the map $V(\fa)\mapsto\cat T_{V(\fa)}$ is
a lattice morphism by Lemma~\ref{le:union}. Let $V=\bigcup_i V_i$ be a
join of Thomason sets. We need to show that
\[\tau_R\left(\bigcup_i V_i\right)\subseteq \bigvee_i \tau_R(V_i);\]
the other inclusion is automatic. Let $V(\fa)\subseteq V$. Then there is a finite set of finitely
generated ideals $\fa_i$ such that $V(\fa_i)\subseteq V_i$ and
$V(\fa)\subseteq \bigcup_i V(\fa_i)$. Thus
\[\cat T_{V(\fa)}\subseteq \tau_R\left(\bigcup_i V(\fa_i)\right) =
  \bigvee_i \tau_R(V(\fa_i))\subseteq \bigvee_i \tau_R(V_i).\]
Now choose Thomason sets $V_1,V_2$. We need to show that
\[\tau_R(V_1)\wedge\tau_R(V_2)\subseteq\tau_R(V_1\cap V_2);\]
the other inclusion is automatic. For any object $x\in \tau_R(V_1)\wedge\tau_R(V_2)$ we find finitely
generated ideals $\fa_i$ such that  $V(\fa_i)\subseteq  V_i$ and
\[x\in\cat T_{V(\fa_1)}\wedge\cat T_{V(\fa_2)}=\cat
  T_{V(\fa_1+\fa_2)}\subseteq \tau_R(V_1\cap V_2),\]
since $V(\fa_1+\fa_2)=V(\fa_1)\cap V(\fa_2)\subseteq V_1\cap V_2$.

The final assertion follows from  Propositions~\ref{pr:centre} and \ref{pr:koszul}.
\end{proof}

In view of the next definition it is useful to recall for $x\in\cat T$
and $\fp\in\Spec R$
\begin{align*}
  x_\fp\neq 0&\iff\Hom^*_\cat T(-,x)_\fp\neq 0\\
             &\iff\End^*_\cat T(x)_\fp\neq 0\\
             &\iff\fp\in V(\Ker\phi_x)
\end{align*}
where the last equivalence follows Lemma~\ref{le:supp-ring-hom}.

\begin{definition}
  For an object $x\in \cat T$ the \emph{cohomological support} is
  \[ \supp_R(x) \colonequals \{\fp\in\Spec R\mid x_\fp\neq 0 \}.\] For
  a class of objects $\cat X\subseteq\cat T$ set
\[\supp_R(\cat X) \colonequals\bigcup_{x\in\cat X}\supp_R(x).\]
We say that $\supp_R$ is \emph{finite} if for each $x\in\cat T$ there
is a finitely generated ideal $\fa$ of $R$ such that
$\supp_R(x)=V(\fa)$.
\end{definition}

The cohomological support is a specialisation closed subset of
$\Spec R$, but not necessarily a Thomason set. Also, $\supp_R$
preserves arbitrary joins in $\Thick(\cat T)$, but not necessarily
finite meets. The following lemma explains the finiteness condition.

\begin{lemma}\label{le:adjoint-supp}
  The following conditions are equivalent.
  \begin{enumerate}
  \item The cohomological support $\supp_R$ is finite.
\item The maps $\supp_R$ and $\tau_R$ provide an adjoint pair
\begin{equation*}\label{eq:supp-tau}
\begin{tikzcd}
\Thick(\cat T) \ar[rr,yshift=2.5,"\supp_R"] &&  \ar[ll,yshift=-2.5,"\tau_R"]
\{\text{Thomason subsets of }\Spec R\}.
\end{tikzcd}
\end{equation*}
\end{enumerate}
 In this case one has for each Thomason set
$V\subseteq\Spec R$
\[
\tau_R(V)=\{x\in\cat T\mid \supp_R(x)\subseteq V\}.
\]
\end{lemma}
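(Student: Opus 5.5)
The plan is to first pin down exactly which objects lie in $\cat T_{V(\fa)}$ and in $\tau_R(V)$, in terms of $\supp_R$. Since $x_\fp=0$ if and only if $\fp\notin\supp_R(x)$, the definition gives directly
\[x\in\cat T_{V(\fa)}\iff \supp_R(x)\subseteq V(\fa).\]
Next I will describe $\tau_R(V)$. The join $\bigvee_{V(\fa)\subseteq V}\cat T_{V(\fa)}$ is a directed union, because $V(\fa)\cup V(\fb)=V(\fa\fb)$ and Lemma~\ref{le:union} gives $\cat T_{V(\fa)}\vee\cat T_{V(\fb)}=\cat T_{V(\fa\fb)}$. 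A directed union of thick subcategories is already thick, so
\[\tau_R(V)=\bigcup_{V(\fa)\subseteq V}\cat T_{V(\fa)}=\{x\mid \supp_R(x)\subseteq V(\fa)\subseteq V\text{ for some finitely generated }\fa\}.\]
In particular $\tau_R(V)\subseteq\{x\mid\supp_R(x)\subseteq V\}$ holds unconditionally.

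For (1)$\Rightarrow$(2), I first note that under finiteness $\supp_R(\cat U)=\bigcup_{x\in\cat U}V(\fa_x)$ is a union of finite Thomason sets, hence Thomason, so $\supp_R$ does land in Thomason subsets. Monotonicity of both maps is clear. It remains to check $\supp_R(\cat U)\subseteq V\iff\cat U\subseteq\tau_R(V)$. If $\cat U\subseteq\tau_R(V)$, then each $x\in\cat U$ has $\supp_R(x)\subseteq V(\fa)\subseteq V$, giving $\supp_R(\cat U)\subseteq V$. Conversely, if $\supp_R(x)\subseteq V$ and $\supp_R(x)=V(\fa)$ with $\fa$ finitely generated, then $x\in\cat T_{V(\fa)}\subseteq\tau_R(V)$. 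This argument also gives the final formula $\tau_R(V)=\{x\mid\supp_R(x)\subseteq V\}$.

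For (2)$\Rightarrow$(1), I expect the main (though modest) obstacle to be extracting finiteness from an abstract adjunction. I fix $x$ and set $\cat U=\thick(x)$. Since $\supp_R$ is defined as a union and each $\supp_R(y)$ for $y\in\thick(x)$ is contained in $\supp_R(x)$ (localisation is exact), we get $\supp_R(\cat U)=\supp_R(x)$. By the adjunction this set is Thomason, so $\supp_R(x)=\bigcup_i V(\fa_i)$. The unit of the adjunction gives $x\in\tau_R(\supp_R(x))$, hence $x\in\cat T_{V(\fa)}$ for some finitely generated $\fa$ with $V(\fa)\subseteq\supp_R(x)$. This yields $\supp_R(x)\subseteq V(\fa)\subseteq\supp_R(x)$, so $\supp_R(x)=V(\fa)$ and $\supp_R$ is finite. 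The only inputs are the directed-union description of $\tau_R$ and the fact that $\supp_R$ is constant on $\thick(x)$.
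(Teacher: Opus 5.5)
Your proof is correct and follows the paper's argument. For (1)$\Rightarrow$(2) you use the same chain of equivalences. For (2)$\Rightarrow$(1), your use of the unit $x\in\tau_R(\supp_R(x))$ together with the directed-union description of $\tau_R(V)$ is exactly the concrete content of the paper's ``standard argument'': a left adjoint preserves finite elements when the right adjoint preserves joins, and the finite Thomason sets are the $V(\fa)$.

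Your explicit check that $\bigvee_{V(\fa)\subseteq V}\cat T_{V(\fa)}$ is a directed union also fills in a step the paper calls immediate from the definitions. That check needs only the monotonicity of $V(\fa)\mapsto\cat T_{V(\fa)}$ and the identity $V(\fa)\cup V(\fb)=V(\fa\fb)$, so you do not need Lemma~\ref{le:union} for it.
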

\begin{proof}
  Suppose first that $\supp_R$ is finite.  Let $V$ be Thomason and
  $x\in \cat T$. Then we have
\begin{align*}
\supp_R(x)\subseteq V&\iff \supp_R(x)\subseteq V(\fa) \text{ for some }V(\fa)\subseteq V\\
                     &\iff x\in\cat T_{V(\fa)}\text{ for some }V(\fa)\subseteq V\\
                     &\iff  x\in\tau_R(V).
\end{align*}
The first equivalence uses the assumption on $\supp_R(x)$ and the
other ones are immediate from the definitions.

Now suppose there is an adjoint pair of maps. This implies that
$\supp_R(x)$ is Thomason for each   $x\in \cat T$. A standard argument
shows that the left adjoint preserves finiteness if the right
adjoint  preserves all joins. It remains to note that finiteness in
$\Spec R$ means that there
is a finitely generated ideal $\fa$ of $R$ such that 
$\supp_R(x)=V(\fa)$.
\end{proof}

\begin{example}
  Let $R$ be noetherian ring. Then for any $R$-linear triangulated
  category the cohomological support $\supp_R$ is finite.
\end{example}

\begin{lemma}\label{le:koszul-supp}
  For a finitely generated ideal $\fa$ of $R$ and $x\in\cat T$ we have
  \[\supp_R(\kos x{\fa})=\supp_R(x)\cap V(\fa).\]
\end{lemma}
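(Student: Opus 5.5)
We reduce to the case of a single element and then induct on the number of generators. Choose homogeneous generators $\mathbf r=r_1,\ldots,r_n$ of $\fa$. Since $\kos x{\fa}=\kos{(\kos x{\fb})}{r_n}$, where $\fb$ is generated by $r_1,\ldots,r_{n-1}$, and since $V(\fa)=V(\fb)\cap V(r_n)$, it suffices to prove
\[\supp_R(\kos xr)=\supp_R(x)\cap V(r)\]
for a single homogeneous element $r$ of degree $d$. The general case then follows by induction on $n$.

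Fix $\fp\in\Spec R$ and apply the exact functor $\cat T\to\cat T_\fp$ to the triangle
\[x\xrightarrow{\ r\ }\Si^d x\lto\kos xr\lto\Si x.\]
This gives an exact triangle
\[x_\fp\xrightarrow{\ r\ }\Si^d x_\fp\lto(\kos xr)_\fp\lto\Si x_\fp\]
in $\cat T_\fp$. We consider two cases.

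If $\fp\notin V(r)$, then $r\notin\fp$ is invertible in $R_\fp$. Hence multiplication by $r$ on $x_\fp$ is an isomorphism in $\cat T_\fp$, because its action on $\End^*_{\cat T_\fp}(x_\fp)=\End^*_{\cat T}(x)_\fp$ is by a unit. So $(\kos xr)_\fp=0$. If $x_\fp=0$, then clearly $(\kos xr)_\fp=0$ as well. This proves the inclusion $\supp_R(\kos xr)\subseteq\supp_R(x)\cap V(r)$.

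For the reverse inclusion, suppose $r\in\fp$ and $x_\fp\neq0$. We must show $(\kos xr)_\fp\neq 0$. Suppose instead that it vanishes. Then $r\colon x_\fp\to\Si^dx_\fp$ is an isomorphism in $\cat T_\fp$. Consequently multiplication by $r$ on the $R_\fp$-module $E\colonequals\End^*_\cat T(x)_\fp$ is bijective, since it is given by composing with an isomorphism. The module $E$ is cyclic, generated by the identity, and is therefore finitely generated over $R_\fp$. Its annihilator is $\Ker(\phi_x)_\fp$. Surjectivity of $r$ gives $\mathrm{id}=r\cdot e$ for some $e\in E$, which makes $r$ a unit in the ring $E$. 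Now $E_\fp\neq0$ means $\fp R_\fp\supseteq \ann(E)$, so $E/\fp E\neq 0$ by Nakayama's lemma, applied in its graded form to the cyclic module $E$. But $r\in\fp$ acts invertibly on $E/\fp E$ while also acting by zero on it, forcing $E/\fp E=0$. This is a contradiction. Equivalently, $E\cong R_\fp/\Ker(\phi_x)_\fp$ is a nonzero cyclic module over the local ring $R_\fp$, and it is impossible for an element of the maximal ideal to act invertibly on such a module.

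The main obstacle is exactly this last step: checking that an element of $\fp$ cannot act invertibly on the nonzero cyclic $R_\fp$-module $\End^*_\cat T(x)_\fp$. This requires care with the graded setting, namely that homogeneous localisation still yields a graded-local ring with unique homogeneous maximal ideal $\fp R_\fp$, so that graded Nakayama applies. The remaining steps are formal consequences of exactness of localisation and of the Koszul triangle.
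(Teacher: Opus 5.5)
Your reduction to a single element $r$ and your proof of $\subseteq$ are fine. You show directly that $r$ becomes invertible on $x_\fp$ when $r\notin\fp$, whereas the paper cites Proposition~\ref{pr:koszul}. For $\supseteq$ you use the same idea as the paper: if $(\kos xr)_\fp=0$ then $r$ acts invertibly on $E=\End^*_{\cat T}(x)_\fp$.

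The gap is in the justification that this is impossible. Your claim that $E$ is a cyclic $R_\fp$-module generated by the identity is false. The element $\id_x$ only generates the submodule $(R/\Ker\phi_x)_\fp\subseteq E$, and $E$ need not even be finitely generated over $R_\fp$. Without finite generation Nakayama's lemma does not apply, and $E/\fp E$ can vanish although $E\neq 0$. In fact no argument can close this step in the stated generality. Take $\cat T=\bfD^{\mathrm b}(\mod\mathbb{Q})$ with its canonical $\bbZ$-action and $x=\mathbb{Q}$. Then $\End^*_{\cat T}(x)=\mathbb{Q}$, so $\supp_{\bbZ}(x)=V(0)=\Spec\bbZ$. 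But multiplication by a prime $p$ is invertible on $x$, so $\kos xp=0$ and $\supp_{\bbZ}(\kos xp)=\varnothing\neq\{(p)\}=\supp_{\bbZ}(x)\cap V(p)$.

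The paper's one-line proof asserts the same impossibility (``which is impossible since $r\in\fp$'') without argument, so this gap is shared. The lemma needs a finiteness hypothesis, for instance that $R$ is noetherian and $\End^*_{\cat T}(x)$ is a finitely generated $R$-module. This holds in the applications to modular representations. It also passes to every object of $\thick(x)$, in particular to the intermediate Koszul objects in your induction, by a d\'evisage using noetherianity. Under this hypothesis your argument works once ``cyclic'' is replaced by ``finitely generated''. If $r\in\fp$ acts surjectively on the finitely generated graded $R_\fp$-module $E$, then $E=rE\subseteq\fp E$. Graded Nakayama over the graded-local ring $R_\fp$ then gives $E=0$, contradicting $x_\fp\neq 0$.
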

\begin{proof}
The inclusion $\subseteq$ follows from
Proposition~\ref{pr:koszul}. For the other inclusion we use an
induction on the number of generators of $\fa$, as in the proof of
Proposition~\ref{pr:koszul}. Thus we claim for $r\in R$ that
$\supp_R(\kos xr) \supseteq\supp_R(x)\cap V(r)$. Choose $\fp$ in
$\supp_R(x)\cap V(r)$. If $(\kos xr)_\fp=0$, then the canonical map
$R\to\End^*(x)_\fp$ sends $r$ to a unit, which is impossible since $r\in\fp$. 
\end{proof}

\begin{proposition}
  \label{pr:hopkins}
  Let $\cat T$ be an essentially small triangulated category and $R$ a ring acting on
  it.  Then cohomological support induces a lattice isomorphism
  \begin{equation*}
     \Thick(\cat T)\longiso\{\text{Thomason subsets of }\Spec
     R\}
\end{equation*}  
if and only if the following holds:
  \begin{enumerate}
  \item  $\supp_R(\cat T)=\Spec R$;  
  \item $\supp_R$ is finite;
  \item $\supp_R(x)\subseteq\supp_R(y)$ implies
    $\thick(x)\subseteq\thick(y)$ for all $x,y\in\cat T$.
\end{enumerate}
\end{proposition}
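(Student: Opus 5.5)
The plan is to prove both implications separately. The map in question sends a thick subcategory $\cat U$ to $\supp_R(\cat U)$. Its candidate inverse is $\tau_R$, and Lemma~\ref{le:adjoint-supp} will be the main tool.

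\emph{Necessity.} Suppose $\supp_R$ induces a lattice isomorphism onto the Thomason subsets.

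First, the top element $\cat T$ must go to the top element $\Spec R$, which gives (1).

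Second, an isomorphism of complete lattices is an adjoint pair, since its inverse is both a left and a right adjoint. However, that inverse need not a priori be $\tau_R$. So I would argue finiteness directly. An isomorphism preserves finite (compact) elements. The finite thick subcategories are the $\thick(x)$, and $\supp_R(\thick(x))=\supp_R(x)$, because support is constant on thick closures (supports of extensions, shifts and summands are contained in unions). The finite Thomason sets are the $V(\fa)$ with $\fa$ finitely generated. Hence $\supp_R(x)=V(\fa)$, which gives (2).

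Third, (3) follows from injectivity and order reflection. If $\supp_R(x)\subseteq\supp_R(y)$, then $\supp_R(\thick(x)\vee\thick(y))=\supp_R(\thick(y))$, so $\thick(x)\vee\thick(y)=\thick(y)$.

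\emph{Sufficiency.} Assume (1)--(3). By (2) and Lemma~\ref{le:adjoint-supp}, $\supp_R$ is left adjoint to $\tau_R$, and
\[\tau_R(V)=\{x\in\cat T\mid \supp_R(x)\subseteq V\}.\]
It suffices to show that the unit and counit are identities. Then the two maps are mutually inverse, order-preserving bijections, hence lattice isomorphisms.

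The unit says $\cat U=\tau_R(\supp_R(\cat U))$. Let $x$ satisfy $\supp_R(x)\subseteq\supp_R(\cat U)=\bigcup_{u\in\cat U}\supp_R(u)$. By (2), $\supp_R(x)=V(\fa)$ is a compact Thomason set, so it is covered by finitely many $\supp_R(u_i)$. Then
\[\supp_R(x)\subseteq\supp_R(u_1\oplus\cdots\oplus u_n),\]
and (3) gives $x\in\thick(u_1\oplus\cdots\oplus u_n)\subseteq\cat U$.

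The counit says $\supp_R(\tau_R(V))=V$. The inclusion $\subseteq$ is clear. For $\supseteq$, I would show that every $V(\fa)$ with $\fa$ finitely generated is realised as a support. By (1) each $\fp\in V(\fa)$ lies in $\supp_R(x)$ for some $x$. By Lemma~\ref{le:koszul-supp},
\[\supp_R(\kos x\fa)=\supp_R(x)\cap V(\fa)\ni\fp,\]
and this set is contained in $V(\fa)$. So $\fp\in\supp_R(\tau_R(V(\fa)))\subseteq\supp_R(\tau_R(V))$. Since $V$ is a union of such $V(\fa)$, the counit is the identity.

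I expect the main obstacle to be the necessity of (2). The argument needs that a lattice isomorphism preserves compactness, which holds because compactness is defined purely order-theoretically. It also needs that supports of thick closures coincide with supports of the generator. Both points are routine but must be stated carefully. The remaining steps are formal.
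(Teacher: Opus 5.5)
Your proof follows the paper's proof closely, and the necessity half and the unit half are sound. The counit half, however, rests on a lemma that is false as stated, and the paper's proof rests on the same lemma.

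\textbf{Comparison with the paper.} Necessity, which the paper calls clear, is correctly derived from preservation of the top element, of finite elements, and injectivity. For sufficiency both arguments go through Lemma~\ref{le:adjoint-supp} and check the two composites; the differences are cosmetic.
\begin{itemize}
\item For $\tau_R\circ\supp_R=\id$ you apply compactness of $V(\fa)$ directly. The paper instead verifies it on $\thick(x)$ and extends it because $\tau_R$ preserves joins (Proposition~\ref{pr:Thomason}, proved by the same compactness).
\item For $\supp_R\circ\tau_R=\id$ you pick a witness $x$ for each prime. The paper uses quasi-compactness of $(\Spec R)^\vee$ to get one $x_0$ with $\supp_R(x_0)=\Spec R$, so that $\supp_R(\kos{x_0}{\fa})=V(\fa)$ exactly.
\end{itemize}

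\textbf{The gap.} The step you call formal is the appeal to Lemma~\ref{le:koszul-supp}, and that lemma is false in this generality. Its proof asserts that $r\in\fp$ cannot become a unit in $\End^*_{\cat T}(x)_\fp$. That needs a Nakayama-type finiteness hypothesis, which is not assumed.

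Here is a counterexample. Take $A=k[t,t^{-1}]$ in degree $0$, $\cat T=\bfD^\perf(A)$, $R=k[t]$ acting through $R\subseteq A$, and $x=A$.
\begin{itemize}
\item Then $\End^*_{\cat T}(x)=A$ and $A\otimes_R R_{(t)}=k(t)\neq 0$, so $(t)\in\supp_R(x)\cap V(t)$.
\item But $\kos xt=0$, because $t$ acts invertibly on $x$. So Lemma~\ref{le:koszul-supp} fails.
\end{itemize}
This $\cat T$ even satisfies (1)--(3).
\begin{itemize}
\item (1) and (2) hold because $\supp_R(A)=\Spec R$ and $R$ is noetherian.
\item For (3): if $\fp\neq(t)$ then $\fp\in\supp_R(y)$ iff $\fp A\in\supp_A(y)$, and $(t)\in\supp_R(y)$ iff $(0)\in\supp_A(y)$. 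So (3) reduces to Hopkins--Neeman--Thomason over $A$.
\end{itemize}
Yet any object whose support contains $(t)$ also contains $(0)$. Hence the Thomason set $V(t)=\{(t)\}$ is not of the form $\supp_R(\cat U)$, and the sufficiency direction fails as stated.

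\textbf{The repair.} What is missing is a finiteness hypothesis, for instance that every $\End^*_{\cat T}(x)$ is finitely generated over $R$. This holds for $\bfD^\fin(kG)$ over $H^*(G,k)$ by Evens--Venkov, which covers the paper's applications. Under it, $(\kos xr)_\fp=0$ would make $r\in\fp$ act surjectively on the nonzero finitely generated graded $R_\fp$-module $\End^*_{\cat T}(x)_\fp$. That contradicts graded Nakayama. So Lemma~\ref{le:koszul-supp} holds, and your counit argument then goes through unchanged.
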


\begin{proof}
  The forward direction is clear. For the other direction we claim
  \[\text{(1)--(2)}\implies {\supp_R}\circ \tau_R=\id\qquad\text{and}\qquad
    \text{(2)--(3)}\implies \tau_R\circ{\supp_R}=\id.  \]

  First suppose that (1)--(2) hold. Then
\[\Spec R=\supp_R(\cat T)=\bigcup_{x\in\cat T}\supp_R(x)\] implies 
that there  is an object
  $x_0\in\cat T$ with $\supp_R(x_0)=\Spec R$, since $\Spec R$ (with
  the Hochster dual topology) is quasi-compact. This implies
  $\supp_R(\kos{x_0}{\fa})=V(\fa)$ for any ideal $\fa$ of $R$ by
  Lemma~\ref{le:koszul-supp}. Thus $\supp_R \tau_R=\id$.
Now suppose that (2)--(3) hold. This implies for $x\in\cat T$ that
  $\tau_R\supp_R(\thick(x))=\thick(x)$, and therefore
  $\tau_R\supp_R=\id$.  Thus (1)--(3) imply that $\supp_R$ induces a lattice isomorphism.
\end{proof}

\begin{corollary}\label{co:hopkins}
\pushQED{\qed}  If the conditions \emph{(1)--(2)} in
Proposition~\ref{pr:hopkins} hold, then \[{\supp_R}\circ\tau_R=\id.\]
On the other hand, conditions \emph{(2)--(3)} imply
\[\tau_R\circ{\supp_R}=\id.\qedhere\]
\end{corollary}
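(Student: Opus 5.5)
The corollary simply isolates the two implications already established in the proof of Proposition~\ref{pr:hopkins}, so my plan is to reorganise that argument into two independent halves. Throughout, condition (2) lets me invoke Lemma~\ref{le:adjoint-supp}. This gives the adjunction $\supp_R\dashv\tau_R$ and the description $\tau_R(V)=\{x\in\cat T\mid\supp_R(x)\subseteq V\}$ for every Thomason set $V$. Since $\supp_R$ preserves joins, it also shows that $\supp_R(\cat U)$ is Thomason for every $\cat U\in\Thick(\cat T)$.

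For ${\supp_R}\circ\tau_R=\id$ under (1)--(2), I first note that the adjunction gives $\supp_R\tau_R(V)\subseteq V$, so only the reverse inclusion needs work. By (1) and (2), $\Spec R=\bigcup_x\supp_R(x)$ is a union of sets of the form $V(\fa)$. These are the compact opens in the Hochster dual topology, and $\Spec R=V(0)$ is itself compact. Hence finitely many objects $x_1,\dots,x_n$ suffice, and $x_0=\bigoplus x_i$ satisfies $\supp_R(x_0)=\Spec R$. Next I write a Thomason set $V$ as a union of sets $V(\fa)$ with $\fa$ finitely generated. For each such $\fa$, Lemma~\ref{le:koszul-supp} gives $\supp_R(\kos{x_0}{\fa})=V(\fa)$. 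The Koszul object lies in $\cat T_{V(\fa)}\subseteq\tau_R(V)$ by Proposition~\ref{pr:koszul}. Therefore $V(\fa)\subseteq\supp_R\tau_R(V)$, and taking the union over all such $\fa$ gives $V\subseteq\supp_R\tau_R(V)$.

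For $\tau_R\circ{\supp_R}=\id$ under (2)--(3), the adjunction gives $\cat U\subseteq\tau_R\supp_R(\cat U)$, so again only the reverse inclusion needs work. Every thick subcategory is the join of the subcategories $\thick(x)$ for $x\in\cat U$. I would first treat the case $\cat U=\thick(x)$. If $y\in\tau_R\supp_R(\thick(x))$, then $\supp_R(y)\subseteq\supp_R(\thick(x))=\supp_R(x)$, using that support is constant on thick closures. Condition (3) then gives $y\in\thick(x)$.

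The main obstacle is passing from this case to general $\cat U$, since I need $\tau_R$ to commute with the relevant join. Here (2) is essential: $\supp_R(y)=V(\fb)$ is compact in the Hochster dual topology. So if $\supp_R(y)\subseteq\supp_R(\cat U)=\bigcup_{x\in\cat U}\supp_R(x)$, finitely many $x_1,\dots,x_n$ already cover it. Their sum $x=\bigoplus x_i$ lies in $\cat U$ and satisfies $\supp_R(y)\subseteq\supp_R(x)$, so (3) gives $y\in\thick(x)\subseteq\cat U$.
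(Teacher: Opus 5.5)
Your proposal is correct and follows the paper's argument from the proof of Proposition~\ref{pr:hopkins}: you obtain an object $x_0$ of full support from quasi-compactness of the Hochster dual topology, realise each $V(\fa)$ by the Koszul objects $\kos{x_0}{\fa}$, and apply condition (3) to $\thick(x)$. The one difference is that you make the passage from $\thick(x)$ to an arbitrary $\cat U$ explicit, using compactness of $\supp_R(y)=V(\fb)$; the paper leaves this step implicit, where it follows from $\supp_R$ and $\tau_R$ both preserving joins (Proposition~\ref{pr:Thomason}), whose proof rests on the same compactness.
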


\subsection*{Tensor triangulated support}

Let $\cat T= (\cat T,\otimes,\one)$ be a \emph{tensor triangulated
  category}. Thus the triangulated category $\cat T$ admits a closed
symmetric monoidal structure
$\otimes\colon\cat T\times\cat T\to\cat T$. In particular, there are
natural isomorphisms $x\otimes y\cong y\otimes x$, which we may treat
as identities, and a unit $\one$ for the tensor product, so
$\one\otimes x\cong x$.  Moreover, there exists an adjoint
$\fHom\colon \cat T^\op\times\cat T\to \cat T$ such that
\begin{equation}\label{eq:function-obj}
  \Hom_\cat T(x\otimes y,z)\cong\Hom_\cat T(x,\fHom(y,z)).
\end{equation}
The functors $\otimes$ and $\fHom$ are assumed to be exact in each
variable.

An object $x$ is \emph{dualisable} if for each
$y\in\cat T $ the natural map
\[
\fHom(x,\one)\otimes y\longrightarrow \fHom(x,y)
\]
is an isomorphism. We assume that the category $\cat T$ is
\emph{rigid}, which means that every object is dualisable. Finally,
assume that $\cat T$ is essentially small and let
$\Thick_\otimes(\cat T)$ denote the lattice of thick tensor ideals of
$\cat T$.

\begin{proposition}\label{pr:tensor-central}
  Let $\cat T=(\cat T,\otimes,\one) $ be an essentially small rigid tensor triangulated
  category.  Then any pair of thick tensor ideals is commuting and
  $\Thick_\otimes(\cat T)$ is a frame.
\end{proposition}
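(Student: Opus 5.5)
Using the elementary description of commutativity from Lemma~\ref{le:U-V-central}, it suffices to show the following for thick tensor ideals $\cat U,\cat V$: every morphism $\alpha\colon u\to v$ with $u\in\cat U$ and $v\in\cat V$ factors through an object of $\cat U\cap\cat V$. The symmetric case $v\to u$ is the same argument with the roles of $\cat U$ and $\cat V$ exchanged. The frame property then follows from Proposition~\ref{pr:centre} and the remark after it. That remark says the proposition applies to the sublattice $T=\Thick_\otimes(\cat T)$ when every element of $T$ commutes with every other element of $T$. For this we also need $\Thick_\otimes(\cat T)$ to be closed under arbitrary joins and finite meets in $\Thick(\cat T)$. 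Meets are intersections, so they are clearly tensor ideals. For joins, $\thick(\bigcup_i\cat U_i)$ is a tensor ideal because for each $w$ the class $\{x\mid x\otimes w\in\thick(\bigcup\cat U_i)\}$ is thick and contains each $\cat U_i$.

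The key step is the factorisation, and for it I would use rigidity. Write $u^\vee=\fHom(u,\one)$. Dualisability gives the coevaluation $\eta\colon\one\to u\otimes u^\vee$ and the evaluation $\varepsilon\colon u^\vee\otimes u\to\one$, satisfying the triangle identities. In particular the identity of $u$ factors as
\[u\cong\one\otimes u\xrightarrow{\eta\otimes u}u\otimes u^\vee\otimes u\xrightarrow{u\otimes\varepsilon}u.\]
Now consider $\alpha\colon u\to v$. I will write $\alpha=\alpha\circ\id_u$ and then move $\alpha$ past the second map using naturality of the tensor product. This gives
\[\alpha\colon u\xrightarrow{\eta\otimes u}u\otimes u^\vee\otimes u\xrightarrow{\id\otimes\id\otimes\alpha}u\otimes u^\vee\otimes v\xrightarrow{u\otimes\varepsilon'}v,\]
where the last map has to be built from the evaluation. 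Rather than juggling evaluations on $v$, it is cleaner to factor as follows. First compose
\[u\xrightarrow{\eta\otimes u}u\otimes(u^\vee\otimes u)\xrightarrow{u\otimes u^\vee\otimes\alpha}u\otimes u^\vee\otimes v.\]
Then, using $\alpha$ again, compose
\[u\otimes u^\vee\otimes u\xrightarrow{u\otimes\varepsilon}u\xrightarrow{\alpha}v.\]
By naturality of $\varepsilon$ with respect to $\alpha$, one needs care here, so I would instead tensor the whole identity factorisation with $\alpha$. That is, use the commutative square expressing $\alpha=(\alpha)\circ(u\otimes\varepsilon)\circ(\eta\otimes u)$, in which $(u\otimes\varepsilon)\circ(\eta\otimes u)$ is replaced by $(\varepsilon'\otimes v)\circ(u\otimes u^\vee\otimes\alpha)\circ(\eta\otimes u)$. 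Here $\varepsilon'$ is the composite $u\otimes u^\vee\to\one$ obtained from the evaluation and the symmetry. This is exactly the standard identity $\alpha=(\varepsilon\otimes v)\circ(u\otimes u^\vee\otimes\alpha)\circ(\eta\otimes u)$ up to symmetry isomorphisms. It follows from the triangle identity together with bifunctoriality of $\otimes$, since $\alpha$ acts on a tensor factor disjoint from those on which $\eta$ and $\varepsilon$ act.

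The middle object is $w=u\otimes u^\vee\otimes v$. It lies in $\cat U$ because $u\in\cat U$ and $\cat U$ is a tensor ideal. It lies in $\cat V$ because $v\in\cat V$ and $\cat V$ is a tensor ideal. So $w\in\cat U\cap\cat V$, and $\alpha$ factors through it, as required.

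The main obstacle is the bookkeeping of the diagram identity in the symmetric monoidal setting: one must check that the chosen composite really equals $\alpha$, including the symmetry and sign conventions. Everything else follows formally from Lemma~\ref{le:U-V-central} and Proposition~\ref{pr:centre}. To settle the identity, I would verify it in the string-diagram or graphical calculus for dual pairs, where it reduces to a single zig-zag identity.
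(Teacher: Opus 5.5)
Your overall plan matches the paper's. You factor $\alpha\colon u\to v$ through a triple tensor product with both $u$ and $v$ as factors, so that it lies in $\cat U\cap\cat V$, and you get the frame property as in Proposition~\ref{pr:centre}; your closure check for joins and meets is fine. However, the explicit factorisation you settle on is false, and it is the step that carries the proof.

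You claim $\alpha=(\varepsilon'\otimes v)\circ(u\otimes u^\vee\otimes\alpha)\circ(\eta\otimes u)$, where $\varepsilon'\colon u\otimes u^\vee\to\one$ is the evaluation precomposed with the symmetry. Precisely because $\alpha$ acts on a factor disjoint from those of $\eta$ and $\varepsilon'$, bifunctoriality gives
\[(\varepsilon'\otimes v)\circ(u\otimes u^\vee\otimes\alpha)\circ(\eta\otimes u)=(\varepsilon'\circ\eta)\otimes\alpha .\]
So the composite is $\alpha$ multiplied by the scalar $\varepsilon'\circ\eta\in\End_{\cat T}(\one)$. This scalar is the trace of $\id_u$ (the Euler characteristic of $u$), which need not be $1$. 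For instance, take $\bfD^\perf(k)$ for a field $k$ of characteristic $2$, with $u=v=\one\oplus\one$ and $\alpha=\id_u$: your composite is $0$.

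The issue is not symmetry bookkeeping but which copy of $u$ receives $\alpha$. If it is the original copy, the two strands created by $\eta$ can only be capped off against each other. This produces a loop, and the triangle identity is never used. The same problem is already visible in your first attempt: naturality lets you move $\alpha$ past $u\otimes\varepsilon$ only onto the first tensor factor, because $\varepsilon$ consumes the third.

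The repair, which is the paper's argument, is to apply $\alpha$ to the copy of $u$ created by the coevaluation:
\[u\cong\one\otimes u\xrightarrow{\eta\otimes u}u\otimes u^\vee\otimes u\xrightarrow{\alpha\otimes u^\vee\otimes u}v\otimes u^\vee\otimes u\xrightarrow{v\otimes\varepsilon}v\otimes\one\cong v .\]
Bifunctoriality gives $(v\otimes\varepsilon)\circ(\alpha\otimes u^\vee\otimes u)=\alpha\circ(u\otimes\varepsilon)$. The triangle identity $(u\otimes\varepsilon)\circ(\eta\otimes u)=\id_u$ then shows that the composite equals $\alpha$. The middle object $v\otimes u^\vee\otimes u$ lies in $\cat U\cap\cat V$ because both are tensor ideals. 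With this replacement, and the symmetric argument for morphisms $v\to u$, the rest of your proof goes through.
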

\begin{proof}
  Fix a pair of thick tensor ideal $\cat U,\cat V$.  For $x\in\cat T$
  we set $Dx\colonequals\fHom(x,\one)$.  Given $u\in\cat U$ and $v\in\cat V$, any
  morphism $\phi\colon u\to v$ admits a factorisation
  \[u\iso \one\otimes u\xrightarrow{\text{coev}\otimes u} u\otimes
    Du\otimes u\xrightarrow{\phi\otimes Du\otimes u} v\otimes
    Du\otimes u\xrightarrow{v\otimes\text{ev}} v\otimes\one\iso v\]
  with $v\otimes Du\otimes u$ in $\cat U\wedge\cat V$.  By symmetry
  the same holds for a morphism $v\to u$. Thus the pair
  $\cat U,\cat V$ is commuting.  The (infinite) distributivity now
  follows, as in the proof of Proposition~\ref{pr:centre}.
\end{proof}

\begin{remark}
(1)  For Proposition~\ref{pr:tensor-central} the assumption on the tensor
  triangulated category to be rigid is needed. On the other hand, the
  commutativity of the tensor product is not needed.

(2)   There are examples of  thick subcategories $\cat
  U\subseteq \cat T$ such that $\cat U$ is tensor ideal but not
  central, and conversely that $\cat U$ is central but not tensor ideal.
\end{remark}

\begin{definition}
  A thick tensor ideal $\cat P\subsetneq\cat T$ is \emph{prime} if
  $x\otimes y\in\cat P$ implies $x\in\cat P$ or $y\in\cat P$ for each
  pair $x,y\in\cat T$.  The \emph{Balmer spectrum} of $\cat T$ is the
  set $\Spc\cat T$ of prime thick tensor ideals, endowed with the
  \emph{Zariski topology}. The basic open sets are of the form
  $\{\cat P\in \Spc\cat T\mid x\in\cat P\}$ for some $x\in\cat T$.
\end{definition}

\begin{definition}\label{de:tensor-supp}
  For an object $x\in\cat T$ its \emph{tensor triangulated support} is
\begin{equation*}
\supp_\otimes(x) \colonequals\{\cat P\in \Spc\cat T\mid x\not\in\cat P\}\,.
\end{equation*}
\end{definition}

The support provides a classification of all thick tensor ideals; see
\cite[Theorem~4.10]{Balmer:2005a}.

\begin{theorem}\pushQED{\qed}\label{th:Balmer}
  Let $\cat T$ be an essentially small rigid tensor triangulated
  category.  Then there are mutually inverse isomorphisms of frames
\begin{equation}\label{eq:Balmer-supp}
\begin{tikzcd}
\Thick_\otimes(\cat T) \ar[rr,yshift=2.5,"\supp_\otimes"] &&  \ar[ll,yshift=-2.5,"\tau_\otimes"]
\{\text{Thomason subsets of }\Spc\cat T\}
\end{tikzcd}
\end{equation}
where for each pair $\cat U\subseteq\cat T$ and $V\subseteq\Spc\cat T$ one has
\[
\supp_\otimes(\cat U)\colonequals\bigcup_{x\in\cat U}\supp_\otimes(x)\quad\text{and}\quad
  \tau_\otimes(V)\colonequals\{x\in\cat T\mid\supp_\otimes(x)\subseteq V\}\,.\qedhere
\]
\end{theorem}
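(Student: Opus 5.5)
The plan is to mimic Proposition~\ref{pr:hopkins}: first show that $\supp_\otimes$ and $\tau_\otimes$ form an adjoint pair of monotone maps, and then check that the two composites are identities.

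\emph{Basic properties of $\supp_\otimes$.} I first record the standard facts, all immediate from the definition of a prime ideal:
\begin{itemize}
\item $\supp_\otimes(0)=\varnothing$ and $\supp_\otimes(\one)=\Spc\cat T$;
\item $\supp_\otimes(\Si x)=\supp_\otimes(x)$;
\item $\supp_\otimes(x\oplus y)=\supp_\otimes(x)\cup\supp_\otimes(y)$;
\item $\supp_\otimes(y)\subseteq\supp_\otimes(x)\cup\supp_\otimes(z)$ for each exact triangle $x\to y\to z\to\Si x$;
\item $\supp_\otimes(x\otimes y)=\supp_\otimes(x)\cap\supp_\otimes(y)$, which uses primeness.
\end{itemize}
By definition the complement of $\supp_\otimes(x)$ is a basic open set, so $\supp_\otimes(x)$ is closed. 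It is also quasi-compact open in the Hochster dual topology, hence Thomason. Together these give three consequences:
\begin{itemize}
\item $\tau_\otimes(V)$ is a thick tensor ideal for every subset $V$;
\item $\supp_\otimes(\cat U)$ is Thomason for every $\cat U$;
\item $\supp_\otimes(\cat U)\subseteq V$ if and only if $\cat U\subseteq\tau_\otimes(V)$.
\end{itemize}
The last is the adjunction, and it gives $\cat U\subseteq\tau_\otimes\supp_\otimes(\cat U)$ and $\supp_\otimes\tau_\otimes(V)\subseteq V$.

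\emph{The composite $\supp_\otimes\tau_\otimes$.} Write a Thomason set as $V=\bigcup_i Z_i$ with each $Z_i$ closed and with quasi-compact complement. It suffices to show that each $Z_i$ equals $\supp_\otimes(x)$ for some object $x$. The complement of $Z_i$ is quasi-compact, hence a finite union of basic opens $\Spc\cat T\smallsetminus\supp_\otimes(x_j)$. So $Z_i=\bigcap_j\supp_\otimes(x_j)=\supp_\otimes(x_1\otimes\cdots\otimes x_n)$.

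\emph{The composite $\tau_\otimes\supp_\otimes$.} This is the main obstacle: I must show that $\supp_\otimes(x)\subseteq\supp_\otimes(\cat U)$ forces $x\in\cat U$.

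The key step is the radical statement. Assume $\cat U$ is radical, meaning $x^{\otimes n}\in\cat U$ implies $x\in\cat U$. I would show that $\cat U$ is then the intersection of the primes containing it. The proof goes by contradiction. Suppose $x\notin\cat U$. Zorn's lemma gives a thick tensor ideal $\cat P\supseteq\cat U$ that is maximal among those avoiding the multiplicative set $\{x^{\otimes n}\}$. The usual argument shows $\cat P$ is prime: if $a,b\notin\cat P$, then $x^{\otimes m}$ lies in the ideal generated by $\cat P$ and $a$, and $x^{\otimes n}$ lies in the one generated by $\cat P$ and $b$. Then $x^{\otimes(m+n)}$ lies in the ideal generated by $\cat P$ and $a\otimes b$, because the ideal generated by $\cat P$ and $a$, tensored with the one generated by $\cat P$ and $b$, lands in the one generated by $\cat P$ and $a\otimes b$.

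Rigidity enters in showing that every thick tensor ideal is radical. Here $x$ is a direct summand of $x\otimes Dx\otimes x$, since the composite of coevaluation and evaluation is the identity. Hence $x\in\thick_\otimes(x\otimes x)$, and inductively $x\in\thick_\otimes(x^{\otimes n})$. This is the same factorisation used in Proposition~\ref{pr:tensor-central}.

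To finish, suppose $\supp_\otimes(x)\subseteq\supp_\otimes(\cat U)$ but $x\notin\cat U$. The previous step gives a prime $\cat P\supseteq\cat U$ with $x\notin\cat P$. Then $\cat P\in\supp_\otimes(x)$, so $\cat P\in\supp_\otimes(u)$ for some $u\in\cat U\subseteq\cat P$. But that says $u\notin\cat P$, a contradiction.

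\emph{Frame isomorphism.} Both maps are mutually inverse and order preserving, so they are lattice isomorphisms and hence frame isomorphisms. Proposition~\ref{pr:tensor-central} confirms independently that the source is a frame.
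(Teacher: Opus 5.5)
Apart from one step, your argument is essentially Balmer's original proof; the paper gives no proof of its own and simply cites Balmer's Theorem~4.10. The following parts are all correct:
\begin{itemize}
\item the formal adjunction;
\item the identity $\supp_\otimes\circ\tau_\otimes=\id$;
\item the Zorn-type prime existence argument;
\item the use of rigidity (via the triangle identity) to make every thick tensor ideal radical.
\end{itemize}

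\textbf{The gap.} You assert without proof that $\supp_\otimes(x)$ is Thomason. You call it ``quasi-compact open in the Hochster dual topology''. That phrase presupposes $\Spc\cat T$ is spectral, which you have neither shown nor need. More importantly, it amounts to claiming that the basic open $U(x)=\Spc\cat T\smallsetminus\supp_\otimes(x)$ is quasi-compact in the Zariski topology. Nothing in the definitions gives this. Without it you have not shown that $\supp_\otimes$ lands in Thomason subsets. So neither the adjunction nor the bijection is established between the posets in the statement. Your two composite computations do not use this fact, but the well-definedness of $\supp_\otimes$ does.

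\textbf{The fix.} Your Zorn argument already fills the gap if you run it with an arbitrary $\otimes$-multiplicative set instead of $\{x^{\otimes n}\}$.
\begin{enumerate}
\item Since the sets $U(b)$ form a basis, it suffices to treat a cover $U(x)\subseteq\bigcup_{i\in I}U(b_i)$ by basic opens.
\item Let $S$ be the set of finite tensor products of the $b_i$, including $\one$.
\item Suppose $\thick_\otimes(x)\cap S=\varnothing$. Zorn gives a thick tensor ideal $\cat P\supseteq\thick_\otimes(x)$ maximal with $\cat P\cap S=\varnothing$, and your argument shows it is prime. It is proper because $\one\in S$.
\item Then $\cat P\in U(x)$ but $\cat P\notin U(b_i)$ for every $i$, contradicting the cover.
\item Hence $b_{i_1}\otimes\cdots\otimes b_{i_n}\in\thick_\otimes(x)$ for finitely many indices. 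This gives $\supp_\otimes(b_{i_1})\cap\cdots\cap\supp_\otimes(b_{i_n})\subseteq\supp_\otimes(x)$, that is, $U(x)\subseteq U(b_{i_1})\cup\cdots\cup U(b_{i_n})$.
\end{enumerate}
So $U(x)$ is quasi-compact and $\supp_\otimes(x)$ is Thomason. With this lemma inserted before your adjunction paragraph, your proof is complete.
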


The graded endomorphism ring of the tensor unit acts on $\cat T$
via
\[\End^*_{\cat T}(\one)\lto Z^* (\cat T),\quad
  \alpha\longmapsto\alpha\otimes -.\]
Now fix a graded-commutative ring $R$ that acts on $\cat T$
\emph{canonically}, that is to say, via a map of rings
$\phi\colon R\to \End^*_{\cat T}(\one)$. In many cases $R$ is the
graded endomorphism ring of $\one$ and $\phi=\mathrm{id}$, but not
always. The Balmer spectrum is related to the spectrum of the ring $R$
that acts on $\cat T$, via the \emph{comparison map}
\begin{equation}
\label{eq:bs-zs}
\Spc\cat T\xrightarrow{\ \rho_{\cat T}\ } \Spec \End^*_{\cat T}(\one)\xrightarrow{\ \Spec\phi\ } \Spec R
\end{equation}
where $\rho_{\cat T}$ takes a prime ideal $\cat P\subseteq\cat T$ to
the preimage of the unique maximal ideal of
$\End^*_{\cat T/\cat P}(\one)$ under the canonical homomorphism
$\End^*_{\cat T}(\one)\to \End^*_{\cat T/\cat P}(\one)$; see
\cite{Balmer:2010b}.

We wish to compare  tensor triangulated and  cohomological
support. This requires a sequence of lemmas.

\begin{lemma}
  For  objects  $x,y$ in $\cat T$ we have
  \[\supp_R(x\otimes y)\subseteq\supp_R(x)\cap\supp_R(y).\]
\end{lemma}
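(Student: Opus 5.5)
By symmetry of the tensor product, $x\otimes y\cong y\otimes x$, it is enough to prove $\supp_R(x\otimes y)\subseteq\supp_R(x)$. Equivalently, for every $\fp\in\Spec R$ with $x_\fp=0$ we must show $(x\otimes y)_\fp=0$. The plan is to translate the vanishing of $x_\fp$ into a statement about the ring homomorphism $\phi_x\colon R\to\End^*_\cat T(x)$, and then transport it to $x\otimes y$ through the functor $-\otimes y$.

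First, use the equivalences recorded just before the definition of cohomological support:
\[x_\fp=0\iff \End^*_\cat T(x)_\fp=0\iff \fp\notin V(\Ker\phi_x).\]
So $x_\fp=0$ means there is a homogeneous $s\in R\smallsetminus\fp$ with $s\cdot\id_x=0$ in $\End^*_\cat T(x)$. Indeed, the unit $1\in\End^*_\cat T(x)$ becomes zero after localising at $\fp$, and this is witnessed by a single element $s\notin\fp$.

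Second, compare the actions of $s$ on $x$ and on $x\otimes y$. Because $R$ acts canonically through $\phi\colon R\to\End^*_\cat T(\one)$, the element $s$ acts on an object $z$ by $\phi(s)\otimes z$, transported along $\one\otimes z\cong z$. For $z=x\otimes y$ we use associativity of the monoidal structure: $\phi(s)\otimes(x\otimes y)$ identifies with $(\phi(s)\otimes x)\otimes y$. Hence $\phi_{x\otimes y}(s)=\phi_x(s)\otimes y$, possibly up to a suspension isomorphism and a sign coming from moving $\Si^{|s|}$ past $-\otimes y$. Since $\phi_x(s)=0$, we get $\phi_{x\otimes y}(s)=0$. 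Thus $s\in\Ker\phi_{x\otimes y}$ and $s\notin\fp$, so $\fp\notin V(\Ker\phi_{x\otimes y})$, that is, $(x\otimes y)_\fp=0$. By the same equivalence, applied this time to $x\otimes y$, this gives the inclusion.

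The only step needing care is the compatibility $\phi_{x\otimes y}(s)=\pm\,\phi_x(s)\otimes y$. This involves the coherence of the unit and associativity isomorphisms, and the compatibility of $\otimes$ with $\Si$, which is exact in each variable. Any sign or invertible twist that appears is harmless, because we only need to conclude that a zero map stays zero.
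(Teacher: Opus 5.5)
Your proof is correct, and it takes a different route from the paper's.

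The paper characterises $\fp\in\supp_R(x)$ by $\Hom^*_{\cat T}(x,-)_\fp\neq 0$. It then uses the adjunction \eqref{eq:function-obj}, $\Hom^*_{\cat T}(x\otimes y,-)\cong\Hom^*_{\cat T}(x,\fHom(y,-))$: vanishing of $\Hom^*_{\cat T}(x,-)_\fp$ forces vanishing of $\Hom^*_{\cat T}(x\otimes y,-)_\fp$.

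You work with the kernel of $\phi_x$ instead. In effect you prove $\Ker\phi_x\subseteq\Ker\phi_{x\otimes y}$, because $\phi_{x\otimes y}(s)$ is $\phi_x(s)\otimes y$ composed with structure isomorphisms (naturality of the unit and associativity constraints). You then pass to $V(-)$; the symmetry step is harmless since support is invariant under isomorphism.

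What your route buys:
\begin{itemize}
\item It needs neither the internal Hom nor rigidity. It uses only that $-\otimes y$ is additive and that $R$ acts canonically through $\End^*_{\cat T}(\one)$.
\item It makes explicit the compatibility of the $R$-action with $\otimes$. The paper's argument uses this implicitly: localising both sides of the adjunction isomorphism at $\fp$ requires that isomorphism to be $R$-linear.
\end{itemize}

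The paper's version is shorter. It also fits the functorial description of support via $\Hom^*_{\cat T}(x,-)$ used throughout the notes.
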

\begin{proof}
  Let $\fp\in\Spec R$. Then $\fp\in\supp_R(x)$ if and only if
  $\Hom^*_\cat T(x,-)_\fp\neq 0$.  From \eqref{eq:function-obj} it
  follows that $\Hom^*_\cat T(x\otimes y,-)_\fp\neq 0$ implies
  $\Hom^*_\cat T(x,-)_\fp\neq 0$.
\end{proof}

The lemma implies that for any subset $V\subseteq \Spec R$ the thick
subcategory consisting of objects $x\in\cat T$ with $\supp_R(x)\subseteq V$ is a
tensor ideal. In particular, $\tau_R(V)$ is a tensor ideal when $V$ is Thomason.

\begin{lemma}\label{le:koszul}
For a finitely generated ideal $\fa$ of $R$ we have $\cat T_{V(\fa)}=\thick_\otimes(\kos{\one}{\fa})$.
\end{lemma}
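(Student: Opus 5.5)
The plan is to apply Proposition~\ref{pr:koszul} with $\cat X=\{\one\}$, after reducing to the thick tensor ideal generated by $\one$. The key observation is that for a canonical action the Koszul construction commutes with tensoring: for $r\in R$ of degree $d$, the action of $r$ on $x$ is $\phi(r)\otimes x$. Tensoring the exact triangle $\one\xrightarrow{r}\Si^d\one\to\kos{\one}{r}\to\Si\one$ with $x$ therefore gives an exact triangle $x\xrightarrow{r}\Si^dx\to \kos{\one}{r}\otimes x\to\Si x$, since $\otimes$ is exact. Hence $\kos{x}{r}\cong\kos{\one}{r}\otimes x$. Iterating over a sequence of generators $r_1,\dots,r_n$ of $\fa$ yields $\kos{x}{\fa}\cong \kos{\one}{\fa}\otimes x$, up to the usual non-uniqueness of Koszul objects, which does not affect the thick subcategories generated.

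Next I apply Proposition~\ref{pr:koszul} with $\cat X=\cat T$, which is trivially allowed since $\cat T=\thick(\cat T)$. This gives
\[\cat T_{V(\fa)}=\thick(\{\kos x\fa\mid x\in\cat T\})=\thick(\{\kos{\one}{\fa}\otimes x\mid x\in\cat T\}).\]
The right-hand side is exactly $\thick_\otimes(\kos{\one}{\fa})$. Indeed, the thick tensor ideal generated by an object $y$ is the thick closure of $\{y\otimes x\mid x\in\cat T\}$. This follows because that thick closure is already a tensor ideal: the class of $z$ with $z\otimes x$ in it for all $x$ is thick and contains every $y\otimes x'$. So the two sides agree.

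As an alternative route for the inclusion $\supseteq$, note that $\cat T_{V(\fa)}$ is a tensor ideal by the lemma preceding the statement, since membership is defined by a support condition. It also contains $\kos{\one}{\fa}$, by Lemma~\ref{le:koszul-supp} or directly by Proposition~\ref{pr:koszul} with $\cat X=\{\one\}$. The inclusion $\subseteq$ is then the substantive one, and it is handled by the identification $\kos{x}{\fa}\cong\kos{\one}{\fa}\otimes x$.

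The main obstacle is verifying that the morphism $x\to\Si^dx$ given by the $R$-action really equals $\phi(r)\otimes x$ under the identifications $\one\otimes x\cong x$ and $\Si^d\one\otimes x\cong\Si^dx$. This holds by the definition of the action through $\End^*_{\cat T}(\one)\to Z^*(\cat T)$, $\alpha\mapsto\alpha\otimes-$. Beyond that, I need to check that the suspension compatibility of $\otimes$ introduces at most a sign, which does not change the cone up to isomorphism.
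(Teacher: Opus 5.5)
Your argument is correct and is the paper's proof: apply Proposition~\ref{pr:koszul} with $\cat X=\cat T$, and use $\kos{x}{\fa}\cong\kos{\one}{\fa}\otimes x$. You additionally supply the routine check that $\thick(\{\kos{\one}{\fa}\otimes x\mid x\in\cat T\})$ is already a tensor ideal. One correction to your opening sentence: you must take $\cat X=\cat T$, as you in fact do, and not $\cat X=\{\one\}$, since $\cat T=\thick(\one)$ need not hold.
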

\begin{proof}
  The equality
  $\cat T_{V(\fa)}=\thick(\{\kos{x}{\fa}\mid x\in\cat T\})$ follows from
  Proposition~\ref{pr:koszul} and it remains
  to note that $\kos{x}{\fa}= \kos{\one}{\fa}\otimes x$.
\end{proof}

\begin{lemma}\label{le:comparison}
For $r\in\End^*_\cat T(\one)$ we have $\rho^{-1}_\cat T(V(r))=\supp_\otimes(\kos{\one}{r})$.
\end{lemma}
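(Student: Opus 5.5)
We have to show, for every prime $\cat P\in\Spc\cat T$, that
\[\rho_\cat T(\cat P)\in V(r)\iff \kos{\one}{r}\notin\cat P.\]
Set $E\colonequals\End^*_\cat T(\one)$ and write $q\colon\cat T\to\cat T/\cat P$ for the quotient functor. By definition $\rho_\cat T(\cat P)$ is the preimage under $E\to\End^*_{\cat T/\cat P}(\one)$ of the unique maximal homogeneous ideal $\mathfrak m$. Hence $r\in\rho_\cat T(\cat P)$ holds if and only if $q(r)$ lies in $\mathfrak m$, that is, if and only if $q(r)$ is \emph{not} invertible in $\End^*_{\cat T/\cat P}(\one)$. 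On the other side, $\kos{\one}{r}\notin\cat P$ means $q(\kos{\one}{r})\neq 0$. Applying $q$ to the exact triangle $\one\xrightarrow{r}\Si^{d}\one\to\kos{\one}{r}\to\Si\one$ gives an exact triangle in $\cat T/\cat P$ with first map $q(r)$. A morphism in a triangulated category is an isomorphism exactly when its cone vanishes, so $q(\kos{\one}{r})=0$ if and only if $q(r)$ is invertible. Putting the two equivalences together: $\cat P\in\rho_\cat T^{-1}(V(r))$ iff $q(r)$ is non-invertible iff $q(\kos{\one}{r})\ne 0$ iff $\cat P\in\supp_\otimes(\kos{\one}{r})$.

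One point needs checking. We must verify that invertibility of $q(r)$ as a morphism $\one\to\Si^d\one$ in $\cat T/\cat P$ agrees with $q(r)$ being a unit of the graded ring $\End^*_{\cat T/\cat P}(\one)$. This holds because a homogeneous element of degree $d$ is a unit in the graded endomorphism ring precisely when it has a two-sided inverse of degree $-d$, and such an inverse is exactly an inverse morphism $\Si^d\one\to\one$ after suspending.

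The main obstacle is the claim, used in the definition of $\rho_\cat T$, that $\End^*_{\cat T/\cat P}(\one)$ is graded-local. Once this is granted, the non-units form exactly the ideal $\mathfrak m$, and the argument above goes through. I would cite \cite{Balmer:2010b} for it; that is also where the well-definedness of $\rho_\cat T$ is established. If a direct argument were wanted, the key step is this: for a homogeneous $s$ with cone $c$, non-invertibility of $s$ in $\cat T/\cat P$ means $c\notin\cat P$. One then uses that $\cat T/\cat P$ is local, meaning $x\otimes y=0$ forces $x=0$ or $y=0$, which comes from primeness of $\cat P$. With this, Balmer's argument shows that the sum of two non-units is again a non-unit. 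This is the step I would not reprove here.
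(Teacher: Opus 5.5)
Your proof is correct and is the same argument as the paper's: $\cat P\in\supp_\otimes(\kos{\one}{r})$ iff $r\colon\one\to\Si^{|r|}\one$ is not invertible in $\cat T/\cat P$ iff $r\in\rho_\cat T(\cat P)$. The extra points you check are exactly the ingredients the paper's two-line proof uses implicitly. These are that homogeneous units of $\End^*_{\cat T/\cat P}(\one)$ are the invertible morphisms, and that this ring is graded-local, which the paper builds into the definition of $\rho_\cat T$ following Balmer.
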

\begin{proof}
  Let $\cat P\in\Spc\cat T$. Then $\cat
  P\in\supp_\otimes(\kos{\one}{r})$ if and only if the morphism
  $\one\xrightarrow{r}\Si^{|r|}\one$ is not invertible in $\cat T/\cat P$. This
  means $r\in\rho_\cat T(\cat P)$, so $\cat P\in \rho^{-1}_\cat T(V(r))$.
\end{proof}

The following result provides a comparison between cohomological and
tensor triangulated support.

\begin{proposition}
\label{pr:bs-lattices}
  Let $\cat T$ be an essentially small rigid tensor triangulated
  category. Then the comparison map \eqref{eq:bs-zs} induces a
  commutative diagram:
\begin{equation*}
  \begin{tikzcd}
    \{\text{Thomason subsets of }\Spec
    R\} \ar[r,"\tau_R"]\ar[d,"\rho^{-1}_{\cat T}(\Spec\phi)^{-1}"] &\Thick_\otimes(\cat T)\ar[d,equal]\\
    \{\text{Thomason subsets of }\Spc\cat T\} \ar[r,"\tau_\otimes", "\sim" swap]&\Thick_\otimes(\cat T)
\end{tikzcd}
\end{equation*}
In particular, the comparison map  \eqref{eq:bs-zs} is a homeomorphism if and only if the
  map $\tau_R$ is an isomorphism.
\end{proposition}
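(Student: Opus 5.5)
The plan is to check commutativity of the square on the finite Thomason sets $V(\fa)$, $\fa\subseteq R$ finitely generated, and then extend to all Thomason sets by joins. Write $f\colonequals(\Spec\phi)\circ\rho_{\cat T}$ for the comparison map \eqref{eq:bs-zs}. The left vertical map $V\mapsto f^{-1}(V)$ preserves arbitrary unions and finite intersections. It sends Thomason sets to Thomason sets, since $f$ is continuous for the Zariski topologies and preimages of quasi-compact opens will turn out to be of the form $\supp_\otimes(\kos{\one}{\fa})$.

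By Proposition~\ref{pr:Thomason}, $\tau_R$ preserves arbitrary joins and finite meets. By Theorem~\ref{th:Balmer}, $\tau_\otimes$ is a frame isomorphism. Hence both composites preserve arbitrary joins. Every Thomason set is a union of sets $V(\fa)$, so it suffices to show
\[\tau_R(V(\fa))=\tau_\otimes\bigl(f^{-1}(V(\fa))\bigr).\]
This is a genuine equality of tensor ideals, because $\tau_R(V)$ lands in $\Thick_\otimes(\cat T)$ by the remark after the lemma on $\supp_R(x\otimes y)$.

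To prove this equality, I would first note that $\tau_R(V(\fa))=\cat T_{V(\fa)}$, since $V(\fa)$ is the largest term in the defining join. By Lemma~\ref{le:koszul}, $\cat T_{V(\fa)}=\thick_\otimes(\kos{\one}{\fa})$. Under Balmer's classification, $\thick_\otimes(\kos{\one}{\fa})$ corresponds to $\supp_\otimes(\kos{\one}{\fa})$, so it remains to show
\[\supp_\otimes(\kos{\one}{\fa})=f^{-1}(V(\fa)).\]
Choose homogeneous generators $r_1,\dots,r_n$ of $\fa$. Then $V(\fa)=\bigcap_i V(r_i)$, and hence $f^{-1}(V(\fa))=\bigcap_i f^{-1}(V(r_i))$.

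On the tensor side, $\kos{\one}{\fa}\cong\kos{\one}{r_1}\otimes\cdots\otimes\kos{\one}{r_n}$, because Koszul objects are obtained by tensoring with $\kos{\one}{r_i}$, as used in Lemma~\ref{le:koszul}. Since $\supp_\otimes(x\otimes y)=\supp_\otimes(x)\cap\supp_\otimes(y)$ by primality, we get
\[\supp_\otimes(\kos{\one}{\fa})=\bigcap_i\supp_\otimes(\kos{\one}{r_i}).\]
Lemma~\ref{le:comparison} gives $\supp_\otimes(\kos{\one}{r})=\rho_{\cat T}^{-1}(V(\phi(r)))$. Moreover $(\Spec\phi)^{-1}(V(r))=V(\phi(r))$, since $\phi(r)\in\fq$ exactly when $r\in\phi^{-1}(\fq)$. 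Combining these, $f^{-1}(V(r_i))=\supp_\otimes(\kos{\one}{r_i})$, which gives the required equality and the commutativity of the square. As a byproduct, $f^{-1}(V(\fa))$ is a support of an object, hence Thomason, which justifies the well-definedness of the left vertical map.

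For the final claim, first suppose $f$ is a homeomorphism. Then $f^{-1}$ is a frame isomorphism on Thomason sets, because a homeomorphism preserves quasi-compact opens. So $\tau_R=\tau_\otimes\circ f^{-1}$ is an isomorphism.

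Conversely, suppose $\tau_R$ is an isomorphism. Then $f^{-1}$ is a frame isomorphism between the coherent frames of Thomason subsets. Both $\Spc\cat T$ and $\Spec R$ are spectral spaces, and a spectral space can be recovered from its frame of Thomason sets as the points of the Hochster dual frame. Hence $f$ is a homeomorphism for the Hochster dual topologies, and so also for the Zariski topologies, by Hochster duality \cite{Hochster:1969a}.

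This reconstruction step is where I expect the main obstacle. One must check that $f$ is exactly the map of points induced by the frame isomorphism $f^{-1}$. That reduces to the fact that spectral spaces are sober, so their points correspond to prime filters of the frame, and that a continuous map is determined by its preimage map on opens.
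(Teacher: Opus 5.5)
Your proposal is correct and follows essentially the same route as the paper. You reduce to the finite Thomason sets $V(\fa)$ using join-preservation, identify $\tau_R(V(\fa))=\thick_\otimes(\kos{\one}{\fa})$ via Lemma~\ref{le:koszul} and $f^{-1}(V(\fa))=\supp_\otimes(\kos{\one}{\fa})$ via Lemma~\ref{le:comparison} and finite intersections, and conclude by sobriety of spectral spaces; your explicit well-definedness check and passage through Hochster duality simply spell out what the paper compresses into the remark that the spaces are sober.
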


\begin{proof}
  The map $\rho^{-1}_{\cat T}$ is given by taking preimages along
  $\rho_{\cat T}$, and $(\Spec\phi)^{-1}$ is defined analogously.  We
  need to check that these maps are well defined and that they make
  the diagram commutative. First observe that for any $r\in R$ the
  preimage of $V(r)$ under $\Spec\phi$ equals $V(\phi(r))$, and its
  preimage under $\rho_{\cat T}$ equals $\supp_\otimes(\kos{\one}{r})$
  by Lemma~\ref{le:comparison}.  Thus
  \[
  (\rho^{-1}_{\cat T}(\Spec\phi)^{-1})(V(\fa))=\supp_\otimes(\kos{\one}{\fa})
  \] 
  for any finitely generated ideal $\fa$ of $R$, since
  $\rho^{-1}_{\cat T}(\Spec\phi)^{-1}$ and $\supp_\otimes$ preserve finite
  intersections.  Using Lemma~\ref{le:koszul} this yields the
  commutativity, keeping in mind that all maps in the diagram preserve
  arbitrary joins. The diagram implies that $\tau_R$ is a bijection if
  and only if $\rho^{-1}_{\cat T}(\Spec\phi)^{-1}$ is a bijection. Since the
  spaces in question are sober, so determined up to homeomorphism by
  the lattice of open subsets, $\rho^{-1}_{\cat T}(\Spec\phi)^{-1}$ is a
  bijection if and only if $(\Spec\phi)\rho_{\cat T}$ is a
  homeomorphism.
\end{proof}

The tensor triangulated analogue of Proposition~\ref{pr:hopkins} is a consequence.

\begin{corollary}\label{co:HNT}
  For an essentially small rigid tensor triangulated category $\cat T$
  and a ring $R$ acting on it the following are equivalent.
  \begin{enumerate}
\item   
  Cohomological support induces a lattice isomorphism
  \[\Thick_\otimes(\cat T)\longiso   \{\text{Thomason subsets of }\Spec
    R\}.\]
\item The comparison map \eqref{eq:bs-zs} is a homeomorphism.
\item We have $\supp_R(\one)=\Spec R$, $\supp_R$ is finite, and  for all $x,y\in\cat T$
  \[\supp_R(x)\subseteq\supp_R(y)\quad \implies\quad
    \thick_\otimes(x)\subseteq\thick_\otimes(y).\]
\end{enumerate}
\end{corollary}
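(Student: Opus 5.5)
The plan is to prove (1)$\Leftrightarrow$(2) with Proposition~\ref{pr:bs-lattices}, and (1)$\Leftrightarrow$(3) by the argument of Proposition~\ref{pr:hopkins}, run inside the frame $\Thick_\otimes(\cat T)$ in place of $\Thick(\cat T)$. Since $\Thick_\otimes(\cat T)$ is the lattice of tensor ideals, we replace $\thick$ by $\thick_\otimes$ throughout. We use the lemma preceding Lemma~\ref{le:koszul}, which shows that each $\tau_R(V)$ is a tensor ideal. So $\tau_R$ maps Thomason subsets of $\Spec R$ into $\Thick_\otimes(\cat T)$, and $\supp_R$ restricts to $\Thick_\otimes(\cat T)$.

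For (1)$\Leftrightarrow$(2), Proposition~\ref{pr:bs-lattices} gives $\tau_R=\tau_\otimes\circ\rho_{\cat T}^{-1}(\Spec\phi)^{-1}$, with $\tau_\otimes$ an isomorphism by Theorem~\ref{th:Balmer}. Hence (2) holds iff $\tau_R$ is an isomorphism onto $\Thick_\otimes(\cat T)$. It remains to check that $\tau_R$ is an isomorphism iff $\supp_R$ is one, and then that $\supp_R$ is its inverse. If $\tau_R$ is bijective, then finiteness of $\supp_R$ must be obtained without assuming it. Here I would use that the image of $\tau_R$ is everything: $\thick_\otimes(x)=\tau_R(V)$ for some $V$. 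Since $\thick_\otimes(x)$ is a finite element, $x\in\cat T_{V(\fa)}$ for some $V(\fa)\subseteq V$. Then $\thick_\otimes(x)=\tau_R(V(\fa))=\cat T_{V(\fa)}=\thick_\otimes(\kos{\one}{\fa})$ by Lemma~\ref{le:koszul}. Lemma~\ref{le:koszul-supp} together with $\supp_R(\one)=\Spec R$ then gives $\supp_R(x)=V(\fa)$. The fact that $\supp_R(\one)=\Spec R$ follows from injectivity of $\tau_R$: $\tau_R(\supp_R(\one))$ would contain $\one$, so it equals $\tau_R(\Spec R)$. I expect this bookkeeping, getting finiteness and (1) out of bijectivity of $\tau_R$ alone, to be the main obstacle. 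The route is the adjunction of Lemma~\ref{le:adjoint-supp}: once $\supp_R$ is finite, $\supp_R\dashv\tau_R$, and the left adjoint of an isomorphism is its inverse.

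For (3)$\Rightarrow$(1), run Corollary~\ref{co:hopkins} in the tensor setting. First, $\supp_R(\one)=\Spec R$ and Lemma~\ref{le:koszul-supp} give $\supp_R(\kos{\one}{\fa})=V(\fa)$. Since $\tau_R(V(\fa))=\thick_\otimes(\kos{\one}{\fa})$ by Lemma~\ref{le:koszul}, and $\supp_R$ preserves joins, we get $\supp_R\circ\tau_R=\id$. Second, finiteness gives the adjunction and the description $\tau_R(V)=\{x\mid\supp_R(x)\subseteq V\}$ from Lemma~\ref{le:adjoint-supp}. Combined with the implication in (3), this gives $\tau_R\supp_R(\thick_\otimes(x))=\thick_\otimes(x)$. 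Every tensor ideal is a join of such $\thick_\otimes(x)$ and both maps preserve joins, so $\tau_R\circ\supp_R=\id$ on $\Thick_\otimes(\cat T)$.

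For (1)$\Rightarrow$(3), the inverse of $\supp_R$ must be the right adjoint $\tau_R$. Bijectivity then gives $\supp_R(\one)=\Spec R$, because $\cat T$ is the top element. Finiteness follows because isomorphisms preserve finite elements: $\thick_\otimes(x)$ is finite, and the finite Thomason sets are exactly the $V(\fa)$. The implication in (3) holds because $\supp_R$ reflects inclusions.
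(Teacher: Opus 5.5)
Your route is the paper's: (1)$\Leftrightarrow$(2) via Proposition~\ref{pr:bs-lattices}, Theorem~\ref{th:Balmer} and the adjunction of Lemma~\ref{le:adjoint-supp}, and (1)$\Leftrightarrow$(3) by running the proof of Proposition~\ref{pr:hopkins} inside $\Thick_\otimes(\cat T)$, with $\one$ as the object of full support. The paper's proof is only a sketch and does not say where finiteness of $\supp_R$ comes from in the direction (2)$\Rightarrow$(1). Your derivation of finiteness from bijectivity of $\tau_R$ is what is needed there. It writes $\thick_\otimes(x)=\tau_R(V)$, uses that $x$ lies in some $\cat T_{V(\fa_1)}\vee\cdots\vee\cat T_{V(\fa_n)}=\cat T_{V(\fa_1\cdots\fa_n)}$ by Lemma~\ref{le:union}, and then applies Lemmas~\ref{le:koszul} and \ref{le:koszul-supp}.

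One step does not work as written. To get $\supp_R(\one)=\Spec R$ you apply $\tau_R$ to $\supp_R(\one)$ and invoke injectivity. But $\tau_R$ is only defined on Thomason subsets, and you do not yet know that $\supp_R(\one)=V(\Ker\phi)$ is Thomason. That is part of the finiteness you are trying to prove, and for non-noetherian $R$ a closed set $V(I)$ need not be Thomason. The repair is short. For each $r\in\Ker\phi$ you have $\supp_R(\one)\subseteq V(r)$, so $\one\in\cat T_{V(r)}=\tau_R(V(r))$. Since this is a tensor ideal containing $\one$, $\tau_R(V(r))=\cat T=\tau_R(\Spec R)$, and injectivity gives $V(r)=\Spec R$. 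Intersecting over $r\in\Ker\phi$ yields $\supp_R(\one)=V(\Ker\phi)=\Spec R$.

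Relatedly, in (1)$\Rightarrow$(2) the identification of the inverse of $\supp_R$ with $\tau_R$ uses the adjunction, hence finiteness. So derive finiteness from (1) first, which your argument does without reference to $\tau_R$, and then invoke Lemma~\ref{le:adjoint-supp}. With these adjustments the proof is complete.
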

\begin{proof}
  We apply Proposition~\ref{pr:bs-lattices}.  The equivalence (1)
  $\Leftrightarrow$ (2) uses the adjointness of the pair
  $(\supp_R,\tau_R)$; see Lemma~\ref{le:adjoint-supp}.  For the
  equivalence (1) $\Leftrightarrow$ (3) one adapts the proof of
  Proposition~\ref{pr:hopkins}.
\end{proof}

The prototypical example is the category of perfect complexes over a
commutative ring $A$, on which $A$ acts canonically.

\begin{example}
  Let $A$ be a commutative ring.  The classification of thick tensor
  ideals of the category of perfect complexes via $\Spec A$ means that
  the equivalent conditions in Corollary~\ref{co:HNT} are satisfied;
  it is due to Hopkins, Neeman, and Thomason
  \cite{Hopkins:1987a,Neeman:1992a,Thomason:1997a}.
\end{example}

\section{Local cohomology and support}

We study compactly generated triangulated categories and the basic
properties of localising subcategories that are generated by compact
objects.  Local cohomology functors are introduced when the category
of compact objects is fibred over a space, and this provides a notion
of support.

References are \cite{Benson/Iyengar/Krause:2008a,Hovey/Palmieri/Strickland:1997a}.

\subsection*{Localisation  and  adjoints}

Let $\cat T$ be a triangulated category and $\cat S\subseteq\cat T$ a triangulated
subcategory. Suppose that the canonical functor $q\colon\cat T\to\cat T/\cat S$
admits a right adjoint $q_\rho\colon \cat T/\cat S\to\cat T$.
 Then $q_\rho$ is fully
faithful and induces an equivalence
\[\cat T/\cat S\longiso\cat S^\perp\qquad\text{with quasi-inverse}\qquad
  \cat S^\perp\hookrightarrow\cat T\xrightarrow{\ q\ }\cat T/\cat S,\]
where
\[
  \cat S^\perp\colonequals\{x\in\cat T\mid \Hom_\cat T(s,x)=0\text{ for all
  }s\in\cat S\}
\] and
\[
  ^\perp\cat S\colonequals\{x\in\cat T\mid \Hom_\cat T(x,s)=0\text{ for all
                }s\in\cat S\}\,.
\]
The
unit of the adjunction yields for $x$ in $\cat T$ an exact triangle
\[x'\lto x\stackrel{\eta}\lto q_\rho q(x)\lto \Si x'\]
with $x'$ a direct summand of an object in $\cat S$ since $q(\eta)$ is
invertible.

The following proposition expresses the symmetry which arises from
localising a triangulated category with respect to a thick
subcategory; see for example \cite[Proposition~3.2.8]{Krause:2022a}.

\begin{proposition}\label{pr:verdier-adjoint}\pushQED{\qed} 
  Let $\cat S\subseteq\cat T$ be a thick subcategory. Then the following are equivalent:
\begin{enumerate}
\item The canonical functor $\cat T\to\cat T/\cat S$ admits a right adjoint. 
\item The inclusion $\cat S\to\cat T$ admits a right adjoint.
\item For each $x\in\cat T$ there exists an exact triangle
  $x'\to x\to x''\to\Si x'$ with $x'\in\cat S$ and $x''\in\cat S^\perp$.
\item The composite $\cat S^\perp\hookrightarrow\cat T\twoheadrightarrow\cat T/\cat S$ is a triangle equivalence.
\end{enumerate}
 In that case the right
  adjoint $\cat T\to\cat S$ induces a triangle
  equivalence
  \[\cat T/(\cat S^\perp)\longiso\cat S\qquad\text{and}\qquad^\perp(\cat S^\perp)=\cat S.\qedhere\]
\end{proposition}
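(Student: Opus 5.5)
I would prove the equivalences in the cycle (1)$\Rightarrow$(3)$\Rightarrow$(2)$\Rightarrow$(3)$\Rightarrow$(4)$\Rightarrow$(1), and derive the final assertions from the triangle in (3). Throughout, $q\colon\cat T\to\cat T/\cat S$ is the quotient and $i\colon\cat S\to\cat T$ the inclusion.

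\textbf{From (1) to (3).} Let $q_\rho$ be the right adjoint of $q$. Complete the unit to the exact triangle $x'\to x\xrightarrow{\eta} q_\rho q(x)\to\Si x'$ recalled above. Since $q_\rho$ is fully faithful, $q(\eta)$ is invertible, so $q(x')=0$. As $\cat S$ is thick, the kernel of $q$ equals $\cat S$, so $x'\in\cat S$. For $s\in\cat S$ adjunction gives $\Hom(s,q_\rho q x)\cong\Hom(qs,qx)=0$, so $q_\rho q(x)\in\cat S^\perp$.

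\textbf{(2) and (3).} Given (3), the triangle $x'\to x\to x''\to\Si x'$ with $x''\in\cat S^\perp$ yields $\Hom(s,x')\iso\Hom(s,x)$ for $s\in\cat S$, using the long exact $\Hom(s,-)$ sequence and $\Hom(s,\Si^{-1}x'')=0=\Hom(s,x'')$. This gives the right adjoint $x\mapsto x'$; functoriality follows from the standard uniqueness of such triangles. Conversely, given a right adjoint $i_\rho$, complete the counit $i\,i_\rho(x)\to x$ to a triangle. The cone lies in $\cat S^\perp$ because the counit induces bijections on $\Hom(s,-)$ for $s\in\cat S$, and the same holds after suspension.

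\textbf{(3) to (4), and (4) to (1).} For $y\in\cat S^\perp$, every morphism $z\to y$ with cone in $\cat S$ splits in a manner that shows $\Hom_{\cat T/\cat S}(x,y)\cong\Hom_\cat T(x,y)$. Concretely, for $z\to y$ with $\mathrm{cone}\in\cat S$, the map $\Hom(x,y)\to\Hom(x,z)$ is computed via the colimit description in Lemma~\ref{le:loc-long}, and $\Hom(\cat S,y)=0$ kills the correction terms. Hence $\cat S^\perp\to\cat T/\cat S$ is fully faithful. It is essentially surjective because in (3) the map $x\to x''$ becomes invertible in $\cat T/\cat S$. Given (4), composing a quasi-inverse with the inclusion $\cat S^\perp\hookrightarrow\cat T$ gives a right adjoint of $q$, by the full faithfulness just established.

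\textbf{The final assertions.} Apply the symmetric version of the argument to $\cat S^\perp$. The triangle in (3) exhibits, for every $x$, a triangle with one end in $^\perp(\cat S^\perp)\supseteq\cat S$ and the other in $\cat S^\perp$. If $x\in{}^\perp(\cat S^\perp)$, the map $x\to x''$ vanishes, so $x$ is a summand of $x'\in\cat S$ and therefore lies in $\cat S$ by thickness. This proves $^\perp(\cat S^\perp)=\cat S$. Then $x\mapsto x'$ is a left adjoint-type quotient of $\cat T$ by $\cat S^\perp$ (the dual of the earlier argument), giving $\cat T/(\cat S^\perp)\iso\cat S$.

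The step I expect to need the most care is the full faithfulness of $\cat S^\perp\to\cat T/\cat S$ via roofs. The rest is formal adjunction yoga.
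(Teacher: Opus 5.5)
Your proof is correct. The paper gives no argument of its own here: it cites \cite[Proposition~3.2.8]{Krause:2022a} and closes with \qed, so there is no in-paper route to compare against. Your cycle (1)$\Rightarrow$(3)$\Leftrightarrow$(2), (3)$\Rightarrow$(4)$\Rightarrow$(1) is the standard self-contained proof. The only facts you take for granted are that $q_\rho$ is fully faithful (the paper asserts this just before the proposition) and that $\Ker q=\cat S$ for thick $\cat S$.

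Your argument for ${}^\perp(\cat S^\perp)=\cat S$ is fine. For $\cat T/(\cat S^\perp)\iso\cat S$ you can replace the vague ``left adjoint-type quotient'' with a direct argument:
\begin{itemize}
\item $i_\rho$ is exact and kills $\cat S^\perp$, so it factors through $\cat T/\cat S^\perp$.
\item The induced functor is quasi-inverse to $\cat S\hookrightarrow\cat T\to\cat T/\cat S^\perp$, because $i_\rho i\cong\id$.
\item The counit $x'\to x$ has cone $x''\in\cat S^\perp$, so it becomes invertible in $\cat T/\cat S^\perp$.
\end{itemize}

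One sentence in (3)$\Rightarrow$(4) needs repair. You claim that every morphism $z\to y$ with cone in $\cat S$ splits. For morphisms \emph{into} $y\in\cat S^\perp$ this is false, because the connecting map $y\to s$ need not vanish: $\cat S^\perp$ only kills maps \emph{out of} $\cat S$. A counterexample, with $A=k(\circ\to\circ)$ as in Example~\ref{ex:distrib}:
\begin{itemize}
\item Let $P$ be the simple projective, $I$ the simple injective and $M$ the projective-injective module, and take $\cat S=\thick(P)$.
\item Then $I\in\cat S^\perp$, and $M\to I$ has cone $\Si P\in\cat S$, but $M\to I$ does not split.
\end{itemize}
What is true concerns morphisms $y\to z$ \emph{out of} $y$ with cone $s\in\cat S$. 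The connecting map $s\to\Si y$ lies in $\Hom(\Si^{-1}s,y)=0$, so $y\to z$ is a split monomorphism. This is the version that matches the map $\Hom(x,y)\to\Hom(x,z)$ you then write.

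The cleanest fix is to use Lemma~\ref{le:loc-long} directly. Its correction terms are colimits over $\cat S/y$. If $\Hom(\cat S,y)=0$, then $0\to y$ is a terminal object of $\cat S/y$, so these terms vanish. Hence $\Hom^*_{\cat T}(x,y)\iso\Hom^*_{\cat T/\cat S}(x,y)$ for every $x\in\cat T$. You need this for arbitrary $x$, not only $x\in\cat S^\perp$, because (4)$\Rightarrow$(1) uses exactly that generality. Your write-up does state it that way, so once this sentence is corrected the proof is complete.
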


We capture the situation in the following diagram 
\begin{equation}\label{eq:tria-loc-seq}
\begin{tikzcd}
\cat S \arrow[tail,yshift=0.75ex]{rr}{i} &&\cat T  \arrow[twoheadrightarrow,yshift=-0.75ex]{ll}{i_\rho}
\arrow[twoheadrightarrow,yshift=0.75ex]{rr}{q} &&\cat T/\cat S \arrow[tail,yshift=-0.75ex]{ll}{q_\rho}
\end{tikzcd}
\end{equation}
which is a localisation sequence. We set
\[\Gamma_\cat S\colonequals i\circ
  i_\rho\qquad\text{and}\qquad L_\cat S\colonequals q_\rho \circ
  q\,.\] Then the adjunctions yield for each object $x\in\cat T$ an
exact triangle
\begin{equation}\label{eq:loc-tria}
  \Gamma_\cat S(x)\lto x\lto L_\cat S(x)\lto\Si \Gamma_\cat S(x).
\end{equation}
In particular, there are natural isomorphisms
\[\Gamma_\cat S \Gamma_\cat S\iso \Gamma_\cat S\qquad\text{and}\qquad  L_\cat S\iso L_\cat S L_\cat S\,.\]

\subsection*{Compactly generated triangulated categories}

Let $\cat T$ be a triangulated category that admits arbitrary
coproducts. An object $x$ in $\cat T$ is called \emph{compact} (or
\emph{small}) if for any morphism
$\phi\colon x\to \coprod_{i\in I}y_i$ in $\cat T$ there is a finite
set $J\subseteq I$ such that $\phi$ factors through
$\coprod_{i\in J}y_i$. It is easily checked that $x$ is compact if and
only if the canonical map
\[ \coprod_{i\in I}\Hom_{\cat T}(x,y_i)\lto \Hom_{\cat T}(x,\coprod_{i\in I}y_i)\]
is bijective for all coproducts $\coprod_{i\in I}y_i$ in $\cat T$. It
follows that the compact objects form a thick subcategory of $\cat T$
which is denoted by $\cat T^{\mathrm c}$.

\begin{definition}
  A set $\cat C$ of compact objects is called \emph{compactly
  generating} if $\cat T$ has no proper localising subcategory
containing $\cat C$. In this case $\cat T$ is called \emph{compactly
  generated}.
\end{definition}

We record some basic facts. For the following, see
\cite[Proposition~3.4.15]{Krause:2022a}.

\begin{lemma}\label{le:comp-gen}\pushQED{\qed} 
  Let $\cat T$ be compactly generated by a set $\cat C$ of compact
  objects. Then
\[\cat T^{\mathrm c}=\thick(\cat C)\,.\qedhere\]
\end{lemma}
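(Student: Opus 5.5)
The proof has two inclusions. The inclusion $\thick(\cat C)\subseteq\cat T^{\mathrm c}$ is immediate: the compact objects form a thick subcategory of $\cat T$, as noted just before the definition, and $\cat C\subseteq\cat T^{\mathrm c}$. The work lies in the reverse inclusion, and for it I would use Brown representability together with Neeman's localisation argument. Set $\cat S\colonequals\thick(\cat C)$, which is essentially small, and let $\cat L\colonequals\Loc(\cat C)$ denote the smallest localising subcategory containing $\cat C$. By hypothesis $\cat L=\cat T$.

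First I would show that $\cat C^\perp=0$, where the orthogonal is taken with respect to all shifts. The full subcategory of objects $x$ with $\Hom^*_{\cat T}(x,y)=0$ for a fixed $y$ is localising, because $\Hom(-,y)$ takes coproducts to products and is cohomological. If $y\in\cat C^\perp$, this subcategory contains $\cat C$, hence equals $\cat T$; taking $x=y$ gives $y=0$. Thus $\cat C$ detects objects.

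Next, take a compact object $x$. Since $\cat T$ is compactly generated, Brown representability holds for $\Loc(\cat S)=\cat T$, so $\cat S$ is the compact part of a compactly generated category. I would then pass to the Verdier quotient $\cat T/\cat L'$ with $\cat L'=\Loc(\cat S)$, or equivalently argue inside $\cat T$ directly. The key claim is that every morphism $x\to y$ with $y\in\cat T$ factors through an object of $\cat S$ up to the filtered colimit structure. Concretely, I would write each $y$ as a homotopy colimit of a sequence $y_0\to y_1\to\cdots$, where $y_0$ is a coproduct of shifts of objects of $\cat C$ and each $y_{n+1}$ is obtained from $y_n$ by attaching a coproduct of shifts of objects of $\cat C$ (the standard cellular tower killing $\Hom(\cat C,-)$ of the fibre). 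The detection property from the previous paragraph shows that $\operatorname{hocolim} y_n\to y$ is an isomorphism.

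Apply this to $y=x$. Compactness of $x$ lets the identity $x\to\operatorname{hocolim} x_n$ factor through some $x_n$. Compactness again lets it factor through a finite subcoproduct at each attaching stage. Therefore $x$ is a direct summand of an object built by finitely many extensions from finite coproducts of shifts of objects of $\cat C$, and so $x\in\thick(\cat C)$.

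I expect the main obstacle to be the cellular tower: justifying that $\operatorname{hocolim}$ recovers $y$, and that compactness lets one descend to a finite stage with finitely many cells. If this tower is not available as a cited tool, I would instead cite Neeman's theorem, \cite[Proposition~3.4.15]{Krause:2022a}, which the paper itself references.
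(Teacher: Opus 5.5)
The paper gives no proof of this lemma; it only refers to \cite[Proposition~3.4.15]{Krause:2022a}, which is exactly your fallback. Your direct argument is the standard cellular-tower argument, and most of it is sound. The condition $\Hom^*_{\cat T}(\cat C,y)=0$ forces $y=0$. The tower gives $\operatorname{hocolim}y_n\iso y$; for this you also need that $\Hom(c,-)$ commutes with the homotopy colimit for compact $c\in\cat C$, so that the cone lands in $\cat C^\perp$. For compact $x$, the isomorphism $\Hom(x,\operatorname{hocolim}x_n)\cong\colim_n\Hom(x,x_n)$ makes $\id_x$ factor as $x\xrightarrow{\psi}x_n\to x$.

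The paragraph on Brown representability and the Verdier quotient should be deleted. Asserting that $\cat S$ is ``the compact part'' presupposes the conclusion. Moreover $\cat T/\loc(\cat S)=0$, so the quotient carries information only through full faithfulness of $\cat T^{\mathrm c}/\cat S\to\cat T/\loc(\cat S)$, and that is the very property you are trying to prove.

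The real content is in the sentence ``compactness again lets it factor through a finite subcoproduct at each attaching stage'', and as written it is not a proof. Cones are not functorial, so truncating the coproducts stage by stage does not by itself give a finite complex through which $\psi$ factors. Compactness of $x$ alone is also not enough. What you need is the following lemma.

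Let $\cat A$ be the class of coproducts of shifts of objects of $\cat C$, and $\cat B\subseteq\cat A$ the finite ones. Write $\cat X*\cat Y$ for the objects $w$ admitting a triangle $a\to w\to b\to\Si a$ with $a\in\cat X$ and $b\in\cat Y$. \emph{Lemma:} if $x$ is compact and $w\in\cat A^{*m}$, then every $f\colon x\to w$ factors through an object of $\cat B^{*m}$.

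\emph{Proof.} Induct on $m$; the case $m=1$ is compactness of $x$. Use the octahedral axiom to write $w$ in a triangle $a\to w\to b\xrightarrow{\delta}\Si a$ with $a\in\cat A$ and $b\in\cat A^{*(m-1)}$. This rewriting is needed because your tower attaches the new cells on the other side.
\begin{enumerate}
\item By induction, $x\to w\to b$ factors as $x\xrightarrow{g}b'\xrightarrow{\beta}b$ with $b'\in\cat B^{*(m-1)}$. The object $b'$ is compact, so $\delta\beta$ factors through a finite subcoproduct $\Si a'\to\Si a$, which is a split monomorphism.
\item Complete the resulting map $b'\to\Si a'$ to a triangle $a'\to w'\to b'\to\Si a'$. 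This yields a morphism $\gamma\colon w'\to w$ of triangles.
\item Since $\delta\beta g=0$ and $\Si a'\to\Si a$ is split, $g$ lifts to some $g'\colon x\to w'$.
\item The difference $f-\gamma g'$ vanishes after composing to $b$, so it factors through $a$. By compactness of $x$ it then factors through a finite subcoproduct $a''$.
\end{enumerate}
Hence $f$ factors through $w'\oplus a''\in\cat B^{*m}$. Note that compactness of the finite piece $b'$ is used, not only that of $x$, and the correction term $a''$ is essential.

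Applied to $\psi$, the lemma shows that $\id_x$ factors through an object of $\thick(\cat C)$. A split monomorphism completes to a split triangle, so $x$ is a direct summand of that object and lies in $\thick(\cat C)$. With this lemma supplied, your argument becomes a complete, self-contained proof of what the paper outsources to the literature.
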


We consider the \emph{restricted Yoneda functor}
\[h_\cat T\colon\cat T\lto\Coh\cat T^{\mathrm c},\quad x\mapsto h_x\colonequals\Hom_\cat
  T(-,x)|_{\cat T^{\mathrm c}}\] which is in general neither full nor
faithful. Nonetheless, it is a useful tool. For instance, the functor
\emph{reflects isomorphisms}, so a morphism $\phi$ in $\cat T$ is an
isomorphism if $h_\cat T(\phi)$ is an isomorphism.

Let us fix an adjoint pair of exact functors 
\begin{equation*}
\begin{tikzcd}
\cat T \ar[rr,yshift=2.5,"f"] &&  \ar[ll,yshift=-2.5,"g"]
\cat U
\end{tikzcd}
\end{equation*}
between triangulated categories that admit arbitrary coproducts, and
suppose that $\cat T$ is compactly generated. Then the following
holds.

\begin{lemma}\label{le:comp-gen-adjoints}
  The left adjoint $f$ preserves compactness if and only the right
  adjoint $g$ preserves all coproducts.\qed
\end{lemma}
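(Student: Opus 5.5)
Both implications come down to the adjunction isomorphism
\[\Hom_{\cat U}(f(x),y)\cong\Hom_{\cat T}(x,g(y))\]
combined with the characterisation of compactness via the canonical map $\coprod_i\Hom(x,y_i)\to\Hom(x,\coprod_i y_i)$. Throughout, fix a family $(y_i)_{i\in I}$ in $\cat U$ and write $\sigma\colon\coprod_i g(y_i)\to g(\coprod_i y_i)$ for the canonical comparison morphism in $\cat T$.

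Suppose first that $g$ preserves coproducts, so that $\sigma$ is an isomorphism, and let $x\in\cat T^{\mathrm c}$. The canonical map
\[\coprod_i\Hom_{\cat U}(f(x),y_i)\lto\Hom_{\cat U}\Bigl(f(x),\coprod_i y_i\Bigr)\]
identifies, via the adjunction, with the composite
\[\coprod_i\Hom_{\cat T}(x,g(y_i))\lto\Hom_{\cat T}\Bigl(x,\coprod_i g(y_i)\Bigr)\xrightarrow{\ \sigma_*\ }\Hom_{\cat T}\Bigl(x,g\bigl(\textstyle\coprod_i y_i\bigr)\Bigr).\]
The first map is bijective because $x$ is compact, and the second because $\sigma$ is invertible. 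The one thing to check is naturality of the adjunction isomorphism with respect to the coproduct inclusions, which gives exactly this factorisation. Hence $f(x)$ is compact.

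Conversely, suppose $f$ preserves compactness. It suffices to show that $\sigma$ is invertible, and for this I use that the restricted Yoneda functor $h_{\cat T}$ reflects isomorphisms. So I need $\Hom_{\cat T}(x,\sigma)$ to be bijective for every compact $x$. For compact $x$, the target $\Hom_{\cat T}(x,\coprod_i g(y_i))$ equals $\coprod_i\Hom_{\cat T}(x,g(y_i))$ by compactness of $x$. By adjunction this is $\coprod_i\Hom_{\cat U}(f(x),y_i)$, which equals $\Hom_{\cat U}(f(x),\coprod_i y_i)=\Hom_{\cat T}(x,g(\coprod_i y_i))$ because $f(x)$ is compact. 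The same compatibility as before shows that this chain of bijections is $\Hom_{\cat T}(x,\sigma)$. Therefore $h_{\cat T}(\sigma)$ is an isomorphism, and so is $\sigma$.

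The step needing the most care is this converse, specifically the use that $h_{\cat T}$ reflects isomorphisms, which is where compact generation of $\cat T$ enters. Since the paper only asserts this, I would justify it briefly. Complete $\sigma$ to an exact triangle with cone $c$. If $h_{\cat T}(\sigma)$ is invertible (in all degrees, since $\cat T^{\mathrm c}$ is closed under $\Si$), the long exact sequence gives $\Hom(x,c)=0$ for all compact $x$. The objects $t$ with $\Hom(t,\Si^n c)=0$ for all $n$ form a localising subcategory containing $\cat C$, so they exhaust $\cat T$. In particular $\Hom(c,c)=0$, hence $c=0$ and $\sigma$ is an isomorphism. All remaining verifications are routine naturality checks.
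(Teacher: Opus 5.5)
Your proof is correct and follows the paper's argument. The forward direction uses the same square relating $\coprod_i\Hom_{\cat U}(f(x),y_i)\to\Hom_{\cat U}(f(x),\coprod_i y_i)$ to $\Hom_{\cat T}(x,\coprod_i g(y_i))\to\Hom_{\cat T}(x,g(\coprod_i y_i))$ via the adjunction. For the converse you supply the step the paper only indicates, namely that compact generation lets you detect the comparison morphism $\coprod_i g(y_i)\to g(\coprod_i y_i)$ on compact objects, and your cone argument for why $h_{\cat T}$ reflects isomorphisms is valid.
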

\begin{proof}
  Fix objects $x\in\cat T$ and $\coprod_{i\in I}y_i\in\cat U$, and suppose that $x$ is
  compact.  We consider the following commutative diagram.
  \[\begin{tikzcd}[column sep = scriptsize]
      \coprod_i\Hom_\cat U(f(x),y_i)
      \arrow{rr}{\alpha}\arrow{d}{\wr}&&\Hom_\cat U(f(x),\coprod_i y_i) \arrow{d}{\wr}\\
      \coprod_i\Hom_\cat T(x,g(y_i)) \arrow{r}{\sim}&\Hom_\cat T(x,\coprod_i
      g(y_i))\arrow{r}{\beta}&\Hom_\cat T(x,g\left(\coprod_i y_i\right))
    \end{tikzcd}\]
  Suppose that $g$ preserves coproducts. Then $\beta$ is an isomorphism,
  and therefore $\alpha$ is an isomorphism. Thus $f(x)$ is compact. The
  converse requires that the compact objects of $\cat T$ are generating.
\end{proof}

Now assume that $f$ preserves compactness. Thus there is an adjoint
pair of functors
\begin{equation*}
\begin{tikzcd}
\Coh\cat T^{\mathrm c} \ar[rr,yshift=2.5,"f^*"] &&  \ar[ll,yshift=-2.5,"f_*"]
\Coh\cat U^{\mathrm c}
\end{tikzcd}
\end{equation*}
where $f^*=(f^{\mathrm c})^*$ and $f_*=(f^{\mathrm c})_*$.

\begin{lemma}\label{le:res-yoneda}
  The pair $(f,g)$ induces the following square
\begin{equation*}
  \begin{tikzcd}
     \cat T \ar[rr,yshift=2.5,"f"]\ar[d,swap,"h_{\cat T}"] && \ar[ll,yshift=-2.5,"g"]
     \cat U \ar[d,"h_{\cat U}"]
     \\
  \Coh{\cat T^{\mathrm c}} \ar[rr,yshift=2.5,"f^*"] && \ar[ll,yshift=-2.5,"f_*"]
  \Coh{\cat U^{\mathrm c}}
\end{tikzcd}
\end{equation*}
which commutes up to isomorphisms.
\end{lemma}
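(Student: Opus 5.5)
We need to show two commutativity statements: $h_{\cat U}\circ f\cong f^*\circ h_{\cat T}$ and $h_{\cat T}\circ g\cong f_*\circ h_{\cat U}$. The plan is to treat the right adjoints first, since that square is essentially the adjunction itself, and then handle the left adjoints using filtered colimits. Throughout we use that $f$ restricts to an exact functor $f^{\mathrm c}\colon\cat T^{\mathrm c}\to\cat U^{\mathrm c}$, because $f$ preserves compactness by Lemma~\ref{le:comp-gen-adjoints}.

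\emph{The square with $g$ and $f_*$.} For $y\in\cat U$ and $c\in\cat T^{\mathrm c}$ the adjunction gives
\[
(f_*h_y)(c)=\Hom^*_{\cat U}(f(c),y)\cong\Hom^*_{\cat T}(c,g(y))=h_{g(y)}(c).
\]
This isomorphism is natural in $c$ and in $y$, and it respects the grading because $f$ and $g$ are exact and commute with $\Si$. Hence $f_*\circ h_{\cat U}\cong h_{\cat T}\circ g$.

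\emph{The square with $f$ and $f^*$.} For $x\in\cat T$ we write $h_x$ as a filtered colimit of representables, using the earlier lemma:
\[
h_x=\colim_{H_c\to h_x}H_c,\qquad c\in\cat T^{\mathrm c}.
\]
By Yoneda, the indexing category is equivalent to the slice category $\cat T^{\mathrm c}/x$ of morphisms $c\to x$ with $c$ compact. The definition of $f^*$ then gives
\[
f^*(h_x)=\colim_{c\to x}H_{f(c)},
\]
and there is a canonical comparison map $f^*(h_x)\to h_{f(x)}$ induced by the morphisms $f(c)\to f(x)$.

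To show this map is an isomorphism, we check it after applying $\Hom(-,F')$ for every $F'\in\Coh\cat U^{\mathrm c}$ and conclude by Yoneda. By the adjunction $(f^*,f_*)$ we have $\Hom(f^*h_x,F')\cong\Hom(h_x,f_*F')$. In particular, for $F'=h_{f(x)}$ both sides can be related to the identity, which yields a candidate inverse. A cleaner route is to compute the comparison map directly at each $d\in\cat U^{\mathrm c}$:
\[
\colim_{c\to x}\Hom^*_{\cat U}(d,f(c))\lto\Hom^*_{\cat U}(d,f(x)).
\]

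This direct computation is the main obstacle, because $d$ need not lie in the image of $f$. Surjectivity is immediate only when $d=f(c')$, since then a morphism $f(c')\to f(x)$ corresponds to $c'\to gf(x)$, not to a morphism into $x$. So the plan is to check instead that both functors $x\mapsto f^*h_x$ and $x\mapsto h_{f(x)}$ are left adjoint-like in the following sense: both send coproducts to coproducts, and both are homological on triangles. Then we show the comparison is an isomorphism first for compact $x$, where it is trivial because the slice category $\cat T^{\mathrm c}/x$ has a terminal object. Finally we would extend from compact objects to all of $\cat T$ using compact generation.

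If that extension step resists, the fallback is to establish the isomorphism via the right adjoints: $\Hom(f^*h_x,h_y)\cong\Hom(h_x,h_{g(y)})$ by the first square, and one compares this with $\Hom(h_{f(x)},h_y)$. This comparison is valid at least on the subcategory where restricted Yoneda behaves well, for instance on pure-injective objects $y$, and these cogenerate $\Coh\cat U^{\mathrm c}$, which would be enough to conclude.
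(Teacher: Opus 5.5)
Your main route is the paper's proof. The square with $g$ and $f_*$ is the adjunction isomorphism $\Hom^*_{\cat U}(f(c),y)\cong\Hom^*_{\cat T}(c,g(y))$, and for the square with $f$ and $f^*$ you compare $\colim_{c\to x}H_{f(c)}\to h_{f(x)}$, note it is invertible for compact $x$, and observe that the objects where it is invertible form a localising subcategory containing $\cat T^{\mathrm c}$; that extension step does go through, because both functors are homological and preserve coproducts ($f^*$ being exact and a left adjoint), so your fallback via pure-injectives is unnecessary.
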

\begin{proof}
For $x\in\cat T$ we have \[h_\cat T(x)=\colim_{C\to X}\Hom_\cat T(-,c)\] where
$c\to x$ runs through all morphisms with $c\in\cat T^{\mathrm c}$. From this we
get a functorial morphism
\[(f^*h_\cat T)(x)=f^*(\colim_{C\to X}\Hom_\cat T(-,c))\cong \colim_{C\to
    X}\Hom_\cat U(-,f(c))\lto h_{\cat U}(f(x)).
\]
The objects  $x\in\cat T$ where this is an isomorphism form a localising
subcategory which contains $\cat T^{\mathrm c}$. Thus $f^*h_\cat T\cong h_{\cat
  U}f$. On the other hand, we have for $y\in\cat U$
\[(h_\cat T g)(y)=\Hom_\cat T(-,g(y))|_{\cat T^{\mathrm c}}\cong\Hom_\cat
  U(f^{\mathrm c}-,y)=(f_*h_\cat U)(y).\qedhere\]
\end{proof}

\subsection*{The lattice of localising subcategories}

Let $\cat T$ be a triangulated category and suppose that arbitrary
coproducts exists in $\cat T$. Let $\Loc(\cat T)$ denote the lattice
of localising subcategories of $\cat T$.  This may not be a set, but
the operations $\vee$ and $\wedge$ are well defined.  For a class of
objects $\cat X\subseteq\cat T$ let $\loc(\cat X)$ denote the smallest
localising subcategory containing $\cat X$. Then for a family
$(\cat S_i)_{i\in I}$ in $\Loc(\cat T)$ we have
\[\bigwedge_{i\in I}\cat S_i=\bigcap_{i\in I}\cat
  S_i\qquad\text{and}\qquad \bigvee_{i\in I}\cat S_i=\loc\left(\bigcup_{i\in
    I}\cat S_i\right).\]

Now assume that $\cat T$ is compactly generated. We are interest are
in the localising subcategories that are generated by compact objects.

Recall that an additive functor $\cat C\to\cat D$ is an
\emph{equivalence up to direct summands} if the functor is fully
faithful and every object in $\cat D$ is isomorphic to a direct
summand of an object in the image.

\begin{proposition}\label{pr:bousfield-comp-gen}
  Let $\cat T$ be a compactly generated triangulated category and 
  $\cat S\subseteq \cat T$ a localising subcategory that is generated by a set
  of compact objects in $\cat T$.
\begin{enumerate}
\item
  There is a localisation sequence
\[\begin{tikzcd}
\cat S \arrow[tail,yshift=0.75ex]{rr} &&\cat T  \arrow[twoheadrightarrow,yshift=-0.75ex]{ll}
\arrow[twoheadrightarrow,yshift=0.75ex]{rr} &&\cat T/\cat S \arrow[tail,yshift=-0.75ex]{ll}
\end{tikzcd}\]
and the category $\cat T/\cat S$ is compactly generated.
\item  We have $\cat S^{\mathrm c}=\cat S\cap\cat T^{\mathrm c}$ and the canonical functor
$\cat T^{\mathrm c}\to\cat T/\cat S$ induces a triangle equivalence up to direct
summands
\[(\cat T^{\mathrm c})/(\cat S^{\mathrm c})\lto (\cat T/\cat S)^{\mathrm c}\,.\]
\item The functors $\Gamma_{\cat S}$ and $L_{\cat S}$ preserve all
  coproducts.
  \end{enumerate}
\end{proposition}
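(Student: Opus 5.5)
The plan is to construct the right adjoint of the inclusion $\cat S\to\cat T$ by Brown representability, and then read off the rest from Proposition~\ref{pr:verdier-adjoint} and Lemma~\ref{le:comp-gen-adjoints}. Let $\cat C\subseteq\cat T^{\mathrm c}$ be a set of compact objects with $\cat S=\loc(\cat C)$.

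\emph{Step 1: $\cat S$ is compactly generated.} Objects of $\cat C$ remain compact in $\cat S$, since coproducts in $\cat S$ are computed in $\cat T$. By the definition of $\loc(\cat C)$, no proper localising subcategory of $\cat S$ contains $\cat C$. Hence $\cat S$ is compactly generated by $\cat C$.

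\emph{Step 2: the right adjoint of the inclusion.} The inclusion $i\colon\cat S\to\cat T$ preserves coproducts. By Brown representability for the compactly generated category $\cat S$, for each $x\in\cat T$ the cohomological functor $\Hom_\cat T(i(-),x)$ on $\cat S$, which sends coproducts to products, is representable. This gives a right adjoint $i_\rho$. Proposition~\ref{pr:verdier-adjoint} then yields the right adjoint $q_\rho$ of $q\colon\cat T\to\cat T/\cat S$, hence the localisation sequence \eqref{eq:tria-loc-seq}, together with an equivalence $\cat T/\cat S\simeq\cat S^\perp$.

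\emph{Step 3: coproducts and compact generation of the quotient.} We have $\cat S^\perp=\{x\mid \Hom^*(c,x)=0 \text{ for } c\in\cat C\}$, because the objects $y$ with $\Hom^*(y,x)=0$ form a localising subcategory. This description shows that $\cat S^\perp$ is closed under coproducts, since each $c\in\cat C$ is compact. So $L_\cat S=q_\rho q$ preserves coproducts. Applying the five lemma to the triangle \eqref{eq:loc-tria} shows that $\Gamma_\cat S$ preserves coproducts as well, which is (3). In particular $q_\rho$ and $i_\rho$ preserve coproducts, so by Lemma~\ref{le:comp-gen-adjoints} their left adjoints $q$ and $i$ preserve compactness.

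The images $q(c')$ for $c'$ in a compact generating set of $\cat T$ then generate $\cat T/\cat S$. Indeed, if $\Hom^*(q(c'),y)=0$ for all such $c'$, then $\Hom^*(c',q_\rho y)=0$, so $q_\rho y=0$ and hence $y=0$. For compact objects this vanishing criterion is equivalent to generation in the sense of the definition, via Brown representability or the standard argument with the localisation $\cat L^\perp$ of the localising subcategory $\cat L$ generated by them. This gives (1).

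\emph{Step 4: compact objects, which I expect to be the main obstacle.} Since $i$ preserves compactness, $\cat S^{\mathrm c}\subseteq\cat S\cap\cat T^{\mathrm c}$. The reverse inclusion is clear. By Lemma~\ref{le:comp-gen} this common category equals $\thick(\cat C)$.

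The functor $\cat T^{\mathrm c}/\cat S^{\mathrm c}\to(\cat T/\cat S)^{\mathrm c}$ is well defined because $q$ preserves compactness. Its image generates $(\cat T/\cat S)^{\mathrm c}$, so by Lemma~\ref{le:comp-gen} the target is the thick closure of the image. It therefore suffices to prove full faithfulness, since then the essential image is a triangulated subcategory and its thick closure consists of direct summands.

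For full faithfulness, fix $x,y\in\cat T^{\mathrm c}$. Then
\[\Hom_{\cat T/\cat S}(x,y)=\Hom_\cat T(x,L_\cat S y),\]
and we compare this with $\colim_{y\to y'}\Hom_\cat T(x,y')$ over maps $y\to y'$ with cone in $\cat S^{\mathrm c}$. The key input is to write $\Gamma_\cat S y$ as a homotopy colimit of objects of $\thick(\cat C)$, or equivalently to show that every map $s\to y$ with $s\in\cat S$ factors through a compact object of $\cat S$ when mapped into from $x$. I would argue this using the restricted Yoneda functor: $h_{\Gamma_\cat S y}$ lies in $\Coh\cat S^{\mathrm c}$, that is, it is a filtered colimit of representables $h_u$ with $u\in\cat S^{\mathrm c}$. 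This is proved by checking that the class of objects $s\in\cat S$ for which $h_s$ is such a colimit is localising and contains $\cat C$.

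Granting this, $h_{\Gamma_\cat S y}\cong\Gamma_{\cat S^{\mathrm c}}h_y$, and Lemma~\ref{le:loc-long} identifies $\Hom^*(x,L_\cat S y)$ with $\Hom^*_{\cat T^{\mathrm c}/\cat S^{\mathrm c}}(x,y)$ by the five lemma. Establishing this colimit description is the delicate point, since closure under coproducts and cones inside $\Coh$ needs care.
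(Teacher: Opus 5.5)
Your proposal is correct, and it differs from the paper's route mainly in how full faithfulness is proved.

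\emph{How the paper argues.} It first notes that $\cat S^{\mathrm c}=\thick(\cat C)\subseteq\cat T^{\mathrm c}$ by Lemma~\ref{le:comp-gen}, so the inclusion preserves compactness. By the nontrivial direction of Lemma~\ref{le:comp-gen-adjoints} its right adjoint then preserves coproducts, hence so does $q_\rho$, hence $q$ preserves compactness. Full faithfulness of $(\cat T^{\mathrm c})/(\cat S^{\mathrm c})\to(\cat T/\cat S)^{\mathrm c}$ is then dismissed as ``a cofinality argument'' in the calculus of fractions.

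\emph{How your route differs.} You get coproduct preservation of $L_{\cat S}$ directly from $\cat S^\perp=\cat C^\perp$. You then only need the easy direction of Lemma~\ref{le:comp-gen-adjoints}. You also check compact generation of $\cat T/\cat S$ explicitly, which the paper leaves implicit. For full faithfulness you transport the problem into $\Coh\cat T^{\mathrm c}$ and use the sequence \eqref{eq:coh-loc-seq} together with Lemma~\ref{le:loc-long}. This proves more than needed: $h_\cat T\circ L_{\cat S}\cong L_{\cat S^{\mathrm c}}\circ h_\cat T$ on all objects, not only compact ones, which is essentially Lemma~\ref{le:loc-commute} obtained in passing.

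\emph{What both routes share.} Both rest on the same key fact: every morphism from a compact object into $\cat S$ factors through an object of $\thick(\cat C)$. The paper's cofinality argument needs this too, in order to replace maps $x'\to x$ with cone in $\cat S$ by maps with cone in $\cat S^{\mathrm c}$, but it never states it. Your version makes it explicit.

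\emph{Discharging your ``delicate point''.} You need not verify closure under cones inside $\Coh$ by hand. Once Step~3 shows that $i$ preserves compactness, Lemma~\ref{le:res-yoneda} applies to the pair $(i,i_\rho)$. It gives $h_\cat T(s)\cong i^*h_\cat S(s)$ for $s\in\cat S$, and $i^*$ takes values in filtered colimits of $h_u$ with $u\in\cat S^{\mathrm c}$. Alternatively, a short TR3 lifting argument shows directly that the objects $x$ for which every map $c\to x$ from a compact $c$ factors through $\cat S^{\mathrm c}$ are closed under extensions.

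\emph{Comparison map for the five lemma.} A map $L_{\cat S^{\mathrm c}}h_y\to h_{L_{\cat S}y}$ is available because $h_{L_{\cat S}y}$ vanishes on $\cat S^{\mathrm c}$ and is therefore $L_{\cat S^{\mathrm c}}$-local.
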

\begin{proof}
  The category $\cat S$ is compactly generated and therefore the
  inclusion $\cat S\to\cat T$ admits a right adjoint, by Brown
  representability. Thus we are in the situation of
  Proposition~\ref{pr:verdier-adjoint}. We apply
  Lemma~\ref{le:comp-gen-adjoints}. The inclusion $\cat S\to\cat T$
  preserves compactness and therefore the right adjoint preserves all
  coproducts. It follows that the right adjoint of
  $\cat T\to\cat T/\cat S$ preserves all coproducts, so
  $\cat T\to\cat T/\cat S$ preserves compactness.  Thus the canonical
  functor $\cat T^{\mathrm c}\to\cat T/\cat S$ induces a functor
  $(\cat T^{\mathrm c})/(\cat S^{\mathrm c})\to (\cat T/\cat S)^{\mathrm c}$, and a cofinality
  argument shows that this is fully faithful. Its image is a full
  triangulated subcategory consisting of compact objects generating
  $\cat T/\cat S$. Thus every compact object in $\cat T/\cat S$ is a
  direct summand of an object in this image, by
  Lemma~\ref{le:comp-gen}. The same lemma yields that
  $\cat S^{\mathrm c}=\cat S\cap\cat T^{\mathrm c}$.
\end{proof}

The assignments
\[\cat T^{\mathrm c}\supseteq\cat C\longmapsto\bar{\cat C}\colonequals\loc(\cat
  C)\qquad\text{and}\qquad\cat T\supseteq\cat S\longmapsto \comp(\cat S)\colonequals\cat S\cap\cat
  T^{\mathrm c}\] induce an adjoint pair of order-preserving maps
\[
  \begin{tikzcd}
\Thick(\cat T^{\mathrm c}) \ar[rr,yshift=2.5,"\loc"] &&  \ar[ll,yshift=-2.5,"\comp"]
\Loc(\cat T)
\end{tikzcd}
\]
which plays a significant role. We collect some basic facts. First
observe that $\bar{\cat C}$ is generated by objects that are compact
in $\cat T$; so Proposition~\ref{pr:bousfield-comp-gen} applies.

\begin{lemma}\label{le:loc-comp-gen}
For a thick subcategory  $\cat C\subseteq\cat T^{\mathrm c}$ the following holds.
\begin{enumerate}
\item $\bar{\cat C}$ consists of all objects $x\in\cat T$ such that every morphism
  $\cat T^{\mathrm c}\ni c\to x$ factors through an object in $\cat C$.
\item $\bar{\cat C}^{\mathrm c}=\cat C=\bar{\cat C}\cap\cat T^{\mathrm c}$.     
\item $\bar{\cat C}^\perp$ consists of all objects $x\in\cat T$ such that $\Hom_\cat T(c,x)=0$ for all
  $c\in\cat C$.
\item $\bar{\cat C}^\perp$ is a localising subcategory of $\cat T$.
\end{enumerate}
\end{lemma}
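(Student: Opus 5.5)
The plan is to prove (3) and (4) first, since they are formal, and then derive (1) and (2) from the localisation triangle of Proposition~\ref{pr:bousfield-comp-gen}.

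For (3), observe that the objects $s$ with $\Hom_\cat T(\Si^n s,x)=0$ for all $n$ form a localising subcategory of $\cat T$. It is closed under coproducts because $\Hom_\cat T(\coprod s_i,x)\cong\prod\Hom_\cat T(s_i,x)$, and closed under triangles and summands by the long exact Hom sequence. Since $\cat C$ is closed under $\Si^{\pm1}$, this subcategory contains $\bar{\cat C}=\loc(\cat C)$ as soon as it contains $\cat C$, which gives (3). For (4), the description in (3) is a condition tested against compact objects $c$. So it is closed under coproducts, since $\Hom_\cat T(c,\coprod x_i)=\coprod\Hom_\cat T(c,x_i)$, and closed under cones and summands by exactness of $\Hom_\cat T(c,-)$.

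For (1), let $\cat S=\bar{\cat C}$ and use Proposition~\ref{pr:bousfield-comp-gen}(1). For $x\in\cat T$ this gives a triangle $\Gamma_\cat S x\to x\to L_\cat S x\to$ with $L_\cat S x\in\bar{\cat C}^\perp$. Write $\cat X$ for the class of objects $x$ with the factorisation property. I show $\cat X\subseteq\bar{\cat C}$ and then the reverse.

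To show $\cat X\subseteq\bar{\cat C}$, take $x\in\cat X$ and a morphism $\phi\colon c\to L_\cat S x$ with $c$ compact. I would like to lift $\phi$ to $x$, but that is not directly possible. Instead I use the restricted Yoneda functor. Every morphism $c\to x$ factors through some $c'\in\cat C$, and $\Hom_\cat T(c',L_\cat S x)=0$. Hence $h_x\to h_{L_\cat S x}$ is zero, which gives a surjection $h_{L_\cat S x}\to h_{\Si\Gamma_\cat S x}$-kernel. Concretely, $h_{L_\cat S x}$ embeds into $h_{\Si\Gamma_\cat S x}$ via the exact sequence. So it suffices to show that $h_{L_\cat S x}$ is zero on compacts, because reflection of isomorphisms then gives $L_\cat S x=0$ and $x\cong\Gamma_\cat S x\in\bar{\cat C}$.

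The main obstacle is exactly that last point, showing $h_{L_\cat S x}=0$. I would argue via the functor category. The factorisation property says precisely that $h_x$ lies in $\Coh\cat C\subseteq\Coh\cat T^{\mathrm c}$, by the criterion in the proof of Lemma~\ref{le:U-V-central}. So $h_x$ is a filtered colimit of representables $H_{c'}$ with $c'\in\cat C$. Then $\Gamma_\cat C h_x\cong h_x$, and hence $L_\cat C h_x=0$ by the long exact sequence \eqref{eq:coh-loc-seq}. I then need $h_{L_\cat S x}\cong L_\cat C h_x$. I would prove this from Lemma~\ref{le:res-yoneda} applied to the adjoint pair $(q,q_\rho)$, which is allowed since $q$ preserves compacts. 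Combining it with Proposition~\ref{pr:bousfield-comp-gen}(2) to identify $\Coh((\cat T/\cat S)^{\mathrm c})$ with $\Coh(\cat T^{\mathrm c}/\cat C)$ gives $h_{q_\rho q x}\cong q_*q^*h_x=L_\cat C h_x$.

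For the reverse inclusion $\bar{\cat C}\subseteq\cat X$, I show that $\cat X$ is localising and contains $\cat C$; equivalently, the objects with $h_x\in\Coh\cat C$ form a localising subcategory. This holds because $h$ sends coproducts to coproducts, since compactness makes $h_{\coprod x_i}=\bigoplus h_{x_i}$. Moreover $\Coh\cat C$ is closed under filtered colimits. For triangles I use that $\Gamma_\cat C$ is exact and compare the long exact sequences with the five lemma.

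For (2), take $c\in\bar{\cat C}\cap\cat T^{\mathrm c}$. By (1), $\id_c$ factors through an object of $\cat C$, so $c$ is a summand of an object of $\cat C$ and hence lies in $\cat C$ by thickness. The equality $\bar{\cat C}^{\mathrm c}=\bar{\cat C}\cap\cat T^{\mathrm c}$ is Proposition~\ref{pr:bousfield-comp-gen}(2).
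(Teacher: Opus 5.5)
Your proof is correct and rests on the same core idea as the paper: translate the factorisation property into $h_x\in\Coh\cat C$, and compare with the localisation functors through the restricted Yoneda functor. The organisation differs, though.

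The paper proceeds as follows:
\begin{itemize}
\item It gets (2) directly from Lemma~\ref{le:comp-gen} applied to $\bar{\cat C}$, which is compactly generated by $\cat C$.
\item It then reads off both directions of (1) at once from Lemma~\ref{le:res-yoneda} for the inclusion $\bar{\cat C}\to\cat T$.
\item This gives $h_{\Gamma_{\bar{\cat C}}x}\cong\Gamma_{\cat C}h_x$, using $\Coh(\bar{\cat C}^{\mathrm c})=\Coh\cat C$. So $x\cong\Gamma_{\bar{\cat C}}x$ if and only if $h_x\in\Coh\cat C$.
\end{itemize}

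You instead proceed as follows:
\begin{itemize}
\item For $\cat X\subseteq\bar{\cat C}$ you work on the quotient side, via $h_{L_{\cat S}x}\cong L_{\cat C}h_x$.
\item For $\bar{\cat C}\subseteq\cat X$ you use an elementary closure argument: the objects with $h_x\in\Coh\cat C$ form a localising subcategory.
\item You then deduce (2) from (1) by factoring $\id_c$ through $\cat C$.
\end{itemize}
Your reverse inclusion and your argument for $\bar{\cat C}\cap\cat T^{\mathrm c}\subseteq\cat C$ are more hands-on. The paper's route is shorter because a single identification handles both directions, at the price of needing (2) beforehand.

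There is one ordering point to fix. You identify $\Coh((\cat T/\cat S)^{\mathrm c})$ with $\Coh(\cat T^{\mathrm c}/\cat C)$ via Proposition~\ref{pr:bousfield-comp-gen}(2). That proposition gives $(\cat T^{\mathrm c})/(\cat S\cap\cat T^{\mathrm c})$, so the identification presupposes $\cat S\cap\cat T^{\mathrm c}=\cat C$, which is (2). But you prove (2) afterwards, from (1).

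This is harmless, because your proof of (2) only uses $\bar{\cat C}\subseteq\cat X$, which you establish independently. So either of two fixes works:
\begin{itemize}
\item Prove that inclusion and (2) first, then the forward inclusion.
\item Work with $L_{\cat S\cap\cat T^{\mathrm c}}$ throughout, noting that $\Coh\cat C\subseteq\Coh(\cat S\cap\cat T^{\mathrm c})$ already forces $L_{\cat S\cap\cat T^{\mathrm c}}h_x=0$.
\end{itemize}

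Finally, the remark that $h_{L_\cat S x}$ embeds into $h_{\Si\Gamma_\cat S x}$ is never used and can be dropped.
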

      
\begin{proof}
  (1) Fix $x\in \cat T$. We may view $\Coh\cat C$ as a subcategory of
  $\Coh\cat T^{\mathrm c}$. Then we have $x\in\bar{\cat C}$ if and only if
  $h_x\in\Coh\cat C$; see Lemma~\ref{le:res-yoneda}. This condition
  means that we can write $h_x=\colim h_{x_i}$ with all
  $x_i\in\cat C$. From this the assertion follows.

(2) Apply Lemma~\ref{le:comp-gen}. 

(3) Fix $x\in \cat T$. The kernel of $\Hom^*_\cat T(-,x)$ is a
localising subcategory; so it contains $\bar{\cat C}$ if it contains
$\cat C$.

(4) From (3) it follows that $\bar{\cat C}^\perp$ is closed under
coproducts.
\end{proof}

\begin{lemma}\label{le:loc-commute}
For  a thick subcategory
$\cat C\subseteq\cat T^{\mathrm c}$ we have
\[\Gamma_{\cat C}\circ h_\cat T\cong h_\cat T\circ\Gamma_{\bar{\cat
  C}}\qquad\text{and}\qquad
  L_{\cat C}\circ h_\cat T\cong h_\cat T\circ L_{\bar{\cat C}}\, .\]
\end{lemma}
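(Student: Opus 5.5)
The plan is to apply the restricted Yoneda functor $h_\cat T$ to the localisation triangle \eqref{eq:loc-tria} for $\bar{\cat C}$ and compare the result with the exact sequence \eqref{eq:coh-loc-seq} for $\cat C\subseteq\cat T^{\mathrm c}$. For $x\in\cat T$, the triangle $\Gamma_{\bar{\cat C}}x\to x\to L_{\bar{\cat C}}x\to$ gives a long exact sequence
\[\cdots\lto h_{\Gamma_{\bar{\cat C}}x}\lto h_x\lto h_{L_{\bar{\cat C}}x}\lto\cdots\]
in $\Coh\cat T^{\mathrm c}$. It then suffices to establish two facts: first, that $h_{\Gamma_{\bar{\cat C}}x}$ lies in $\Coh\cat C$, the essential image of $\Gamma_\cat C$; and second, that $h_{L_{\bar{\cat C}}x}$ lies in the essential image of $L_\cat C$, namely $q_*\Coh(\cat T^{\mathrm c}/\cat C)$, which is $\Ker\Gamma_\cat C$ by (L3).

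The first fact follows from Lemma~\ref{le:loc-comp-gen}(1). Since $\Gamma_{\bar{\cat C}}x\in\bar{\cat C}$, every morphism $c\to\Gamma_{\bar{\cat C}}x$ with $c$ compact factors through an object of $\cat C$. By the criterion in the proof of Lemma~\ref{le:U-V-central}, this means $h_{\Gamma_{\bar{\cat C}}x}\in\Coh\cat C$.

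For the second fact, note that $L_{\bar{\cat C}}x\in\bar{\cat C}^\perp$. Lemma~\ref{le:loc-comp-gen}(3) then gives $\Hom^*(c,L_{\bar{\cat C}}x)=0$ for all $c\in\cat C$, so $i_*h_{L_{\bar{\cat C}}x}=0$ and hence $\Gamma_\cat C h_{L_{\bar{\cat C}}x}=0$. To see that this functor is $L_\cat C$-local, apply the sequence \eqref{eq:coh-loc-seq} to $F=h_{L_{\bar{\cat C}}x}$: the $\Gamma_\cat C$ terms vanish, so $F\iso L_\cat C F$.

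With both facts in hand, the comparison works as follows. In the long exact sequence above, the outer terms are respectively $\Gamma_\cat C$-torsion and $L_\cat C$-local. Apply $\Gamma_\cat C$ to it. Because $\Gamma_\cat C$ is exact, $\Gamma_\cat C h_{L_{\bar{\cat C}}x}=0$ in all shifts, and $\Gamma_\cat C$ is the identity on $\Coh\cat C$. This yields $h_{\Gamma_{\bar{\cat C}}x}\iso\Gamma_\cat C h_x$. Dually, applying $L_\cat C$ kills the torsion term, giving $L_\cat C h_x\iso h_{L_{\bar{\cat C}}x}$. Naturality in $x$ comes from the functoriality of the triangles and of the unit and counit.

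The main obstacle is the exactness bookkeeping. A long exact sequence in $\Coh\cat T^{\mathrm c}$ with torsion and local outer terms does not by itself split into short exact pieces. The cleanest route is to verify directly that the induced maps $\Gamma_\cat C h_x\to h_x$ and $h_x\to L_\cat C h_x$ agree, under the identifications above, with the maps induced from the triangle. One can do this using the universal property: $\Gamma_\cat C$ is right adjoint to the inclusion of $\Coh\cat C$ via the counit, and $L_\cat C$ is left adjoint to the inclusion of local objects. Concretely, one checks that $h_{\Gamma_{\bar{\cat C}}x}\to h_x$ induces a bijection on $\Hom(G,-)$ for $G\in\Coh\cat C$. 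This holds because $\Hom(G,h_{\Si^n L_{\bar{\cat C}}x})=0$ for torsion $G$: $G$ is a filtered colimit of objects $h_c$ with $c\in\cat C$, and for each such $c$ we have $\Hom(h_c,h_{L_{\bar{\cat C}}x})=0$ by Yoneda. The case of $L$ is dual.
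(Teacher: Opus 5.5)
Your argument is correct, but it takes a different route from the paper's. The paper's proof is a formal consequence of Lemma~\ref{le:res-yoneda}. Applied to the adjoint pair $(i,i_\rho)$ for the inclusion $i\colon\bar{\cat C}\to\cat T$, using $\bar{\cat C}^{\mathrm c}=\cat C$, it gives $h_\cat T\, i\, i_\rho\cong i^*h_{\bar{\cat C}}\, i_\rho\cong i^*i_*h_\cat T$. Applied to $(q,q_\rho)$ for $q\colon\cat T\to\cat T/\bar{\cat C}$, using $(\cat T/\bar{\cat C})^{\mathrm c}\simeq\cat T^{\mathrm c}/\cat C$ up to direct summands (Proposition~\ref{pr:bousfield-comp-gen}), it gives $h_\cat T\, q_\rho\, q\cong q_*q^*h_\cat T$.

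You instead apply $h_\cat T$ to the localisation triangle. You show the outer terms lie in $\Coh\cat C$ and in $\Ker\Gamma_{\cat C}$, and then apply the exact functors $\Gamma_{\cat C}$ and $L_{\cat C}$. What your route buys: you never need to identify the compact objects of $\cat T/\bar{\cat C}$, and the isomorphisms are visibly compatible with the counit $\Gamma_{\cat C}\to\id$ and the unit $\id\to L_{\cat C}$. What the paper's route buys: it is shorter and treats $\Gamma$ and $L$ uniformly. Note that your first fact rests on Lemma~\ref{le:loc-comp-gen}(1), whose proof in the paper itself goes through Lemma~\ref{le:res-yoneda}. To be independent of it, observe that the objects $x$ with $q^*h_x=0$ form a localising subcategory containing $\cat C$, and then use (L3).

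Your final paragraph is not needed. After applying $\Gamma_{\cat C}$ (resp.\ $L_{\cat C}$), every third term of the long exact sequence vanishes. That alone forces the remaining map to be injective and surjective, so nothing has to split into short exact sequences.

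If you keep the universal-property argument anyway, its justification needs fixing. $\Hom(G,-)$ is only left exact for non-representable $G$, so the vanishing of $\Hom(G,h_{\Si^n L_{\bar{\cat C}}x})$ does not by itself give bijectivity. The correct reason is that $h_{\Gamma_{\bar{\cat C}}x}(c)\to h_x(c)$ is bijective for each $c\in\cat C$, and $\Hom(G,-)$ is a limit of such evaluations.
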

\begin{proof}
This follows from Lemma~\ref{le:res-yoneda}, applied to the pair of canonical functors
$\bar{\cat C}\to\cat T$ and $\cat T\to \cat T/\bar{\cat C}$.
\end{proof}

Next we show that the assignment $\cat C\mapsto\bar{\cat C}$ preserves
arbitrary joins, while preserving meets requires an assumption.

\begin{lemma}
For any set of thick subcategories
$\cat C_i\subseteq\cat T^{\mathrm c}$ we have
\[\loc\left(\bigvee_i \cat C_i\right)=\bigvee_i\loc(\cat C_i).\]
\end{lemma}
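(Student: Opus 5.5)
We argue by double inclusion, using only that $\loc$ is the left adjoint of $\comp$ and the inductive description of thick closures. Write $\cat C=\bigvee_i\cat C_i=\thick\bigl(\bigcup_i\cat C_i\bigr)$, a thick subcategory of $\cat T^{\mathrm c}$.

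For the inclusion $\supseteq$: each $\cat C_i$ is contained in $\cat C$, so $\loc(\cat C_i)\subseteq\loc(\cat C)$ for every $i$. Since $\loc(\cat C)$ is localising, it contains the smallest localising subcategory containing all the $\loc(\cat C_i)$, which is $\bigvee_i\loc(\cat C_i)$.

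For the inclusion $\subseteq$: set $\cat S=\bigvee_i\loc(\cat C_i)=\loc\bigl(\bigcup_i\loc(\cat C_i)\bigr)$. It suffices to show $\cat C\subseteq\cat S$, because $\loc(\cat C)$ is the smallest localising subcategory containing $\cat C$. Now $\cat S$ is localising, hence thick, and it contains $\bigcup_i\cat C_i$. Therefore it contains $\thick\bigl(\bigcup_i\cat C_i\bigr)=\cat C$, using the inductive construction of $\thick$: $\cat S$ is closed under extensions, (de)suspensions and direct summands. Equivalently, by the adjunction $(\loc,\comp)$, $\cat C\subseteq\cat S$ means $\cat C\subseteq\comp(\cat S)$. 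This holds because $\comp(\cat S)=\cat S\cap\cat T^{\mathrm c}$ is a thick subcategory of $\cat T^{\mathrm c}$ containing every $\cat C_i$, and so contains their join in $\Thick(\cat T^{\mathrm c})$.

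Alternatively, the whole statement is the formal fact recalled in Section 1 that a left adjoint between posets preserves all joins, applied to the adjoint pair $(\loc,\comp)$ between $\Thick(\cat T^{\mathrm c})$ and $\Loc(\cat T)$. The only point to check is that this pair really is adjoint, i.e.\ $\loc(\cat C)\subseteq\cat S\iff\cat C\subseteq\cat S\cap\cat T^{\mathrm c}$. This is immediate, since $\cat C$ consists of compact objects and $\loc(\cat C)$ is the smallest localising subcategory containing $\cat C$.

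The only step needing any care is being explicit that the join is taken in $\Thick(\cat T^{\mathrm c})$, i.e.\ the thick closure inside $\cat T^{\mathrm c}$, and that this closure stays inside $\cat S$. This follows because $\cat T^{\mathrm c}$ is thick in $\cat T$, so thick closure inside $\cat T^{\mathrm c}$ agrees with thick closure inside $\cat T$.
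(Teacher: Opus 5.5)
Your proof is correct, but it follows a different route from the paper's. The paper compares compact parts. Both $\loc(\bigvee_i\cat C_i)$ and $\bigvee_i\loc(\cat C_i)$ are localising subcategories generated by compact objects, and Lemma~\ref{le:comp-gen} shows that each has $\bigvee_i\cat C_i$ as its class of compact objects. A localising subcategory generated by compacts is the localising closure of its compacts, so the two agree.

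Your argument is purely formal: a double inclusion using only closure properties, or equivalently the observation that $\loc$ is left adjoint to $\comp$ and therefore preserves joins. The advantage of your route is that it bypasses Lemma~\ref{le:comp-gen}, which is a genuinely non-formal fact about compactly generated categories. Compactness plays no role in your argument, and the same reasoning gives $\loc(\thick(\bigcup_i\cat X_i))=\bigvee_i\loc(\cat X_i)$ for arbitrary classes $\cat X_i\subseteq\cat T$.

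The paper's route, in exchange, records that the compact objects of $\bigvee_i\loc(\cat C_i)$ are exactly $\bigvee_i\cat C_i$, in line with $\comp\circ\loc=\id$ from Lemma~\ref{le:loc-comp-gen}. Note, however, that applying Lemma~\ref{le:comp-gen} to $\bigvee_i\loc(\cat C_i)$ already presupposes $\loc(\bigcup_i\loc(\cat C_i))=\loc(\bigcup_i\cat C_i)$. That is precisely the kind of closure argument you give.
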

\begin{proof}
  It follows from Lemma~\ref{le:comp-gen} that both localising
  subcategories have the same class of compact objects; so they agree.
\end{proof}

\begin{proposition}\label{pr:loc-commuting}
  For  a pair of commuting thick subcategories
  $\cat C,\cat D$ of $\cat T^{\mathrm c}$  there are isomorphisms
\[\Gamma_{\bar{\cat C}}\Gamma_{\bar{\cat D}}\leftiso\Gamma_{\overline{\cat C\wedge\cat
      D}}\iso \Gamma_{\bar{\cat D}}\Gamma_{\bar{\cat
      C}}\qquad\text{and}\qquad L_{\bar{\cat C}}L_{\bar{\cat D}}\iso L_{\overline{\cat C\vee\cat
      D}}\leftiso L_{\bar{\cat D}}L_{\bar{\cat C}}\,.\]
Moreover, there is an isomorphism
\[\Gamma_{\bar{\cat C}} L_{\bar{\cat D}}\cong L_{\bar{\cat D}}\Gamma_{\bar{\cat C}}\,.\]
\end{proposition}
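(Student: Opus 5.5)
The strategy is to transport the isomorphisms of functors on $\Coh\cat T^{\mathrm c}$, which hold because $\cat C,\cat D$ commute (see \eqref{eq:commuting}), up to $\cat T$. Two tools make this work: Lemma~\ref{le:loc-commute}, which intertwines $\Gamma_{\cat C},L_{\cat C}$ with $\Gamma_{\bar{\cat C}},L_{\bar{\cat C}}$ via the restricted Yoneda functor $h_\cat T$, and the fact that $h_\cat T$ reflects isomorphisms.

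First, since $\cat C\wedge\cat D\subseteq\cat C$, we have $\overline{\cat C\wedge\cat D}\subseteq\bar{\cat C}$. So for $x\in\cat T$ the object $\Gamma_{\overline{\cat C\wedge\cat D}}x$ lies in $\bar{\cat C}\cap\bar{\cat D}$. Hence the counits give canonical morphisms
\[\Gamma_{\overline{\cat C\wedge\cat D}}=\Gamma_{\bar{\cat C}}\Gamma_{\overline{\cat C\wedge\cat D}}\lto\Gamma_{\bar{\cat C}}\Gamma_{\bar{\cat D}},\]
obtained by applying $\Gamma_{\bar{\cat C}}$ to the counit of $\Gamma_{\bar{\cat D}}$ (or directly via the universal property of the right adjoint). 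Dually, $\bar{\cat C}\subseteq\overline{\cat C\vee\cat D}$ gives a canonical morphism $L_{\bar{\cat C}}L_{\bar{\cat D}}\to L_{\overline{\cat C\vee\cat D}}$, using that $L_{\overline{\cat C\vee\cat D}}$ inverts the units of $L_{\bar{\cat C}}$ and $L_{\bar{\cat D}}$. Applying $h_\cat T$ and using Lemma~\ref{le:loc-commute} twice, these morphisms become the canonical maps $\Gamma_{\cat C\wedge\cat D}h_x\to\Gamma_\cat C\Gamma_\cat D h_x$ and $L_\cat C L_\cat D h_x\to L_{\cat C\vee\cat D}h_x$ in $\Coh\cat T^{\mathrm c}$. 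These are isomorphisms by the commuting hypothesis and \eqref{eq:commuting}. Since $h_\cat T$ reflects isomorphisms, the original maps are isomorphisms. By symmetry in $\cat C,\cat D$ the same holds with the roles exchanged.

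For the last claim I would define a comparison morphism $\Gamma_{\bar{\cat C}}L_{\bar{\cat D}}\to L_{\bar{\cat D}}\Gamma_{\bar{\cat C}}$. One route is to apply $\Gamma_{\bar{\cat C}}$ to the triangle \eqref{eq:loc-tria} for $L_{\bar{\cat D}}$ and compare it with $L_{\bar{\cat D}}$ applied to the triangle for $\Gamma_{\bar{\cat C}}$. More cleanly, I would observe that $\Gamma_{\bar{\cat C}}L_{\bar{\cat D}}x\in\bar{\cat C}$ and use the isomorphisms already proved to show that $\Gamma_{\bar{\cat C}}L_{\bar{\cat D}}x$ lies in $\bar{\cat D}^\perp$. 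That is, $\Gamma_{\bar{\cat D}}\Gamma_{\bar{\cat C}}L_{\bar{\cat D}}\cong\Gamma_{\bar{\cat C}}\Gamma_{\bar{\cat D}}L_{\bar{\cat D}}=0$. So the unit $\Gamma_{\bar{\cat C}}L_{\bar{\cat D}}x\to L_{\bar{\cat D}}\Gamma_{\bar{\cat C}}L_{\bar{\cat D}}x$ is invertible. Composing with $L_{\bar{\cat D}}$ applied to the counit map $\Gamma_{\bar{\cat C}}x\to x$, suitably arranged, gives a natural map $L_{\bar{\cat D}}\Gamma_{\bar{\cat C}}x\to\Gamma_{\bar{\cat C}}L_{\bar{\cat D}}x$. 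Under $h_\cat T$ this becomes the isomorphism $L_\cat D\Gamma_\cat C\cong\Gamma_\cat C L_\cat D$ of \eqref{eq:commuting}, and reflection of isomorphisms finishes the argument.

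The main obstacle I expect is making the comparison morphisms genuinely natural and checking that their images under $h_\cat T$ coincide with the canonical maps on $\Coh\cat T^{\mathrm c}$, rather than merely establishing an abstract isomorphism of objects. The isomorphism from Lemma~\ref{le:loc-commute} must be compatible with the counits and units. This compatibility should follow from the construction in Lemma~\ref{le:res-yoneda}, where $h_\cat T$ intertwines the adjunctions $(i,i_\rho)$ and $(i^*,i_*)$, but it needs to be stated carefully.
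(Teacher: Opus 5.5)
Your proposal is correct and is essentially the paper's proof. You build the canonical comparison maps, apply $h_\cat T$, identify the results via Lemma~\ref{le:loc-commute} with the isomorphisms on $\Coh\cat T^{\mathrm c}$ given by commutativity and \eqref{eq:commuting}, and conclude since $h_\cat T$ reflects isomorphisms; the paper is equally terse about compatibility with units and counits.

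For the last claim you can bypass $h_\cat T$ altogether. Once $\Gamma_{\bar{\cat C}}L_{\bar{\cat D}}x\in\bar{\cat D}^\perp$, apply $\Gamma_{\bar{\cat C}}$ to $\Gamma_{\bar{\cat D}}x\to x\to L_{\bar{\cat D}}x\to$. The first term of the resulting triangle, $\Gamma_{\bar{\cat C}}\Gamma_{\bar{\cat D}}x\cong\Gamma_{\overline{\cat C\wedge\cat D}}x$, lies in $\bar{\cat D}$, so by uniqueness this is the localisation triangle of $\Gamma_{\bar{\cat C}}x$. That gives $L_{\bar{\cat D}}\Gamma_{\bar{\cat C}}x\cong\Gamma_{\bar{\cat C}}L_{\bar{\cat D}}x$ directly, with an explicit natural comparison map.
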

\begin{proof}
Using Lemma~\ref{le:loc-commute} we deduce from the commutativity of
$\cat C$ and $\cat D$ that there  are isomorphisms
\[h_\cat T\Gamma_{\bar{\cat C}}\Gamma_{\bar{\cat D}}\leftiso h_\cat
  T\Gamma_{\overline{\cat C\wedge\cat D}}\iso h_\cat
  T\Gamma_{\bar{\cat D}}\Gamma_{\bar{\cat C}}\,.\] This yields the
first pair of isomorphisms since $h_\cat T$ reflects isomorphisms. For
the other isomorphisms the proof is analogous, using \eqref{eq:commuting}.
\end{proof}

\begin{corollary}
 For  a pair of commuting thick subcategories
  $\cat C,\cat D$ of $\cat T^{\mathrm c}$ we have
  \[\loc(\cat C\wedge\cat D)=\loc(\cat C)\wedge\loc(\cat D)\,.\]
\end{corollary}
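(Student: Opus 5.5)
The inclusion $\loc(\cat C\wedge\cat D)\subseteq\loc(\cat C)\wedge\loc(\cat D)$ holds because $\loc$ is order preserving, so the work is in the reverse inclusion. For that I will use the isomorphisms of Proposition~\ref{pr:loc-commuting}, which say that $\Gamma_{\overline{\cat C\wedge\cat D}}\to\Gamma_{\bar{\cat C}}\Gamma_{\bar{\cat D}}$ is invertible.

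Let $x\in\loc(\cat C)\wedge\loc(\cat D)=\bar{\cat C}\cap\bar{\cat D}$. Since $x\in\bar{\cat D}$, the counit $\Gamma_{\bar{\cat D}}x\to x$ is an isomorphism; this follows from the exact triangle \eqref{eq:loc-tria}, because $L_{\bar{\cat D}}x\in\bar{\cat D}^\perp$ receives a morphism from $x\in\bar{\cat D}$ that must vanish, so $L_{\bar{\cat D}}x=0$. Likewise $\Gamma_{\bar{\cat C}}x\iso x$, and therefore $\Gamma_{\bar{\cat C}}\Gamma_{\bar{\cat D}}x\cong x$. Proposition~\ref{pr:loc-commuting} then gives $\Gamma_{\overline{\cat C\wedge\cat D}}x\cong x$.

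The object $\Gamma_{\overline{\cat C\wedge\cat D}}x$ lies in $\overline{\cat C\wedge\cat D}=\loc(\cat C\wedge\cat D)$, since $\Gamma$ is the composite of the inclusion with its right adjoint. Hence $x\in\loc(\cat C\wedge\cat D)$.

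The one point to check is that the composite of the canonical counits is compatible with the isomorphism of Proposition~\ref{pr:loc-commuting}, so that $x$ really is isomorphic to an object of $\overline{\cat C\wedge\cat D}$. Strictly, this is not needed: any object isomorphic to $\Gamma_{\overline{\cat C\wedge\cat D}}x$ lies in the localising subcategory $\overline{\cat C\wedge\cat D}$, which is closed under isomorphisms. So an abstract isomorphism suffices and there is no real obstacle.

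There is also an alternative route via Lemma~\ref{le:loc-comp-gen}(1) and Lemma~\ref{le:U-V-central}: factor morphisms $c\to x$ through objects of $\cat C$ and then of $\cat D$. The argument above avoids this.
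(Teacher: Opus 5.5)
Your proof is correct and is the paper's argument: the paper also combines the criterion $x\in\cat S\iff\Gamma_{\cat S}(x)\cong x$ for localising $\cat S$ with the isomorphism $\Gamma_{\overline{\cat C\wedge\cat D}}\cong\Gamma_{\bar{\cat C}}\Gamma_{\bar{\cat D}}$ from Proposition~\ref{pr:loc-commuting}. Your step ``the map $x\to L_{\bar{\cat D}}x$ vanishes, so $L_{\bar{\cat D}}x=0$'' implicitly uses the universal property of this unit as a reflection into $\bar{\cat D}^\perp$. Alternatively, note from the triangle that $L_{\bar{\cat D}}x\in\bar{\cat D}\cap\bar{\cat D}^\perp=0$.
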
  
\begin{proof}
  For $x\in\cat T$ and a localising subcategory
  $\cat S\subseteq\cat T$ we have $x\in\cat S$ if and only if
  $\Gamma_\cat S(x)\cong x$. With this observation the  assertion
  follows from Proposition~\ref{pr:loc-commuting}
\end{proof}

Any pair of commuting localising subcategories yields a pair of
\emph{Mayer--Vietoris triangles}.

\begin{proposition}\label{pr:loc-MV}
  Let $\cat U$ and $\cat V$ be localising subcategories of $\cat T$
  that are generated by compact objects. Suppose that $\cat U^{\mathrm c}$ and
  $\cat V^{\mathrm c}$ commute. Then for each $x\in\cat T$ there are the
  following canonical exact triangles:
\[
  \Gamma_{\cat U\wedge\cat
    V}(x)\lto\Gamma_{\cat U}(x)\oplus\Gamma_{\cat V}(x)\lto\Gamma_{\cat U\vee\cat
    V}(x)\lto
\]
and
\[
  L_{\cat U\wedge\cat V}(x)\lto L_{\cat U}(x)\oplus L_{\cat V}(x)\lto L_{\cat
    U\vee\cat V}(x)\lto
\]
\end{proposition}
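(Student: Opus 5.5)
Set $\cat C\colonequals\cat U^{\mathrm c}$ and $\cat D\colonequals\cat V^{\mathrm c}$. These are thick subcategories of $\cat T^{\mathrm c}$ with $\cat U=\bar{\cat C}$ and $\cat V=\bar{\cat D}$ by Lemma~\ref{le:loc-comp-gen}. The preceding corollary gives $\cat U\wedge\cat V=\overline{\cat C\wedge\cat D}$, and the lemma on joins gives $\cat U\vee\cat V=\overline{\cat C\vee\cat D}$. So all four localising subcategories in the statement are compactly generated, and their $\Gamma$ and $L$ functors exist by Proposition~\ref{pr:bousfield-comp-gen}. The plan is to mimic the proof of Proposition~\ref{pr:MV}, but at the level of triangles in $\cat T$ rather than functors on $\Coh$.

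\emph{Step 1: the $\Gamma$ triangle.} Inclusions of localising subcategories $\cat S\subseteq\cat S'$ give canonical maps $\Gamma_{\cat S}\to\Gamma_{\cat S'}$ and $L_{\cat S}\to L_{\cat S'}$ compatible with \eqref{eq:loc-tria}. Apply the octahedral axiom to the composite $\Gamma_{\cat U}\Gamma_{\cat V}x\to\Gamma_{\cat V}x\to x$. Proposition~\ref{pr:loc-commuting} identifies $\Gamma_{\cat U}\Gamma_{\cat V}\cong\Gamma_{\cat U\wedge\cat V}$ and $L_{\cat U}\Gamma_{\cat V}\cong\Gamma_{\cat V}L_{\cat U}$. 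The cone of $\Gamma_{\cat U\wedge\cat V}x\to\Gamma_{\cat V}x$ is then $L_{\cat U}\Gamma_{\cat V}x$, and the cone of $\Gamma_{\cat U}x\to x$ is $L_{\cat U}x$.

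It remains to show that the induced map $L_{\cat U}\Gamma_{\cat V}x\to L_{\cat U}x$ has cone $L_{\cat U}L_{\cat V}x\cong L_{\cat U\vee\cat V}x$. This holds because $L_{\cat U}$ is exact and $L_{\cat U}L_{\cat V}\cong L_{\cat U\vee\cat V}$ by Proposition~\ref{pr:loc-commuting}. Equivalently, the square
\[\begin{tikzcd}
\Gamma_{\cat U\wedge\cat V}x\arrow{r}\arrow{d}&\Gamma_{\cat V}x\arrow{d}\\
\Gamma_{\cat U}x\arrow{r}&\Gamma_{\cat U\vee\cat V}x
\end{tikzcd}\]
has isomorphic horizontal cones, namely $L_{\cat U}\Gamma_{\cat V}x$ and $L_{\cat U}\Gamma_{\cat U\vee\cat V}x$. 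For the second identification I will use the analogue of \eqref{eq:commuting} in $\cat T$, namely $\Gamma_{\cat V}L_{\cat U}\cong\Gamma_{\cat U\vee\cat V}L_{\cat U}$. This analogue should be obtained by applying $h_{\cat T}$, using Lemma~\ref{le:loc-commute} together with \eqref{eq:commuting} for $\cat C,\cat D$, and then using that $h_{\cat T}$ reflects isomorphisms. A square whose parallel cones are identified by the induced map is homotopy cartesian. The standard lemma (e.g.\ the one used in Proposition~\ref{pr:MV}, cf.\ \cite[Lemma~10.7.4]{Dieck:2008a}, or Neeman's homotopy-cartesian criterion) then yields the Mayer--Vietoris triangle
\[\Gamma_{\cat U\wedge\cat V}x\lto\Gamma_{\cat U}x\oplus\Gamma_{\cat V}x\lto\Gamma_{\cat U\vee\cat V}x\lto\Si\Gamma_{\cat U\wedge\cat V}x.\]

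\emph{Step 2: the $L$ triangle.} This follows in the same way from the square $L_{\cat U\wedge\cat V}x\to L_{\cat U}x, L_{\cat V}x\to L_{\cat U\vee\cat V}x$. Alternatively, it can be deduced from Step 1 by a $3\times 3$ diagram, using the triangles \eqref{eq:loc-tria} for the four subcategories together with the morphism of triangles $x\to x\oplus x\to x$ (the diagonal and the difference map).

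\emph{Main obstacle.} The delicate point is that the maps of triangles must come from the canonical maps and must actually assemble into a homotopy cartesian square. Exactness on the level of $h_{\cat T}$ is clear from Proposition~\ref{pr:MV} and Lemma~\ref{le:loc-commute}. However, $h_{\cat T}$ is not faithful, so exactness there does not by itself produce a triangle in $\cat T$. I will therefore build the triangle directly with the octahedral axiom as above. I will then use $h_{\cat T}$ and the reflection of isomorphisms only to verify that the comparison map between cones is invertible, which is a check of isomorphisms and therefore legitimate.
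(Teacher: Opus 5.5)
\textbf{There is a genuine gap in Step 1.} The problem is the sentence ``a square whose parallel cones are identified by the induced map is homotopy cartesian.'' Take a commutative square $f\colon A\to B$, $g\colon C\to D$, $a\colon A\to C$, $b\colon B\to D$ that extends to a morphism of exact triangles $(a,b,c)$ with $c$ invertible. Such a square need not be homotopy cartesian.

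Here is a counterexample in $\bfD(\bbZ)$.
\begin{itemize}
\item Let $A=\bbZ$, $B=\bbZ/2$, $C=\bbZ/8$, and let $f$ and $a'$ be the projections.
\item Let $A\xrightarrow{\binom{a'}{f}}C\oplus B\xrightarrow{(g,-b)}D\to\Si A$ be an exact triangle. This square is homotopy cartesian, so it extends to a morphism of triangles $(a',b,c)$ with $c$ invertible.
\item Put $a=5a'$. Then $ga=bf$. Moreover $(a,b,c)$ is still a morphism of triangles, because $a-a'=nf$ with $n(1)=4$, and $\Si f\circ w=0$, where $w$ is the third map of the triangle on $f$.
\item Nevertheless $(f,g,a,b)$ is not homotopy cartesian. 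Two exact triangles with the same second map $(g,-b)$ would force $\binom{a'}{f}=\binom{a}{f}\theta$ with $\theta=\pm1$. That means $(1,1)=\pm(5,1)$ in $\bbZ/8\oplus\bbZ/2$, which is false.
\end{itemize}
The octahedral axiom gives only a weaker statement: the square becomes homotopy cartesian after replacing $a$ by $a+mf$ for some $m\colon B\to C$ with $gmf=0$. Equivalently, one may replace $b$ by $b-gm$. \cite[Lemma~10.7.4]{Dieck:2008a} concerns long exact sequences and does not produce triangles, as you note yourself.

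\textbf{How to close the gap.} In your situation the correction term vanishes, and this is the missing idea.
\begin{itemize}
\item For the $\Gamma$-square, the correction is $e=mf\colon\Gamma_{\cat U\wedge\cat V}x\to\Gamma_{\cat U}x$, and it satisfies $g\circ e=0$, where $g\colon\Gamma_{\cat U}x\to\Gamma_{\cat U\vee\cat V}x$. For any $u\in\cat U$, composition with $g$ is a bijection $\Hom(u,\Gamma_{\cat U}x)\to\Hom(u,\Gamma_{\cat U\vee\cat V}x)$, since both sides identify with $\Hom(u,x)$ via the counits. Taking $u=\Gamma_{\cat U\wedge\cat V}x\in\cat U$ gives $e=0$, so the canonical square is homotopy cartesian.
\item For the $L$-triangle, take the horizontal maps to be $f\colon L_{\cat U\wedge\cat V}x\to L_{\cat V}x$ and $g\colon L_{\cat U}x\to L_{\cat U\vee\cat V}x$. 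Their cones are $\Si\Gamma_{\cat V}L_{\cat U\wedge\cat V}x$ and $\Si\Gamma_{\cat V}L_{\cat U}x$, identified via \eqref{eq:commuting}. Use the form $b-gm$. Here $f$ is the $\cat V$-localisation of $L_{\cat U\wedge\cat V}x$, and $gm$ lands in $L_{\cat U\vee\cat V}x\in\cat V^\perp$. Since precomposition with $f$ is injective on maps into $\cat V^\perp$, the relation $gm\circ f=0$ forces $gm=0$.
\end{itemize}
Your $3\times 3$ alternative for Step~2 has the same kind of subtlety and should be avoided.

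\textbf{Comparison with the paper.} With these additions your route is a correct alternative. The paper instead completes $\Gamma_{\cat U\wedge\cat V}(x)\to\Gamma_{\cat U}(x)\oplus\Gamma_{\cat V}(x)$ to a triangle. It then checks, via $h_{\cat T}$ and Proposition~\ref{pr:MV}, that a comparison map to $\Gamma_{\cat U\vee\cat V}(x)$ is invertible. Your version works directly with localisation triangles and adjunctions in $\cat T$. It uses $h_{\cat T}$ only to establish the isomorphism $\Gamma_{\cat V}L_{\cat U}\cong\Gamma_{\cat U\vee\cat V}L_{\cat U}$ (and its $L$-analogue).
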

\begin{proof}
  We use the fact that both sequences induce long exact sequences in
  $\Coh\cat T^{\mathrm c}$ which identify with the Mayer--Vietoris
  sequences from Proposition~\ref{pr:MV}; this follows from
  Lemma~\ref{le:loc-commute}. Completing
  $\Gamma_{\cat U\wedge\cat V}(x)\to\Gamma_{\cat
    U}(x)\oplus\Gamma_{\cat V}(x)$ to an exact triangle
\[\Gamma_{\cat U\wedge\cat
    V}(x)\lto\Gamma_{\cat U}(x)\oplus\Gamma_{\cat V}(x)\lto y\lto\]
yields a comparison morphism $y\to\Gamma_{\cat U\vee\cat V}(x)$, and
using the above observation it is straightforward to show that this is
an isomorphism.  For the second triangle the proof is analogous.
\end{proof}

\subsection*{Local cohomology functors}

Let $\cat T$ be a compactly generated triangulated category. Also, let
$X$ be a topological space and write $\Omega(X)$ for its frame of open
subsets. 

\begin{definition}
  Let $\tau\colon\Omega(X)\to \Thick(\cat T^{\mathrm c})$ be a lattice morphism
   that preserves all joins  and such that for each pair $U,V$ of open
  subsets the pair $\tau(U),\tau(V)$ is commuting. Then we say that
  $\cat T$ is \emph{fibred over $X$ via $\tau$}.
\end{definition}

Our examples involve the following duality for spectral spaces.

\begin{definition}
  Let $T$ be a topological space that is spectral. A subset
  $V\subseteq T$ is said to be \emph{Thomason} if it can be written as
  $V=\bigcup_i V_i$ such that each $T\smallsetminus V_i$ is
  quasi-compact and open in $T$. The Thomason subsets are by
  definition the open subsets for the \emph{Hochster dual topology} on
  $T$.  Let $T^\vee$ denote the set $T$ with the Hochster dual
  topology. Then $T^\vee$ is again spectral and $(T^\vee)^\vee=T$; see
  \cite{Hochster:1969a}.
\end{definition}

\begin{example}
  (1) Let $R$ be a graded-commutative ring acting on $\cat T$. Then
  $\cat T$ is fibred over $(\Spec R)^\vee$ via $\tau_R$; see
  Proposition~\ref{pr:Thomason}.

  (2) Suppose that $\cat T^{\mathrm c}$ is a rigid tensor triangulated
  category. Then $\cat T$ is fibred over
  $(\Spc\cat T^{\mathrm c})^\vee$ via the bijection $\tau_\otimes$ in
  \eqref{eq:Balmer-supp}; see Proposition~\ref{pr:tensor-central}.
\end{example}

Now assume that $\cat T$ is fibred over $X$ via a map $\tau$.  For an
open subset $U\subseteq X$ we set
\[\cat T_U\colonequals\loc(\tau(U))\]
and this yields a pair of functors
\[\Gamma_U\colonequals\Gamma_{\cat T_U}\qquad\text{and}\qquad
  L_U\colonequals L_{\cat T_U}\,.\] Note that all functors of the form
$\Gamma_U$ and $L_U$ commute with each other, by
Proposition~\ref{pr:loc-commuting}. This will be used throughout
without further reference. In particular, for any inclusion
$U\subseteq V$ the functor $\Gamma_U$ restricts to a functor
$\cat T_V\to\cat T_U$.

We proceed with a generalisation. Let $Y=V\cap(X\smallsetminus U)$ be
a locally closed subset of $X$, given by a pair $U,V$ of open subsets. Then we
set
\[\cat T_Y\colonequals \cat T_V\cap (\cat
  T_U)^\perp\qquad\text{and}\qquad \Gamma_Y\colonequals \Gamma_V\circ
  L_U\,.\]

\begin{lemma}
  Let $Y=V\cap(X\smallsetminus U)$ be locally closed. Then the functor
  $\Gamma_Y$ and the localising subcategory $\cat T_Y\subseteq\cat T$ do
  not depend on the choice of $U$ and $V$.
\end{lemma}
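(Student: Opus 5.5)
The plan is to compare an arbitrary presentation $Y=V\cap(X\smallsetminus U)$ with a canonical one. The canonical choice is $V'=V$ and $U'=U\cap V$. Indeed $Y=V\cap(X\smallsetminus(U\cap V))$, and $U\cap V\subseteq V$. Given two presentations $(U_1,V_1)$ and $(U_2,V_2)$ of the same $Y$, I will pass through a common refinement. The key observation is that if $Y=V_1\cap(X\smallsetminus U_1)=V_2\cap(X\smallsetminus U_2)$, then also $Y=(V_1\cap V_2)\cap(X\smallsetminus U_1)$. So it suffices to treat two elementary moves:

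(a) replacing $(U,V)$ by $(U\cap V,V)$;

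(b) replacing $(U,V)$ by $(U,V')$ or $(U',V)$ with $V'\subseteq V$ (respectively $U'\subseteq U$) whenever this does not change $Y$.

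The equalities of open sets to be used are:
\[V_1\cup U_1\supseteq V_2\cap\ldots\]
More precisely, $Y$ determines $V\smallsetminus U$. From $V\cap(X\smallsetminus U)=V'\cap(X\smallsetminus U)$ with $V'\subseteq V$ one gets $V=V'\cup(U\cap V)$.

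For functors I work with the commuting identities of Proposition~\ref{pr:loc-commuting}. Since $\tau$ is a lattice morphism preserving joins, and $\tau(U),\tau(V)$ commute, we have
\[\Gamma_U\Gamma_V\cong\Gamma_{U\cap V},\qquad L_UL_V\cong L_{U\cup V}.\]
There is one subtlety: $\loc(\tau(U\cap V))=\loc(\tau(U)\wedge\tau(V))=\cat T_U\wedge\cat T_V$ by the Corollary following Proposition~\ref{pr:loc-commuting}.

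\textbf{Move (a).} I show $\Gamma_VL_U\cong\Gamma_VL_{U\cap V}$. Apply $\Gamma_V$ to the triangle $\Gamma_U x\to x\to L_Ux$. Then $\Gamma_V\Gamma_U\cong\Gamma_{U\cap V}$ and $\Gamma_V\Gamma_{U\cap V}\cong\Gamma_{U\cap V}$. Comparing the triangles for $U$ and for $U\cap V$ after applying $\Gamma_V$ gives the claim.

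\textbf{Move (b), case $V=V'\cup(U\cap V)$ with $U\subseteq V$.} I use the Mayer--Vietoris triangle of Proposition~\ref{pr:loc-MV} for $\cat T_{V'}$ and $\cat T_{U}$. Applying it to $L_Ux$, the terms $\Gamma_U L_U$ and $\Gamma_{U\cap V'}L_U$ vanish. Hence $\Gamma_{V'}L_Ux\cong\Gamma_{V'\cup U}L_Ux=\Gamma_VL_Ux$.

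\textbf{Varying $U$ with $V$ fixed.} Here $V\smallsetminus U=V\smallsetminus U'$ means $U\cap V=U'\cap V$, so this reduces to move (a).

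\textbf{Subcategory.} For $\cat T_Y$, I would show $\cat T_Y$ is the essential image of $\Gamma_Y$. If $x\in\cat T_V\cap\cat T_U^\perp$, then $L_Ux\cong x$ and $\Gamma_Vx\cong x$. Conversely, $\Gamma_VL_Ux$ lies in $\cat T_V$, and it is $L_U$-local since $\Gamma_V$ and $L_U$ commute and $\cat T_U^\perp$ is the image of $L_U$. Independence of $\cat T_Y$ then follows from that of $\Gamma_Y$.

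The main obstacle is the set-theoretic reduction to elementary moves, together with verifying that the Mayer--Vietoris step correctly kills the terms supported on $U$. I would first try to organise everything via the canonical form $(U\cap V,V)$ and the intersection $V_1\cap V_2$.
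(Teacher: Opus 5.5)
Your argument is correct, but it is organised differently from the paper's.

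The paper does not normalise presentations at all. Given $V\smallsetminus U=V'\smallsetminus U'$, it only uses $\Gamma_WL_{W'}=0$ for $W\subseteq W'$ together with localisation triangles:
\begin{itemize}
\item from $V\subseteq V'\cup U$ it gets $L_{V'}\Gamma_VL_U\cong\Gamma_VL_{V'\cup U}=0$, hence $\Gamma_{V'}\Gamma_VL_U\cong\Gamma_VL_U$;
\item from $U'\cap V\subseteq U$ it gets $\Gamma_VL_U\Gamma_{U'}=0$, hence $\Gamma_VL_UL_{U'}\cong\Gamma_VL_U$;
\item combined with the mirror-image statements, this yields $\Gamma_{V'}L_{U'}\cong\Gamma_V\Gamma_{V'}L_{U'}L_U\cong\Gamma_{V'}\Gamma_VL_UL_{U'}\cong\Gamma_VL_U$ in one symmetric chain.
\end{itemize}

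You instead pass through the presentations $(U_1,V_1\cap V_2)$ and $(U_2,V_1\cap V_2)$. This splits the problem into changing $U$ inside a fixed $V$ (move (a)) and shrinking $V$ (move (b)). All the set-theoretic identities you use are right. Two remarks follow.

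\textbf{Move (b) is heavier than needed.} The Mayer--Vietoris triangle of Proposition~\ref{pr:loc-MV}, and with it the preliminary reduction to $U\subseteq V$, can be avoided. If $V'\subseteq V$ and $V\smallsetminus U\subseteq V'$, then $V\subseteq V'\cup U$. So the paper's observation $L_{V'}\Gamma_VL_U\cong\Gamma_VL_{V'\cup U}=0$ directly gives $\Gamma_{V'}L_U\cong\Gamma_{V'}\Gamma_VL_U\cong\Gamma_VL_U$. If you keep your version, say explicitly how a general shrink reduces to the case $U\subseteq V$: apply (a) before and after, using $(U\cap V)\cap V'=U\cap V'$. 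Also delete the unfinished display ``$V_1\cup U_1\supseteq V_2\cap\ldots$''.

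\textbf{Move (a) should give an isomorphism of functors.} Comparing cones objectwise is not enough. Use the canonical transformation $L_{U\cap V}\to L_U$ induced by $\cat T_{U\cap V}\subseteq\cat T_U$, and check that it becomes invertible after applying $\Gamma_V$.

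What your route buys is that every step is a comparison along a single inclusion of subcategories. The paper's route avoids the reduction bookkeeping and any appeal to Mayer--Vietoris. Your identification of $\cat T_Y$ with the essential image of $\Gamma_Y$ is equivalent to the paper's criterion $\Gamma_Y(x)\cong x\iff x\in\cat T_Y$.
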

\begin{proof}
  We use that for any inclusion $V\subseteq U$ we have
  $\Gamma_V L_U=0$.  Let $V\smallsetminus U=V'\smallsetminus U'$. Then
  \[V=(V\smallsetminus U)\cup(V\cap U)\subseteq V'\cup U\] implies
  $L_{V'}\Gamma_V L_U\cong\Gamma_V L_{V'\cup U}=0$. Using the triangle
  \eqref{eq:loc-tria} given by $V'$ yields the isomorphism
  \[\Gamma_{V'}\Gamma_V L_U\cong\Gamma_V L_U\,.\]
  Analogously, $U'\cap V\subseteq U$ and therefore
  $\Gamma_V L_U \Gamma_{U'}\cong\Gamma_{U'\cap V}L_U=0$, which implies
  \[\Gamma_V L_U L_{U'}\cong  \Gamma_V L_U\,.\]
  We conclude that
  \[\Gamma_{V'} L_{U'}\cong \Gamma_{V}\Gamma_{V'} L_{U'} L_{U}\cong
    \Gamma_{V'}\Gamma_{V} L_{U} L_{U'}\cong\Gamma_V L_U\,.\]
  Thus $\Gamma_Y$ does not depend on the choice of $U$ and $V$. For
  $x\in\cat T$ one checks that $\Gamma_Y(x)\cong x$ if and only $x\in\cat T_Y$.
It follows that $\cat T_Y$ does not depend on the choice of $U$ and $V$.
\end{proof}

Let us collect a few basic properties of the functor
$\Gamma_Y\colon\cat T\to\cat T$ given by a locally closed set
$Y\subseteq X$. For a localising subcategory $\cat S\subseteq\cat T$
we set
\[\Gamma_Y\cat S\colonequals\{x\in\cat T\mid \Gamma_Y(x)\cong x\}.\]
Note that $\Gamma_Y\cat T=\cat T_Y$.

\begin{lemma}
Let $Y\subseteq X$ be locally closed.          
  \begin{enumerate}
    \item The functor $\Gamma_Y$ is exact, preserves arbitrary
      coproducts, and $(\Gamma_Y)^2\cong\Gamma_Y$.
      \item If $\cat S\subseteq\cat T$ is localising, then $\Gamma_Y\cat S$
        is a localising subcategory of $\cat T$.
\item Let $(\cat S_i)_{i\in I}$ be a family of localising
  subcategories of $\cat T$. Then

   \[\Gamma_Y\left(\bigvee_i \cat S_i\right)=\bigvee_i \Gamma_Y(\cat
    S_i).\]
\end{enumerate}
\end{lemma}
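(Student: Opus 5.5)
Write $Y=V\cap(X\smallsetminus U)$ with $U,V$ open, so $\Gamma_Y=\Gamma_V L_U$. I would derive everything from the properties of $\Gamma_V$ and $L_U$ recorded above: both are exact, preserve coproducts (Proposition~\ref{pr:bousfield-comp-gen}(3)), are idempotent, and commute with each other (Proposition~\ref{pr:loc-commuting}).

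For (1), exactness and preservation of coproducts are inherited by the composite $\Gamma_V L_U$. For idempotence, commutativity gives
\[(\Gamma_V L_U)^2\cong \Gamma_V\Gamma_V L_U L_U\cong \Gamma_V L_U.\]
I would also note for later use that $\Gamma_Y x\cong x$ if and only if $x\in\cat T_Y$. Indeed, $\Gamma_Y x$ lies in $\cat T_V$ because it is in the image of $\Gamma_V$, and it lies in $(\cat T_U)^\perp$ because $\Gamma_V L_U\cong L_U\Gamma_V$. Conversely, if $x\in\cat T_Y$ then $L_U x\cong x$ and $\Gamma_V x\cong x$.

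For (2), let $\cat S$ be localising and put $\Gamma_Y\cat S=\{x\mid \Gamma_Y x\cong x\}$. Read literally, this set does not depend on $\cat S$ and equals $\cat T_Y$. Since $\cat T_Y=\cat T_V\cap(\cat T_U)^\perp$, it is localising by Lemma~\ref{le:loc-comp-gen}(4). However, (3) is only meaningful with the intended reading
\[\Gamma_Y\cat S=\{x\in\cat S\mid \Gamma_Y x\cong x\}=\cat S\cap\cat T_Y,\]
equivalently the essential image of $\cat S$ under $\Gamma_Y$, provided $\Gamma_Y$ maps $\cat S$ into itself. With this reading, (2) is immediate because an intersection of localising subcategories is localising.

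For (3), the inclusion $\supseteq$ follows from monotonicity. For $\subseteq$, put $\cat S=\bigvee_i\cat S_i=\loc(\bigcup_i\cat S_i)$ and consider
\[\cat S'=\{x\in\cat T\mid \Gamma_Y x\in\bigvee_i\Gamma_Y\cat S_i\}.\]
By (1), $\cat S'$ is localising: $\Gamma_Y$ is exact and preserves coproducts, and the target is localising. Each $\cat S_i$ lies in $\cat S'$, since $\Gamma_Y\cat S_i\subseteq \Gamma_Y\cat S_i$ using that $\Gamma_Y^2\cong\Gamma_Y$. Hence $\cat S\subseteq\cat S'$. Now if $x\in\Gamma_Y\cat S$, then $x\cong\Gamma_Y x\in\bigvee_i\Gamma_Y\cat S_i$.

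The main obstacle is this step. It requires $\Gamma_Y\cat S_i$ to contain $\Gamma_Y(s)$ for all $s\in\cat S_i$, so it relies on the image interpretation $\Gamma_Y\cat S=\{\Gamma_Y s\mid s\in\cat S\}$ up to isomorphism, and on this class being localising. I would settle this first. I expect it to hold because $\Gamma_Y$ is an exact, coproduct-preserving idempotent functor. Under the intersection reading, I would instead need $\Gamma_Y\cat S_i\subseteq\cat S_i$, which holds when the $\cat S_i$ are stable under $\Gamma_Y$, for instance when they are tensor ideals.
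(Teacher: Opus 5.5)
Part (1) is fine, and it is what the paper means by ``straightforward''. You are also right that the displayed definition of $\Gamma_Y\cat S$ is a typo. The gap is that you never prove (2) and (3) under one and the same reading, and the step you postpone cannot be settled the way you expect.

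Under the reading $\Gamma_Y\cat S=\cat S\cap\cat T_Y$, part (2) is trivial but (3) is false without $\Gamma_Y$-stability (example below). Under the essential-image reading, your argument with $\cat S'$ proves exactly the inclusion $\subseteq$ of $\Gamma_Y(\loc(\cat X))=\loc(\Gamma_Y(\cat X))$. That identity is what the paper's proof invokes. The reverse inclusion, equivalently (2), is the step you defer. It does not follow from $\Gamma_Y$ being exact, coproduct-preserving and idempotent. By adjunction, a morphism $\Gamma_Y s\to\Gamma_Y s'$ corresponds to a morphism $\Gamma_V s\to L_U s'$. It need not be $\Gamma_Y$ of a morphism in $\cat S$, so nothing controls its cone or the summands of $\Gamma_Y s$.

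In fact the step fails. Let $\cat T$ be the derived category of the Kronecker algebra over $k$. Let $P$ be the simple projective and $Q$ the other indecomposable projective, so $\Hom^*_{\cat T}(P,Q)=k^2$, concentrated in degree $0$. Fibre $\cat T$ over the Sierpinski space $X=\{a,b\}$, with $\{a\}$ open, via $\tau(\varnothing)=0$, $\tau(\{a\})=\thick(P)$ and $\tau(X)=\cat T^{\mathrm c}$; nested thick subcategories commute, so this is a valid fibration.

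Since $P$ is exceptional, $\Gamma_{\{a\}}(Q)\cong P\oplus P$. Every object of $\loc(Q)$ is a coproduct of shifts of $Q$. Hence the essential image of $\loc(Q)$ consists of coproducts of shifts of $P\oplus P$. Counting $\dim_k\Hom_{\cat T}(P,-)$ shows this image does not contain its summand $P$, so it is not localising.

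The same example refutes the intersection reading. Let $S$ be the simple top of $Q$. Both $\loc(Q)$ and $\loc(S)$ meet $\cat T_{\{a\}}=\loc(P)$ in $0$, yet their join is $\cat T$.

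The lemma, and the paper's identity, become correct if $\Gamma_Y\cat S$ is read as $\loc(\Gamma_Y(\cat S))$. Then (2) is a tautology and $\Gamma_Y\cat T=\cat T_Y$ still holds. Your $\cat S'$ argument plus monotonicity then proves (3) verbatim, without needing the bare image to be localising.

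Alternatively, restrict to $\cat S$ with $\Gamma_Y\cat S\subseteq\cat S$, for instance $\tau$-localising subcategories or tensor ideals. There the image, $\cat S\cap\cat T_Y$ and $\loc$ of the image coincide, and this is the only case the paper uses later.
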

\begin{proof}
  The proof is straightforward. For the last part note that for any class $\cat
  X\subseteq\cat T$
  \[\Gamma_Y(\loc(\cat X))=\loc(\Gamma_Y(\cat X)).\qedhere\]
\end{proof}

An intersection of two locally closed sets is again locally closed. With
this observation the following calculations are straightforward.

\begin{proposition}\label{pr:loc-closed} 
  Let $Y,Y'$ be a pair of locally closed subsets of $X$. Then
  \[\cat T_Y\cap\cat T_{Y'}=\cat T_{Y\cap Y'}=\Gamma_Y(\cat T_{Y'})\qquad\text{and}\qquad
\Gamma_{Y\cap Y'}\cong     \Gamma_Y \Gamma_{Y'}\,.\qedhere\] 
\end{proposition}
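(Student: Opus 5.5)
The plan is to write $Y=V\smallsetminus U$ and $Y'=V'\smallsetminus U'$ with $U,V,U',V'$ open. Then
\[Y\cap Y'=(V\cap V')\smallsetminus(U\cup U'),\]
and by the preceding lemma we may compute $\Gamma_{Y\cap Y'}$ and $\cat T_{Y\cap Y'}$ using the opens $V\cap V'$ and $U\cup U'$.

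The functor identity comes first. Since all functors $\Gamma_W,L_W$ for open $W$ commute with each other (Proposition~\ref{pr:loc-commuting}), we have
\[\Gamma_Y\Gamma_{Y'}=\Gamma_VL_U\Gamma_{V'}L_{U'}\cong\Gamma_V\Gamma_{V'}L_UL_{U'}.\]
The map $\tau$ is a lattice morphism, so $\tau(V\cap V')=\tau(V)\wedge\tau(V')$ and $\tau(U\cup U')=\tau(U)\vee\tau(U')$. Moreover, $\tau(V),\tau(V')$ commute. Proposition~\ref{pr:loc-commuting} then gives
\[\Gamma_V\Gamma_{V'}\cong\Gamma_{\overline{\tau(V)\wedge\tau(V')}}=\Gamma_{V\cap V'}\qquad\text{and}\qquad L_UL_{U'}\cong L_{\overline{\tau(U)\vee\tau(U')}}=L_{U\cup U'}.\]
Hence $\Gamma_Y\Gamma_{Y'}\cong\Gamma_{V\cap V'}L_{U\cup U'}=\Gamma_{Y\cap Y'}$.

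For the subcategories I use the characterisation from the proof of the preceding lemma: $x\in\cat T_Z$ if and only if $\Gamma_Z(x)\cong x$ (via the canonical maps $\Gamma_V L_U x\to L_Ux\leftarrow x$). This gives $\Gamma_Y(\cat T_{Y'})=\{x\mid\Gamma_Y x\cong x\text{ and }\Gamma_{Y'}x\cong x\}$. Indeed, an $x$ with $\Gamma_Yx\cong x$ that also lies in $\cat T_{Y'}$ is in both; conversely, I read the definition of $\Gamma_Y\cat S$ as requiring $x\in\cat S$, otherwise $\Gamma_Y(\cat T_{Y'})$ would just be $\cat T_Y$. So this set is exactly $\cat T_Y\cap\cat T_{Y'}$. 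The remaining equality $\cat T_Y\cap\cat T_{Y'}=\cat T_{Y\cap Y'}$ follows from the functor identity. If $x\in\cat T_Y\cap\cat T_{Y'}$, then $\Gamma_{Y\cap Y'}x\cong\Gamma_Y\Gamma_{Y'}x\cong\Gamma_Yx\cong x$. Conversely, if $\Gamma_{Y\cap Y'}x\cong x$, then $x\cong\Gamma_Y(\Gamma_{Y'}x)$ lies in the essential image of $\Gamma_Y$, which is $\cat T_Y$ by idempotence. By symmetry ($\Gamma_Y\Gamma_{Y'}\cong\Gamma_{Y'}\Gamma_Y$) it also lies in $\cat T_{Y'}$.

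The main obstacle is bookkeeping with the canonical maps. An abstract isomorphism $\Gamma_Zx\cong x$ should be upgraded to the statement that the canonical comparison is invertible, i.e. that $x\in\cat T_V$ and $x\in(\cat T_U)^\perp$. I would handle this directly: $x\in\cat T_V\cap\cat T_U^\perp$ means $\Gamma_Vx\to x$ and $x\to L_Ux$ are isomorphisms. Conversely, anything in the image of $\Gamma_VL_U$ lies in $\cat T_V$ and, since $\Gamma_V$ commutes with $L_U$, also in $\cat T_U^\perp$. With this, membership in $\cat T_Z$ is exactly being in the essential image of $\Gamma_Z$, and the argument above goes through.
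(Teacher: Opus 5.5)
Your argument is correct and is the paper's route: the paper's proof is just ``apply Proposition~\ref{pr:loc-commuting}'', and you have filled in the bookkeeping (writing $Y\cap Y'=(V\cap V')\smallsetminus(U\cup U')$, using that $\tau$ is a lattice morphism, and identifying $\cat T_Z$ with the essential image of $\Gamma_Z$). One small point: the later definition of $\tau$-localising suggests that $\Gamma_Y\cat S$ is meant as the essential image of $\cat S$ under $\Gamma_Y$, since under your reading $\Gamma_U\cat S\subseteq\cat S$ would hold automatically; with that reading the equality $\Gamma_Y(\cat T_{Y'})=\cat T_{Y\cap Y'}$ still follows at once from your functor identity, because $\Gamma_Y(y)\cong\Gamma_Y\Gamma_{Y'}(y)\cong\Gamma_{Y\cap Y'}(y)$ for $y\in\cat T_{Y'}$.
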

\begin{proof}
  Apply Proposition~\ref{pr:loc-commuting}.
\end{proof}

Here is one application.

\begin{proposition}\label{pr:loc-closed-MV}
Let $Y\subseteq X$ be locally closed, and suppose there are open sets
$U_1,U_2\subseteq X$ such that for $Y_i=Y\cap U_i$ we have $Y=Y_1\cup
Y_2$ and $Y_1\cap Y_2=\varnothing$. Then $\Gamma_Y=\Gamma_{Y_1}\oplus\Gamma_{Y_2}$.
\end{proposition}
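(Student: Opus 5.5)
The plan is to use the Mayer--Vietoris triangle of Proposition~\ref{pr:loc-MV} for the open sets $U_1,U_2$, applied to the object $\Gamma_Y(x)$. The vanishing of its third term should then split the triangle, and the pieces identify with $\Gamma_{Y_1}(x)$ and $\Gamma_{Y_2}(x)$.

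First I record the identifications. Each $Y_i=Y\cap U_i$ is locally closed, and Proposition~\ref{pr:loc-closed} gives $\Gamma_{Y_i}\cong\Gamma_{U_i}\Gamma_Y$. Likewise $Y_1\cap Y_2=Y\cap U_1\cap U_2=\varnothing$ gives
\[\Gamma_{U_1\cap U_2}\Gamma_Y\cong\Gamma_{\varnothing}=0,\]
since $\tau(\varnothing)=0$ because $\tau$ preserves the empty join. Finally $Y=Y_1\cup Y_2\subseteq U_1\cup U_2$, so $Y\cap(U_1\cup U_2)=Y$ and hence $\Gamma_{U_1\cup U_2}\Gamma_Y\cong\Gamma_Y$.

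Next I apply the Mayer--Vietoris triangle. The subcategories $\cat T_{U_1},\cat T_{U_2}$ are generated by compact objects, and their compact parts $\tau(U_1),\tau(U_2)$ commute because $\cat T$ is fibred over $X$. Moreover $\cat T_{U_1}\wedge\cat T_{U_2}=\cat T_{U_1\cap U_2}$ by the corollary to Proposition~\ref{pr:loc-commuting}, using that $\tau$ preserves meets. Also $\cat T_{U_1}\vee\cat T_{U_2}=\cat T_{U_1\cup U_2}$, since $\loc$ preserves joins and $\tau$ preserves joins. Proposition~\ref{pr:loc-MV} applied to $\Gamma_Y(x)$ therefore yields an exact triangle
\[\Gamma_{U_1\cap U_2}\Gamma_Y(x)\lto\Gamma_{U_1}\Gamma_Y(x)\oplus\Gamma_{U_2}\Gamma_Y(x)\lto\Gamma_{U_1\cup U_2}\Gamma_Y(x)\lto.\]
The first term vanishes, so the middle map is an isomorphism. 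Substituting the identifications above gives $\Gamma_{Y_1}(x)\oplus\Gamma_{Y_2}(x)\iso\Gamma_Y(x)$.

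The main obstacle is naturality in $x$, so that the conclusion is an isomorphism of functors rather than merely an objectwise one. Here the map is explicit: it is the sum of the counits $\Gamma_{U_i}\Gamma_Y\to\Gamma_Y$ composed appropriately, which is natural. The Mayer--Vietoris maps are induced by the inclusions, so they are natural as well. I would therefore define the comparison morphism $\Gamma_{Y_1}\oplus\Gamma_{Y_2}\to\Gamma_Y$ directly from these counits and use the triangle only to conclude that it is invertible. Alternatively, I would check invertibility after applying $h_\cat T$, which reflects isomorphisms, via Lemma~\ref{le:loc-commute} and Proposition~\ref{pr:MV}.
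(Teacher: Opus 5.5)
Your proposal is correct and is essentially the paper's proof, which applies $\Gamma_Y$ to the Mayer--Vietoris triangle for $U_1,U_2$ from Proposition~\ref{pr:loc-MV}. Evaluating that triangle at $\Gamma_Y(x)$ instead, as you do, amounts to the same thing because all functors $\Gamma_U$ and $\Gamma_Y$ commute (Proposition~\ref{pr:loc-closed}), and your extra bookkeeping ($\cat T_{U_1}\wedge\cat T_{U_2}=\cat T_{U_1\cap U_2}$, $\cat T_{U_1}\vee\cat T_{U_2}=\cat T_{U_1\cup U_2}$, $\Gamma_\varnothing=0$, and naturality of the comparison map) fills in details the paper leaves implicit.
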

\begin{proof}
  Apply $\Gamma_Y$ to the Mayer--Vietoris triangle for $U_1$ and $U_2$
  from Proposition~\ref{pr:loc-MV}.
\end{proof}

\begin{definition}
Let $p\in X$ be locally closed. Then we set $\Gamma_p\colonequals\Gamma_{\{p\}}$
and call this the \emph{local cohomology functor supported at $p$}.
\end{definition}

\subsection*{Support}

We keep a compactly generated triangulated category $\cat T$, fibred
over a space $X$ via $\tau\colon\Omega(X)\to\Thick(\cat T^{\mathrm c})$. We
assume that each point in $X$ is locally closed.

\begin{definition}\label{de:tau-supp}
  For an object $x\in\cat T$ its \emph{$\tau$-support} is
  \[\supp_\tau(x)\colonequals\{p\in X\mid\Gamma_p(x)\neq 0\}\,.\]
\end{definition}

A more suggestive notation is $\supp_R$ or $\supp_\otimes$ when $\tau$ is given by the
action of a ring $R$ or a
tensor product, respectively.

Basic properties of $\supp_\tau$ follow from the fact that for each
$p\in X$ the functor $\Gamma_p$ is exact and preserves all coproducts.

We wish to classify localising subcategories of
$\cat T$ via the support of objects and begin with a simple
observation.

\begin{lemma}\label{le:support-loc-closed}
  Let $Y\subseteq X$ be a locally closed subset and $x\in\cat T$. Then there is an
  equality
  \[\supp_\tau(\Gamma_Y(x))=\supp_\tau(x)\cap Y\,.\]
\end{lemma}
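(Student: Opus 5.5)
The plan is to reduce the statement to Proposition~\ref{pr:loc-closed}, which gives $\Gamma_{Y\cap Y'}\cong\Gamma_Y\Gamma_{Y'}$ for locally closed subsets $Y,Y'\subseteq X$. Fix $p\in X$; by the standing assumption $\{p\}$ is locally closed, so $\{p\}\cap Y$ is again locally closed. I apply this with the pair $\{p\}$ and $Y$, and compute
\[\Gamma_p(\Gamma_Y(x))\cong\Gamma_{\{p\}\cap Y}(x).\]

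Two cases then arise. If $p\in Y$, then $\{p\}\cap Y=\{p\}$ and we get $\Gamma_p\Gamma_Y(x)\cong\Gamma_p(x)$. Hence $p\in\supp_\tau(\Gamma_Y(x))$ if and only if $p\in\supp_\tau(x)$.

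If $p\notin Y$, then $\{p\}\cap Y=\varnothing$, and I need to check that $\Gamma_\varnothing=0$. Write $\varnothing=V\cap(X\smallsetminus U)$ with $V=U=\varnothing$ (or any $V\subseteq U$). Then $\Gamma_\varnothing=\Gamma_V L_U$. Since $\tau$ preserves joins, $\tau(\varnothing)$ is the empty join, namely $0$, so $\cat T_\varnothing=0$ and $\Gamma_{\varnothing}=0$. More generally, for $V\subseteq U$ we have $\Gamma_V L_U=0$, as already used in the proof of the independence lemma. Thus $\Gamma_p\Gamma_Y(x)=0$, and so $p\notin\supp_\tau(\Gamma_Y(x))$.

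Combining the two cases gives $\supp_\tau(\Gamma_Y(x))=\supp_\tau(x)\cap Y$. The only point needing care is the degenerate case $\{p\}\cap Y=\varnothing$, namely making sure the empty locally closed set yields the zero functor. This follows from join preservation of $\tau$ and the vanishing $\Gamma_V L_U=0$ for $V\subseteq U$. Everything else is a direct invocation of Proposition~\ref{pr:loc-closed}.
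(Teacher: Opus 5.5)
Your proof is correct and matches the paper's argument: both apply Proposition~\ref{pr:loc-closed} to the pair $\{p\}$ and $Y$, then split into the cases $p\in Y$ and $p\notin Y$. Your explicit check that $\Gamma_\varnothing=0$, via $\Gamma_V L_U=0$ for $V\subseteq U$, fills in a detail the paper leaves implicit.
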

\begin{proof}
  Given $p\in X$, Proposition~\ref{pr:loc-closed} implies
  $\Gamma_p \Gamma_Y(x)=\Gamma_p(x)$ when $p\in Y$, and
  $\Gamma_p \Gamma_Y(x)=0$  when $p\not\in Y$. This yields the assertion.
\end{proof}

Let $P(X)$ denote the power set of $X$. Then we
consider the adjoint pair of maps between posets
\begin{equation*}\label{eq:supp-tau-bar}
\begin{tikzcd}
\Loc(\cat T) \ar[rr,yshift=2.5,"\supp_\tau"] &&  \ar[ll,yshift=-2.5,"\bar\tau"]
P(X)
\end{tikzcd}
\end{equation*}
given by
\[
\supp_\tau(\cat S)\colonequals\bigcup_{x\in\cat S}\supp_\tau(x)\quad\text{and}\quad
  \bar\tau(U)\colonequals\{x\in\cat T\mid\supp_\tau(x)\subseteq U\}\,.
\]
Suppose there is an adjoint pair of maps between posets:
\begin{equation*}
\begin{tikzcd}
\Thick(\cat T^{\mathrm c}) \ar[rr,yshift=2.5,"\sigma"] &&  \ar[ll,yshift=-2.5,"\tau"]\Omega(X)
\end{tikzcd}
\end{equation*}
In our examples there are explicit descriptions of $\sigma$ given by
the support of compact objects, so $\sigma(x)=\sigma(\thick(x))$ for
each compact $x\in\cat T$.  Then we obtain a diagram of poset
morphisms:
\begin{equation}\label{eq:supp-square}
\begin{tikzcd}
\Loc(\cat T) \ar[rr,yshift=2.5,"\supp_\tau"] \ar[d,xshift=2.5,"\comp"] &&
\ar[ll,yshift=-2.5,"\bar\tau"] P(X) \ar[d,xshift=2.5,"\mathrm{int}"]\\
\Thick(\cat T^{\mathrm c}) \ar[u,xshift=-2.5,"\loc"] \ar[rr,yshift=2.5,"\sigma"]&&
\ar[ll,yshift=-2.5,"\tau"]\ar[u,xshift=-2.5,"\inc"]\Omega(X)
\end{tikzcd}
\end{equation}

The following result says that $\supp_\tau$ extends the definition of
support for compact objects.

\begin{proposition}\label{pr:supp-square}
  The square of left adjoints and the square of right adjoints in
  \eqref{eq:supp-square} both commute. In particular,
  $\supp_\tau(x)=\sigma(x)$ for all compact $x\in\cat T$.
\end{proposition}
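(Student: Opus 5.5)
By the formal properties of adjunctions it is enough to prove that one of the two squares in \eqref{eq:supp-square} commutes. Composites of adjoint pairs are again adjoint pairs, and adjoints between posets are unique, so commutativity of the square of left adjoints, $\supp_\tau\circ\loc=\inc\circ\sigma$, forces commutativity of the square of right adjoints, $\comp\circ\bar\tau=\tau\circ\mathrm{int}$. I will therefore prove the left-adjoint identity.

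The first reduction is to single compact objects. For each $p$ the functor $\Gamma_p$ is exact and preserves coproducts. Hence for any subset $Z\subseteq X$ the objects with support in $Z$ form a localising subcategory, and so $\supp_\tau(\loc(\cat C))=\bigcup_{c\in\cat C}\supp_\tau(c)$ for every $\cat C\in\Thick(\cat T^{\mathrm c})$. On the other side $\sigma$ is a left adjoint, so it preserves joins, and $\sigma(\cat C)=\bigcup_{c\in\cat C}\sigma(c)$. It therefore suffices to show $\supp_\tau(c)=\sigma(c)$ for each compact $c$. The tool for this is the translation, for $U$ open,
\[c\in\cat T_U\iff c\in\tau(U)\iff\sigma(c)\subseteq U.\]
The first equivalence is Lemma~\ref{le:loc-comp-gen}(2), since $\cat T_U\cap\cat T^{\mathrm c}=\tau(U)$; the second is the adjunction $(\sigma,\tau)$.

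For the inclusion $\supp_\tau(c)\subseteq\sigma(c)$, set $W=\sigma(c)$, which is open, so $c\in\cat T_W$ and $\Gamma_W(c)\cong c$. If $p\notin W$, then Proposition~\ref{pr:loc-closed} gives $\Gamma_p(c)\cong\Gamma_p\Gamma_W(c)\cong\Gamma_{\{p\}\cap W}(c)=0$. Here $\Gamma_\varnothing=0$ because $\tau$ preserves the empty join, so $\tau(\varnothing)=0$.

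For the reverse inclusion, take $p\in\sigma(c)$ and write $\{p\}=V\smallsetminus U$ with $U,V$ open. Suppose, for a contradiction, that $\Gamma_p(c)=\Gamma_VL_U(c)=0$. Applying $\Gamma_V$ to the triangle \eqref{eq:loc-tria} for $U$ then gives $\Gamma_V(c)\cong\Gamma_V\Gamma_U(c)\cong\Gamma_{U\cap V}(c)$. The aim is to turn this into $c\in\cat T_{W'}$ for some open $W'$ with $p\notin W'$, which contradicts $\sigma(c)\subseteq W'$. The natural candidate is $W'=U\cup(W\smallsetminus\overline{\{p\}})$, or more generally an open set $W'$ with $W'\cup V\supseteq W$ and $W'\cap V=U\cap V$.

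The argument for the candidate runs as follows. One uses the Mayer--Vietoris triangle of Proposition~\ref{pr:loc-MV} for the pair $V,W'$, applied to $c=\Gamma_W(c)$. In that triangle the summand $\Gamma_V(c)$ already lies in $\cat T_{U\cap V}\subseteq\cat T_{W'}$. One then wants to conclude $\Gamma_{V\cup W'}(c)\cong\Gamma_{W'}(c)$, hence $c\in\cat T_{W'}$; compactness of $c$ is used exactly at this point, through Lemma~\ref{le:loc-comp-gen}(2).

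This last step is the main obstacle. An open $W'$ with the required properties need not exist for an arbitrary space $X$ in which points are merely locally closed, and the support of the non-compact object $\Gamma_V(c)$ cannot be controlled through $\sigma$ directly. If the candidate fails, I would replace it with a direct argument in $\Coh\cat T^{\mathrm c}$ via Lemma~\ref{le:loc-commute}: show that $h_c$ is annihilated by $L_{\tau(U\cup(X\smallsetminus V))}$ restricted to $\Gamma_{\tau(W)}$. The fallback is to use the explicit description of $\sigma$ in the intended examples, namely Koszul objects for $\tau_R$ and $\supp_\otimes$ for $\tau_\otimes$, where $\sigma(c)$ is known to be determined by the vanishing of $c$ in the localisations at points.
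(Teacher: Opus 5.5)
Your reductions are fine: to the square of left adjoints, and from $\loc(\cat C)$ to single compact objects $c$. The inclusion $\supp_\tau(c)\subseteq\sigma(c)$ is also fine. The reverse inclusion, however, is not proved. The Mayer--Vietoris step needs an open $W'$ with $p\notin W'$, $W'\cap V=U\cap V$ and $\sigma(c)\subseteq W'\cup V$, which you never produce, and the fallbacks are only named. The paper uses no Mayer--Vietoris here. Its idea is to take the larger open set in the decomposition of $\{p\}$ to be $\sigma(c)$ itself. If $U$ is open with $\sigma(c)\smallsetminus U=\{p\}$, then $\Gamma_{\sigma(c)}(c)\cong c$, so $\Gamma_p(c)\cong L_U\Gamma_{\sigma(c)}(c)\cong L_U(c)$. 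If $L_U(c)=0$, then $c\in\cat T_U\cap\cat T^{\mathrm c}=\tau(U)$, i.e.\ $\sigma(c)\subseteq U$, contradicting $p\notin U$.

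Your worry is nevertheless justified. Such a $U$ exists only if $\{p\}$ is closed in $\sigma(c)$, and local closedness of $p$ does not give this. In $(\Spec R)^\vee$ it already fails as soon as $\sigma(c)$ contains a prime strictly contained in $p$. Under the stated hypotheses alone the proposition is false. Take the Sierpi\'nski space $X=\{a,b\}$ with open sets $\varnothing,\{a\},X$, in which both points are locally closed. Let $\cat T=\bfD(k\times k)$ with $e_1=k\times 0$ and $e_2=0\times k$, and set $\tau(\varnothing)=0$, $\tau(\{a\})=\thick(e_1)$, $\tau(X)=\cat T^{\mathrm c}$. The image is a chain, so all pairs commute and $\tau$ preserves all joins and meets. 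Its left adjoint has $\sigma(e_2)=X$. But $\Gamma_a(e_2)=\Gamma_{\loc(e_1)}(e_2)=0$ and $\Gamma_b(e_2)=L_{\{a\}}(e_2)\cong e_2$, so $\supp_\tau(e_2)=\{b\}$, which is not even open. No admissible $W'$ exists here.

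What is missing, given $\{p\}=V\smallsetminus U$, is a way to cut $c$ down into $\tau(V)$ without losing $p$. Concretely, you need an object $c'$ with $c'\in\tau(V)$, $p\in\sigma(c')$, and $\Gamma_p(c')\neq 0\Rightarrow\Gamma_p(c)\neq 0$. For $\tau_R$, take $c'=\kos{c}{\fa}\in\thick(c)$ with $p\in V(\fa)\subseteq V$, using Lemma~\ref{le:koszul-supp}. For $\tau_\otimes$, take $c'=c\otimes y$ with $y$ compact and $p\in\supp_\otimes(y)\subseteq V$, using $\Gamma_p\cong\Gamma_p(\one)\otimes-$. The paper's argument applied to $c'$ and $V$ then gives $\Gamma_p(c')\cong L_U(c')\neq 0$.
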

\begin{proof}
  Let $\cat U\in\Thick(\cat T^{\mathrm c})$ and set
  $V\colonequals\sigma(\cat U)$. Then $\cat U\subseteq\tau(V)$, and
  therefore $\loc(\cat U)\subseteq\cat T_V$.  Now
  Lemma~\ref{le:support-loc-closed} implies
  $\supp_\tau(\loc(\cat U))\subseteq V$. For the other inclusion let
  $p\in V$ and choose an open subset $U\subseteq X$ such that
  $V\smallsetminus U=\{p\}$. Then $\cat U\subseteq \cat T_V$, but
  $\cat U\not\subseteq \cat T_U$. Thus $\Gamma_V(x)=x$ for all
  $x\in\cat U$, but for some $x_0\in\cat U$ we have
  $\Gamma_U(x_0)\neq x_0$, and therefore $\Gamma_p(x_0)\neq 0$. Thus
  $p\in\supp_\tau(\cat U)$. This yields the equality
  $\supp_\tau(\loc(\cat U))=\sigma(\cat U)$ and this means the square of left
  adjoints commutes.  That the square of right adjoints commutes is then a
  formal consequence. The assertion about $\supp_\tau(x)$ follows from
  the first part, because
  \[\supp_\tau(x)=\supp_\tau(\loc(x))=\sigma(\thick(x))=\sigma(x)\,.\qedhere\]
\end{proof}

\begin{lemma}\label{le:supp-surjective}
  For each subset $U\subseteq \supp_\tau(\cat T)$ we have
  $\supp_\tau(\bar\tau(U))=U$.
\end{lemma}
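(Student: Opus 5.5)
The inclusion $\supp_\tau(\bar\tau(U))\subseteq U$ holds by definition, since every object of $\bar\tau(U)$ has support contained in $U$. The substance is the reverse inclusion: for each $p\in U$ we must exhibit an object $y\in\bar\tau(U)$ with $p\in\supp_\tau(y)$.

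Fix $p\in U$. Since $U\subseteq\supp_\tau(\cat T)$, there is an object $x\in\cat T$ with $\Gamma_p(x)\neq 0$. Take $y\colonequals\Gamma_p(x)$. This is legitimate because $\{p\}$ is locally closed by the standing assumption, so $\Gamma_p=\Gamma_{\{p\}}$ is defined.

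Lemma~\ref{le:support-loc-closed}, applied with $Y=\{p\}$, gives
\[\supp_\tau(\Gamma_p(x))=\supp_\tau(x)\cap\{p\}=\{p\},\]
the last equality because $p\in\supp_\tau(x)$. Hence $\supp_\tau(y)=\{p\}\subseteq U$, so $y\in\bar\tau(U)$ and $p\in\supp_\tau(y)\subseteq\supp_\tau(\bar\tau(U))$.

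Since $p\in U$ was arbitrary, this proves $U\subseteq\supp_\tau(\bar\tau(U))$, and combined with the first inclusion, equality. There is no real obstacle: the only input is Lemma~\ref{le:support-loc-closed}, which in turn rests on $\Gamma_p\Gamma_p\cong\Gamma_p$ from Proposition~\ref{pr:loc-closed}. The one point to check is that $\bar\tau(U)$ makes sense for an arbitrary subset $U$, and it does, since it is defined for all of $P(X)$.
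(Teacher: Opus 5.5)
Your proof is correct and follows essentially the same route as the paper: the inclusion $\supp_\tau(\bar\tau(U))\subseteq U$ is immediate, and for $p\in U$ one takes $\Gamma_p(x)\neq 0$ and uses Lemma~\ref{le:support-loc-closed} to see that $\Gamma_p(x)\in\bar\tau(U)$. Your version also writes out explicitly that $\supp_\tau(\Gamma_p(x))=\{p\}$, which the paper leaves implicit.
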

\begin{proof}
  By definition we have $\supp_\tau(\bar\tau(U))\subseteq
  U$. Let $p\in U$ and choose $x\in\cat T$ such that $\Gamma_p(x)\neq
    0$. Then $\Gamma_p(x)\in\bar\tau(U)$ by Lemma~\ref{le:support-loc-closed}.
\end{proof}  

For any locally closed subset $Y\subseteq X$ one may compare $\cat T_Y$ and
$\bar\tau(Y)$. There is one inclusion but it may be strict. In fact, the case
$Y=\varnothing$ yields the obstruction for equality.

\begin{lemma}\label{le:supp-detection}
  For a locally closed subset $Y\subseteq X$ and $x\in\cat T$ we have
  \[x\in\cat T_Y\quad\implies\quad \supp_\tau(x)\subseteq Y,\]
and the reverse implication holds when $\supp_\tau(y)\neq\varnothing$ for
all $0\neq y\in\cat T$.
\end{lemma}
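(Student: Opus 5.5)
The forward implication follows directly from Lemma~\ref{le:support-loc-closed}, and the reverse implication comes from applying the detection hypothesis to a suitable ``complement'' object. Write $Y=V\cap(X\smallsetminus U)$ with $U,V$ open, so that $\Gamma_Y=\Gamma_V L_U$ and $\cat T_Y=\cat T_V\cap(\cat T_U)^\perp$. Throughout we use that the functors $\Gamma_U,L_U,\Gamma_V,L_V$ commute, and that $x\in\cat T_Y$ if and only if $\Gamma_Y(x)\cong x$, as noted in the proof that $\cat T_Y$ is independent of the choice of $U$ and $V$.

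For the forward implication, suppose $x\in\cat T_Y$. Then $\Gamma_Y(x)\cong x$, and Lemma~\ref{le:support-loc-closed} gives
\[\supp_\tau(x)=\supp_\tau(\Gamma_Y(x))=\supp_\tau(x)\cap Y\subseteq Y.\]

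For the reverse implication, assume detection, i.e.\ $\supp_\tau(y)\neq\varnothing$ for $y\neq 0$, and suppose $\supp_\tau(x)\subseteq Y$. The plan is to show $L_V(x)=0$ and $\Gamma_U(x)=0$. Then the triangle \eqref{eq:loc-tria} for $\cat T_V$ gives $\Gamma_V(x)\cong x$, the triangle for $\cat T_U$ gives $L_U(x)\cong x$, and so $\Gamma_Y(x)=\Gamma_V L_U(x)\cong x$, i.e.\ $x\in\cat T_Y$.

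By detection it suffices to prove that $\supp_\tau(L_V(x))$ and $\supp_\tau(\Gamma_U(x))$ are empty. For $\Gamma_U$ note that $U$ is open, hence locally closed, so Lemma~\ref{le:support-loc-closed} gives $\supp_\tau(\Gamma_U(x))=\supp_\tau(x)\cap U\subseteq Y\cap U=\varnothing$.

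For $L_V$ I would use that $X\smallsetminus V$ is closed, hence locally closed, and that the corresponding functor is $\Gamma_{X\smallsetminus V}=\Gamma_X L_V$. The key point is that $\Gamma_X$ is the identity: $\tau(X)$ should be all of $\cat T^{\mathrm c}$, so that $\cat T_X=\cat T$. I expect this to be the main obstacle, since it relies on $\tau$ being a lattice morphism that preserves the top element. Granting it, $L_V\cong\Gamma_{X\smallsetminus V}$, and Lemma~\ref{le:support-loc-closed} yields $\supp_\tau(L_V(x))=\supp_\tau(x)\smallsetminus V\subseteq Y\smallsetminus V=\varnothing$.

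If instead only $\cat T_X\subsetneq\cat T$ is available, I would handle this by replacing $L_V$ with $\Gamma_X L_V$ and $\Gamma_V$ with $\Gamma_V$ restricted to $\cat T_X$. One would then also need $L_X(x)=0$. This holds because every point $p$ satisfies $\Gamma_p=\Gamma_{V'}L_{U'}$ with $V'\subseteq X$, which gives $\Gamma_pL_X=0$; thus $\supp_\tau(L_X(x))=\varnothing$, and detection forces $L_X(x)=0$. The same argument then goes through.
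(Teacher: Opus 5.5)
Your proof is correct and follows the paper's argument. Both directions rest on Lemma~\ref{le:support-loc-closed}, and for the converse you, like the paper, show that $L_V(x)$ and $\Gamma_U(x)$ have empty support and then use detection to get $\Gamma_V(x)\cong x$ and $L_U(x)\cong x$. The paper passes over whether $\Gamma_X\cong\id$, which is needed to read $L_V$ as $\Gamma_{X\smallsetminus V}$; your observation that detection forces $L_X(x)=0$, hence $\cat T_X=\cat T$, settles this cleanly.
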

\begin{proof}
 The first implication follows from
  Lemma~\ref{le:support-loc-closed}. For the reverse implication suppose
  $\supp_\tau(y)=\varnothing$ implies $y=0$ for each $y\in\cat T$.
  Let $\supp_\tau(x)\subseteq Y$ and $Y=V\smallsetminus W$ with
  $V,W\in\Omega(X)$. Consider the exact triangle
  $\Gamma_V(x)\to x\to L_V(x)\to$. Then
  $\supp_\tau(L_V(x))=\varnothing$, again by
  Lemma~\ref{le:support-loc-closed}. Thus $\Gamma_V(x)\cong x$ by our
  assumption. The same argument yields $L_W(\Gamma_V(x))\cong x$. Thus
  $x\in\cat T_Y$.
\end{proof}

\subsection*{Comparison of supports}

We keep a compactly generated triangulated category $\cat T$, fibred
over a space $X$ via
$\tau\colon\Omega(X)\to\Thick(\cat T^{\mathrm c})$. Suppose there is a
space $X'$ and a continuous map $f\colon X\to X'$. We set
$\tau':=\tau\circ f^*$, where $f^*\colon\Omega(X')\to\Omega(X)$ denotes
the map induced by $f$, and then $\cat T$ is fibred over $X'$ via
$\tau'$. For any locally closed subset $Y\subseteq X'$ it follows from the definitions that
\[\cat T_Y=\cat T_{f^{-1}(Y)}\qquad\text{and}\qquad\Gamma_Y=\Gamma_{f^{-1}(Y)}.\]

Now assume that points in $X$ and $X'$ are locally closed.

\begin{lemma}\label{le:compare-supp}
  Let $x\in\cat T$. Then
  \[ f(\supp_{\tau}(x)) \subseteq\supp_{\tau'}(x)\] and equality holds
  when $\supp_{\tau}$ detects nonzero objects.
\end{lemma}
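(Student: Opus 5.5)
The plan is to use the identity $\Gamma_Y=\Gamma_{f^{-1}(Y)}$ for locally closed $Y\subseteq X'$, recorded just before the statement. It lets us compare $\Gamma_{f(p)}$ on $X'$ with $\Gamma_p$ on $X$ via Lemma~\ref{le:support-loc-closed}.

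For the inclusion, fix $p\in\supp_\tau(x)$ and set $q=f(p)$. Since $q$ is locally closed in $X'$, its preimage $Y\colonequals f^{-1}(\{q\})$ is locally closed in $X$ and contains $p$. Then $\Gamma_q(x)=\Gamma_Y(x)$, and Lemma~\ref{le:support-loc-closed} gives
\[\supp_\tau(\Gamma_q(x))=\supp_\tau(x)\cap Y\ni p.\]
So $\Gamma_q(x)$ has nonempty $\tau$-support and is therefore nonzero, since $\Gamma_p$ of the zero object is $0$. Hence $q\in\supp_{\tau'}(x)$. Equivalently, $\{p\}=\{p\}\cap Y$ by Proposition~\ref{pr:loc-closed}, so $\Gamma_p\Gamma_Y\cong\Gamma_p\neq0$ on $x$.

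For the reverse inclusion, assume $\supp_\tau$ detects nonzero objects, i.e.\ $\supp_\tau(y)=\varnothing$ implies $y=0$. Let $q\in\supp_{\tau'}(x)$, so $\Gamma_q(x)=\Gamma_{f^{-1}(q)}(x)\neq 0$. By detection, $\supp_\tau(\Gamma_{f^{-1}(q)}(x))\neq\varnothing$. By Lemma~\ref{le:support-loc-closed} this set equals $\supp_\tau(x)\cap f^{-1}(q)$, so there is some $p\in\supp_\tau(x)$ with $f(p)=q$. Thus $q\in f(\supp_\tau(x))$.

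The only step needing care is the identity $\Gamma_Y=\Gamma_{f^{-1}(Y)}$. Writing $Y=V\smallsetminus U$ with $U,V$ open in $X'$, we get $f^{-1}(Y)=f^{-1}(V)\smallsetminus f^{-1}(U)$ and $\tau'(V)=\tau(f^{-1}V)$. Both functors are then $\Gamma_{f^{-1}V}L_{f^{-1}U}$, which is well defined by the independence lemma for locally closed sets. Beyond this, everything is a direct application of Lemma~\ref{le:support-loc-closed}.
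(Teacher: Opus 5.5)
Your proposal is correct and takes the paper's route: reduce to $\Gamma_q=\Gamma_{f^{-1}(q)}$ for $q\in X'$, then get the inclusion directly and the reverse inclusion from detection of nonzero objects. You also spell out, via Lemma~\ref{le:support-loc-closed} and the unwinding of $\tau'=\tau\circ f^*$, the steps the paper leaves implicit.
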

\begin{proof}
Let $p\in X'$ and set $U\colonequals f^{-1}(p)$. Then
$\Gamma_p(x)=\Gamma_U(x)$. If $\fq\in\supp_\tau(x)$, then
$\Gamma_p(x)\neq 0$ for $\fp=f(\fq)$. On the other hand, if
$\Gamma_p(x)\neq 0$ and  $\supp_{\tau}$ detects nonzero objects, then
$\Gamma_\fq(x)\neq 0$ for some $\fq\in U$.
\end{proof}

\subsection*{Support via central actions}

In this section we fix a graded-commutative ring $R$ and keep a
compactly generated triangulated category $\cat T$. We set
$\cat C\colonequals\cat T^{\mathrm c}$ and suppose that $\cat T$ is
$R$-linear. Our aim is to provide for a Thomason subset
$V\subseteq \Spec R$ a cohomological description for the objects in
$\cat T_V$ and in $(\cat T_V)^\perp$. More precisely, for
$V=V(\fa)$ given by a finitely generated ideal $\fa$,
we have essentially by definition that
\[(\cat T_V)^{\mathrm c}=(\cat T^{\mathrm c})_V\]
and we extend this description from compact to all objects in $\cat T$.

Let $S\subseteq R$ be a multiplicative set of homogeneous elements. We
identify
\[\Spec R[S^{-1}]=\{\fp\in\Spec R\mid\fp\cap S=\varnothing\}\]
and its complement is Thomason, since
\[(\Spec R)\smallsetminus \Spec R[S^{-1}]=\bigcup_{s\in S}V(s).\]
Recall that an $R$-module $M$ is $S$-local if
$M\cong M[S^{-1}]$, and we have $M[S^{-1}]=0$ if and only if
$\supp_R(M) \subseteq (\Spec R)\smallsetminus \Spec R[S^{-1}]$.

Let $\cat C_S$ denote the kernel of the
canonical functor $\cat C\to\cat C[S^{-1}]$. We set
\[\cat T_S\colonequals\loc(\cat
  C_S)\qquad\text{and}\qquad\cat T[S^{-1}]\colonequals (\cat
  T_S)^\perp.\]
Then there is a triangle equivalence
\[\cat T/\cat T_S\longiso \cat T[S^{-1}]\]
by Proposition~\ref{pr:verdier-adjoint}. Moreover, $\cat T[S^{-1}]$ is
compactly generated and
\[(\cat T^{\mathrm c})[S^{-1}]=(\cat T[S^{-1}])^{\mathrm c}\] up to
direct summands, by Proposition~\ref{pr:bousfield-comp-gen}. We write
$L_S$ for the localisation functor with respect to $\cat T_S$ and set
$x[S^{-1}]\colonequals L_S(x)$ for $x\in\cat T$.

A special case of the above construction arises for each
$\fp\in\Spec R$ by setting
$\cat T_\fp\colonequals\cat T[(R\smallsetminus\fp)^{-1}]$ and
$x_\fp\colonequals L_{R\smallsetminus\fp}(x)$ for each $x\in\cat T$.

\begin{lemma}\label{le:S-localisation}
  Let $S\subseteq R$ be multiplicative. For an object $x\in\cat T$ the
  following holds.
\begin{enumerate}
\item  The canonical
  morphism $x\to x[S^{-1}]$ induces an isomorphism
\[\Hom^*_{\cat T}(c,x)[S^{-1}]\iso \Hom^*_{\cat T}(c,x[S^{-1}])\quad\text{for
    all}\quad c\in\cat T^{\mathrm c}.\]
\item The object $x$ is in $\cat T[S^{-1}]$ if and only if
  $\Hom^*_{\cat T}(c,x)$ is $S$-local for all $c\in\cat T^{\mathrm c}$.
\item  The object $x$ is in $\cat T_S$ if and only if \[\supp_R(\Hom^*_{\cat T}(c,x))\cap\Spec R[S^{-1}]=\varnothing
\quad\text{for all}\quad c\in\cat T^{\mathrm c}.\]
\end{enumerate}
\end{lemma}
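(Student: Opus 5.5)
I would prove (1) by restricting to compact test objects and reducing to the category $\cat C[S^{-1}]$ of Section~2. The starting point is Lemma~\ref{le:loc-commute}: it gives $h_\cat T\circ L_{\cat T_S}\cong L_{\cat C_S}\circ h_\cat T$, where $L_{\cat C_S}$ is the localisation functor on $\Coh\cat C$ induced by the thick subcategory $\cat C_S$. In the proof of Lemma~\ref{le:central} it was shown that $L_{\cat C_S}F\cong F[S^{-1}]$ for every $F\in\Coh\cat C$. This holds on representables by the definition of $\cat C[S^{-1}]\simeq\cat C/\cat C_S$, and it extends to all $F$ because both sides preserve filtered colimits. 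Evaluating at $c\in\cat C$ then gives
\[\Hom^*_\cat T(c,x[S^{-1}])\cong \Hom^*_\cat T(c,x)[S^{-1}].\]
To finish (1), I would check that this isomorphism is induced by the unit $x\to x[S^{-1}]$. This holds because the unit of $L_{\cat C_S}$ is the canonical map $F\to F[S^{-1}]$, and it is compatible with $h_\cat T$ applied to $x\to L_{\cat T_S}x$.

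For (2), recall that $\cat T[S^{-1}]=(\cat T_S)^\perp$ is the essential image of $L_{\cat T_S}$. Hence $x\in\cat T[S^{-1}]$ if and only if $x\to x[S^{-1}]$ is invertible. Since $h_\cat T$ reflects isomorphisms, this is equivalent to $\Hom^*_\cat T(c,x)\to\Hom^*_\cat T(c,x)[S^{-1}]$ being bijective for all compact $c$, using (1). That is precisely $S$-locality. The same conclusion also follows from Lemma~\ref{le:loc-comp-gen}(3) applied to $\cat C_S$: one checks that $\Hom(c',x)=0$ for all $c'\in\cat C_S$ forces locality via the Koszul triangles, but the first argument is cleaner.

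For (3), $x\in\cat T_S$ if and only if $x[S^{-1}]=L_{\cat T_S}(x)=0$, by the triangle \eqref{eq:loc-tria}. The restricted Yoneda functor detects zero objects, since $\cat C$ generates. By (1), $x[S^{-1}]=0$ is therefore equivalent to $\Hom^*_\cat T(c,x)[S^{-1}]=0$ for all $c\in\cat C$. The lemma on modules preceding Lemma~\ref{le:supp-ring-hom} then translates this into the condition $\supp_R(\Hom^*_\cat T(c,x))\cap\Spec R[S^{-1}]=\varnothing$.

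The main obstacle is the naturality claim in (1): that the abstract isomorphism from Lemma~\ref{le:loc-commute} is induced by the unit morphism. I would settle it by using the functoriality in Lemma~\ref{le:res-yoneda}, which identifies the unit of $(q,q_\rho)$ with the unit of $(q^*,q_*)$ on $\Coh\cat C$.
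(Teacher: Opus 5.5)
Your proof is correct, but it follows a different route from the paper's.

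The paper proves (1) by a closure argument. For compact $c$ it shows two things: $\Hom^*_{\cat T}(c,x)[S^{-1}]=0$ when $x\in\cat T_S$, and $\Hom^*_{\cat T}(c,x)$ is $S$-local when $x\in\cat T[S^{-1}]$. Both are checked first for compact $x$ from the definitions, and then for all $x$ using closure under coproducts, suspensions and extensions. Applying $\Hom^*_{\cat T}(c,-)[S^{-1}]$ to the triangle $\Gamma_S(x)\to x\to x[S^{-1}]\to$ then gives (1). For the converse in (2) the paper uses the description of $\bar{\cat C}^\perp$ in Lemma~\ref{le:loc-comp-gen}(3), together with the fact that an $S$-local module $M$ with $M[S^{-1}]=0$ vanishes. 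Part (3) is read off from (1), as you do.

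You instead get (1) in one stroke from the $\Coh$-formalism, combining Lemma~\ref{le:loc-commute} with the identification $L_{\cat C_S}F\cong F[S^{-1}]$ from the proof of Lemma~\ref{le:central}. You then deduce (2) and (3) formally, because $h_{\cat T}$ reflects isomorphisms and detects zero objects. This is more conceptual: it presents the lemma as the big-category shadow of the identity $L_{\cat C_S}=(-)[S^{-1}]$ on $\Coh\cat C$.

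The price is twofold. First, you depend on Lemma~\ref{le:loc-commute}, and hence on Lemma~\ref{le:res-yoneda} and on $(\cat T/\cat T_S)^{\mathrm c}$ agreeing with $\cat C/\cat C_S$ up to summands. Second, you need the compatibility with units that you flag. The paper's argument needs only compact generation and elementary closure properties of $S$-local and $S$-torsion modules.

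Your flagged obstacle can be sidestepped. Your abstract isomorphism is natural in $c$, hence $R$-linear, so it already shows two things. First, $\Hom^*_{\cat T}(c,y[S^{-1}])$ is $S$-local for every $y$. Second, $\Hom^*_{\cat T}(c,\Gamma_S(x))[S^{-1}]\cong\Hom^*_{\cat T}(c,L_S\Gamma_S(x))=0$. Localising the long exact sequence of $\Gamma_S(x)\to x\to x[S^{-1}]\to$ then shows directly that the map induced by the unit is the required isomorphism. This is exactly the paper's final step.
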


\begin{proof}
(1)  Let $c\in\cat T$ be compact. We claim that
\[\supp_R(\Hom^*_{\cat T}(c,x))\cap\Spec R[S^{-1}]=\varnothing\quad\text{for}\quad x\in \cat T_S.\]
This follows from the definitions when $x$ is compact, and holds for
an arbitrary object because it can be built from compacts using
coproducts, suspensions, and extensions. The same argument is used to
show that $\Hom^*_{\cat T}(c,x)$ is $S$-local when $x\in\cat T[S^{-1}]$.
The assertion about objects in $\cat T_S$ implies
that applying \[\Hom^*_{\cat T}(c,-)[S^{-1}]\] to the exact triangle
$\Gamma_S(x)\to x\to x[S^{-1}]\to $ induces an
isomorphism
  \[\Hom^*_{\cat T}(c,x)[S^{-1}]\iso \Hom^*_{\cat T}(c,x[S^{-1}])[S^{-1}].\]
The assertion about objects in $\cat T[S^{-1}]$ implies
that 
\[\Hom^*_{\cat T}(c,x[S^{-1}])\iso \Hom^*_{\cat T}(c,x[S^{-1}])[S^{-1}].\]

(2) One implication has already been shown. So it remains to show that
$x\in\cat T[S^{-1}]$ when $\Hom^*_{\cat T}(c,x)$ is $S$-local for
all $c\in\cat T^{\mathrm c}$. For this it suffices to show that $\Hom^*_{\cat T}(c,x)=0$
when $c\in \cat T_S$. But this is clear since any
$S$-local $R$-module with support in $\Spec R\smallsetminus \Spec
R[S^{-1}]$ is zero.

(3) This follows from (1).
\end{proof}

\begin{lemma}\label{le:tria-supp-V(a)}
  Let $V=V(\fa)\subseteq \Spec R$ be given by a finitely generated
  ideal $\fa$. Then $x\in\cat T$ belongs to $\cat T_V$ if and only if
\[\supp_R(\Hom^*_{\cat T}(c,x))\subseteq V\quad\text{for all}\quad c\in\cat T^{\mathrm c}.\]
\end{lemma}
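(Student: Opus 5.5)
The plan is to reduce to the case of a single element, where Lemma~\ref{le:S-localisation}(3) applies, and then induct on the number of generators of $\fa$. Here $\cat T_V=\loc(\tau_R(V))=\loc(\cat C_{V(\fa)})$.

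\emph{Single element.} Let $\fa=(r)$ and $S=\{r^i\mid i\ge 0\}$. Then $\cat C_{V(r)}=\cat C_S$ by Lemma~\ref{le:Koszul-r} and the proof of Proposition~\ref{pr:koszul}, so $\cat T_{V(r)}=\loc(\cat C_S)=\cat T_S$. Since $\Spec R\smallsetminus \Spec R[S^{-1}]=V(r)$, Lemma~\ref{le:S-localisation}(3) says that $x\in\cat T_S$ if and only if $\supp_R\Hom^*_\cat T(c,x)\subseteq V(r)$ for all compact $c$. This settles one generator.

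\emph{Induction.} Let $\fa=(r_1,\dots,r_n)$ and $\fb=(r_1,\dots,r_{n-1})$.
\begin{enumerate}
\item By Proposition~\ref{pr:koszul}, $\cat C_{V(\fa)}=\cat C_{V(\fb)}\cap\cat C_{V(r_n)}$, and these are central, hence commuting, thick subcategories of $\cat C$.
\item The Corollary after Proposition~\ref{pr:loc-commuting} then gives
\[\cat T_{V(\fa)}=\loc(\cat C_{V(\fb)}\wedge\cat C_{V(r_n)})=\cat T_{V(\fb)}\cap\cat T_{V(r_n)}.\]
\item By the induction hypothesis and the single-element case, $x\in\cat T_{V(\fa)}$ if and only if $\supp_R\Hom^*(c,x)$ lies in both $V(\fb)$ and $V(r_n)$ for all compact $c$.
\item Since $V(\fb)\cap V(r_n)=V(\fa)$, this is the claimed condition.
\end{enumerate}

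\emph{Alternative route.} One could argue directly instead. The forward direction is a closure argument: the class of $x$ with $\supp_R\Hom^*(c,x)\subseteq V$ for all compact $c$ is localising, because support of modules is compatible with coproducts and long exact sequences. For compact $x$ it holds by definition, since $\Hom^*(c,x)_\fp=0$ whenever $x_\fp=0$. The converse would use the triangle $\Gamma_V x\to x\to L_Vx\to$, but controlling $L_Vx$ is harder. The reduction to Lemma~\ref{le:S-localisation} above avoids this, so I take it as the main route.

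\emph{Main obstacle.} The delicate step is the identification $\cat C_{V(r)}=\cat C_S$, that is, $x_\fp=0$ for all $\fp\notin V(r)$ if and only if $\End^*(x)[r^{-1}]=0$. I would check this through the earlier lemma characterising $M[S^{-1}]=0$ by support, applied to $M=\End^*(x)$.
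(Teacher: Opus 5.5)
Your proof is correct and is essentially the paper's argument: the one-generator case comes from Lemma~\ref{le:S-localisation} via $\cat T_{V(r)}=\cat T_S$, and the induction on the number of generators rests on $\cat T_{V(\fa)}=\cat T_{V(\fb)}\cap\cat T_{V(r_n)}$. The paper obtains this intersection from Proposition~\ref{pr:loc-closed}, which is itself an application of Proposition~\ref{pr:loc-commuting}, so your citation of the corollary after Proposition~\ref{pr:loc-commuting} amounts to the same step.
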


\begin{proof}
  Let $r_1,\ldots,r_n$ be generators of $\fa$, so
  \[V(\fa)=V(r_1)\cap\cdots\cap V(r_n).\]
  The case $n=1$ follows from Lemma~\ref{le:S-localisation}, and then
  the general case by an induction on $n$ using Proposition~\ref{pr:loc-closed} .
\end{proof}

\begin{proposition}\label{pr:noeth-supp}
  Suppose that $R$ is noetherian and let $x\in\cat T$. Then
  \[\supp_R(x)=\varnothing\quad\implies\quad x=0.\]
\end{proposition}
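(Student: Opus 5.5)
We reduce the statement to detecting the nonvanishing of the $R$-modules $\Hom^*_{\cat T}(c,x)$ for compact $c$. Here $\supp_R(x)$ is the $\tau_R$-support of Definition~\ref{de:tau-supp}, with $\cat T$ fibred over $(\Spec R)^\vee$. Since $R$ is noetherian, every specialisation closed subset is Thomason and each point $\fp$ is locally closed: we have $\{\fp\}=V(\fp)\cap(\Spec R\smallsetminus Z(\fp))$, where $Z(\fp)=\{\fq\mid \fq\not\subseteq\fp\}$ is Thomason. Then $\Gamma_\fp=\Gamma_{V(\fp)}L_{Z(\fp)}$, and $L_{Z(\fp)}$ is precisely the localisation $x\mapsto x_\fp$ with respect to $S=R\smallsetminus\fp$, because $\cat T_{Z(\fp)}=\cat T_S$.

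Suppose $x\neq 0$. Since $\cat T$ is compactly generated, there is a compact $c$ with $M\colonequals\Hom^*_{\cat T}(c,x)\neq 0$. Choose a nonzero homogeneous $m\in M$. Since $R$ is noetherian, the cyclic module $Rm$ has an associated prime $\fp$, which is homogeneous. After replacing $m$ by a suitable multiple, we may assume $\ann_R(m)=\fp$. Then $R/\fp\hookrightarrow M$ (up to shift), so $M_\fp\neq 0$ and $\fp$ is minimal in $\supp_R(M)\cap V(\fp)$. We aim to show $\Gamma_\fp(x)\neq 0$.

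First, by Lemma~\ref{le:S-localisation}(1), $\Hom^*_{\cat T}(c,x_\fp)\cong M_\fp\neq0$, and this module contains $(R/\fp)_\fp=k(\fp)$, a nonzero element killed by $\fp$. Next apply $\Gamma_{V(\fp)}$ to $y\colonequals x_\fp$. Write $\fp=(r_1,\dots,r_n)$ and set $L_{V(\fp)}$ to be the localisation complementary to $\Gamma_{V(\fp)}$. The key step is to show that the image $\bar m\in\Hom^*(c,y)$ of $m$ does not lift from, or map injectively into, $\Hom^*(c,L_{V(\fp)}y)$ in a way that would force $\Gamma_{V(\fp)}y=0$. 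If $\Gamma_{V(\fp)}y=0$, then $y\cong L_{V(\fp)}y$ lies in $(\cat T_{V(\fp)})^\perp$, so $\Hom^*(\kos c{\fp},y)=0$, with $\kos c{\fp}$ compact in $\cat T_{V(\fp)}$ by Proposition~\ref{pr:koszul}. I would use the Koszul triangles to get a contradiction. The element $\bar m$ is annihilated by $r_1$, so it lifts along $\Hom^*(\kos c{r_1},y)\to\Hom^*(c,y)$, up to shift, via the long exact sequence of $c\xrightarrow{r_1}\Si^{d}c\to\kos c{r_1}$. Iterating, with care that the lifts remain annihilated by the remaining $r_i$, which is why one works with a socle-type element of $k(\fp)$, produces a nonzero element of $\Hom^*(\kos c{\fp},y)$, up to shifts. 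This is the contradiction. Hence $\Gamma_\fp(x)=\Gamma_{V(\fp)}(x_\fp)\neq0$, so $\fp\in\supp_R(x)$.

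The main obstacle is the iterated lifting. At each stage the lift is only well defined modulo the image of $r_i$, so we need an element annihilated by the next generator. I would handle this by replacing the naive lifting with the observation that $\Hom^*(\kos c{r},y)$ surjects onto the $r$-torsion of $\Hom^*(c,y)$ up to shift. I would then apply this inductively to the $R_\fp$-module $N=\Hom^*(c,y)$, whose $\fp$-torsion submodule is nonzero. Indeed, by induction on $i$, $\Hom^*(\kos c{(r_1,\dots,r_i)},y)$ has nonzero $\fp$-torsion: the exact sequence shows it contains a quotient of, or maps onto, $\fp$-torsion of the previous stage, and the nonzero $\fp$-torsion of a module over noetherian $R_\fp$ always contains a nonzero element killed by $\fp$. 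Noetherianity is used exactly there, and in the existence of associated primes.
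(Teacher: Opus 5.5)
There is a genuine gap: the prime you choose need not lie in $\supp_R(x)$. An arbitrary associated prime of $\Hom^*_{\cat T}(c,x)$ is not detected by $\Gamma_\fp$, and your inductive claim that nonzero $\fp$-torsion propagates to $\Hom^*_{\cat T}(\kos{c}{(r_1,\dots,r_i)},y)$ is false. Write $N'=\Hom^*_{\cat T}(c',y)$ and consider the sequence $0\to N'/rN'\to\Hom^*_{\cat T}(\kos{c'}{r},y)\to(0:_{N'}r)\to 0$ (up to shifts). The right-hand term having $\fp$-torsion does not force the middle term to have any, and the left-hand term may have lost it.

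Here is a counterexample. Let $R=k[s,t]_{(s,t)}$ act on $\cat T=\bfD(R)$, let $\fp=(s,t)$ be the maximal ideal, and take $c=R$ and $x=L_{V(\fp)}R$. Note $x_\fp=x$ since $R$ is local. Since $R$ is Cohen--Macaulay of dimension $2$, $\Gamma_{V(\fp)}R\simeq\Si^{-2}E$, where $E$ is the injective hull of $k$. Hence $\Hom^*_{\cat T}(R,x)$ is $R$ in degree $0$ and $E$ in degree $1$. A nonzero socle element $m\in E$ has $\ann_R(m)=\fp$, so your construction may produce this $\fp$. But $\Gamma_\fp(x)=\Gamma_{V(\fp)}L_{V(\fp)}R=0$.

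Following your induction, $\Hom^*_{\cat T}(\kos{R}{s},x)$ is the fraction field $K$ of the discrete valuation ring $R/sR$, in degree $1$. It is the non-split extension of $(0:_E s)\cong K/(R/sR)$ by $R/sR$. So the $\fp$-torsion you lifted has disappeared, since $t$ acts invertibly on $K$, and $\Hom^*_{\cat T}(\kos{R}{(s,t)},x)=0$.

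What is missing is the right choice of prime. The minimality you record, $\fp$ minimal in $\supp_R(M)\cap V(\fp)$, is vacuous. Instead take $\fp$ \emph{minimal} in $\bigcup_{c\in\cat T^{\mathrm c}}\supp_R\Hom^*_{\cat T}(c,x)$, which exists because $R$ is noetherian. Then for each compact $c$, the module $\Hom^*_{\cat T}(c,x_\fp)\cong\Hom^*_{\cat T}(c,x)_\fp$ (Lemma~\ref{le:S-localisation}) is $\fp$-local and vanishes at every prime strictly contained in $\fp$, hence has support in $V(\fp)$. Lemma~\ref{le:tria-supp-V(a)} then gives $x_\fp\in\cat T_{V(\fp)}$. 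Therefore $\Gamma_\fp(x)=\Gamma_{V(\fp)}(x_\fp)\cong x_\fp$, which is nonzero because $\Hom^*_{\cat T}(c,x)_\fp\neq 0$ for some compact $c$. No Koszul lifting is needed, and this is exactly the paper's argument. In the example above, minimality forces $\fp=(0)$, and indeed $\Gamma_{(0)}(x)\neq 0$.
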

\begin{proof}
Let $x\neq 0$ and choose a prime $\fp$ that is minimal  in
\[\bigcup_{c\in\cat T^{\mathrm c}}\supp_R(\Hom^*_{\cat T}(c,x)).\] This exists
since $R$ is noetherian. Then it follows from
Lemmas~\ref{le:S-localisation} and \ref{le:tria-supp-V(a)} that
$\Gamma_\fp(x)=x_p\neq 0$, because
$\Gamma_\fp(x)=\Gamma_{V(\fp)}(x_\fp)$. Here, one uses that a
$\fp$-local $R$-module $M$ has $\supp_R(M)\subseteq V(\fp)$  if $M_\fq=0$ for each
prime $\fq$ strictly contained in $\fp$.
\end{proof}

\begin{corollary}
  Suppose that $R$ is noetherian and let $V\subseteq\Spec R$ be a
  Thomason subset. Then for  $x\in\cat T$ the following are
  equivalent.
  \begin{enumerate}
  \item $x\in\cat T_V$.
    \item  $\supp_R(x)\subseteq V$. 
    \item $\supp_R(\Hom^*_{\cat T}(c,x))\subseteq V$ for all $c\in\cat
      T^{\mathrm c}$.
    \end{enumerate}
\end{corollary}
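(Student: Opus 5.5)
The plan is to reduce everything to the case $V=V(\fa)$ with $\fa$ finitely generated (automatic since $R$ is noetherian), and then pass to arbitrary Thomason sets by filtered unions. Write $V=\bigcup_i V(\fa_i)$ as the union of all closed sets $V(\fa)\subseteq V$. Then $\tau_R(V)=\bigvee_i \cat C_{V(\fa_i)}$ with $\cat C=\cat T^{\mathrm c}$, and $\cat T_V=\loc(\tau_R(V))=\bigvee_i \cat T_{V(\fa_i)}$ in $\Loc(\cat T)$.

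For (1)$\Rightarrow$(2): the $\supp_R$-preimage of a subset is a localising subcategory, since each $\Gamma_\fp$ is exact and preserves coproducts. So it suffices to check that $\supp_R(x)\subseteq V$ for $x\in\cat T_{V(\fa)}$. This follows from Lemma~\ref{le:supp-detection}, applied with the closed set $Y=V(\fa)$ in the Hochster dual topology. Since $V(\fa)$ is open in $(\Spec R)^\vee$, it is in particular locally closed.

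For (2)$\Rightarrow$(1): Proposition~\ref{pr:noeth-supp} says that $\supp_R$ detects nonzero objects. Lemma~\ref{le:supp-detection} then gives $x\in\cat T_V$ whenever $\supp_R(x)\subseteq V$, because $V$ is open in the Hochster dual topology and hence locally closed.

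For (1)$\Leftrightarrow$(3) I would use Lemma~\ref{le:tria-supp-V(a)}, which gives the equivalence for $V=V(\fa)$. For general $V$, the implication (1)$\Rightarrow$(3) works as follows. For fixed compact $c$, the objects $x$ with $\supp_R\Hom^*_\cat T(c,x)\subseteq V$ form a localising subcategory: they are closed under coproducts because $\Hom^*(c,-)$ commutes with coproducts for compact $c$, and under extensions because support of a module sandwiched in an exact sequence lies in the union of the supports of the outer terms. This subcategory contains each $\cat T_{V(\fa_i)}$, hence contains $\cat T_V$.

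The converse (3)$\Rightarrow$(1) is the main obstacle. I would show (3)$\Rightarrow$(2) by adapting the proof of Proposition~\ref{pr:noeth-supp}. Suppose $\fp\in\supp_R(x)$, so $\Gamma_\fp(x)=\Gamma_{V(\fp)}(x_\fp)\neq 0$. By Lemma~\ref{le:tria-supp-V(a)} and Lemma~\ref{le:S-localisation}(1), some compact $c$ has $\Hom^*(c,x_\fp)=\Hom^*(c,x)_\fp\neq0$, so $\fp\in\supp_R\Hom^*(c,x)$. Here one should check that $\Gamma_{V(\fp)}$ of an object with vanishing $\Hom^*(c,-)_\fp$ for all compacts $c$ is zero. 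This holds because if $\Hom^*(c,x)_\fp=0$ for all $c$, then $x_\fp=0$ by Lemma~\ref{le:S-localisation}(1) and the fact that $\cat T$ is compactly generated. Hence $\supp_R(x)\subseteq\bigcup_c\supp_R\Hom^*(c,x)\subseteq V$, and (2) holds, which gives (1) by the previous step.
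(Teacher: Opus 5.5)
Your proof is correct. For (1)$\Leftrightarrow$(2) and (1)$\Rightarrow$(3) you follow the paper: Lemma~\ref{le:supp-detection} together with Proposition~\ref{pr:noeth-supp}, and Lemma~\ref{le:tria-supp-V(a)}, respectively. In (1)$\Rightarrow$(3) you pass from $V(\fa)$ to an arbitrary Thomason set $V$ by noting that, for fixed compact $c$, the objects with support of $\Hom^*_{\cat T}(c,-)$ in $V$ form a localising subcategory. The paper leaves this step implicit. In (1)$\Rightarrow$(2) your reduction to $V(\fa)$ is unnecessary: $V$ itself is open in $(\Spec R)^\vee$, so Lemma~\ref{le:supp-detection} applies to it directly. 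Also, $V(\fa)$ is Zariski-closed but Hochster-open, not closed in the dual topology, as your next sentence in fact says.

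For (3)$\Rightarrow$(1) your route differs from the paper's. The paper argues with the cone:
\begin{itemize}
\item it takes the triangle $\Gamma_V(x)\to x\to y\to$;
\item it transfers condition (3) to $y$, using (1)$\Rightarrow$(3) for $\Gamma_V(x)$ and the long exact sequence;
\item it deduces $y_\fp=0$ for $\fp\notin V$ from Lemma~\ref{le:S-localisation};
\item it combines this with $\supp_R(y)\cap V=\varnothing$ and concludes $y=0$ by Proposition~\ref{pr:noeth-supp}.
\end{itemize}
You instead prove, for every object $x$, the inclusion $\supp_R(x)\subseteq\bigcup_{c}\supp_R(\Hom^*_{\cat T}(c,x))$. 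This uses $\Gamma_\fp(x)=\Gamma_{V(\fp)}(x_\fp)$, Lemma~\ref{le:S-localisation}(1) and compact generation. It gives (3)$\Rightarrow$(2) at once, and then you reuse (2)$\Rightarrow$(1).

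The ingredients are the same in both arguments. Your version isolates a general support inclusion and avoids both the cone and the transfer of (3) to it. Your appeal to Lemma~\ref{le:tria-supp-V(a)} in that last step is superfluous: only $x_\fp\neq 0$, compact generation and Lemma~\ref{le:S-localisation}(1) are used.
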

\begin{proof}
  (1) $\Leftrightarrow$ (2) Combine Lemma~\ref{le:supp-detection} and
  Proposition~\ref{pr:noeth-supp}.

  (1) $\Rightarrow$ (3) Apply Lemma~\ref{le:tria-supp-V(a)}.

  (3) $\Rightarrow$ (1) Consider the exact triangle
  $\Gamma_V(x)\to x\to y\to$.  We need to show that $y=0$.  First
  observe that $\supp_R(y)\cap V=\varnothing$ by
  Lemma~\ref{le:supp-detection}. On the other hand,
  $\supp_R(\Hom^*_{\cat T}(c,y))\subseteq V$ for all
  $c\in\cat T^{\mathrm c}$, and therefore $y_\fp=0$ when
  $\fp\not\in V$, by Lemma~\ref{le:S-localisation}. Thus
  $\supp_R(y)=\varnothing$, and therefore $y= 0$ by
  Proposition~\ref{pr:noeth-supp}.
\end{proof}

\section{Stratification}

For a compactly generated triangulated category that is fibred over a
space, we wish to classify its localising subcategories via subsets of
the corresponding space. This leads to the notion of a stratification,
which reduces the classification to a local problem via a local-global
principle. Of prime interest are tensor triangulated categories, but
for actual calculations we employ appropriate ring actions.

References are \cite{Balmer/Favi:2011a,Benson/Iyengar/Krause:2011a,
  Benson/Iyengar/Krause:2012b}.

\subsection*{A local-global principle}

We keep a compactly generated triangulated category $\cat T$, fibred
over a space $X$ via $\tau\colon\Omega(X)\to\Thick(\cat T^{\mathrm c})$. We
assume that each point in $X$ is locally closed.

In general, there is no hope to classify all localising subcategories
of $\cat T$ via the support of objects. For that reason we restrict to a
class of localising subcategories that are compatible with the
additional structure given by $\tau$.

\begin{definition}
  A localising subcategory $\cat S\subseteq\cat T$ is
  \emph{$\tau$-localising}, if $\Gamma_U\cat S\subseteq\cat S$ for
  every open subset $U\subseteq X$, equivalently if
  \[\Gamma_Y\cat S=\cat S\cap \cat T_Y\] for every locally closed subset
  $Y\subseteq X$.
\end{definition}

The next lemma collects some basic facts about $\tau$-localising subcategories.

\begin{lemma}\label{le:tau-local}
  The  $\tau$-localising subcategories are closed
  under all joins, all meets, and also under the assignment
  $\cat S\mapsto\Gamma_Y\cat S$ for every locally closed subset
  $Y\subseteq X$. In this case
  \[\Gamma_Y\left(\bigvee_i \cat S_i\right)=\bigvee_i \Gamma_Y(\cat
    S_i)\qquad\text{and}\qquad \Gamma_Y\left(\bigwedge_i \cat S_i\right)=\bigwedge_i \Gamma_Y(\cat
    S_i).\]
\end{lemma}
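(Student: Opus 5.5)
The proof rests on three facts established earlier. First, by the lemma on basic properties of $\Gamma_Y$, for a localising $\cat S$ the class $\Gamma_Y\cat S=\{x\mid\Gamma_Y(x)\cong x\}$ is localising. Second, all functors $\Gamma_U$ and $L_U$ commute, so $\Gamma_{Y\cap Y'}\cong\Gamma_Y\Gamma_{Y'}$ by Proposition~\ref{pr:loc-closed}. Third, a $\tau$-localising $\cat S$ satisfies $\Gamma_Y\cat S=\cat S\cap\cat T_Y$. This last identity lets me interpret $\Gamma_Y\cat S$ as the class of objects $x\in\cat S$ with $\Gamma_Y(x)\cong x$. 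From $\Gamma_U\cat S\subseteq\cat S$ for all open $U$, I also deduce $L_U(x)\in\cat S$ for $x\in\cat S$, using the triangle \eqref{eq:loc-tria}. Hence $\Gamma_Y(x)=\Gamma_V L_U(x)\in\cat S$ for every locally closed $Y$.

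\emph{Meets.} Let $(\cat S_i)$ be $\tau$-localising. Their intersection is localising, and for $x\in\bigcap_i\cat S_i$ we get $\Gamma_U(x)\in\cat S_i$ for each $i$, so the intersection is $\tau$-localising. The identity for meets is then formal:
\[\Gamma_Y\Bigl(\bigwedge_i\cat S_i\Bigr)=\Bigl(\bigcap_i\cat S_i\Bigr)\cap\cat T_Y=\bigcap_i(\cat S_i\cap\cat T_Y)=\bigwedge_i\Gamma_Y(\cat S_i).\]

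\emph{Joins.} Let $\cat S=\loc(\bigcup_i\cat S_i)$. The class $\{x\in\cat T\mid\Gamma_U(x)\in\cat S\}$ is localising, since $\Gamma_U$ is exact and preserves coproducts. It contains every $\cat S_i$, hence contains $\cat S$, so $\cat S$ is $\tau$-localising. For the identity, I use $\Gamma_Y(\loc(\cat X))=\loc(\Gamma_Y(\cat X))$ from the earlier lemma. The inclusion $\supseteq$ is clear. For $\subseteq$, take $x\in\Gamma_Y\cat S$. Then $x\cong\Gamma_Y(x)\in\Gamma_Y(\loc(\bigcup_i\cat S_i))=\loc(\bigcup_i\Gamma_Y(\cat S_i))$. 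Here $\Gamma_Y(\cat S_i)$ means the image class $\{\Gamma_Y(s)\mid s\in\cat S_i\}$, which lies in $\Gamma_Y\cat S_i$ because $\Gamma_Y^2\cong\Gamma_Y$. This yields the join formula. The join case is the main step requiring care: one has to distinguish the image class $\Gamma_Y(\cat S_i)$ from the fixed-point class $\Gamma_Y\cat S_i$ and check that the two generate the same localising subcategory.

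\emph{Closure under $\Gamma_Y$.} Let $\cat S$ be $\tau$-localising. Then $\Gamma_Y\cat S$ is localising. For open $U$ and $x\in\Gamma_Y\cat S$, we have $\Gamma_U(x)\in\cat S$. Moreover,
\[\Gamma_Y\Gamma_U(x)\cong\Gamma_U\Gamma_Y(x)\cong\Gamma_U(x)\]
by commutativity. Hence $\Gamma_U(x)\in\Gamma_Y\cat S$, so $\Gamma_Y\cat S$ is $\tau$-localising.
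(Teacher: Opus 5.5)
Your proof is correct and follows the paper's argument. For joins you use the localising class $\{x\mid\Gamma_U(x)\in\cat S\}$, which amounts to the paper's chain $\Gamma_Y(\bigvee_i\cat S_i)\subseteq\bigvee_i\Gamma_Y(\cat S_i)\subseteq\bigvee_i\cat S_i$, and for the reverse inclusion the fact that $\Gamma_Y\cat S=\cat S\cap\cat T_Y$ is localising; meets then follow formally, and closure under $\Gamma_Y$ follows from $\Gamma_Y\Gamma_U\cong\Gamma_U\Gamma_Y$ (Proposition~\ref{pr:loc-closed}). One small point: the step $\Gamma_Y(s)\in\Gamma_Y\cat S_i=\cat S_i\cap\cat T_Y$ for $s\in\cat S_i$ uses that $\cat S_i$ is $\tau$-localising, which you proved in your first paragraph, and not only $\Gamma_Y^2\cong\Gamma_Y$.
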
  
\begin{proof}
  Let $\cat S=\bigvee_i\cat S_i$ be a join of localising subcategories
  of $\cat T$ and $Y\subseteq X$ a locally closed subset. If each
  $\cat S_i$ is $\tau$-localising, then
\[\Gamma_Y\left(\bigvee_i \cat S_i\right)\subseteq \bigvee_i
  \Gamma_Y(\cat S_i) \subseteq \bigvee_i \cat S_i\]
since  $\Gamma_Y$ is exact and preserves all
  coproducts. Thus $\cat S$ is $\tau$-localising. The full subcategory
  $\Gamma_Y\cat S$ is localising, and therefore the reverse inclusion
\[\bigvee_i \Gamma_Y(\cat S_i)\subseteq \Gamma_Y\left(\bigvee_i \cat S_i\right)\]
holds. Now consider a meet $\bigwedge_i\cat S_i$ of $\tau$-localising
subcategories. It is clear that this is $\tau$-localising, and  then
\[\Gamma_Y\left(\bigwedge_i \cat S_i\right)=\left(\bigwedge_i \cat
    S_i\right) \wedge\cat T_Y=\bigwedge_i \left(\cat
    S_i\wedge\cat T_Y\right) =\bigwedge_i \Gamma_Y(\cat S_i).\]

If $\cat S\subseteq\cat T$ is $\tau$-localising, then $\Gamma_Y\cat S$ is
$\tau$-localising, by Proposition~\ref{pr:loc-closed}.
\end{proof}

For a class $\cat X\subseteq\cat T$ let $\loc_\tau(\cat X)$ denote the smallest
$\tau$-localising subcategory of $\cat T$ that contains $\cat X$. One
easily checks that
\[\loc_\tau(\cat X)=\bigvee_{Y\text{ loc.\ cl.}}\loc(\Gamma_Y(\cat X)).\]
In particular, $\Gamma_p\cat S$ is $\tau$-localising for each
localising $\cat S\subseteq\cat T$ and $p\in X$.

\begin{definition}
  The \emph{local-global principle} holds for $(\cat T,\tau)$ if for each
  $\tau$-localising subcategory $\cat S\subseteq\cat T$ one has
  \[\cat S=\bigvee_{p\in X}\Gamma_p\cat S\,.\]
\end{definition}

There are sufficient conditions for the local-global principle
to hold in terms of the space $X$, but this is not needed for our discussion.
Let us reformulate the condition and then we look at its consequences.

\begin{lemma}
  For   $(\cat T,\tau)$ the following are equivalent.
  \begin{enumerate}
  \item The local-global principle holds.
  \item For each $x\in\cat T$ one has
    \[x\in \bigvee_{p\in X}\loc(\Gamma_p(x)).\]
  \item For each $x\in\cat T$ and each $\tau$-localising subcategory
    $\cat S\subseteq\cat T$ one has
\[x\in\cat S\qquad\iff\qquad \Gamma_p(x)\in\cat S\;\text{ for all
  }\;p\in X.\]
\end{enumerate}
\end{lemma}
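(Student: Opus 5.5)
I would prove the cycle (1) $\Rightarrow$ (2) $\Rightarrow$ (3) $\Rightarrow$ (1). Throughout I use that $\loc_\tau(\cat X)=\bigvee_{Y}\loc(\Gamma_Y(\cat X))$, so $\loc(\Gamma_p(x))$ is $\tau$-localising for every $p\in X$ and $x\in\cat T$. This follows from the remark after Lemma~\ref{le:tau-local} together with Proposition~\ref{pr:loc-closed}: for a locally closed $Y$ one has $\Gamma_Y\Gamma_p\cong\Gamma_{Y\cap\{p\}}$, which is either $\Gamma_p$ or $0$.

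For (1) $\Rightarrow$ (2), fix $x$ and take $\cat S=\loc_\tau(x)$. By the local-global principle, $x\in\cat S=\bigvee_p\Gamma_p\cat S$. Next I identify $\Gamma_p\cat S$. The formula for $\loc_\tau$ together with the identity $\Gamma_p(\loc(\cat X))=\loc(\Gamma_p\cat X)$ (from the lemma on basic properties of $\Gamma_Y$) gives
\[\Gamma_p\cat S=\bigvee_Y\loc(\Gamma_p\Gamma_Y x)\subseteq\loc(\Gamma_p x),\]
where the inclusion again uses Proposition~\ref{pr:loc-closed}. Hence $x\in\bigvee_p\loc(\Gamma_p(x))$. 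A small point to check is that $\Gamma_p$ applied to a join of localising subcategories behaves as claimed; this is the third item of the basic-properties lemma.

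For (2) $\Rightarrow$ (3), let $\cat S$ be $\tau$-localising. If $x\in\cat S$, then $\Gamma_p(x)\in\Gamma_p\cat S=\cat S\cap\cat T_{\{p\}}\subseteq\cat S$ by the definition of $\tau$-localising. Conversely, if $\Gamma_p(x)\in\cat S$ for all $p$, then $\loc(\Gamma_p(x))\subseteq\cat S$ for each $p$, so the join lies in $\cat S$, and $x\in\cat S$ by (2).

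For (3) $\Rightarrow$ (1), let $\cat S$ be $\tau$-localising. The inclusion $\bigvee_p\Gamma_p\cat S\subseteq\cat S$ holds because $\Gamma_p\cat S=\cat S\cap\cat T_p\subseteq\cat S$. For the reverse inclusion, set $\cat S'=\bigvee_p\Gamma_p\cat S$, which is $\tau$-localising by Lemma~\ref{le:tau-local}. For $x\in\cat S$ we have $\Gamma_p(x)\in\Gamma_p\cat S\subseteq\cat S'$ for every $p$ (using $\Gamma_p\Gamma_p\cong\Gamma_p$), so $x\in\cat S'$ by (3) applied to $\cat S'$.

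The only delicate step is the identification $\Gamma_p(\loc_\tau x)\subseteq\loc(\Gamma_p x)$ in (1) $\Rightarrow$ (2). It is here that Proposition~\ref{pr:loc-closed} and the commuting of $\Gamma_Y$ with $\loc$ are used; the remaining steps are formal.
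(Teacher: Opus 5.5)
Your proof is correct and follows the same cycle (1)$\Rightarrow$(2)$\Rightarrow$(3)$\Rightarrow$(1) as the paper: you apply the local-global principle to $\loc_\tau(x)$, and then apply (3) to $\cat S'=\bigvee_{p\in X}\Gamma_p\cat S$. The only difference is that you spell out the inclusion $\Gamma_p\loc_\tau(x)\subseteq\loc(\Gamma_p(x))$ via Proposition~\ref{pr:loc-closed}, which the paper leaves implicit.
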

\begin{proof}
(1) $\Rightarrow$ (2) Apply the local-global principle to $\cat
S=\loc_\tau(x)$.

 (2) $\Rightarrow$ (3) If  $\Gamma_p(x)\in\cat S$  for all $p\in X$,
 then (2) implies that $x\in\cat S$.

 (3) $\Rightarrow$ (1) Set
 $\cat S'\colonequals \bigvee_{p\in X}\Gamma_p\cat S$. Clearly,
 $\cat S'\subseteq\cat S$.  Applying (3)  to $\cat S'$ yields the other inclusion.
\end{proof}

\begin{proposition}\label{pr:tau-local-classification}
  The local-global principle holds for $(\cat
  T,\tau)$ if and only if the assignments
  \[\cat S\mapsto (\Gamma_p\cat S)_{p\in X}\qquad\text{and}\qquad
(\cat S_p)_{p\in X}\mapsto \bigvee_{p\in X}\cat S_p\]
    induce mutually inverse bijections between
\begin{enumerate}
\item $\tau$-localising subcategories  $\cat S\subseteq\cat T$, and
\item families  of $\tau$-localising
      subcategories $(\cat S_p\subseteq\cat T_{\{p\}})_{p\in X}$.
\end{enumerate}
  \end{proposition}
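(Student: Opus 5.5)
The plan is to check that both assignments are well defined, that one composite is always the identity, and that the other composite is the identity exactly when the local-global principle holds.

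\emph{Well definedness.} For a $\tau$-localising $\cat S$, Lemma~\ref{le:tau-local} shows that $\Gamma_p\cat S$ is $\tau$-localising. Moreover $\Gamma_p\cat S=\cat S\cap\cat T_{\{p\}}\subseteq\cat T_{\{p\}}$, so the first assignment lands in families of type (2). Conversely, a join of $\tau$-localising subcategories is again $\tau$-localising by Lemma~\ref{le:tau-local}, so the second assignment lands in (1).

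\emph{The family-side composite is always the identity.} Start with a family $(\cat S_p\subseteq\cat T_{\{p\}})_{p\in X}$ and set $\cat S=\bigvee_p\cat S_p$. For each $q\in X$, Lemma~\ref{le:tau-local} gives
\[\Gamma_q\cat S=\bigvee_{p}\Gamma_q\cat S_p.\]
Since $\cat S_p$ is $\tau$-localising, $\Gamma_q\cat S_p=\cat S_p\cap\cat T_{\{q\}}$. By Proposition~\ref{pr:loc-closed} and $\cat S_p\subseteq\cat T_{\{p\}}$, this is contained in $\cat T_{\{p\}}\cap\cat T_{\{q\}}=\cat T_{\{p\}\cap\{q\}}$. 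That category is $0$ when $p\neq q$, because $\Gamma_\varnothing=0$. When $p=q$ it equals $\cat S_q$, since $\Gamma_q$ acts as the identity on $\cat T_{\{q\}}$. Hence $\Gamma_q\bigl(\bigvee_p\cat S_p\bigr)=\cat S_q$ for every $q$, with no hypothesis needed.

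\emph{The other composite and the equivalence.} Going the other way, $\cat S\mapsto(\Gamma_p\cat S)_p\mapsto\bigvee_p\Gamma_p\cat S$. Requiring this to equal $\cat S$ for every $\tau$-localising $\cat S$ is literally the definition of the local-global principle. So if the principle holds, both composites are identities and the maps are mutually inverse bijections.

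Conversely, suppose the assignments are mutually inverse bijections. Then in particular the composite starting from (1) is the identity, so $\cat S=\bigvee_p\Gamma_p\cat S$ for every $\tau$-localising $\cat S$, which is the local-global principle.

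\emph{Main obstacle.} The only nontrivial step is $\Gamma_q\cat S_p=0$ for $p\neq q$. This rests on $\cat T_Y\cap\cat T_{Y'}=\cat T_{Y\cap Y'}$ from Proposition~\ref{pr:loc-closed} together with $\cat T_\varnothing=0$. The latter I would verify directly by writing $\varnothing=U\smallsetminus U$, so that $\Gamma_\varnothing=\Gamma_U L_U=0$.
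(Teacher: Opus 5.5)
Your proposal is correct and follows the paper's proof. Well-definedness comes from Lemma~\ref{le:tau-local}, one composite being the identity is literally the local-global principle, and the other composite is the identity because $\Gamma_q\bigl(\bigvee_p\cat S_p\bigr)=\bigvee_p\Gamma_q\cat S_p=\cat S_q$. Your argument that $\Gamma_q\cat S_p=0$ for $p\neq q$, via $\cat T_{\{p\}}\cap\cat T_{\{q\}}=\cat T_\varnothing=0$ and Proposition~\ref{pr:loc-closed}, spells out a step the paper leaves implicit.
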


\begin{proof}  
It follows from Lemma~\ref{le:tau-local} that both maps are well
defined. That one composite equals the identity is precisely the
statement of the local-global
principle. For the other composite note that
\[\Gamma_q\left( \bigvee_{p\in X}\cat S_p\right)=\bigvee_{p\in
    X}\Gamma_q\cat S_p=\cat S_q\]
for each $q\in X$. Thus this composite equals the identity.
\end{proof}

The result suggests the additional assumption that for each point
$p\in X$ there are no proper nonzero $\tau$-localising subcategories
of $\cat T_{\{p\}}$, because then the $\tau$-localising subcategories
are classified via their support. This leads to the notion of
`stratification' which we discuss in the setting of compactly
generated tensor triangulated categories.

\subsection*{Compactly generated tensor triangulated categories}

Let $\cat T= (\cat T,\otimes,\one)$ be a \emph{compactly generated
  tensor triangulated category}. To be more precise, the triangulated
category $\cat T$ is compactly generated and admits a closed symmetric
monoidal structure $\otimes\colon\cat T\times\cat T\to\cat T$ with
unit $\one$. Moreover, there exists an adjoint
$\fHom\colon \cat T^\op\times\cat T\to \cat T$ such that
\[\Hom_\cat T(x\otimes y,z)\cong\Hom_\cat T(x,\fHom(y,z))\,.\]
The functors $\otimes$ and $\fHom$ are assumed to be exact in each
variable.

\begin{definition}
An object $x\in\cat T$ is called \emph{dualisable} or \emph{rigid} if,
for every $y\in\cat T$, the natural map
$\fHom(x,\one) \otimes y\to \fHom(x,y)$ is an isomorphism.  We say that
$\cat T$ is \emph{rigidly-compactly generated} if $\one$ is compact
and every compact object is dualisable, which is equivalent to saying
that compact objects coincide with dualisable objects.
\end{definition}

Now suppose that $\cat T$ is rigidly-compactly generated.  Then
$\cat T$ is fibred over $(\Spc\cat T^{\mathrm c})^\vee$ via the bijection
$\tau_\otimes$ in \eqref{eq:Balmer-supp}; see Proposition~\ref{pr:tensor-central}.  We write
$\Loc_\otimes(\cat T)$ for the lattice of localising tensor ideals,
and for any class $\cat X\subseteq \cat T$ let $\loc_\otimes(\cat X)$
denote the smallest localising tensor ideal containing $\cat X$.

\begin{lemma}\label{le:tensor-ideal}
  Let $U\subseteq (\Spc\cat T^{\mathrm c})^\vee$ be an open subset. Then $\cat T_U$ and  $(\cat
  T_U)^\perp$ are localising tensor
  ideals of $\cat T$.
\end{lemma}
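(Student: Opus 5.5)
By the example preceding the lemma, $\cat T_U=\loc(\tau_\otimes(U))$, where $\cat C\colonequals\tau_\otimes(U)$ is a thick tensor ideal of $\cat T^{\mathrm c}$ by Theorem~\ref{th:Balmer}. The plan is to show that the property of being a tensor ideal passes from $\cat C$ to $\loc(\cat C)$ and then to the right orthogonal. Two standard facts drive the argument: $\otimes$ and $\fHom$ are exact in each variable, $\otimes$ preserves coproducts in each variable (it is a left adjoint), and the compact objects are dualisable and generate $\cat T$.

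\emph{Step 1: $\cat T_U$ is a tensor ideal.} For a fixed compact $c\in\cat C$, the class $\{y\in\cat T\mid c\otimes y\in\cat T_U\}$ is a localising subcategory, since $c\otimes-$ is exact and preserves coproducts. It contains $\cat T^{\mathrm c}$ because $\cat C$ is a tensor ideal in $\cat T^{\mathrm c}$, so it is all of $\cat T$, since $\cat T=\loc(\cat T^{\mathrm c})$. Next fix $y\in\cat T$. The class $\{x\in\cat T\mid x\otimes y\in\cat T_U\}$ is localising and contains $\cat C$, so it contains $\loc(\cat C)=\cat T_U$. Hence $\cat T_U\otimes\cat T\subseteq\cat T_U$.

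\emph{Step 2: $(\cat T_U)^\perp$ is a tensor ideal.} By Lemma~\ref{le:loc-comp-gen}(3)--(4), $(\cat T_U)^\perp=\{x\mid\Hom_\cat T(c,x)=0\text{ for all }c\in\cat C\}$, and it is localising. Take $x\in(\cat T_U)^\perp$. It suffices to show $d\otimes x\in(\cat T_U)^\perp$ for compact $d$: the class $\{y\mid y\otimes x\in(\cat T_U)^\perp\}$ is localising, because $(\cat T_U)^\perp$ is localising and $-\otimes x$ preserves coproducts and triangles, so containing the compacts forces it to be all of $\cat T$. For compact $c\in\cat C$ and compact $d$, rigidity of $d$ gives
\[\Hom(c,d\otimes x)\cong\Hom(c,\fHom(Dd,x))\cong\Hom(c\otimes Dd,x),\]
where $Dd\colonequals\fHom(d,\one)$ is compact (compact equals dualisable) and $d\cong DDd$. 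Since $c\otimes Dd\in\cat C$, the last group vanishes. Shifts are covered because $\cat C$ is closed under $\Si$.

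The main obstacle is the duality step in Step 2: identifying $d\otimes-$ with $\fHom(Dd,-)$ for rigid compact $d$. Everything else is routine closure under localising operations. If one wants to avoid double duals, the alternative is to show directly that $\Hom(c,d\otimes x)\cong\Hom(Dd\otimes c,x)$ via the evaluation and coevaluation maps.
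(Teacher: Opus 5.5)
Your proof is correct and follows the paper's argument: reduce to compact (hence dualisable) tensor factors, use that $\tau_\otimes(U)$ is a thick tensor ideal of $\cat T^{\mathrm c}$ to handle $\cat T_U=\loc(\tau_\otimes(U))$, and use $d\otimes x\cong\fHom(Dd,x)$ together with the tensor--hom adjunction to handle $(\cat T_U)^\perp$. The only difference is that you spell out the two-variable localising-class argument, which the paper compresses into ``we may assume that $x$ is compact''.
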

\begin{proof}
  The subcategory $\cat U\colonequals\tau_\otimes(U)$ is a thick
  tensor ideal of $\cat T^{\mathrm c}$. By definition we have
  $\cat T_U=\loc(\cat U)$ and $(\cat T_U)^\perp=\cat U^\perp$. We need
  to show that $x\otimes \cat T_U\subseteq \cat T_U$ and
  $x\otimes (\cat T_U)^\perp\subseteq (\cat T_U)^\perp$ for all
  $x\in\cat T$. We may assume that $x$ is compact since $\cat T$ is
  compactly generated. The first inclusion is clear since
  $x\otimes \cat U\subseteq \cat U$. For the second inclusion we may
  assume in addition that $x$ is dualisable, so $x\cong\fHom(\fHom(x,\one),\one)$.
  Then the inclusion
  follows from the fact that $\Hom_\cat T(u,y)=0$ for all $u\in\cat U$
  implies
  \[\Hom_\cat T(u,x\otimes y)\cong \Hom_\cat
    T(u,\fHom(\fHom(x,\one),y))\cong
    \Hom_\cat T(u\otimes\fHom(x,\one), y)=0\]
  for all $u\in\cat U$.
\end{proof}

\begin{lemma}\label{le:tensor-ideal-gamma}
  Let $U\subseteq (\Spc\cat T^{\mathrm c})^\vee$ be an open subset. Then there are isomorphisms
  \[\Gamma_U\cong\Gamma_U(\one)\otimes -\qquad\text{and}\qquad
    L_U\cong L_U(\one)\otimes -\,.\] For a locally closed subset
  $Y\subseteq (\Spc\cat T^{\mathrm c})^\vee$ one has
  \[ \Gamma_Y\cong\Gamma_Y(\one)\otimes -\qquad\text{and}\qquad \cat
    T_Y=\loc_\otimes(\Gamma_Y(\one)) \,.\]
\end{lemma}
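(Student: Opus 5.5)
The plan is to first establish the statement for an open subset $U$ and then deduce the locally closed case by composition. Throughout I use that $\Gamma_U$ and $L_U$ are exact and preserve coproducts (Proposition~\ref{pr:bousfield-comp-gen}), and that $\cat T_U$ and $(\cat T_U)^\perp$ are localising tensor ideals (Lemma~\ref{le:tensor-ideal}).

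For open $U$, fix $x\in\cat T$ and tensor the localisation triangle \eqref{eq:loc-tria} for $\one$ with $x$. This gives an exact triangle
\[\Gamma_U(\one)\otimes x\lto x\lto L_U(\one)\otimes x\lto\Si(\Gamma_U(\one)\otimes x).\]
By Lemma~\ref{le:tensor-ideal} the first term lies in $\cat T_U$ and the third in $(\cat T_U)^\perp$. Proposition~\ref{pr:verdier-adjoint} says such a triangle is unique up to unique isomorphism, so we get natural isomorphisms $\Gamma_U(x)\cong\Gamma_U(\one)\otimes x$ and $L_U(x)\cong L_U(\one)\otimes x$. Naturality holds because both sides are the right adjoint of the inclusion $\cat T_U\to\cat T$ applied to $x$, with the same counit. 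This step is essentially formal once Lemma~\ref{le:tensor-ideal} is available.

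For $Y=V\smallsetminus U$ locally closed, write $\Gamma_Y=\Gamma_V L_U$. The open case gives
\[\Gamma_Y(x)\cong\Gamma_V(\one)\otimes L_U(\one)\otimes x.\]
Setting $x=\one$ gives $\Gamma_Y(\one)\cong\Gamma_V(\one)\otimes L_U(\one)$, hence $\Gamma_Y\cong\Gamma_Y(\one)\otimes-$.

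It remains to show $\cat T_Y=\loc_\otimes(\Gamma_Y(\one))$.
\begin{enumerate}
\item[($\supseteq$)] $\Gamma_Y(\one)$ lies in $\cat T_Y=\cat T_V\cap(\cat T_U)^\perp$. This is an intersection of localising tensor ideals by Lemma~\ref{le:tensor-ideal}, hence itself a localising tensor ideal, so it contains $\loc_\otimes(\Gamma_Y(\one))$.
\item[($\subseteq$)] If $x\in\cat T_Y$, then $x\cong\Gamma_Y(x)\cong\Gamma_Y(\one)\otimes x$, which lies in the tensor ideal generated by $\Gamma_Y(\one)$.
\end{enumerate}
The fact $x\in\cat T_Y\iff\Gamma_Y(x)\cong x$ is noted in the proof that $\Gamma_Y$ is independent of choices.

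The main obstacle is the naturality and compatibility of the identifications in the open case, namely making sure the isomorphisms are functorial rather than merely objectwise. I would handle this via the universal property of the right adjoint $i_\rho$. The map $\Gamma_U(\one)\otimes x\to x$ is a $\cat T_U$-coreflection, so it is canonically isomorphic to the counit.
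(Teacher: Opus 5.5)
Your proposal is correct and follows essentially the paper's proof. You tensor the localisation triangle of $\one$ with $x$, use Lemma~\ref{le:tensor-ideal} to place the outer terms in $\cat T_U$ and $(\cat T_U)^\perp$, and then pass to $\Gamma_Y=\Gamma_V L_U$. The only differences are cosmetic: you invoke uniqueness of the localisation triangle where the paper applies $\Gamma_U$ and $L_U$ and uses $\Gamma_U L_U=0=L_U\Gamma_U$, and you spell out the equality $\cat T_Y=\loc_\otimes(\Gamma_Y(\one))$, which the paper calls immediate.
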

\begin{proof}
  Let $x\in\cat T$ and apply $-\otimes x$ to the exact triangle
  \[\Gamma_U(\one)\lto\one\lto L_U(\one)\lto \,.\] Then
  $\Gamma_U(\one)\otimes x$ is in $\cat T_U$ and $L_U(\one)\otimes x$
  is in $(\cat T_U)^\perp$, since both subcategories are tensor ideals
  by Lemma~\ref{le:tensor-ideal}. Applying $\Gamma_U$ to
  \[\Gamma_U(\one) \otimes x\lto x\lto L_U(\one) \otimes x\lto\]
  yields the
  isomorphism $\Gamma_U(\one)\otimes x\iso\Gamma_U(x)$, while applying
  $L_U$ yields $L_U(x)\iso L_U(\one)\otimes x$, since
  $\Gamma_U L_U=0=L_U\Gamma_U$.

Now let $Y$ be locally closed and choose open subsets $U,V$ such that
  $Y=V\smallsetminus U$. Then $\Gamma_Y=\Gamma_VL_U$, and the
  isomorphism  $\Gamma_Y\cong\Gamma_Y(\one)\otimes -$ follows from the
  first part.  The identity for $\cat T_Y$ is an immediate consequence.
\end{proof}

\begin{lemma}\label{le:tensor-tau-local}
  Each localising tensor ideal of $\cat T$ is $\tau_\otimes$-localising.
\end{lemma}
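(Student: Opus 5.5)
The plan is to check directly the defining condition of a $\tau_\otimes$-localising subcategory, namely $\Gamma_U\cat S\subseteq\cat S$ for every open subset $U\subseteq(\Spc\cat T^{\mathrm c})^\vee$. The key input is Lemma~\ref{le:tensor-ideal-gamma}, which identifies the local cohomology functors with tensoring by a fixed object.

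First step: let $\cat S$ be a localising tensor ideal of $\cat T$, and fix an open subset $U$. By Lemma~\ref{le:tensor-ideal-gamma} there is a natural isomorphism $\Gamma_U(x)\cong\Gamma_U(\one)\otimes x$ for each $x\in\cat T$.

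Second step: for $x\in\cat S$ the object $\Gamma_U(\one)\otimes x$ lies in $\cat S$, since $\cat S$ is a tensor ideal. Here I use the symmetry of $\otimes$, so that it does not matter on which side the ideal absorbs. Since $\cat S$ is closed under isomorphisms, it follows that $\Gamma_U(x)\in\cat S$. Thus $\Gamma_U\cat S\subseteq\cat S$ for all open $U$, which is the definition of $\cat S$ being $\tau_\otimes$-localising.

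If one prefers the equivalent formulation $\Gamma_Y\cat S=\cat S\cap\cat T_Y$ for all locally closed $Y$, the argument is the same and uses $\Gamma_Y\cong\Gamma_Y(\one)\otimes-$, also from Lemma~\ref{le:tensor-ideal-gamma}. This gives $\Gamma_Y(x)\in\cat S\cap\cat T_Y$ for $x\in\cat S$, since $\cat T_Y$ is the essential image of the idempotent functor $\Gamma_Y$. Conversely, each $x\in\cat S\cap\cat T_Y$ satisfies $x\cong\Gamma_Y(x)$, so $x$ lies in $\Gamma_Y\cat S$.

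There is no real obstacle. The only point that needs care is that the isomorphism in Lemma~\ref{le:tensor-ideal-gamma} is natural and holds for all objects, not only compact ones. That lemma is proved for arbitrary $x\in\cat T$, so the argument is complete.
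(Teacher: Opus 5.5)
Your proposal is correct and is essentially the paper's proof. Both use Lemma~\ref{le:tensor-ideal-gamma} to write $\Gamma_U\cong\Gamma_U(\one)\otimes-$, and then the tensor-ideal property of $\cat S$ gives $\Gamma_U\cat S\subseteq\cat S$; your remark on locally closed $Y$ merely spells out the equivalent formulation in the definition.
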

\begin{proof}
  When $U\subseteq(\Spc\cat T^{\mathrm c})^\vee$ is open we have
  $\Gamma_U\cong\Gamma_U(\one)\otimes -$ by
  Lemma~\ref{le:tensor-ideal-gamma}. Thus for any tensor ideal
  $\cat S\subseteq\cat T$ one has $\Gamma_U\cat S\subseteq\cat S$.
\end{proof}

\subsection*{Tensor triangulated support}

Let $\cat T$ be a rigidly-compactly generated tensor triangulated
category. We consider $\cat T$ as fibred over $(\Spc\cat T^{\mathrm c})^\vee$
via $\tau_\otimes$. In addition we assume that each point in
$(\Spc\cat T^{\mathrm c})^\vee$ is locally closed. In particular, the local
cohomology functor $\Gamma_p$ is defined for each point $p$.

For the support of objects in $\cat T$ we recall
Definition~\ref{de:tau-supp} which extends
Definition~\ref{de:tensor-supp} for compact objects.

\begin{definition}
For an object $x\in\cat T$ its \emph{tensor triangulated support} is
  \[\supp_\otimes(x)\colonequals\{p\in \Spc\cat T^{\mathrm c}\mid\Gamma_p(x)\neq
    0\}\,.\]
\end{definition}

This definition yields an adjoint pair of inclusion preserving maps  
\begin{equation*}
\begin{tikzcd}
\Loc_\otimes(\cat T) \ar[rr,yshift=2.5,"\supp_\otimes"] &&  \ar[ll,yshift=-2.5,"\tau_\otimes"]
\{\text{subsets of }\Spc\cat T^{\mathrm c}\}
\end{tikzcd}
\end{equation*}
given by
\[
\supp_\otimes(\cat S)\colonequals\bigcup_{x\in\cat S}\supp_\otimes(x)\quad\text{and}\quad
  \tau_\otimes(U)\colonequals\{x\in\cat T\mid\supp_\otimes(x)\subseteq U\}\,.
\]
This adjoint pair of maps extends \eqref{eq:Balmer-supp}.  More
precisely, we obtain the following  diagram of poset
morphisms
\begin{equation}\label{eq:tensor-supp-square} 
\begin{tikzcd}
\Loc_\otimes(\cat T) \ar[rr,yshift=2.5,"\supp_\otimes"] \ar[d,xshift=2.5,"\comp"] &&
\ar[ll,yshift=-2.5,"\tau_\otimes"]\{\text{subsets of }\Spc\cat T^{\mathrm c}\}\ar[d,xshift=2.5,"\mathrm{int}"]\\
\Thick_\otimes(\cat T^{\mathrm c}) \ar[u,xshift=-2.5,"\loc"] \ar[rr,yshift=2.5,"\supp_\otimes"]&&
\ar[ll,yshift=-2.5,"\tau_\otimes"]\ar[u,xshift=-2.5,"\inc"]\{\text{Thomason subsets of }\Spc\cat T^{\mathrm c}\}
\end{tikzcd}
\end{equation}
which commutes in the following sense.

\begin{proposition}
The square of left adjoints and the square of right
adjoints in \eqref{eq:tensor-supp-square} both commute.
In particular, for each compact $x\in\cat T$ there is an equality
\[\{\cat P\in \Spc\cat T^{\mathrm c}\mid x\not\in\cat P\}=\{p\in \Spc\cat T^{\mathrm c}\mid\Gamma_p(x)\neq
  0\}\,.\]
\end{proposition}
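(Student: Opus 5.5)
The plan is to deduce this from Proposition~\ref{pr:supp-square}, applied to $\cat T$ fibred over $X=(\Spc\cat T^{\mathrm c})^\vee$ via $\tau=\tau_\otimes$. There we take $\sigma=\supp_\otimes$ on $\Thick(\cat T^{\mathrm c})$, extended from thick tensor ideals to all thick subcategories by $\sigma(\cat C)=\bigcup_{x\in\cat C}\supp_\otimes(x)$. Two things must be checked first. The first is that $(\sigma,\tau_\otimes)$ is an adjoint pair between $\Thick(\cat T^{\mathrm c})$ and the Thomason subsets. For a thick subcategory $\cat C$ and a Thomason set $V$ we have $\sigma(\cat C)\subseteq V$ iff every $x\in\cat C$ satisfies $\supp_\otimes(x)\subseteq V$, iff $\cat C\subseteq\tau_\otimes(V)$. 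This uses the description of $\tau_\otimes$ in Theorem~\ref{th:Balmer}, and the fact that $\supp_\otimes(x)$ is Thomason for compact $x$, being closed with quasi-compact complement. The second is that $\sigma(x)=\sigma(\thick(x))$ for compact $x$. This holds because $\{y\mid\supp_\otimes(y)\subseteq\supp_\otimes(x)\}$ is thick.

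With these two facts, Proposition~\ref{pr:supp-square} gives commutativity of the squares in \eqref{eq:supp-square} for $\Loc(\cat T)$ and $\Thick(\cat T^{\mathrm c})$. In particular $\supp_\tau(x)=\sigma(x)$ for compact $x$, which is exactly the displayed equality, since $\supp_\tau$ is the new $\supp_\otimes$ and $\sigma(x)=\{\cat P\mid x\notin\cat P\}$.

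It remains to pass to the tensor-ideal versions in \eqref{eq:tensor-supp-square}. For the left adjoints, take a thick tensor ideal $\cat C\subseteq\cat T^{\mathrm c}$. Then $\loc(\cat C)$ is a localising tensor ideal, by the argument of Lemma~\ref{le:tensor-ideal}: the objects $y$ with $y\otimes\cat C\subseteq\loc(\cat C)$ form a localising subcategory containing the compacts. Hence $\loc$ lands in $\Loc_\otimes(\cat T)$, and $\supp_\otimes(\loc\cat C)=\sigma(\cat C)=\supp_\otimes(\cat C)$ by the previous paragraph. Similarly, $\comp$ sends a localising tensor ideal to a thick tensor ideal, so the restricted maps are well defined.

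For the right adjoints we must check that $\tau_\otimes(U)=\{x\mid\supp_\otimes(x)\subseteq U\}$ is a localising tensor ideal, and that $\comp\circ\tau_\otimes=\tau_\otimes\circ\mathrm{int}$. For the first claim, localising follows because each $\Gamma_p$ is exact and preserves coproducts. The tensor ideal property follows from $\Gamma_p(x\otimes y)\cong\Gamma_p(\one)\otimes x\otimes y$ (Lemma~\ref{le:tensor-ideal-gamma}): if $\Gamma_p(y)=0$ then $\Gamma_p(x\otimes y)=0$.

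The commutativity of the right adjoints is then formal. Since all four maps are right adjoints of the left adjoints, which we have shown commute, uniqueness of adjoints gives $\comp\circ\tau_\otimes=\tau_\otimes\circ\mathrm{int}$. I expect the main obstacle to be this bookkeeping: verifying that restricting Proposition~\ref{pr:supp-square} to tensor ideals preserves the adjunctions, and in particular that $\mathrm{int}$ (the largest Thomason subset) is right adjoint to the inclusion. The displayed equality itself comes directly from Proposition~\ref{pr:supp-square}.
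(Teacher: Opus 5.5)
Your proposal is correct and follows essentially the paper's route. The paper also obtains the left-adjoint square from Proposition~\ref{pr:supp-square} (it says the commutativity ``follows as in'' that proposition) and then gets the right-adjoint square as a formal consequence of uniqueness of adjoints. You make this reduction literal by extending $\sigma$ to all of $\Thick(\cat T^{\mathrm c})$ and restricting to tensor ideals afterwards, and you verify that the restricted maps are well defined, which the paper leaves implicit. The one cosmetic difference is that the paper reads off the displayed equality from $\thick_\otimes(x)$ and $\loc_\otimes(x)$, whereas you read it off from $\thick(x)$ and $\loc(x)$.
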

\begin{proof}
  The commutativity of the square of left adjoints means that
  \[\supp_\otimes(\loc(\cat U))=\supp_\otimes(\cat U)\] for each thick
  tensor ideal $\cat U\subseteq\cat T^{\mathrm c}$, where the definition of
  $\supp_\otimes$ depends on the ambient category. This commutativity
  follows as in Proposition~\ref{pr:supp-square}, and then the
  commutativity of right adjoints is a formal consequence. In
  particular, the assertion about $\supp_\otimes(x)$ for a compact
  object $x$ follows since
  \[\supp_\otimes(\thick_\otimes(x)) =\{\cat P\in \Spc\cat T^{\mathrm c}\mid
    x\not\in\cat P\}\]
  and
\[\supp_\otimes(\loc_\otimes(x))=\{p\in \Spc\cat T^{\mathrm c}\mid\Gamma_p(x)\neq
  0\}\,.\qedhere\]
\end{proof}

In general, the support map $\supp_\otimes$ for tensor ideal
localising subcategories is not bijective, though it is surjective.

\begin{lemma}\label{le:tensor-supp-surjective}
  For each subset $U\subseteq \Spc\cat T^{\mathrm c}$ we have
  $\supp_\otimes(\tau_\otimes(U))=U$.
\end{lemma}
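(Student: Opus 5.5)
The plan is to follow the proof of Lemma~\ref{le:supp-surjective}, using the tensor structure to show that every point of $\Spc\cat T^{\mathrm c}$ lies in the support of some object.

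The inclusion $\supp_\otimes(\tau_\otimes(U))\subseteq U$ holds by definition, since every $x\in\tau_\otimes(U)$ satisfies $\supp_\otimes(x)\subseteq U$. Note that $\tau_\otimes(U)$ is indeed a localising tensor ideal. It is localising because each $\Gamma_p$ is exact and preserves coproducts. It is a tensor ideal because $\Gamma_p\cong\Gamma_p(\one)\otimes-$ by Lemma~\ref{le:tensor-ideal-gamma}. Hence
\[\Gamma_p(x\otimes y)\cong\Gamma_p(x)\otimes y,\]
and this vanishes whenever $\Gamma_p(x)=0$.

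For the reverse inclusion, fix $p\in U$. It suffices to find an object $y$ with $\supp_\otimes(y)=\{p\}$, since then $y\in\tau_\otimes(U)$ and $p\in\supp_\otimes(\tau_\otimes(U))$. The natural candidate is $y=\Gamma_p(\one)$. Lemma~\ref{le:support-loc-closed} gives
\[\supp_\otimes(\Gamma_p(\one))=\supp_\otimes(\one)\cap\{p\},\]
so everything reduces to showing $p\in\supp_\otimes(\one)$, that is, $\supp_\otimes(\cat T)=\Spc\cat T^{\mathrm c}$. This reduction mirrors the hypothesis $U\subseteq\supp_\tau(\cat T)$ in Lemma~\ref{le:supp-surjective}.

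The main step is therefore $\supp_\otimes(\one)=\Spc\cat T^{\mathrm c}$. The unit $\one$ is compact, so the preceding proposition identifies its support with $\{\cat P\in\Spc\cat T^{\mathrm c}\mid \one\notin\cat P\}$. This set is all of $\Spc\cat T^{\mathrm c}$, because a prime $\cat P$ is a proper tensor ideal and hence cannot contain $\one$. Thus $\Gamma_p(\one)\neq0$ for every $p$, and $\Gamma_p(\one)$ is the required object.
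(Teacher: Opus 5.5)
Your proof is correct and is essentially the paper's argument: the paper simply invokes Lemma~\ref{le:supp-surjective}, and your choice of $\Gamma_p(\one)$ is that lemma's argument. You also make explicit its hypothesis $U\subseteq\supp_\otimes(\cat T)$, which you verify using compactness of $\one$, the preceding proposition on supports of compact objects, and the fact that no prime contains $\one$. This is exactly the detail the paper's one-line proof leaves to the reader.
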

\begin{proof}
  This follows from Lemma~\ref{le:supp-surjective}.
\end{proof}

\subsection*{Stratification}

Let $\cat T$ be a rigidly-compactly generated tensor triangulated
category, fibred over $(\Spc\cat T^{\mathrm c})^\vee$ via $\tau_\otimes$. Assume
that each point in $(\Spc\cat T^{\mathrm c})^\vee$ is locally closed.

\begin{definition}
The \emph{local-global principle} holds for localising tensor ideals of $\cat
T$ if one has
\[\cat S=\bigvee_{p\in\Spc\cat T^{\mathrm c}}\Gamma_p\cat S\]
for every localising tensor ideal $\cat S\subseteq \cat T$.
An equivalent condition is that $x\in\cat T$ belongs to a localising
tensor ideal $\cat S$ if
and only if $\Gamma_p(x)\in\cat S$ for each point $p$.
\end{definition}

The local-global principle for $(\cat T,\tau_\otimes)$ implies the
local-global principle for localising tensor ideals, because any
localising tensor ideal is $\tau_\otimes$-localising by
Lemma~\ref{le:tensor-tau-local}.

\begin{definition}
  The tensor triangulated category $\cat T$ is \emph{stratified} via
  $\Spc\cat T^{\mathrm c}$ provided that
\begin{enumerate}
\item   the local-global principle holds for
  localising tensor ideals of $\cat T$, and
\item  for every
  $p\in\Spc\cat T^{\mathrm c}$ there are no proper nonzero localising tensor
  ideals of $\cat T_{\{p\}}$.
\end{enumerate}
The minimality condition (2) has an equivalent formulation:
\begin{equation}\label{eq:minimality}
  p\in\supp_\otimes(x)\qquad\iff\qquad\loc_\otimes(\Gamma_p(\one))=\loc_\otimes(\Gamma_p(x))
\end{equation}  
for all $x\in\cat T$ and $p\in\Spc\cat T^{\mathrm c}$. 
\end{definition}

The next result shows that the stratification of $\cat T$ yields a
classification of all localising tensor ideals via the map
$\supp_\otimes$.  Note that $\supp_\otimes$ is surjective; so we ask
when the map is injective.

Recall that $P(X)$ denotes the power set
of a set $X$.

\begin{proposition}\label{pr:stratification-tensor}
For a rigidly-compactly generated tensor triangulated category $\cat T$, the
following are equivalent.
\begin{enumerate}
\item The category $\cat T$ is stratified via $\Spc\cat T^{\mathrm c}$.
\item The map $\supp_\otimes$ induces
  a lattice isomorphism $\Loc_\otimes(\cat T)\iso P(\Spc\cat T^{\mathrm c})$.
\item $\supp_\otimes(x)\subseteq\supp_\otimes(y)$ implies
  $\loc_\otimes(x)\subseteq\loc_\otimes(y)$ for all $x,y\in\cat T$.
\end{enumerate}
\end{proposition}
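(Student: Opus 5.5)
I will prove the cycle (1) $\Rightarrow$ (3) $\Rightarrow$ (2) $\Rightarrow$ (1). Throughout I use that localising tensor ideals are $\tau_\otimes$-localising (Lemma~\ref{le:tensor-tau-local}), that $\Gamma_p\cong\Gamma_p(\one)\otimes-$ (Lemma~\ref{le:tensor-ideal-gamma}), and that $\supp_\otimes$ is surjective onto $P(\Spc\cat T^{\mathrm c})$ with $\supp_\otimes\tau_\otimes=\id$ (Lemma~\ref{le:tensor-supp-surjective}).

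\emph{(1) $\Rightarrow$ (3).} Suppose $\supp_\otimes(x)\subseteq\supp_\otimes(y)$ and set $\cat S=\loc_\otimes(y)$. By the local-global principle it suffices to show $\Gamma_p(x)\in\cat S$ for every $p$. If $p\notin\supp_\otimes(x)$, then $\Gamma_p(x)=0$. Otherwise $p\in\supp_\otimes(y)$, and the minimality condition \eqref{eq:minimality} gives
\[\loc_\otimes(\Gamma_p(x))=\loc_\otimes(\Gamma_p(\one))=\loc_\otimes(\Gamma_p(y)).\]
Since $\Gamma_p(y)\cong\Gamma_p(\one)\otimes y$ lies in $\cat S$, so does $\Gamma_p(x)$.

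\emph{(3) $\Rightarrow$ (2).} Because $\supp_\otimes$ is already surjective, it suffices to show $\tau_\otimes\supp_\otimes(\cat S)\subseteq\cat S$ for each localising tensor ideal $\cat S$. Take $x$ with $\supp_\otimes(x)\subseteq\supp_\otimes(\cat S)$. For each $p\in\supp_\otimes(x)$ choose $y_p\in\cat S$ with $p\in\supp_\otimes(y_p)$. Put $y=\coprod_p y_p\in\cat S$; the coproduct is over a set, since $\Spc\cat T^{\mathrm c}$ is a set. As $\Gamma_q$ preserves coproducts, $\supp_\otimes(y)=\bigcup_p\supp_\otimes(y_p)\supseteq\supp_\otimes(x)$. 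Then (3) gives $x\in\loc_\otimes(y)\subseteq\cat S$. So $\supp_\otimes$ is a bijection with inverse $\tau_\otimes$, both maps are order preserving, and hence it is a lattice isomorphism.

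\emph{(2) $\Rightarrow$ (1).} For minimality, let $\cat S\subseteq\cat T_{\{p\}}$ be a nonzero localising tensor ideal. Nonzero objects of $\cat T_{\{p\}}$ have support exactly $\{p\}$: the support is contained in $\{p\}$ by Lemma~\ref{le:supp-detection}, and the empty support corresponds to the zero ideal under the bijection (2). Hence $\supp_\otimes(\cat S)=\{p\}=\supp_\otimes(\cat T_{\{p\}})$, and injectivity gives $\cat S=\cat T_{\{p\}}$. For the local-global principle, set $\cat S'=\bigvee_p\Gamma_p\cat S\subseteq\cat S$; this is a localising tensor ideal by Lemma~\ref{le:tau-local} and the tensor-ideal property of $\Gamma_p$. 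For $p\in\supp_\otimes(\cat S)$ pick $x\in\cat S$ with $\Gamma_p(x)\ne0$. Then $\Gamma_p(x)\in\Gamma_p\cat S\subseteq\cat S'$ and $p\in\supp_\otimes(\Gamma_p(x))$ by Lemma~\ref{le:support-loc-closed}. Thus $\supp_\otimes(\cat S')=\supp_\otimes(\cat S)$, and injectivity yields $\cat S'=\cat S$.

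The main obstacle is the first step, namely the careful use of \eqref{eq:minimality}. One must check that the minimality condition (2) of the definition really implies \eqref{eq:minimality}, that is, that $\loc_\otimes(\Gamma_p(x))$ is a nonzero localising tensor ideal of $\cat T$ contained in $\cat T_{\{p\}}$, and therefore equals $\cat T_{\{p\}}=\loc_\otimes(\Gamma_p(\one))$ by Lemma~\ref{le:tensor-ideal-gamma}. I would verify this by noting that $\cat T_{\{p\}}$ is itself a tensor ideal, as a consequence of Lemma~\ref{le:tensor-ideal}.
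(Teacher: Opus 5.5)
Your proof is correct, but it is organised differently from the paper's. The paper proves (1)$\Leftrightarrow$(2) and (2)$\Leftrightarrow$(3). For (1)$\Rightarrow$(2) it writes down the inverse $U\mapsto\bigvee_{p\in U}\cat T_{\{p\}}$, using that $\Gamma_p\cat S=\cat T_{\{p\}}$ whenever $p\in\supp_\otimes(\cat S)$. After that, (2)$\Rightarrow$(3) is immediate from $\supp_\otimes(\loc_\otimes(x))=\supp_\otimes(x)$.

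You instead prove (1)$\Rightarrow$(3) directly and object by object. You use the local-global principle in the form ``$x\in\cat S$ iff $\Gamma_p(x)\in\cat S$ for all $p$'' together with \eqref{eq:minimality}. This is the same kind of pointwise argument the paper uses afterwards for the corollary on $\supp_\otimes(x\otimes y)$. Your (3)$\Rightarrow$(2) is the paper's coproduct argument, phrased directly rather than by contradiction, and it identifies the inverse explicitly as $\tau_\otimes$. Your (2)$\Rightarrow$(1) is the paper's argument, with details the paper leaves implicit filled in, namely why $\supp_\otimes\bigl(\bigvee_p\Gamma_p\cat S\bigr)=\supp_\otimes(\cat S)$ via Lemma~\ref{le:support-loc-closed}.

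What each route buys:
\begin{itemize}
\item The paper's route gives an explicit structural description: every localising tensor ideal is a join of the minimal ideals $\cat T_{\{p\}}$.
\item Your route confines the whole use of stratification to a single objectwise comparison. That is condition (3), which is precisely what gets checked in the applications, e.g.\ in the proof of Theorem~\ref{th:strat-finite-group}.
\end{itemize}

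One small simplification: for a nonzero $x\in\cat T_{\{p\}}$ you have $\Gamma_p(x)\cong x\neq 0$, so $p\in\supp_\otimes(x)$ directly. You do not need to appeal to injectivity to rule out empty support there.
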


\begin{proof}
  (1) $\Rightarrow$ (2) The assignment
  $ U\mapsto \bigvee_{p\in U}\cat T_{\{p\}}$ yields an inclusion
  preserving map $P(\Spc\cat T^{\mathrm c})\to \Loc_\otimes(\cat T)$ which
  satisfies $\supp_\otimes(\bigvee_{p\in U}\cat T_{\{p\}})=U$.
  Assuming that $\cat T$ is stratified, this map is an inverse for
  $\supp_\otimes$, since for any localising tensor ideal
  $\cat S\subseteq\cat T$ one has $\Gamma_p\cat S=\cat T_{\{p\}}$ for
  each $p\in\supp_\otimes(\cat S)$. Thus $\supp_\otimes$ provides a
  lattice isomorphism $\Loc_\otimes(\cat T)\iso P(\Spc\cat T^{\mathrm c})$.

  (2) $\Rightarrow$ (1) Suppose that $\supp_\otimes$ is a
  bijection. For a localising tensor ideal $\cat S$ we have
\[\supp_\otimes(\cat S)=\supp_\otimes\left(\bigvee_{p\in\Spc\cat T^{\mathrm c}}\Gamma_p\cat S\right)\]
and therefore
\[\cat S=\bigvee_{p\in\Spc\cat T^{\mathrm c}}\Gamma_p\cat S\,.\]
Thus the local-global principle holds for localising tensor
ideals. For a nonzero localising tensor ideal
$\cat S\subseteq \cat T_{\{p\}}$ we have
$\supp_\otimes(\cat S)=\{p\}$, and therefore $\cat S=\cat T_{\{p\}}$. We
conclude that $\cat T$ is stratified.

(2) $\Rightarrow$ (3) This is clear since
$\supp_\otimes(\loc_\otimes (x))=\supp_\otimes(x)$ for all
$x\in\cat T$.

  (3) $\Rightarrow$ (2) We need to show that
  $\supp_\otimes\colon\Loc_\otimes(\cat T)\to P(\Spc\cat T^{\mathrm c})$ is
  injective. Then this map is bijective since
  $\supp_\otimes\tau_\otimes=\id$ by
  Lemma~\ref{le:tensor-supp-surjective}. Suppose that $\cat S,\cat S'$ is a
  pair of localising tensor ideals such that
  $\supp_\otimes(\cat S)= \supp_\otimes(\cat S')$, but
  $\cat S\neq\cat S'$. Choose $x\in\cat S\smallsetminus\cat S'$. Then
  $\supp_\otimes(x)\subseteq
  \bigcup_i\supp_\otimes(x_i)=\supp_\otimes(\coprod_i x_i)$ for some
  family of objects $x_i\in\cat S'$. This implies
  $\loc_\otimes(x)\subseteq\loc_\otimes(\coprod_i x_i)\subseteq\cat
  S'$, which is a contradiction. Thus $\supp_\otimes$ is injective.
\end{proof}

Here is one consequence of stratification.

\begin{corollary}
  Let $\cat T$ be a rigidly-compactly generated tensor triangulated
  and suppose it is stratified via $\Spc\cat T^{\mathrm c}$. Then for
  objects $x,y\in\cat T$ we have
  \[\supp_\otimes(x)\cap \supp_\otimes(y)=\supp_\otimes(x\otimes y)\]
  and
  \[\loc_\otimes(x)\cap \loc_\otimes(y)=\loc_\otimes(x\otimes y).\]
\end{corollary}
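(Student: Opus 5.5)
We first prove the support formula and then deduce the statement about localising tensor ideals from the classification in Proposition~\ref{pr:stratification-tensor}.

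\emph{Support formula.} Fix $p\in\Spc\cat T^{\mathrm c}$. By Lemma~\ref{le:tensor-ideal-gamma} we have $\Gamma_p\cong\Gamma_p(\one)\otimes -$. Since $\Gamma_p$ is idempotent, this gives
\[\Gamma_p(x\otimes y)\cong\Gamma_p(\one)\otimes x\otimes y\cong \Gamma_p(x)\otimes\Gamma_p(y).\]
If $p\notin\supp_\otimes(x)$, then $\Gamma_p(x)=0$, so $\Gamma_p(x\otimes y)=0$. This gives the inclusion $\supp_\otimes(x\otimes y)\subseteq\supp_\otimes(x)\cap\supp_\otimes(y)$, which holds without stratification.

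For the reverse inclusion, let $p\in\supp_\otimes(x)\cap\supp_\otimes(y)$. By the minimality condition \eqref{eq:minimality} we have
\[\loc_\otimes(\Gamma_p(x))=\loc_\otimes(\Gamma_p(\one))=\loc_\otimes(\Gamma_p(y)).\]
Consider $\cat S\colonequals\{z\in\cat T\mid z\otimes\Gamma_p(y)\in\loc_\otimes(\Gamma_p(x)\otimes\Gamma_p(y))\}$. This is a localising tensor ideal, since $\otimes$ is exact and preserves coproducts in each variable. It contains $\Gamma_p(x)$, hence it contains $\loc_\otimes(\Gamma_p(x))$, and therefore it contains $\Gamma_p(\one)$. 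This shows
\[\Gamma_p(y)\cong\Gamma_p(\one)\otimes\Gamma_p(y)\in\loc_\otimes(\Gamma_p(x)\otimes\Gamma_p(y))=\loc_\otimes(\Gamma_p(x\otimes y)).\]
Since $\Gamma_p(y)\neq 0$, we conclude $\Gamma_p(x\otimes y)\neq0$, that is, $p\in\supp_\otimes(x\otimes y)$.

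\emph{Localising tensor ideals.} By Proposition~\ref{pr:stratification-tensor}, $\supp_\otimes$ is a lattice isomorphism $\Loc_\otimes(\cat T)\iso P(\Spc\cat T^{\mathrm c})$, so it preserves meets, and meets in $P(\Spc\cat T^{\mathrm c})$ are intersections. Using $\supp_\otimes(\loc_\otimes(z))=\supp_\otimes(z)$ (noted in the proof of that proposition), we get
\[\supp_\otimes(\loc_\otimes(x)\cap\loc_\otimes(y))=\supp_\otimes(x)\cap\supp_\otimes(y)=\supp_\otimes(x\otimes y)=\supp_\otimes(\loc_\otimes(x\otimes y)).\]
Injectivity of $\supp_\otimes$ then gives $\loc_\otimes(x)\cap\loc_\otimes(y)=\loc_\otimes(x\otimes y)$.

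The main obstacle is the nonvanishing step $\Gamma_p(x)\otimes\Gamma_p(y)\neq0$. It requires the ideal-closure argument above, namely that $\{z\mid z\otimes w\in\cat L\}$ is a localising tensor ideal for fixed $w$ and any localising tensor ideal $\cat L$. Everything else is formal.
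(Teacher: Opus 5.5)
Your proof is correct and follows the paper's argument. You get the easy inclusion from $\Gamma_p\cong\Gamma_p(\one)\otimes-$, the reverse inclusion from the minimality condition \eqref{eq:minimality}, and the statement about localising tensor ideals from the lattice isomorphism of Proposition~\ref{pr:stratification-tensor}. Your auxiliary ideal $\{z\mid z\otimes\Gamma_p(y)\in\loc_\otimes(\Gamma_p(x)\otimes\Gamma_p(y))\}$ makes explicit the step the paper uses tacitly when it passes from $\loc_\otimes(\Gamma_p(\one))=\loc_\otimes(\Gamma_p(x))$ to $\loc_\otimes(\Gamma_p(\one)\otimes y)=\loc_\otimes(\Gamma_p(x)\otimes y)$.
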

\begin{proof}
  The isomorphism $\Loc_\otimes(\cat T)\iso P(\Spc\cat T^{\mathrm c})$
  shows that both equalities are equivalent. Thus it suffices to prove
  the first one. We use the isomorphism
  $\Gamma_p(x)\cong\Gamma_p(\one)\otimes x$ for
  $p\in\Spc\cat T^{\mathrm c}$.  Then the inclusion $\supseteq$ is
  clear. The other inclusion uses the stratification, and more
  precisely condition \eqref{eq:minimality}.  Then for
  $p\in \supp_\otimes(x)\cap \supp_\otimes(y)$ we have
\begin{align*}
  \loc_\otimes(\Gamma_p(\one))
  &=\loc_\otimes(\Gamma_p(y))\\
  &=\loc_\otimes(\Gamma_p(\one)\otimes y)\\
  &=\loc_\otimes(\Gamma_p(x)\otimes y) \\
  &=\loc_\otimes(\Gamma_p(x\otimes y)),
\end{align*}
where the first equality uses that $p\in\supp_\otimes(y)$ and the
third equality uses that $p\in\supp_\otimes(x)$.  Thus
$p\in\supp_\otimes(x\otimes y)$.
\end{proof}

\subsection*{Stratification via a central action}

Let $\cat T$ be a rigidly-compactly generated tensor triangulated
category, and suppose that there is a graded-com\-mu\-ta\-tive
ring $R$ that acts on $\cat T$ via  a homomorphism $\phi\colon R\to\End^*_{\cat
  T}(\one)$. Then $\cat T$ is fibred over $(\Spec R)^\vee$ via
$\tau_R$. We assume that each point of  $(\Spec R)^\vee$ is locally
closed. For instance, this holds when $R$ is noetherian.

As before when $\cat T$ is fibred over $(\Spc \cat T^{\mathrm c})^\vee$, we have
for each locally closed subset $Y\subseteq (\Spec R)^\vee$ an
isomorphism $\Gamma_Y\cong\Gamma_Y(\one)\otimes-$ and $\cat T_Y$
is a localising tensor ideal of $\cat T$. This follows from the fact
that $\tau_R$ factors through $\tau_\otimes$; see
Proposition~\ref{pr:bs-lattices}. More precisely, let
$f=(\Spec\phi)\circ\rho_\cat T$ be the comparison map
\eqref{eq:bs-zs}. Then
\[\cat T_Y=\cat T_{f^{-1}(Y)}\qquad\text{and}\qquad\Gamma_Y=\Gamma_{f^{-1}(Y)}.\]

\begin{definition}
  The tensor triangulated category $\cat T$ is \emph{stratified via
    the action of $R$} provided that
  \[\cat S=\bigvee_{\fp\in\Spec R}\Gamma_\fp\cat S\] for every
  localising tensor ideal $\cat S\subseteq \cat T$, and for every
  $\fp\in\Spec R$ there are no proper nonzero localising tensor ideals
  of $\cat T_{\{\fp\}}$.
\end{definition}

It can be shown that the first condition in the above definition is
automatic when the ring $R$ is noetherian. The following result is an
analogue of Proposition~\ref{pr:stratification-tensor}. In fact, it
provides a strategy for establishing a stratification of $\cat T$ via
$\Spc\cat T^c$.

\begin{proposition}\label{pr:stratification-ring}
  For a rigidly-compactly generated tensor triangulated category
  $\cat T$ with the action of a ring $R$, the following are equivalent.
\begin{enumerate}
\item The category $\cat T$ is stratified via the action of $R$.
\item The map $\supp_R$ induces
  a lattice isomorphism $\Loc_\otimes(\cat T)\iso P(\supp_R(\cat T))$.
\item $\supp_R(x)\subseteq\supp_R(y)$ implies
  $\loc_\otimes(x)\subseteq\loc_\otimes(y)$ for all $x,y\in\cat T$.
\end{enumerate}
Moreover, in this case $\cat T$ is also stratified via $\Spc\cat T^c$,
provided that all points of $\Spc\cat T^c$ are locally closed, and the
comparison map \eqref{eq:bs-zs} induces a homeomorphism  $\Spc\cat
T^c\iso\supp_R(\cat T)$ that maps $\supp_\otimes(x)$ to $\supp_R(x)$
for each $x\in\cat T$.
\end{proposition}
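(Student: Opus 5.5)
The equivalence (1)--(3) follows the template of Proposition~\ref{pr:stratification-tensor}, with $\Spc\cat T^{\mathrm c}$ replaced by $\Spec R$ and every local cohomology functor taken with respect to the fibration $\tau_R$. The necessary input is already in place. For each locally closed $Y\subseteq(\Spec R)^\vee$ we have $\Gamma_Y\cong\Gamma_Y(\one)\otimes-$. Hence every localising tensor ideal is $\tau_R$-localising, by the argument of Lemma~\ref{le:tensor-tau-local}, and each $\cat T_{\{\fp\}}$ is a localising tensor ideal.

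Surjectivity comes first. For $U\subseteq\supp_R(\cat T)$, Lemma~\ref{le:supp-surjective} shows that $\bar\tau(U)$ has support exactly $U$. This subcategory is a localising tensor ideal, since $\supp_R(x\otimes y)\subseteq\supp_R(y)$ via $\Gamma_\fp(x\otimes y)\cong x\otimes\Gamma_\fp(y)$. So $\supp_R\colon\Loc_\otimes(\cat T)\to P(\supp_R(\cat T))$ is surjective with section $\bar\tau$.

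For (1)$\Rightarrow$(2), the local-global principle writes $\cat S=\bigvee_\fp\Gamma_\fp\cat S$. Minimality then forces $\Gamma_\fp\cat S$ to be either $0$ or $\cat T_{\{\fp\}}$, according to whether $\fp\notin\supp_R(\cat S)$ or $\fp\in\supp_R(\cat S)$. Thus $\cat S=\bigvee_{\fp\in\supp_R(\cat S)}\cat T_{\{\fp\}}$ is determined by its support.

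For (2)$\Rightarrow$(1), we have $\supp_R(\cat S)=\supp_R(\bigvee_\fp\Gamma_\fp\cat S)$ by Lemma~\ref{le:support-loc-closed}, so injectivity gives the local-global principle. Any nonzero localising tensor ideal inside $\cat T_{\{\fp\}}$ has support $\{\fp\}$, because Lemma~\ref{le:supp-detection} holds when $R$-support detects nonzero objects. It therefore equals $\cat T_{\{\fp\}}$.

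Here is the first delicate point. Whether nonzero objects have nonempty support is not assumed in general. I would extract it from (1): if $\supp_R(x)=\varnothing$, the local-global principle applied to $\loc_\otimes(x)$ gives $x\in\bigvee_\fp\loc_\otimes(\Gamma_\fp x)=0$. Under (2) the same conclusion follows from injectivity, comparing $\loc_\otimes(x)$ with $0$.

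(2)$\Leftrightarrow$(3) is verbatim the argument of Proposition~\ref{pr:stratification-tensor}. The one ingredient to check is that $\supp_R$ of a coproduct is the union of the supports, which holds because each $\Gamma_\fp$ preserves coproducts.

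For the final assertion, let $f=(\Spec\phi)\circ\rho_\cat T$. Lemma~\ref{le:compare-supp}, applied to $f$ and $\tau_\otimes$, gives $f(\supp_\otimes(x))\subseteq\supp_R(x)$. There is equality once $\supp_\otimes$ detects nonzero objects, and that follows from the local-global principle for $\tau_\otimes$, or more simply from $\tau_R$-detection via $\Gamma_\fp=\Gamma_{f^{-1}(\fp)}$.

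The main obstacle is showing that $f$ is injective with image $\supp_R(\cat T)$. Surjectivity onto $\supp_R(\cat T)$ follows from the equality of supports just established. For injectivity, I would use that each fibre $\cat T_{\{\fp\}}=\cat T_{f^{-1}(\fp)}$ is minimal. Suppose two distinct primes $P,Q\in f^{-1}(\fp)$ existed. I would try to show that $\Gamma_P(\one)$ and $\Gamma_Q(\one)$ would generate distinct nonzero localising tensor ideals inside $\cat T_{\{\fp\}}$, using Proposition~\ref{pr:loc-closed-MV} where the fibre splits, or else exploiting the specialisation order between $P$ and $Q$.

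Once $f$ is a bijection onto $\supp_R(\cat T)$, stratification via $\Spc\cat T^{\mathrm c}$ follows: $\supp_\otimes=f^{-1}\circ\supp_R$ satisfies condition (3) of Proposition~\ref{pr:stratification-tensor}. Finally, $f$ is continuous, and it is a homeomorphism because both topologies are recovered from supports of compact objects: Proposition~\ref{pr:bs-lattices} identifies $\tau_R$ with $\tau_\otimes$ composed with $f^{-1}$ on Thomason subsets, and these generate the dual topologies.
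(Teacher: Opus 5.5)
Your argument for (1)$\Leftrightarrow$(2)$\Leftrightarrow$(3) is correct and follows the paper, which transfers Proposition~\ref{pr:stratification-tensor}. You rightly note that surjectivity is only onto $P(\supp_R(\cat T))$. The detection detour in (2)$\Rightarrow$(1) is unnecessary: a nonzero $x\in\cat T_{\{\fp\}}$ satisfies $\Gamma_\fp(x)\cong x$, so $\supp_R(x)=\{\fp\}$.

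The gap is in the final assertion, at the step you yourself call the main obstacle: injectivity of $f$ is never proved. You propose a case distinction, using Proposition~\ref{pr:loc-closed-MV} when the fibre splits by open sets and the specialisation order otherwise. You carry out neither case, and the first cannot apply to a fibre in which one point lies in the closure of the other. No case distinction is needed:
\begin{itemize}
\item For $P\neq Q$ in $\Spc\cat T^{\mathrm c}$, Proposition~\ref{pr:loc-closed} gives $\cat T_{\{P\}}\cap\cat T_{\{Q\}}=\cat T_{\varnothing}=0$.
\item For $P\in f^{-1}(\fp)$, the same proposition gives $\cat T_{\{P\}}=\cat T_{\{P\}}\cap\cat T_{f^{-1}(\fp)}\subseteq\cat T_{\{\fp\}}$.
\item $\cat T_{\{P\}}=\loc_\otimes(\Gamma_P(\one))\neq 0$, because $\one$ lies in no prime and supports of compact objects are computed by the $\Gamma_P$ (Proposition~\ref{pr:supp-square}).
\end{itemize}
So two points in one fibre give two nonzero localising tensor ideals of $\cat T_{\{\fp\}}$ with zero intersection, contradicting minimality. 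This is what underlies the paper's one-line claim that $|f^{-1}(\fp)|\le 1$.

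There is also a circularity in how you obtain $f(\supp_\otimes(x))=\supp_R(x)$, on which your surjectivity rests. The local-global principle for $\tau_\otimes$ is not available under the stated hypotheses. The identity $\Gamma_\fp=\Gamma_{f^{-1}(\fp)}$ transfers detection from $\supp_R$ to $\supp_\otimes$ only once you know the fibres have at most one point. Reorder as the paper does:
\begin{itemize}
\item $f$ lands in $\supp_R(\cat T)$ by the formal inclusion of Lemma~\ref{le:compare-supp} applied to $x=\one$.
\item $f$ is onto $\supp_R(\cat T)$: if $\fp\in\supp_R(\cat T)$ then $\cat T_{f^{-1}(\fp)}=\cat T_{\{\fp\}}\neq 0$, which forces $f^{-1}(\fp)\neq\varnothing$ since $\cat T_\varnothing=0$.
\item With injectivity, every nonzero $\Gamma_\fp$ equals $\Gamma_P$ for the unique $P$ over $\fp$. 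Detection for $\supp_R$, which you correctly derive from (1), then gives detection for $\supp_\otimes$.
\item Lemma~\ref{le:compare-supp} now yields the equality of supports.
\end{itemize}
From there your deduction of stratification via $\Spc\cat T^{\mathrm c}$, and of the homeomorphism via Proposition~\ref{pr:bs-lattices}, runs as in the paper.
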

\begin{proof}
  For the first part of the assertion the proof of
  Proposition~\ref{pr:stratification-tensor} carries over, simply by
  replacing $\supp_\otimes$ with $\supp_R$.

  Now assume that $\cat T$ is stratified via $R$ and set
  $f\colonequals (\Spec\phi)\circ\rho_\cat T$. Let $\fp\in\Spec R$.
  Then $\cat T_{\{\fp\}}=\cat T_U$ for $U= f^{-1} (\fp)$. Because the
  localising tensor ideal $\cat T_{\{\fp\}}$ is either zero or
  minimal, it follows that the cardinality of $U$ equals either zero
  or one. Thus the comparison map $f$ induces a bijection
  $\Spc\cat T^c\to\supp_R(\cat T)$. In particular, $\cat T$ is is also
  stratified via $\Spc\cat T^c$. The diagram in
  Proposition~\ref{pr:bs-lattices} shows that the comparison map $f$
  induces a lattice isomorphism between Thomason subsets of $\Spec R$
  intersected with $\supp_R(\cat T)$ and Thomason subsets of
  $\Spc\cat T^c$. Thus $\Spc\cat T^c\to\supp_R(\cat T)$ is actually a
  homeomorphism. Finally, $f\circ\supp_\otimes=\supp_R$ by
  Lemma~\ref{le:compare-supp}.
\end{proof}

\begin{example}
  The prototypical example is the derived category of  a
  commutative noetherian ring $A$, on which $A$ acts canonically. The
  classification of localising tensor ideals via $\Spec A$ means that the
  equivalent conditions in Proposition~\ref{pr:stratification-ring} are satisfied; it is
  due to  Neeman \cite{Neeman:1992a}. For a generalisation, see Theorem~\ref{th:Neeman-dga}.
\end{example}

\subsection*{Change of rings and categories}

We fix a graded-commutative ring $R$. Let $\cat T$ and $\cat U$ be
$R$-linear triangulated categories. We say that a functor
$F\colon \cat T\to \cat U$ is \emph{$R$-linear} if it is an exact
functor such that for each $x$ in $\cat T$ the following diagram is
commutative:
\[
  \begin{tikzcd}
    &R\ar[ld,swap,"\phi_x"]\ar[rd,"\phi_{F(x)}"]\\
    \End^*_{\cat T}(x)\ar[rr,"F"]&&   \End^*_{\cat U}(F(x))
  \end{tikzcd}
\]

\begin{lemma}
\label{le:change-cats}
Let $F\colon\cat T\to \cat U$ be an exact functor between compactly
generated $R$-linear triangulated categories which preserves all
products and coproducts. For any locally closed subset $Y$ of
$(\Spec R)^\vee$ we have
\[F(\cat T_{Y})\subseteq\cat U_{Y}\qquad\text{and}\qquad
F\Gamma_{Y}\cong\Gamma_{Y}F .
\]
\end{lemma}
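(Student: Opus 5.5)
Since $F$ preserves coproducts and is exact, the preimage of a localising subcategory under $F$ is localising. So $F^{-1}(\cat U_Y)$ is localising, and to show $F(\cat T_Y)\subseteq\cat U_Y$ it suffices to check generators. The same preimage argument gives the basic reduction: for $V=V(\fa)$ with $\fa$ finitely generated, $\cat T_V=\loc(\thick\{\kos c\fa\mid c\in\cat T^{\mathrm c}\})$ by Proposition~\ref{pr:koszul}. The plan is first to treat open subsets $V$ of $(\Spec R)^\vee$, that is, Thomason sets, then the functors $\Gamma_V$ and $L_V$, and finally locally closed $Y$.

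\emph{Step 1 (opens).} Let $V$ be Thomason. Then $\cat T_V$ is generated by the objects $\kos c\fa$ with $V(\fa)\subseteq V$ and $c$ compact. By $R$-linearity, $F$ sends the triangle $c\xrightarrow{r}\Si^d c\to\kos cr\to$ to a triangle $F(c)\xrightarrow{r}\Si^dF(c)\to F(\kos cr)\to$. Hence $F(\kos c\fa)$ is a Koszul object $\kos{F(c)}{\fa}$.

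The difficulty is that $F(c)$ need not be compact, so we cannot directly say $\kos{F(c)}\fa\in\cat U_{V(\fa)}$. I would show the stronger fact that $\kos y\fa\in\cat U_{V(\fa)}$ for every $y\in\cat U$. Let $\cat K$ be the class of $y$ with $\kos y\fa\in\cat U_{V(\fa)}$. Koszul objects are only defined up to non-unique isomorphism, so I would phrase $\cat K$ through the functor $-\otimes$-free description: $\kos y\fa\in\thick(\{\kos{y}{r^n}\})$-type arguments are awkward. It is cleaner to use Lemma~\ref{le:tria-supp-V(a)}. For compact $d\in\cat U^{\mathrm c}$, the module $\Hom^*_{\cat U}(d,\kos y\fa)$ is built by finitely many long exact sequences from $\Hom^*(d,y)$ with multiplication by the $r_i$. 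Hence it is annihilated by a power of $\fa$, so its support lies in $V(\fa)$. Lemma~\ref{le:tria-supp-V(a)} then gives $\kos y\fa\in\cat U_{V(\fa)}$. Taking joins over $V(\fa)\subseteq V$ yields $F(\cat T_V)\subseteq\cat U_V$.

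\emph{Step 2 ($\Gamma_V$ and $L_V$).} Apply $F$ to the triangle $\Gamma_Vx\to x\to L_Vx\to$. By Step 1, $F\Gamma_Vx\in\cat U_V$. It remains to show $FL_Vx\in(\cat U_V)^\perp$; then uniqueness of localisation triangles gives $F\Gamma_V\cong\Gamma_VF$ and $FL_V\cong L_VF$.

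This is the main obstacle, and it is where preservation of products must enter. $F$ has a left adjoint $G$ by Brown representability for the dual, since $F$ preserves products. The left adjoint of an $R$-linear functor is $R$-linear on Hom-modules, so $G$ sends Koszul triangles to Koszul triangles. Since $F$ preserves coproducts, $G$ preserves compacts by Lemma~\ref{le:comp-gen-adjoints}. Therefore $G(\kos d\fa)\cong\kos{G(d)}\fa$ lies in $(\cat T^{\mathrm c})_{V(\fa)}$. Then
\[\Hom_{\cat U}(\kos d\fa,FL_Vx)\cong\Hom_{\cat T}(G(\kos d\fa),L_Vx)=0,\]
and Lemma~\ref{le:loc-comp-gen}(3) gives $FL_Vx\in(\cat U_V)^\perp$.

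\emph{Step 3 (locally closed $Y$).} Write $Y=V\smallsetminus W$. Then $F\Gamma_Y=F\Gamma_VL_W\cong\Gamma_VL_WF=\Gamma_YF$ by Step 2. Since $\cat T_Y$ consists of the objects fixed by $\Gamma_Y$, it follows that $F(\cat T_Y)\subseteq\cat U_Y$.
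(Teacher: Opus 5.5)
Your proof is correct. Steps 2 and 3 follow the paper's structure. The paper also uses the adjoint $E$ of $F$ that comes from preservation of products. It calls $E$ the right adjoint, but its displayed isomorphism $\Hom^*_{\cat T}(E(x),y)\cong\Hom^*_{\cat U}(x,F(y))$ makes $E$ the left adjoint, exactly as in your Step 2. The paper shows via Koszul objects that $E$ maps $(\cat U^{\mathrm c})_{V(\fa)}$ into $(\cat T^{\mathrm c})_{V(\fa)}$, and deduces $F((\cat T_V)^\perp)\subseteq(\cat U_V)^\perp$.

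Where you differ is the inclusion $F(\cat T_V)\subseteq\cat U_V$. The paper gets it from the same adjoint. For $x\in\cat T_{V(\fa)}$ and compact $d\in\cat U$ one has $\Hom^*_{\cat U}(d,F(x))\cong\Hom^*_{\cat T}(E(d),x)$ with $E(d)$ compact. So Lemma~\ref{le:tria-supp-V(a)}, applied first in $\cat T$ and then in $\cat U$, transfers membership for all objects of $\cat T_{V(\fa)}$ at once.

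You instead reduce to the generators $\kos{c}{\fa}$. You use $R$-linearity of $F$ to see that $F(\kos{c}{\fa})$ is a Koszul object on $F(c)$. You then show that $\Hom^*_{\cat U}(d,\kos{y}{\fa})$ is killed by a power of $\fa$ for arbitrary $y$: at each stage the middle term is an extension of an $r$-torsion cokernel by an $r$-torsion kernel, so it is killed by $r^2$.

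What each route buys:
\begin{itemize}
\item Your route shows this half needs only exactness, $R$-linearity and coproduct preservation, with no products and no adjoint. It also yields the independently useful fact that $\kos{y}{\fa}\in\cat U_{V(\fa)}$ for every $y\in\cat U$.
\item The paper's route is shorter, since the adjoint is needed anyway for the orthogonal half.
\end{itemize}

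Similarly, you deduce $F(\cat T_Y)\subseteq\cat U_Y$ from $F\Gamma_Y\cong\Gamma_Y F$, whereas the paper deduces it from the two inclusions for $\cat T_V$ and $(\cat T_U)^\perp$; both work.

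The aside about the class $\cat K$ and ``$-\otimes$-free descriptions'' plays no role in your argument and should be deleted.
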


\begin{proof}
  Let $E\colon\cat U\to\cat T$ denote the right adjoint of $F$, which
  exists by Brown representability. Note that $E$ preserves
  compactness by Lemma~\ref{le:comp-gen-adjoints}.  The adjunction
  isomorphisms are $R$-linear, and this implies that $E$ is also
  $R$-linear.
  
  Choose a Thomason subset $V\subseteq\Spec R$. Using for any ideal
  $\fa$ of $R$ the description of
  $(\cat T^{\mathrm c})_{V(\fa)}=(\cat T_{V(\fa)})^{\mathrm c}$ via
  Koszul objects in Proposition~\ref{pr:koszul}, it follows that
  $E((\cat U^{\mathrm c})_{V(\fa)})\subseteq (\cat T^{\mathrm
    c})_{V(\fa)}$, and therefore $E(\cat U_{V})\subseteq \cat
  T_{V}$. Then the adjunction isomorphism
  \[ \Hom^*_{\cat T}(E(x),y)\cong \Hom^*_{\cat U}(x,F(y))\] yields
  that $F((\cat T_{V})^\perp)\subseteq (\cat U_{V})^\perp$.  On the
  other hand, the cohomological description of objects from
  $\cat T_{V(\fa)}$ in Lemma~\ref{le:tria-supp-V(a)} plus the
  adjunction isomorphism imply that
  $F(\cat T_{V})\subseteq \cat U_{V}$. Here one uses that
  \[\cat T_V=\bigvee_{V(\fa)\subseteq V}\cat T_{V(\fa)}.\] This
  establishes the inclusion $F(\cat T_{Y})\subseteq\cat U_{Y}$.

For the isomorphism $F\Gamma_{Y}\cong\Gamma_{Y}F$ it suffices to show
that $F$ commutes with $\Gamma_V$ and $L_V$ since
$\Gamma_Y=\Gamma_V L_U$ for $Y=V\smallsetminus U$.  For each object $x\in\cat T$, one has an
exact triangle
\[
F\Gamma_{V}(x) \lto F(x) \lto F L_{V}X\lto
\]
given by the localisation triangle for $V$. From the first part of
the proof we know that $F\Gamma_{V}(x)$ is in $\cat U_V$ and that
$F L_{V}(x)$ is in $(\cat U_V)^\perp$. Thus the above triangle
identifies with the localisation triangle of $F(x)$.
\end{proof}

Let $S$ be a graded-commutative  ring, and $\cat T$ an
$S$-linear triangulated category.  Given a homomorphism of rings
$\alpha\colon R\to S$, there is a natural $R$-linear structure on
$\cat T$
induced by homomorphisms
\[
R\xrightarrow{\alpha}
S\xrightarrow{\phi_{x}}\End^{*}_{\cat T}(x)\quad\text{for}\quad x\in
  \cat T.
\]

As usual, $\alpha$ induces a map $\alpha^*\colon \Spec S\to \Spec R$,
with $\alpha^{*}(\fq)=\alpha^{-1}(\fq)$ for each $\fq$ in $\Spec
S$. Observe that if $Y\subseteq(\Spec R)^\vee$ is locally closed,
then so is the subset $(\alpha^*)^{-1}Y$ of $(\Spec S)^\vee$.

\begin{lemma}
\label{le:change-rings}
Let $\alpha\colon R\to S$ be a homomorphism of rings, and $\cat T$ an
$S$-linear triangulated category, with induced $R$-linear structure
via $\alpha$. For a locally closed set $Y\subseteq(\Spec R)^\vee$ we
have
\[\cat T_Y=\cat T_{(\alpha^*)^{-1}Y} \qquad\text{and}\qquad
\Gamma_{Y}= \Gamma_{(\alpha^*)^{-1}Y}.\]
\end{lemma}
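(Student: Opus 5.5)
We reduce everything to open subsets, i.e.\ Thomason subsets $V\subseteq\Spec R$. The reason is that $\cat T_Y=\cat T_V\cap(\cat T_U)^\perp$ and $\Gamma_Y=\Gamma_V L_U$ for $Y=V\smallsetminus U$. Moreover, $(\alpha^*)^{-1}Y=(\alpha^*)^{-1}V\smallsetminus(\alpha^*)^{-1}U$ with both preimages Thomason in $\Spec S$. So it suffices to prove
\[\tau_R(V)=\tau_S((\alpha^*)^{-1}V)\quad\text{in }\Thick(\cat T^{\mathrm c})\]
for each Thomason $V\subseteq \Spec R$. Taking $\loc$ then gives $\cat T_V=\cat T_{(\alpha^*)^{-1}V}$, and since the localisation functors $\Gamma_V$, $L_V$ depend only on the localising subcategory, the equalities for $\Gamma$ and $L$ follow at once.

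Both sides preserve arbitrary joins in $V$: $\tau_R$ and $\tau_S$ do by Proposition~\ref{pr:Thomason}, and preimage commutes with unions. Every Thomason $V$ is a union of sets $V(\fa)$ with $\fa$ finitely generated, so we reduce to $V=V(\fa)$. Note that $(\alpha^*)^{-1}V(\fa)=V(\alpha(\fa)S)$, and the ideal $\alpha(\fa)S$ is finitely generated by the images of the generators of $\fa$.

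Now compare the two thick subcategories via Koszul objects, using Proposition~\ref{pr:koszul} applied to the class $\cat X=\cat T^{\mathrm c}$. Choose generators $r_1,\dots,r_n$ of $\fa$. The $R$-action on $x$ is $\phi_x\circ\alpha$, so multiplication by $r_i$ on $x$ is the same morphism as multiplication by $\alpha(r_i)$. Hence a Koszul object $\kos x{\mathbf r}$ for the $R$-action is a Koszul object $\kos x{\alpha(\mathbf r)}$ for the $S$-action. Therefore
\[(\cat T^{\mathrm c})_{V(\fa)}=\thick(\{\kos x{\mathbf r}\})=\thick(\{\kos x{\alpha(\mathbf r)}\})=(\cat T^{\mathrm c})_{V(\alpha(\fa)S)}.\]
Alternatively, one can argue directly from the definition: for $\fq\in\Spec S$ with $\fp=\alpha^{-1}\fq$, localisation commutes with the base change, and $\End^*(x)_\fp=0$ for all $\fp\notin V(\fa)$ holds if and only if $\End^*(x)_\fq=0$ for all $\fq\notin V(\alpha(\fa)S)$. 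This equivalence holds because $\End^*(x)$ is an $S$-module whose $R$-localisation at $\fp$ vanishes exactly when its localisations at all $\fq$ lying over $\fp$ vanish.

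The main obstacle is the join step: verifying that $\tau_S((\alpha^*)^{-1}V)$ is the join of the $\tau_S(V(\alpha(\fa)S))$. The issue is that $(\alpha^*)^{-1}V$ may contain sets $V(\fb)$ that are not themselves preimages. This is handled by the join preservation of $\tau_S$ (Proposition~\ref{pr:Thomason}) together with the identity $(\alpha^*)^{-1}\bigcup V(\fa)=\bigcup(\alpha^*)^{-1}V(\fa)$, so no compactness argument beyond the one already in that proposition is needed.
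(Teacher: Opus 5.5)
Your proposal is correct and is essentially the paper's proof. Like the paper, you identify $(\cat T^{\mathrm c})_{V(\fa)}$ with $(\cat T^{\mathrm c})_{V(\alpha(\fa)S)}$ via Koszul objects and Proposition~\ref{pr:koszul}, pass to arbitrary Thomason sets by join-preservation, and also spell out the reduction from locally closed to open sets that the paper leaves implicit.

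One caveat concerns only your optional alternative argument. For an $S$-module $M$ one has $M_\fp=0$ if and only if $M_\fq=0$ for all $\fq$ with $\alpha^{-1}(\fq)\subseteq\fp$, not merely for those $\fq$ lying over $\fp$; take $k[t]\to k[t,t^{-1}]$ and $\fp=(t)$. With this corrected statement the equivalence you need still holds.
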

\begin{proof}
  The second assertion is immediate from the first. We begin with the
  case that $Y=V(\fa)$ for a finitely generated ideal $\fa$ of
  $R$. Then \[(\alpha^*)^{-1}V(\fa)=V(\alpha(\fa))\] and setting 
  $\cat C=\cat T^c$ we get
  \[\cat T_{V(\fa)}=\loc(\cat C_{V(\fa)})=\loc(\cat C_{V(\alpha(\fa))})=\cat
    T_{V(\alpha(\fa))},\] where one uses the description of
  $\cat C_{V(\fa)}$ via Koszul objects in Proposition~\ref{pr:koszul}.

  For a Thomason subset $V\subseteq\Spec R$ we have
  \[(\alpha^*)^{-1}V=\bigcup_{V(\fa)\subseteq V}V(\alpha(\fa))\]
and therefore
  
  \[
    \cat T_V=\bigvee_{V(\fa)\subseteq V}\cat
    T_{V(\fa)}=\bigvee_{V(\fa)\subseteq V}\cat T_{V(\alpha(\fa))}=
\cat T_{(\alpha^*)^{-1}V}.\qedhere
  \]
\end{proof}

Now let  $\cat T$ and $\cat U$ be compactly
generated $R$-linear and $S$-linear triangulated categories,
respectively, $\alpha\colon R\to S$  a homomorphism of graded-commutative rings,
and $F\colon\cat T\to\cat U$  an exact functor that is $R$-linear with respect to the
induced $R$-linear structure on $\cat U$. Thus the
diagram
\[
  \begin{tikzcd}
    R\ar[d,swap,"\phi_x"]\ar[r,"\alpha"]&S\ar[d,"\phi_{F(x)}"]\\
    \End^*_{\cat T}(x)\ar[r,"F"]&   \End^*_{\cat U}(F(x))
  \end{tikzcd}
\]
is commutative for each $x\in \cat T$.

\begin{proposition}
  \label{pr:change-cats-rings}
  For the pair $(F,\alpha)$ suppose that $F$ preserves all products
  and coproducts.  For a locally closed set $Y\subseteq(\Spec R)^\vee$
  we have
\[F\Gamma_{Y}\cong \Gamma_{(\alpha^*)^{-1}Y}F.\]
\end{proposition}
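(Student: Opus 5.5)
The plan is to reduce the statement to the two lemmas just proved: Lemma~\ref{le:change-rings}, which handles the change of rings on a fixed category, and Lemma~\ref{le:change-cats}, which handles the change of categories over a fixed ring. Equip $\cat U$ with the $R$-linear structure induced by $\alpha$, namely
\[\phi_{F(x)}\circ\alpha\colon R\to\End^*_{\cat U}(F(x)).\]
The commutativity of the square in the hypothesis says exactly that $F\colon\cat T\to\cat U$ is an $R$-linear functor in the sense defined before Lemma~\ref{le:change-cats}, with $\cat U$ regarded as $R$-linear via $\alpha$.

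First, since $F$ is exact, $R$-linear and preserves all products and coproducts between compactly generated $R$-linear categories, Lemma~\ref{le:change-cats} yields
\[F\Gamma_Y\cong\Gamma^R_Y F,\]
where $\Gamma^R_Y$ denotes the local cohomology functor on $\cat U$ computed with respect to the induced $R$-action. Second, Lemma~\ref{le:change-rings}, applied to the $S$-linear category $\cat U$ and the homomorphism $\alpha$, identifies
\[\Gamma^R_Y=\Gamma_{(\alpha^*)^{-1}Y},\]
where the right-hand side is computed with respect to the $S$-action. Combining the two gives $F\Gamma_Y\cong\Gamma_{(\alpha^*)^{-1}Y}F$. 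Here $(\alpha^*)^{-1}Y$ is locally closed in $(\Spec S)^\vee$, as observed before Lemma~\ref{le:change-rings}, so the right-hand side is defined.

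The main point needing care is whether the hypotheses of Lemma~\ref{le:change-cats} really hold for the induced structure on $\cat U$. That proof uses a right adjoint $E$ of $F$ which preserves compactness and is $R$-linear, together with the Koszul and cohomological descriptions of $\cat T_{V(\fa)}$ from Proposition~\ref{pr:koszul} and Lemma~\ref{le:tria-supp-V(a)}. Both descriptions refer only to the $R$-action, and the induced action of $R$ on $\cat U$ is a genuine central action through $S\to Z^*(\cat U)$. So I expect no new difficulty beyond checking that the adjunction isomorphisms are $R$-linear for the induced action. This holds because $F$ is compatible with the actions of $R$, and that compatibility transfers to $E$ via the unit and counit.

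A minor bookkeeping issue is that Lemma~\ref{le:change-cats} as stated assumes both categories are linear over the same ring. After the first step this is literally the situation, so nothing further should be required.
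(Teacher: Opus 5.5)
Your proposal is correct and matches the paper's proof: regard $\cat U$ as $R$-linear via $\alpha$, so that $F$ is $R$-linear. Then Lemma~\ref{le:change-cats} gives $F\Gamma_Y\cong\Gamma_Y F$, and Lemma~\ref{le:change-rings} identifies $\Gamma_Y$ on $\cat U$ with $\Gamma_{(\alpha^*)^{-1}Y}$.
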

\begin{proof}
Since $F$ is linear with respect to the induced $R$-linear structure
on $\cat U$, Lemmas~\ref{le:change-cats} and
\ref{le:change-rings} yield the assertion.
\end{proof}

\begin{remark}
  There is an analogue of the above proposition for functors between
  compactly generated triangulated categories that are fibred over
  spaces; its formulation is left to the interested reader.
\end{remark}

\section{Differential graded algebras and modular representations}

For a finite group one is interested in its modular representations
over a field of prime characteristic, and we may assume that the prime
divides the order of the group. Otherwise, the group algebra is
semisimple by Maschke's theorem.

In this context we study either the essentially small bounded derived category of
finitely generated modules over the group algebra, or we pass to a
compactly generated triangulated category which contains the bounded derived
category as its subcategory of compact objects. The big category
equals the category of complexes of injective modules, up to homotopy.
Note that projective and injective modules agree over a group algebra,
because it is self-injective. However, we prefer the
injectives since representations are identified with their injective
resolutions. A tensor structure on modular representations is given by
the tensor product over the field and the diagonal action of the
group.

We begin with a classification of the thick tensor ideals of the
bounded derived category. This is done in two steps, so first for
elementary abelian $p$-groups via a Bernstein--Gelfand--Gelfand
correspondence, and then for an arbitrary finite group. The reduction
involves a subgroup theorem for cohomological support which uses
Quillen's stratification of group cohomology. For the group algebra of
an elementary abelian $p$-group we pass to its Koszul complex; this is
a differential graded Hopf algebra which is quasi-isomorphic to an
exterior algebra. So a brief discussion of differential graded
algebras and their derived categories is included.

References are
\cite{Benson/Iyengar/Krause:2011a,Benson/Iyengar/Krause:2011b,
Carlson:2000c,Carlson/Iyengar:2015a}.

\subsection*{Differential graded algebras}

In this section we consider the derived category of diﬀerential graded
(dg for short) modules over a dg algebra. We recall the relevant
definitions.  A \emph{dg algebra} is a $\bbZ$-graded associative
algebra \[A=\bigoplus_{n\in\bbZ}A^n\] over some fixed commutative ring
$k$, together with a \emph{differential} $d\colon A\to A$, so a
homogeneous $k$-linear map of degree $+1$ satisfying $d^2=0$ and the
\emph{Leibniz rule}
\[d(xy)=d(x)y+(-1)^nxd(y)\quad\text{for all}\quad x\in
A^n\quad\text{and}\quad y\in A.\] A
\emph{dg $A$-module} is a
$\bbZ$-graded (right) $A$-module $X$, together with a \emph{differential} $d\colon
X\to X$, so a homogeneous $k$-linear map of degree one
satisfying $d^2=0$ and the \emph{Leibniz rule}
\[d(xy)=d(x)y+(-1)^nxd(y)\quad\text{for all}\quad x\in
  X^n\quad\text{and}\quad y\in A.\] A morphism of dg $A$-modules
$X\to Y$ is an $A$-linear map which is homogeneous of degree zero and
commutes with the differential; it is a \emph{quasi-isomorphism} if it
induces a bijection $H^n(X)\to H^n(Y)$ for all $n\in\bbZ$.

We consider the category of dg $A$-modules and let $\bfD(A)$ denote the
\emph{derived category} which is obtained by formally inverting all
quasi-isomorphisms. This is a compactly generated triangulated
category which is generated by the free module $A$, since
$H^*(X)\cong\Hom_{\bfD(A)}^*(A,X)$ for every $X\in\bfD(A)$.  Thus
the category of compact objects identifies with the full subcategory
of \emph{perfect dg modules}, so direct summands of modules which are
finite iterated extensions of free modules of finite rank. We write
$\bfD^\perf(A)$ for this subcategory, and $\bfD^\fin(A)$ denotes the
full subcategory of dg $A$-modules $X$ such that $H^*(X)$ is finitely
generated over $H^*(A)$.

A morphism $A\to B$ of dg algebras is a \emph{quasi-isomorphism} if it
induces a bijection $H^*(A)\to H^*(B)$.
The dg algebras $A$ and $B$ are \emph{quasi-isomorphic} if there is a chain of
quasi-isomorphisms linking them. The multiplication on $A$ induces one
on its cohomology. We say that $A$ is \emph{formal} if it is
quasi-isomorphic to $H^*(A)$, viewed as a dg algebra with zero
diﬀerential.

A dg algebra $A$ is said to be \emph{commutative} if its underlying
ring is graded-commutative.  In this case, the derived tensor product
of dg modules endows $\bfD(A)$ with a structure of a tensor triangulated
category, with unit $A$. In fact, it is a rigidly-compactly generated
tensor triangulated category with a central action of $H^*(A)$.

A theorem of Neeman classifies the localising subcategories
of the derived category of a commutative noetherian ring
\cite{Neeman:1992a}. The following  is its dg analogue
\cite[Theorem~8.1]{Benson/Iyengar/Krause:2011a}.

\begin{theorem}\label{th:Neeman-dga}
Let $A$ be a commutative dg algebra such that the ring $H^*(A)$ is noetherian.
If $A$ is formal, then $\bfD(A)$ is stratified via the canonical
$H^*(A)$-action. In particular, the comparison map
\[\Spc \bfD^\perf(A)\lto\Spec H^*(A)\] is a homeomorphism.\qed
\end{theorem}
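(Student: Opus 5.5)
Since $A$ is formal, a chain of quasi-isomorphisms induces triangle equivalences of derived categories. These equivalences are compatible with derived tensor products and with the actions of $H^*(A)$. So we may replace $A$ by $R\colonequals H^*(A)$ viewed as a dg algebra with zero differential, a graded-commutative noetherian ring. By Proposition~\ref{pr:stratification-ring}, it then suffices to verify condition (1): stratification via the action of $R$. The homeomorphism statement follows from the ``moreover'' part, once we know that $\supp_R(\bfD(R))=\Spec R$ and that points of $\Spc\bfD^\perf(R)$ are locally closed; the latter holds because the comparison map identifies $\Spc$ with the spectral space $\Spec R$ of a noetherian ring. Surjectivity of the support is immediate: for each $\fp$ the residue field $k(\fp)$, computed as a graded field $R_\fp/\fp R_\fp$, satisfies $\Gamma_\fp k(\fp)\cong k(\fp)\ne 0$.

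For the local-global principle, the text notes that it is automatic for noetherian $R$. I would prove it directly as follows. Proposition~\ref{pr:noeth-supp} shows that $\supp_R$ detects nonzero objects, and the Thomason sets of a noetherian space admit a filtration by dimension. One then argues by noetherian induction that $x\in\loc(\Gamma_\fp x\mid \fp)$, using the localisation triangles for $V(\fp)$ and the Mayer--Vietoris triangles of Proposition~\ref{pr:loc-MV}. Because every localising subcategory of $\bfD(R)$ is a tensor ideal ($R$ generates), ideals and subcategories coincide here.

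The main step is minimality: for each $\fp$, every nonzero $x\in\Gamma_\fp\bfD(R)$ satisfies $\loc(x)=\loc(\Gamma_\fp R)$. I would first show $\loc(\Gamma_\fp R)=\loc(k(\fp))$, since $\Gamma_\fp R$ is built from Koszul objects $\kos{R_\fp}{\fp}$, which have finite-length cohomology over $R_\fp$ and are therefore finite extensions of shifts of $k(\fp)$. Then for nonzero $x\in\Gamma_\fp\bfD(R)$, I would show that $k(\fp)\lotimes_R x$ is nonzero. This holds because $\supp_R(x)=\{\fp\}$, and by Lemma~\ref{le:koszul-supp} the Koszul object $\kos x\fp$ is nonzero with $\fp$-torsion, $\fp$-local cohomology.

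Over the graded field $k(\fp)$, every dg module is formal and is a coproduct of shifts of $k(\fp)$. Here I use that $R$ has zero differential, so $k(\fp)\lotimes_R x$ is a $k(\fp)$-module in the derived sense; this is where formality is really used. Hence $k(\fp)\in\loc(k(\fp)\lotimes_R x)\subseteq\loc_\otimes(x)$, giving $\loc(\Gamma_\fp R)\subseteq\loc(x)$, while the reverse inclusion holds by Lemma~\ref{le:tensor-ideal-gamma}.

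The obstacle I expect is making this last step rigorous: that $k(\fp)\lotimes_R x$ splits as a sum of shifts of $k(\fp)$ in $\bfD(R)$, not merely in cohomology. I would handle it by treating $k(\fp)$ as a commutative dg algebra with zero differential and using the restriction functor $\bfD(k(\fp))\to\bfD(R)$, which preserves coproducts. Every object of $\bfD(k(\fp))$ is a coproduct of shifts of the free module, because graded modules over a graded field are free.
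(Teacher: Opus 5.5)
The paper gives no proof of this theorem; it quotes it from \cite[Theorem~8.1]{Benson/Iyengar/Krause:2011a}. Your outline is the natural one: pass to $R=H^*(A)$ with zero differential, get the local--global principle from noetherianity, and prove minimality using the graded residue field $k(\fp)$. But the minimality step rests on an unproved claim. It is not the one you flagged; splitting dg $k(\fp)$-modules into shifts of $k(\fp)$ is routine.

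The problem is your assertion that $\kos{R_\fp}{\fp}$ is a finite extension of shifts of $k(\fp)$ \emph{because} its cohomology has finite length. That is the ungraded truncation argument, and it is unavailable in $\bfD(R)$. When $R$ has elements of nonzero degree, the truncations $\tau^{\le n}M$ of a dg $R$-module $M$ are not dg submodules, since multiplication by such elements does not preserve them. So you cannot peel $M$ apart along its cohomology, and finite length of $H^*(M)$ does not by itself place $M$ in $\thick(k(\fp))$.

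Both halves of your minimality argument depend on this claim:
\begin{itemize}
\item The inclusion $\Gamma_\fp R\in\loc(k(\fp))$ uses it directly.
\item So does $k(\fp)\lotimes_R x\neq 0$. Knowing that $\kos{x}{\fp}\neq 0$ has $\fp$-torsion, $\fp$-local cohomology does not force $k(\fp)\lotimes_R x\neq 0$. That deduction goes through $\kos{x}{\fp}\cong\kos{R_\fp}{\fp}\lotimes_R x\in\loc(k(\fp)\lotimes_R x)$, which needs exactly $\kos{R_\fp}{\fp}\in\loc(k(\fp))$.
\end{itemize}

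The claim is true for Koszul objects, but it needs an argument, for instance the following.
\begin{itemize}
\item A Koszul complex on homogeneous generators $r_1,\dots,r_n$ of $\fp$ is bigraded. Give each exterior generator a homological degree and an internal degree separately, so that the differential sending it to $r_i$ preserves internal degree. Then $\kos{R_\fp}{\fp}$ is the totalization of a genuine complex of graded $R_\fp$-modules.
\item In the derived category of the abelian category of graded $R_\fp$-modules, truncation is available.
\item The homology modules of this complex are finitely generated and annihilated by $\fp$. Hence they are finite sums of internal shifts of $k(\fp)$.
\item Totalization is an exact functor to $\bfD(R)$ and takes these to shifts of $k(\fp)$.
\end{itemize}
This gives $\kos{R_\fp}{\fp}\in\thick(k(\fp))$, after which your argument goes through.

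Two smaller points.
\begin{itemize}
\item Your justification that points of $\Spc\bfD^{\perf}(R)$ are locally closed uses the homeomorphism you are trying to prove. Instead, for compact $x,y$ stratification gives $\supp_R(x)\subseteq\supp_R(y)\Rightarrow\loc(x)\subseteq\loc(y)\Rightarrow\thick(x)\subseteq\thick(y)$, the last step by Lemma~\ref{le:loc-comp-gen}. Combined with $\supp_R(R)=\Spec R$, finiteness of $\supp_R$, and Proposition~\ref{pr:supp-square} to match the two supports on compacts, Corollary~\ref{co:HNT} yields the homeomorphism.
\item A filtration of $\Spec R$ by dimension need not exist when $R$ has infinite Krull dimension. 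The local--global principle should come from noetherian induction on closed subsets; as the text indicates, it holds for any noetherian $R$.
\end{itemize}
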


\subsection*{A differential graded Bernstein--Gelfand--Gelfand correspondence}

Let $k$ be a field and let $\Lambda=k\langle\xi_1,\ldots,\xi_r\rangle$
denote an exterior $k$-algebra on 
elements $\xi_1,\ldots,\xi_r$ of odd negative degrees. We
consider $\Lambda$ as dg algebra with zero differential. Let $k$
denote the trivial $\Lambda$-module. Then we have
\[\bfD^\fin(\Lambda)=\thick(k).\]

Let $S=k[x_1,\ldots,x_r]$ denote the symmetric algebra on elements
$x_1,\ldots,x_r$ of degree
$|x_i|=-|\xi_i|+1$, again viewed as a dg algebra with zero
diﬀerential. In this case we have
\[\bfD^\fin(S)=\thick(S).\]

Consider the graded dual $\Hom_k(S,k)$ which is a dg $S$-module. Then
\[F\colonequals\Hom_k(S,k)\otimes_k\Lambda\] with differential given by
multiplication by $\sum_i x_i\otimes\xi_i$ is a  dg $S$-$\Lambda$-bimodule.

\begin{lemma}
  \begin{enumerate}
    \item The augmentations $\Hom_k(S,k)\to k$ and $\Lambda\to k$
      yield a morphism $F\to k$ of dg $\Lambda$-modules which is a quasi-isomorphism.
    \item The $S$-multiplication on $F$ induces a morphism
      $S\to \Hom_\Lambda(F,F)$ of dg $S$-modules which is a
      quasi-isomorphism.\qed
      \end{enumerate}
    \end{lemma}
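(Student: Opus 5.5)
The plan is to reduce both statements to explicit Koszul-complex computations. For (1) I would reduce to the case $r=1$. For (2) I would use (1) together with the fact that $F$ is semifree over $\Lambda$.

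\emph{Preliminaries.} Since $|x_i|=-|\xi_i|+1$, the element $\delta=\sum_i x_i\otimes\xi_i$ has degree $1$. It squares to zero because the $\xi_i$ anticommute with $\xi_i^2=0$ and the $x_i$ commute. So $F$ is indeed a dg $S$-$\Lambda$-bimodule.

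\emph{Part (1).} The underlying complex decomposes as a tensor product over $k$:
\[F\cong \bigotimes_{i=1}^r\bigl(\Hom_k(k[x_i],k)\otimes_k k\langle\xi_i\rangle\bigr),\]
with Koszul signs, each factor carrying the differential given by multiplication by $x_i\otimes\xi_i$. The augmentation is compatible with this decomposition. By the Künneth formula over the field $k$ it therefore suffices to treat $r=1$.

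For $r=1$, write $\{x^{\vee n}\}_{n\ge 0}$ for the basis of $\Hom_k(k[x],k)$ dual to $\{x^n\}$. Then $x\cdot x^{\vee n}=x^{\vee(n-1)}$ for $n\ge1$ and $x\cdot x^{\vee 0}=0$. The differential sends $x^{\vee n}\otimes 1\mapsto \pm x^{\vee(n-1)}\otimes\xi$ for $n\ge1$, and it kills $x^{\vee0}\otimes1$ and every $x^{\vee n}\otimes\xi$ because $\xi^2=0$. Hence the cycles are spanned by $x^{\vee0}\otimes 1$ together with all $x^{\vee n}\otimes\xi$, and the boundaries are exactly the span of the $x^{\vee n}\otimes\xi$. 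The cohomology is therefore $k$, concentrated in degree $0$ and spanned by the class of $x^{\vee 0}\otimes1$. This class maps to $1$ under the augmentation, so $F\to k$ is a quasi-isomorphism. Finally, the augmentation is $\Lambda$-linear and kills the image of $\delta$, so it is a morphism of dg $\Lambda$-modules.

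\emph{Part (2).} First, $F$ is semifree as a dg $\Lambda$-module. Filter $\Hom_k(S,k)$ by polynomial degree, letting $F_n$ be the span of $x^{\vee\alpha}\otimes\Lambda$ with $|\alpha|\le n$. Multiplication by $\delta$ lowers the polynomial degree, so each $F_n/F_{n-1}$ is a free $\Lambda$-module with zero differential, and the filtration is exhaustive. It follows that $\Hom_\Lambda(F,-)$ preserves quasi-isomorphisms. By (1), the map
\[\Hom_\Lambda(F,F)\lto\Hom_\Lambda(F,k)\]
induced by the augmentation is a quasi-isomorphism.

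Next I identify the target. As graded objects, $\Hom_\Lambda(\Hom_k(S,k)\otimes\Lambda,k)\cong\Hom_k(\Hom_k(S,k),k)$. This is $S$ itself, because $S$ is degreewise finite-dimensional: the $|x_i|\ge 2$ are even, since the $\xi_i$ have odd negative degree. The induced differential is zero, because composing with $\delta$ and then augmenting kills the $\xi_i$. Finally, the composite
\[S\lto\Hom_\Lambda(F,F)\lto\Hom_\Lambda(F,k)\cong S\]
sends $s$ to the functional $f\mapsto\varepsilon(sf)$. This is $s$ under the double-dual identification, so the composite is the identity. Since the second map is a quasi-isomorphism, so is $S\to\Hom_\Lambda(F,F)$. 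It is $S$-linear by construction.

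\emph{Main obstacle.} I expect the hardest part to be the bookkeeping: checking that the Koszul signs are consistent, both in the tensor decomposition used for the Künneth reduction and in the identification of the double dual, and confirming that the semifree filtration is honestly exhaustive even though the grading is unbounded. The semifreeness step is the one I would check most carefully, since everything in (2) rests on it.
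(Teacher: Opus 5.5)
Your proof is correct. The paper states this lemma without proof, treating it as a standard Koszul-duality computation in the spirit of Avramov--Buchweitz--Iyengar--Miller. So there is no argument in the paper to compare with, and your proposal supplies the missing verification.

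The individual steps hold up.
\begin{enumerate}
\item The K\"unneth reduction to $r=1$ in (1) is legitimate. Since $S$ is degreewise finite and concentrated in non-negative degrees, the graded dual of $S$ is the tensor product of the duals of the $k[x_i]$. Under this identification, multiplication by $\delta$ becomes the Koszul-signed sum of the one-variable differentials, and the one-variable cohomology computation is right.
\item In (2), the polynomial-degree filtration is an honest semifree filtration. It is exhaustive because the graded dual is spanned by the dual monomials $x^{\vee\alpha}$, and $\delta$ strictly lowers polynomial degree. Hence $\Hom_\Lambda(F,-)$ preserves quasi-isomorphisms.
\item The identification $\Hom_\Lambda(F,k)\cong S$ with zero differential is correct. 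The two-out-of-three argument then goes through.
\end{enumerate}

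A nice feature of your route for (2) is that it never uses a prior computation of $\Ext^*_\Lambda(k,k)$. Combined with (1), it yields $S\cong\Ext^*_\Lambda(k,k)$ directly, which is exactly what Theorem~\ref{th:BGG-dga} extracts from this lemma.

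One cosmetic correction: degreewise finiteness of $S$ comes from $|x_i|\ge 2>0$, not from evenness. The evenness of the $|x_i|$ is what lets the $x_i$ commute with everything without signs.
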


The following result is a dg analogue of the Bernstein--Gelfand--Gelfand
correspondence \cite{Avramov/Buchweitz/Iyengar/Miller:2010a}.

\begin{theorem}\label{th:BGG-dga}
      The adjoint pair of exact functors
\begin{equation*}
\begin{tikzcd}[column sep = large]
\bfD(S) \ar[rr,yshift=2.5,"-\lotimes_S F"] &&  \ar[ll,yshift=-2.5,"{\RHom_\Lambda(F,-)}"]
\bfD(\Lambda)
\end{tikzcd}
\end{equation*}    
restricts to a pair of equivalences
$\bfD^\fin(S) \rightleftarrows
\bfD^\fin(\Lambda)$ which induces an isomorphism
\[S\longiso\Ext_\Lambda^*(k,k).\]
\end{theorem}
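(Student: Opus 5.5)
The plan is to prove the statement by dévissage: check it on a single generator on each side, using the preceding Lemma, and then pass to thick closures. Write $G\colonequals-\lotimes_S F$ and $E\colonequals\RHom_\Lambda(F,-)$. Both are exact, and $(G,E)$ is an adjoint pair by the tensor--hom adjunction for the dg bimodule $F$. Since $\bfD^\fin(S)=\thick(S)$ and $\bfD^\fin(\Lambda)=\thick(k)$, it suffices to show three things: that $G(S)\cong k$ and $E(k)\cong S$; that the unit $S\to EG(S)$ is an isomorphism; and that the counit $GE(k)\to k$ is an isomorphism. The objects at which the unit (respectively counit) is invertible form a thick subcategory, since $G$ and $E$ are exact. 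Hence invertibility on the generators gives invertibility on $\thick(S)$ and $\thick(k)$. Moreover $G(\thick(S))\subseteq\thick(G(S))=\thick(k)$, and symmetrically for $E$, so the two functors restrict to mutually quasi-inverse equivalences $\bfD^\fin(S)\rightleftarrows\bfD^\fin(\Lambda)$.

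For the generators, first note $G(S)=S\lotimes_S F\cong F$ as dg $\Lambda$-modules. By part (1) of the Lemma, $F\to k$ is a quasi-isomorphism, so $G(S)\cong k$ in $\bfD(\Lambda)$. For $E(k)$ we compute $E(k)=\RHom_\Lambda(F,k)\cong\RHom_\Lambda(F,F)$. To identify this with $\Hom_\Lambda(F,F)$, I will argue that $F$ is semi-free (K-projective) as a dg $\Lambda$-module. Its underlying graded module is $\Hom_k(S,k)\otimes_k\Lambda$, which is free over $\Lambda$. The differential is multiplication by $\sum_i x_i\otimes\xi_i$, and $x_i$ lowers the polynomial degree in the dual $\Hom_k(S,k)$. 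Filtering by polynomial degree of the dual component therefore gives an exhaustive semi-free filtration. Granting this, $\RHom_\Lambda(F,F)=\Hom_\Lambda(F,F)$, which is quasi-isomorphic to $S$ by part (2) of the Lemma. The unit $S\to EG(S)=\Hom_\Lambda(F,F)$ is exactly the map induced by the $S$-multiplication on $F$, so it is this quasi-isomorphism. The counit at $k$ is $GE(k)\cong E(k)\lotimes_S F$; under $E(k)\cong S$ it becomes $S\lotimes_S F=F\to k$, the augmentation quasi-isomorphism. Before concluding, I would check compatibility of this identification with the actual counit through the triangle identities.

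For the ring isomorphism, full faithfulness of $E$ on $\thick(k)$ gives
\[\Ext^*_\Lambda(k,k)=\Hom^*_{\bfD(\Lambda)}(k,k)\cong\Hom^*_{\bfD(S)}(E(k),E(k))\cong\Hom^*_{\bfD(S)}(S,S)=H^*(S)=S.\]
This composite is multiplicative because $E$ is a functor. Tracing through the definitions, it is inverse to the map $S\to\Ext^*_\Lambda(k,k)$ given by the action of $S$ on $F\simeq k$ (equivalently, by $S\to\Hom_\Lambda(F,F)$ followed by passing to cohomology). So $S\iso\Ext^*_\Lambda(k,k)$ is an isomorphism of graded rings.

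The main obstacle I expect is justifying $\RHom_\Lambda(F,-)=\Hom_\Lambda(F,-)$, i.e.\ the semi-freeness of $F$ over $\Lambda$. The difficulty is that $\Hom_k(S,k)$ is infinite-dimensional and the $\xi_i$ have negative degrees, so one must check that the polynomial-degree filtration really is increasing and exhaustive with free subquotients. A second, more routine, source of error is the grading and sign conventions needed to ensure that $\sum_i x_i\otimes\xi_i$ squares to zero and has degree one, and that the unit and counit agree with the maps in the Lemma.
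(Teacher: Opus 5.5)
Your proposal is correct and takes essentially the same route as the paper: identify the generators $S$ and $k$ via the preceding Lemma, then extend by d\'evisage from $\thick(S)$ to $\thick(k)$, and read off $S\cong\Ext^*_\Lambda(k,k)$ from the induced map on graded endomorphisms. Your polynomial-degree filtration does show that $F$ is semi-free over $\Lambda$, which supplies the identification $\RHom_\Lambda(F,F)=\Hom_\Lambda(F,F)$ that the paper leaves implicit, and your counit concern is settled by the triangle identity $\epsilon_{G(S)}\circ G(\eta_S)=\id$, which makes the counit invertible at $G(S)\cong k$.
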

\begin{proof}
  We apply the above lemma. It shows that the pair of functors
  identifies the $S$-module $S$ and the $\Lambda$-module $k$, up to
  isomorphisms. Moreover, the functor $-\lotimes_S F$ induces an
  isomorphism
\[\Hom^*_{\bfD(S)}(S,S)\iso \Hom^*_{\bfD(\Lambda)}(F,F)\cong
  \Hom^*_{\bfD(\Lambda)}(k,k)\] which identifies with the isomorphism
$S\iso\Ext_\Lambda^*(k,k)$ from Koszul duality. Therefore a d{\'e}visage argument
yields the equivalence
\[\bfD^\fin(S)=\thick(S) \iso\thick(k)= \bfD^\fin(\Lambda).\qedhere\]
\end{proof}

The triangle equivalence $\bfD^\fin(S)\iso \bfD^\fin(\Lambda)$
provides an $S$-action on $\bfD^\fin(\Lambda)$ and therefore
Theorem~\ref{th:Neeman-dga} yields a classification of thick
subcategories.

\begin{corollary}\label{co:BGG-dga}
  Cohomological support induces a lattice isomorphism
  \begin{equation*}
     \Thick(\bfD^\fin(\Lambda))\longiso\{\text{Thomason subsets of }\Spec
     S\}\,.
\end{equation*}  
\end{corollary}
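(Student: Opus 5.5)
Via Theorem~\ref{th:BGG-dga}, the problem moves to $\bfD^\fin(S)=\thick(S)=\bfD^\perf(S)$, where $S$ acts on itself. Since a triangle equivalence induces a lattice isomorphism of thick subcategories, we get
\[\Thick(\bfD^\fin(\Lambda))\cong\Thick(\bfD^\perf(S)).\]
The $S$-action on $\bfD^\fin(\Lambda)$ is transported along this equivalence, so it identifies with the canonical action of $S$ on $\bfD^\perf(S)$. Hence cohomological supports agree: $\supp_S(X)=\supp_S(X\lotimes_S F)$ for every $X\in\bfD^\perf(S)$. It therefore suffices to show that $\supp_S$ induces a lattice isomorphism $\Thick(\bfD^\perf(S))\iso\{\text{Thomason subsets of }\Spec S\}$.

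For this step, note that $S$ is a polynomial algebra on generators of even degree. Indeed, $|x_i|=-|\xi_i|+1$ is even because $|\xi_i|$ is odd, so $S$ is genuinely commutative and noetherian, and it is formal as a dg algebra with zero differential. Theorem~\ref{th:Neeman-dga} then gives that $\bfD(S)$ is stratified via the canonical $S$-action, and that the comparison map $\Spc\bfD^\perf(S)\to\Spec S$ is a homeomorphism. Since $\bfD^\perf(S)$ is an essentially small rigid tensor triangulated category with canonical $S$-action, Corollary~\ref{co:HNT} ((2)$\Rightarrow$(1)) yields a lattice isomorphism $\Thick_\otimes(\bfD^\perf(S))\iso\{\text{Thomason subsets of }\Spec S\}$ induced by $\supp_S$.

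It remains to pass from thick tensor ideals to all thick subcategories. The unit $S$ generates $\bfD^\perf(S)$ as a thick subcategory. For any thick $\cat U$ and $u\in\cat U$, the objects $x$ with $x\lotimes_S u\in\cat U$ form a thick subcategory containing $S$, hence all of $\bfD^\perf(S)$. So every thick subcategory is a tensor ideal, and $\Thick_\otimes=\Thick$.

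Composing the three identifications gives the result. The main obstacle is the first step: checking that the $S$-action on $\bfD^\fin(\Lambda)$ really corresponds to the canonical one, so that supports are preserved under the equivalence. Since the action is defined by transport via the equivalence, I expect this to be essentially tautological. The only care needed is to verify that $\supp_S$ is computed intrinsically from $\End^*(x)$ localised at primes, which is invariant under $S$-linear equivalences.
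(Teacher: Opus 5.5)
Your proposal is correct and follows the paper's route. The paper's proof is just ``combine Theorems~\ref{th:Neeman-dga} and~\ref{th:BGG-dga}'', and you have filled in exactly the implicit steps: transporting the $S$-action and supports along the BGG equivalence, applying Neeman's theorem to the formal noetherian algebra $S$ via Corollary~\ref{co:HNT}, and noting that every thick subcategory of $\bfD^\perf(S)=\thick(S)$ is a tensor ideal (including your check that $|x_i|$ is even, which the paper leaves unstated).
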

\begin{proof}
Combine  Theorems~\ref{th:Neeman-dga} and \ref{th:BGG-dga}.
\end{proof}

\subsection*{Elementary abelian $p$-groups}

We fix a prime $p>0$ and consider the elementary abelian $p$-group
$E=(\bbZ/p)^r$ of rank $r$. Its group algebra over a field $k$ of
characteristic $p$ is isomorphic to
\[A\colonequals k[z_1,\ldots,z_r]/(z_1^p,\ldots,z_r^p)\] which we view as dg algebra
concentrated in degree zero with zero differential.

\begin{proposition}
  For the trivial $A$-module $k$ one has
  \begin{equation}\label{eq:ext-algebra}
    \Ext^*_A(k,k)\cong\begin{cases} k[\eta_1,\ldots,\eta_r]&\text{for }p=2,\\
      k\langle\eta_1,\ldots,\eta_r\rangle\otimes_k
      k[\theta_1,\ldots,\theta_r]&\text{for }p>2,
    \end{cases}
    \end{equation}
  with  $|\eta_i|=1$ and $|\theta_i|=2$ for all $i$.
\end{proposition}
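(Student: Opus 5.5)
The plan is to reduce to the rank one case via a Künneth formula, and then compute the rank one case from an explicit periodic resolution. Since $A\cong k[z]/(z^p)^{\otimes r}$, the tensor product over $k$ of minimal projective resolutions of $k$ over the factors $k[z_i]/(z_i^p)$ is a projective resolution of $k$ over $A$. Applying $\Hom_A(-,k)$ then gives
\[\Ext^*_A(k,k)\cong\bigotimes_{i=1}^r\Ext^*_{k[z_i]/(z_i^p)}(k,k)\]
as graded algebras. This identification requires that the Yoneda product on the left corresponds to the graded tensor product of algebras on the right, with the Koszul sign rule. To justify this, I would use the diagonal (Hopf) structure, or equivalently compute Yoneda products via chain maps between tensor products of resolutions; both routes are standard. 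Since everything is over a field and the resolutions are of finite type in each degree, no $\mathrm{Tor}$ terms or completion issues arise.

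For $B=k[z]/(z^p)$, the minimal resolution of $k$ is periodic:
\[\cdots\to B\xrightarrow{z^{p-1}}B\xrightarrow{z}B\xrightarrow{z^{p-1}}B\xrightarrow{z}B\to k\to 0.\]
All differentials vanish after applying $\Hom_B(-,k)$, so $\Ext^n_B(k,k)\cong k$ for every $n\ge 0$. It remains to determine the ring structure. Let $\eta\in\Ext^1$ be the class of the extension $0\to k\to B/(z^2)\to k\to 0$; when $p=2$ this is the extension $0\to k\to B\to k\to 0$. Let $\theta\in\Ext^2$ be the class of the two-fold extension $0\to k\to B\xrightarrow{z}B\to k\to 0$, which is the periodicity generator. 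I would lift these classes to chain maps on the resolution, namely shift maps composed with multiplication by suitable powers of $z$. From these lifts:
\begin{enumerate}
\item if $p=2$, $\eta$ lifts to the identity shift, so $\eta^n\neq0$ for all $n$ and $\Ext^*_B(k,k)=k[\eta]$;
\item if $p>2$, the lift of $\eta$ alternates between $1$ and $z^{p-2}$, so $\eta^2=0$, while $\theta$ is the shift by two; this gives $\Ext^*_B(k,k)=k\langle\eta\rangle\otimes k[\theta]$.
\end{enumerate}
Graded-commutativity forces $\eta^2=-\eta^2$ in odd degree, which is consistent with case (2) in characteristic different from $2$.

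Finally, I would combine the two steps. For $p=2$, the tensor product of polynomial rings $k[\eta_i]$ is $k[\eta_1,\dots,\eta_r]$; signs are irrelevant in characteristic $2$. For $p>2$, the graded tensor product of the algebras $k\langle\eta_i\rangle\otimes k[\theta_i]$ is the free graded-commutative algebra $k\langle\eta_1,\dots,\eta_r\rangle\otimes_k k[\theta_1,\dots,\theta_r]$, as claimed in \eqref{eq:ext-algebra}.

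The main obstacle is the rank one product computation. Specifically, one must check that $\eta^2=0$ when $p>2$, and that $\eta\theta$ generates each odd degree. I would settle this by writing the explicit chain-map lifts. The multiplicativity of the Künneth isomorphism is the secondary point, which I would cite as standard.
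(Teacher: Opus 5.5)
Your proposal is correct and follows the same route as the paper. The paper's proof is simply ``an explicit calculation in case $r=1$ and the K\"unneth formula,'' and your chain-map lifts check out: the lift of $\eta$ alternates between $1$ and $z^{p-2}$, giving $\eta^2=0$ for $p>2$ and $\eta^n\neq 0$ for $p=2$, while $\theta$ lifts to the identity shift by two, so $\eta\theta^n$ spans each odd degree.
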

\begin{proof}
  This is an explicit calculation in case $r=1$ and the general case
  follows from the Künneth formula.
\end{proof}

Let $B$ denote the Koszul complex on the sequence $z_1,\ldots,z_r$,
viewed as a dg algebra. It is thus an exterior algebra over $A$ on elements
$y_1,\ldots,y_r$  each of degree $-1$, with diﬀerential determined by
$d(z_i) = 0$ and $d(y_i) = z_i$. Note that $A=B^0$.

\begin{lemma}\label{le:ext-alg-qis}
  Let $\Lambda= k\langle\xi_1,\ldots,\xi_r\rangle$ be an exterior
  algebra over $k$ on degree $-1$ elements $\xi_1,\ldots,\xi_r$,
  viewed as a dg algebra with zero diﬀerential. The morphism
  $\phi\colon\Lambda\to B$ of dg $k$-algebras defined by
  $\phi(\xi_i)= z_i^{p-1}y_i$ is a quasi-isomorphism.  In particular,
  one has \[\Ext^*_B(k,k)=k[x_1,\ldots,x_r]\] with each $x_i$ of degree $2$.
\end{lemma}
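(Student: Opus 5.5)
First I check that $\phi$ is a morphism of dg algebras, and then I compute cohomology on both sides and compare.

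\emph{$\phi$ is a dg map.} The elements $w_i\colonequals z_i^{p-1}y_i$ have degree $-1$. They anticommute and square to zero, because the $y_i$ do and the $z_i$ are central of degree $0$. Hence $\xi_i\mapsto w_i$ extends to an algebra map from the exterior algebra $\Lambda$. Compatibility with differentials means $d(w_i)=0$. By the Leibniz rule, $d(z_i^{p-1}y_i)=z_i^{p-1}d(y_i)=z_i^{p}=0$ in $A$, so this holds.

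\emph{Reduction to $r=1$.} Both sides decompose as tensor products over $k$ of dg algebras, one factor for each $i$, and $\phi=\phi_1\otimes\cdots\otimes\phi_r$:
\[B\cong\bigotimes_i \bigl(k[z_i]/(z_i^p)\bigr)\langle y_i\rangle,\qquad \Lambda\cong\bigotimes_i k\langle\xi_i\rangle.\]
By the Künneth formula over the field $k$, it suffices to treat $r=1$.

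\emph{The case $r=1$.} Here $B$ is the two-term complex $A\,y\xrightarrow{\;d\;}A$ with $ay\mapsto az$.
\begin{itemize}
\item $H^0(B)=A/zA=k$, generated by the class of $1$.
\item $H^{-1}(B)=\{ay : az=0\}=z^{p-1}A\,y=k\cdot z^{p-1}y$.
\end{itemize}
So $\phi$ sends the basis $1,\xi$ of $\Lambda$ to generators of $H^0(B)$ and $H^{-1}(B)$. It is therefore a quasi-isomorphism, and the tensor product of quasi-isomorphisms of complexes over a field is again one.

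\emph{The Ext computation.} A quasi-isomorphism of dg algebras induces a triangle equivalence $\bfD(B)\simeq\bfD(\Lambda)$, via restriction and extension of scalars. This equivalence matches the trivial modules $k$, since $k$ is obtained on both sides through the augmentations and $\phi$ is compatible with them. Hence
\[\Ext^*_B(k,k)\cong\Ext^*_\Lambda(k,k).\]
By Theorem~\ref{th:BGG-dga}, $\Ext^*_\Lambda(k,k)\cong S=k[x_1,\ldots,x_r]$ with $|x_i|=-|\xi_i|+1=2$.

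\emph{Main obstacle.} The step needing the most care is this last identification. One must justify that a quasi-isomorphism of dg algebras yields an equivalence of derived categories matching the two trivial modules. The cleanest route is to take K-projective resolutions over $\Lambda$ and apply $-\otimes_\Lambda B$, which preserves quasi-isomorphisms between K-projectives. One then checks that $k\otimes^{\mathbf L}_\Lambda B\simeq k$ as dg $B$-modules, using that $B\to k$ restricts along $\phi$ to the augmentation of $\Lambda$.
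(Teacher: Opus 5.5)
Your proposal is correct and follows the same route as the paper: a routine check that $\phi$ is a dg algebra map, the factorisation $\phi=\phi(1)\otimes_k\cdots\otimes_k\phi(r)$ reducing to $r=1$ where the quasi-isomorphism is checked by inspection, and then $\Ext^*_B(k,k)\cong\Ext^*_\Lambda(k,k)$ combined with Theorem~\ref{th:BGG-dga}; you simply supply more detail than the paper (the explicit cohomology $H^0(B)=k$, $H^{-1}(B)=k\cdot z^{p-1}y$, and the Künneth step). For the last step, restriction along $\phi$ is already the equivalence $\bfD(B)\iso\bfD(\Lambda)$, and it visibly sends the trivial $B$-module $k$ to the trivial $\Lambda$-module $k$ because each $z_i^{p-1}y_i$ lies in the augmentation ideal, so your detour through $k\lotimes_\Lambda B$ is unnecessary.
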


\begin{proof}
  A routine calculation shows that $\phi$ is a morphism of dg
  $k$-algebras.  What needs to be verified is that it is a
  quasi-isomorphism, and this is determined only by the structure of
  $\Lambda$ and $B$ as complexes of $k$-vector spaces.  Let
  $\Lambda(i)$ be the exterior algebra on the variable $\xi_i$ and
  $B(i)$ the Koszul dg algebra over $k[z_i]/(z_i^p)$, with exterior
  generator $y_i$. Observe that
  $\phi= \phi(1)\otimes_k \cdots\otimes_k\phi(r)$ where
  $\phi(i)\colon\Lambda(i)\to B(i)$ is the morphism of complexes
  mapping $\xi_i$ to $z_i^{p-1}y_i$. Each $\phi(i)$ is a quasi-isomorphism by
  inspection, and hence so is $\phi$.  Since $\phi$ is a quasi-isomorphism the
  $k$-algebras $\Ext^*_\Lambda(k,k)$ and $\Ext^*_B(k,k)$ are
  isomorphic, and we refer to Theorem~\ref{th:BGG-dga} for the
  description as a symmetric algebra.
\end{proof}

The quasi-isomorphism $\phi\colon\Lambda\to B$ induces an equivalence
$\bfD(\Lambda)\iso\bfD(B)$ which restricts to an equivalence
$\bfD^\fin(\Lambda)\iso\bfD^\fin(B)$.

Next consider the morphism of dg algebras
$\iota\colon A\to B$ which identifies $A$ with $B^0$. This induces an adjoint pair of
exact functors
\begin{equation*}
\begin{tikzcd}[column sep = huge]
\bfD^\fin(A) \ar[rr,yshift=2.5,"\iota^*=-\lotimes_A B"] &&  \ar[ll,yshift=-2.5,"{\iota_*=\RHom_B(B,-)}"]
\bfD^\fin(B)
\end{tikzcd}
\end{equation*}
and restriction along $\iota$ induces a homomorphism
\[ S\colonequals\Ext^*_B(k,k)\xrightarrow{\ \iota_k\ } \Ext^*_A(k,k)=:R.\]

\begin{lemma}\label{le:ext-alg-homeom}
Let $S=k[x_1,\ldots,x_r]$ and keep the generators $\eta_1,\ldots,\eta_r,\theta_1,\ldots,\theta_r$ of $R$
from \eqref{eq:ext-algebra}. The homomorphism $S\to R$ is
injective and given by \[ x_i\longmapsto \begin{cases} \eta_i^2&\text{for }p=2,\\
    \theta_i&\text{for }p>2.
  \end{cases}\] It induces a homeomorphism $\Spec R\to\Spec S$.
\end{lemma}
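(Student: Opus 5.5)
We need to compute the map $S=\Ext^*_B(k,k)\to\Ext^*_A(k,k)=R$ induced by restriction along $\iota\colon A\to B$, and the plan is to reduce to the rank-one case via the Künneth formula. Both $A=\bigotimes_i k[z_i]/(z_i^p)$ and $B=\bigotimes_i B(i)$ decompose as tensor products of rank-one pieces, and $\iota=\iota(1)\otimes\cdots\otimes\iota(r)$. The trivial module $k$ decomposes compatibly, so the Künneth isomorphisms give a commutative square identifying $\iota_k$ with $\bigotimes_i\iota(i)_k$. Hence it suffices to treat $r=1$: show that $k[x]=\Ext^*_{B(1)}(k,k)\to\Ext^*_{k[z]/(z^p)}(k,k)$ sends $x$ to $\eta^2$ for $p=2$ and to $\theta$ for $p>2$, up to a nonzero scalar, which can be absorbed by rescaling generators.

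For $r=1$ I would compute with explicit resolutions. Over $A_1=k[z]/(z^p)$ the minimal free resolution of $k$ is the $2$-periodic complex $\cdots\xrightarrow{z^{p-1}}A_1\xrightarrow{z}A_1\to k$. The class $\theta$ (respectively $\eta^2$ when $p=2$) is the degree-two periodicity generator, i.e.\ the chain map shifting the resolution by two. Over $B(1)$, whose cohomology is the exterior algebra on the class of $z^{p-1}y$ by Lemma~\ref{le:ext-alg-qis}, a semifree resolution of $k$ is obtained by killing cycles step by step. Restricting along $\iota$ identifies $B(1)$ as a dg $A_1$-module with the two-term complex $A_1\xrightarrow{z}A_1$, that is, the first stage of the $A_1$-resolution. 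The degree-two generator $x$ of $\Ext_{B(1)}$ is dual to the class of $z^{p-1}y$ via Theorem~\ref{th:BGG-dga}, and under restriction it should correspond to the Yoneda class of the extension $0\to k\to A_1\xrightarrow{z} A_1\to k\to 0$. That class is precisely the periodicity element. Checking this identification of Yoneda classes, keeping track of the dg structure and signs, is the main obstacle. As a fallback, I would compute $\iota_k$ on the degree-two part directly: both degree-two Ext groups are low-dimensional, and it suffices to show the map is nonzero there, which follows because $\iota^*$ sends $k$ to a module with nonzero cohomology in two degrees.

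Given the formula $x_i\mapsto\eta_i^2$ or $\theta_i$, injectivity is immediate: the images are algebraically independent in $R$. For $p=2$, $R$ is a polynomial ring on the $\eta_i$. For $p>2$, $R=\Lambda(\eta)\otimes k[\theta]$ contains $k[\theta]$ as a polynomial subring. For the homeomorphism, note that $R$ is integral over the image of $S$, since for $p=2$ each $\eta_i$ satisfies $\eta_i^2=x_i$, and for $p>2$ each $\eta_i$ is nilpotent. Thus $\Spec R\to\Spec S$ is closed, and it is surjective by lying over. It is injective because every homogeneous prime of $R$ contains the nilpotents $\eta_i$ when $p>2$, while for $p=2$ a prime $\fq$ of $k[\eta]$ is determined by $\fq\cap k[\eta^2]$ (since $\eta_i\in\fq$ if and only if $\eta_i^2\in\fq$, and in characteristic two the Frobenius-type argument $f^2\in k[\eta^2]$ for $f\in R$ gives $f\in\fq\iff f^2\in\fq\cap S$). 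A continuous closed bijection is a homeomorphism.
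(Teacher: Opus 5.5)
Your route is the paper's. The paper simply invokes ``an explicit calculation'' for the images of the $x_i$, then notes that the homeomorphism is clear for $p=2$ and follows from nilpotence of odd-degree elements for $p>2$. Your K\"unneth reduction and your injectivity argument are correct. So is your proof of the homeomorphism: integrality, every homogeneous prime containing the $\eta_i$ when $p>2$, and $f^2\in k[\eta_1^2,\ldots,\eta_r^2]$ when $p=2$. All of this is more detailed than what the paper writes.

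The one step you leave open is the rank-one fact $\iota_k(x)\neq 0$, and this is a genuine, if small, gap. The identification with the Yoneda class is only asserted. Your fallback reason, that $\iota^*k$ has cohomology in two degrees, does not by itself say anything about the restriction map on $\Ext$. The step can be closed as follows.

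\begin{itemize}
\item The dg submodule $k\cdot z^{p-1}y\subseteq B(1)$ is isomorphic to $\Si k$ and quasi-isomorphic to the kernel of the augmentation. This gives an exact triangle $\Si k\to B(1)\to k\xrightarrow{\delta}\Si^2 k$ in $\bfD(B(1))$.
\item Since $\End_{\bfD(B(1))}(B(1))=H^0(B(1))=k$, the triangle cannot split. So $\delta$ is a nonzero multiple of $x$.
\item Restriction along $\iota$ is exact and turns this into a triangle $\Si k\to(A_1\xrightarrow{z}A_1)\to k\xrightarrow{\iota_k(\delta)}\Si^2 k$ in $\bfD(A_1)$.
\item If $\iota_k(\delta)=0$, then $k$ would be a direct summand of a perfect complex. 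This is impossible since $\Ext^n_{A_1}(k,k)\neq 0$ for all $n\ge 0$.
\item $\Ext^2_{A_1}(k,k)$ is one-dimensional, spanned by $\eta^2$ for $p=2$ and by $\theta$ for $p>2$. This gives the formula up to a unit. In fact $\iota_k(\delta)$ is the class of $0\to k\to A_1\xrightarrow{z}A_1\to k\to 0$, as you guessed.
\end{itemize}

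Alternatively, your fallback can be made precise. In $\bfD(B)$ the object $\iota^*k=k\otimes_A B$ is a direct sum of shifts $\Si^{j}k$ with $j\ge 0$, because each $z_i^{p-1}y_i$ acts on it by zero. Under the adjunction, $\iota_k$ becomes precomposition with the counit $\iota^*k\to k$, which is a split epimorphism. Hence $\iota_k$ is injective.
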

\begin{proof}
An explicit calculation yields the image of each $x_i$. The induced
map between spectra is a homeomorphism. This is clear for $p=2$ and
uses that odd degree elements are
nilpotent when $p>2$.
\end{proof}

\begin{lemma}\label{le:thick-ind-res}
  For $X\in \bfD^\fin(A)$ one has \[\thick(X)=\thick(\iota_*\iota^*(X)).\]
\end{lemma}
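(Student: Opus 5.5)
The plan is to identify $\iota_*\iota^*(X)$ with the tensor product $X\otimes_A B$, viewed as a dg $A$-module by restriction. Since $B$ is the Koszul complex on $z_1,\ldots,z_r$, it is a bounded complex of free $A$-modules. So $-\lotimes_A B = -\otimes_A B$, and $\iota_*\iota^*(X)\cong X\otimes_A B$ in $\bfD(A)$. We then prove the two inclusions separately.

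For $\thick(\iota_*\iota^*X)\subseteq\thick(X)$, filter $B$ by its exterior degree in the $y_i$. As a complex of $A$-modules, $B$ is an iterated extension (iterated mapping cone) of the shifted free modules $\Si^{j}A^{\binom rj}$, $0\le j\le r$. Concretely, $B=A\otimes_A \bigotimes_i (A\xrightarrow{z_i}A)$. Tensoring with $X$ exhibits $X\otimes_A B$ as a finite iterated extension of shifts of $X$. Equivalently, $X\otimes_AB=\kos X{\mathbf z}$ is a Koszul object of $z_1,\ldots,z_r$ acting on $X$ through $A$. Hence it lies in $\thick(X)$. Note that $A$ is commutative, so the $z_i$ act centrally on $\bfD(A)$ and Koszul objects make sense.

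For the reverse inclusion $X\in\thick(X\otimes_A B)$, use that each $z_i$ is nilpotent: $z_i^p=0$. Each step of the Koszul construction is one factor. For an object $Y$ and $z=z_i$, the triangle $Y\xrightarrow{z}Y\to\kos Yz\to$ gives the following. By the octahedral axiom (as in the proof of Lemma~\ref{le:union}), $\kos Y{z^n}$ is an extension of $\kos Yz$ and $\kos Y{z^{n-1}}$, so $\kos Y{z^p}\in\thick(\kos Yz)$. But $z^p=0$ on $Y$, so $\kos Y{z^p}\cong Y\oplus\Si Y$. Hence $Y\in\thick(\kos Yz)$. Apply this iteratively with $Y=X, \kos X{z_1},\ldots$. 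This gives
\[X\in\thick(\kos X{z_1})\subseteq\cdots\subseteq\thick(\kos X{z_1,\ldots,z_r})=\thick(X\otimes_AB).\]
Here each step uses that $\thick$ of a Koszul object of $Y$ is contained in $\thick$ of a further Koszul object. This follows because $\kos{Y}{z_1}$ lies in $\thick(\kos{(\kos Y{z_1})}{z_2})$ by the same nilpotence argument, applied to $Y'=\kos Y{z_1}$.

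The main obstacle is the identification $\iota_*\iota^*X\cong\kos X{\mathbf z}$ in $\bfD(A)$, including the restriction to $\bfD^\fin$. One has to check that $\iota_*$ is just restriction of scalars, so that $\RHom_B(B,-)$ is the forgetful functor, and that the iterated cone description of $B$ is compatible with tensoring. This is routine because $B$ is $A$-free and bounded. The nilpotence/octahedral argument is standard.
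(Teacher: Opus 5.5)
Your proof is correct and rests on the same idea as the paper's: $\iota_*\iota^*(X)\cong X\otimes_A B$ is the Koszul object of $X$ on the generators $z_1,\ldots,z_r$ of the nilpotent maximal ideal $\fz$, and Koszul objects on nilpotent elements generate. You differ in how the argument is organised and how the key step is justified.

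The paper first reduces to the single equality $\thick(A)=\thick(B)$ and identifies $B=\kos A{\fz}$. It obtains that equality from Proposition~\ref{pr:koszul} applied to $\cat T=\thick(A)$, since $V(\fz)=\Spec A$ forces $\cat T_{V(\fz)}=\cat T$. The general case then follows by applying the exact functor $X\lotimes_A-$. The appeal to Proposition~\ref{pr:koszul} goes back to Lemma~\ref{le:Koszul-r}, i.e.\ to central localisation and cohomological functors.

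You instead work with $X$ directly and replace that machinery by the octahedral argument $\kos Y{z^p}\cong Y\oplus\Si Y\in\thick(\kos Yz)$. This is self-contained and applies verbatim to any nilpotent central element acting on any object. The paper's version is shorter given what is already set up, and its intermediate statement $\thick(A)=\thick(B)$ is exactly what is reused, after tensoring, in Lemma~\ref{le:loc-ind-res}.
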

\begin{proof}
  It suffices to show that $\thick(A)=\thick(B)$, because then one can
  apply $X\lotimes_A-$. We use the fact that $\bfD^\fin(A)$ is an
  $A$-linear triangulated category.  For the maximal ideal
  $\fz=(z_1,\ldots,z_r)$ of $A$ we may identify $B=\kos{A}{\fz}$,
 and the equality $\thick(A)=\thick(\kos{A}{\fz})$ follows from
  Proposition~\ref{pr:koszul}.
\end{proof}

The comultiplication of the group algebra $kE$ yields a monoidal
structure on the category of $kE$-modules, which is given by the
tensor product over $k$ and therefore exact. Thus $\bfD^\fin(A)$ is a
tensor triangulated category with unit $k$ and a canonical action of
$R=\Ext_{A}^*(k,k)$. Note that $\bfD^\fin(A)=\thick(k)$ since $A$ is
local ring.

Also, $\bfD^\fin(B)$ is a tensor triangulated category with unit $k$
and a canonical action of $S=\Ext_{B}^*(k,k)$, since there is a pair
of triangle equivalences
\[\bfD^\fin(S)\iso\bfD^\fin(\Lambda)\iso \bfD^\fin(B)\] where the first
is given by Theorem~\ref{th:BGG-dga} and the second is induced by the
quasi-isomorphism $\Lambda\to B$; see Lemma~\ref{le:ext-alg-qis}.

\begin{theorem}\label{th:hopkins-elem-ab}
  Let $A$ be the group algebra of an elementary abelian $p$-group over
  a field $k$ of characteristic $p$.  For the derived category
  $\bfD^\fin(A)$ with the canonical action of $R=\Ext_{A}^*(k,k)$,
  cohomological support induces a lattice isomorphism
  \begin{equation*}
    \Thick(\bfD^\fin(A))\longiso\{\text{Thomason subsets of }\Spec
    R\}\,.
\end{equation*}  
\end{theorem}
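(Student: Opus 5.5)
We verify the three conditions of Proposition~\ref{pr:hopkins} for $\bfD^\fin(A)$ with the $R$-action, by transporting everything to $\bfD^\fin(B)$. There, Corollary~\ref{co:BGG-dga} together with the equivalence $\bfD^\fin(\Lambda)\iso\bfD^\fin(B)$ of Lemma~\ref{le:ext-alg-qis} already gives the classification via cohomological support for the $S$-action. So we may use (1)--(3) of Proposition~\ref{pr:hopkins} for $(\bfD^\fin(B),S)$. The comparison runs through the adjoint pair $(\iota^*,\iota_*)$ and the ring map $S\to R$ of Lemma~\ref{le:ext-alg-homeom}. That map induces a homeomorphism $\Spec R\iso\Spec S$, so Thomason subsets of both spectra correspond, and it suffices to work with $S$-supports.

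Step one: the $S$-action on $\bfD^\fin(A)$ obtained by restricting the $R$-action along $S\to R$ has the same support theory as the $R$-action, transported along the homeomorphism. Indeed $R$ is finite over the image of $S$: the odd generators are nilpotent for $p>2$, and $\eta_i$ is a square root of $x_i$ for $p=2$. Hence $\End^*(X)_\fp$ vanishes for the $S$-prime $\fp$ exactly when it vanishes for the unique $R$-prime over it. Moreover $R$ is noetherian, so $\supp_R$ is finite and condition~(2) holds.

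Step two: compare $\supp_S$ on $\bfD^\fin(A)$ with $\supp_S$ on $\bfD^\fin(B)$ via $\iota_*$. The functor $\iota_*$ (restriction along $\iota$) is compatible with the actions: $S=\Ext_B^*(k,k)$ acts on $\iota_*Y$ through $\iota_k\colon S\to R$. One way to see this is that both actions are given by tensoring with the unit $k$, and $\iota_*$ is monoidal in a suitable sense. I expect this to be the main obstacle, namely making precise that $\iota_*$ is $S$-linear with respect to the canonical actions. The cleanest approach is to note that $\End^*$ of any object is a module over $\Ext^*(k,k)$ via $-\otimes_k X$, and that $\iota_*$ commutes with $-\otimes_k$. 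Granting this, $\supp_S(\iota_*Y)\subseteq\supp_S(Y)$. For the reverse inclusion, use $\thick(\iota_*\iota^*X)=\thick(X)$ from Lemma~\ref{le:thick-ind-res}, together with the fact that $\supp$ depends only on the thick closure, to get $\supp_S(X)=\supp_S(\iota_*\iota^*X)$. Since $\iota_*\iota^*X$ is built from $\iota^*X$ and its Koszul-type twists, we then have $\supp_S(X)=\supp_S(\iota^*X)$, where the right-hand side is computed in $\bfD^\fin(B)$.

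Step three: deduce conditions (1) and (3). For (1), $\supp_S(k)$ in $\bfD^\fin(B)$ is all of $\Spec S$, since $\End^*(k)=S$; hence $\supp_R(k)=\Spec R$. For (3), suppose $\supp_R(X)\subseteq\supp_R(Y)$. Then $\supp_S(\iota^*X)\subseteq\supp_S(\iota^*Y)$, so $\iota^*X\in\thick(\iota^*Y)$ by (3) in $\bfD^\fin(B)$. Applying the exact functor $\iota_*$ gives $\iota_*\iota^*X\in\thick(\iota_*\iota^*Y)$. Lemma~\ref{le:thick-ind-res} then yields $X\in\thick(Y)$. Proposition~\ref{pr:hopkins} now gives the lattice isomorphism.
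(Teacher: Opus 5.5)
Verifying conditions (1)--(3) of Proposition~\ref{pr:hopkins} object by object is a reasonable alternative to the paper's square of lattice maps. Step one is fine, and so are conditions (1) and (2); for (1) you do not even need $B$, since $R=\End^*_{\bfD(A)}(k)$ acts faithfully on $k$. Step two carries all the weight, however, and it has two gaps.

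First, $S$-linearity of $\iota_*$ does not follow from ``$\iota_*$ commutes with $-\otimes_k-$''. The canonical $R$-action on $\bfD^\fin(A)$ is defined via the group comultiplication $\Delta(z_i)=z_i\otimes 1+z_i\otimes z_i+1\otimes z_i$. The Hopf structure on $B$ compatible with $\iota$ has the $y_i$ primitive, hence also the $z_i=d(y_i)$, so it induces $\Delta(z_i)=z_i\otimes 1+1\otimes z_i$ on $A$. Restriction along $\iota$ is monoidal for this second tensor product, not for the one defining the canonical action. That the two resulting actions of $S$ on $\bfD^\fin(A)$ agree is a genuine theorem of Carlson--Iyengar. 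It is exactly the non-formal input the paper uses in Remark~\ref{re:iota-koszul}, and your argument needs it too; you cannot simply grant it.

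Second, even granting that, linearity of $\iota_*$ only gives $\supp_S(\iota_*W)\subseteq\supp_S(W)$. With Lemma~\ref{le:thick-ind-res} this yields $\supp_S(X)=\supp_S(\iota_*\iota^*X)\subseteq\supp_S(\iota^*X)$. Step three uses this inclusion for $Y$, but for $X$ it needs the reverse one, $\supp_S(\iota^*X)\subseteq\supp_S(X)$. The remark about Koszul-type twists does not supply it. Indeed $\iota_*\iota^*X\cong\kos X{\fz}$ is a Koszul object over $A$, which only recovers $\thick(X)=\thick(\iota_*\iota^*X)$ and says nothing about supports computed in $\bfD^\fin(B)$. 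You would need $S$-linearity of $\iota^*$ as well.

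You can avoid comparing supports of arbitrary objects altogether, which is essentially what the paper does. Since $\bfD^\fin(A)=\thick(k)$, Proposition~\ref{pr:koszul} and Lemma~\ref{le:koszul-supp} give $\tau_R(V(\fa))=\thick(\kos k\fa)$ and $\supp_R(\kos k\fa)=V(\fa)$. For any $Y$, the classification over $B$ gives $\thick(\iota^*Y)=\thick(\kos k\fb)$ with $V(\fb)=\supp_S(\iota^*Y)$. Hence $\thick(Y)=\thick(\iota_*(\kos k\fb))=\thick(\kos k{\iota_k(\fb)})=\tau_R(\supp_R(Y))$, and Lemma~\ref{le:adjoint-supp} turns this into condition (3). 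The paper packages the same facts as a commutative square, with $\tau_R$ injective by Corollary~\ref{co:hopkins} and $\Thick(\iota_*)$ surjective by Lemma~\ref{le:thick-ind-res}. In either form the only compatibility needed is $\iota_*(\kos k\fa)\cong\kos k{\iota_k(\fa)}$ for Koszul objects on the unit.
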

\begin{proof}
The adjoint pairs $(\supp,\tau)$  given by the actions of $R$ and $S$ yield the following diagram.
\begin{equation*}
  \begin{tikzcd}
    \Thick(\bfD^\fin(B)) \ar[rr,
    yshift=2.5,"\supp_S"]\ar[d,swap,"\Thick(\iota_*)"] &&\ar[ll,
    yshift=-2.5,"\tau_S"]
    \{\text{Thomason subsets of }\Spec S\}\ar[d,"(\Spec \iota_k)^{-1}"]\\
    \Thick(\bfD^\fin(A)) \ar[rr, yshift=2.5,"\supp_R"] && \ar[ll,
    yshift=-2.5,"\tau_R"]\{\text{Thomason subsets of }\Spec R\}
\end{tikzcd}
\end{equation*}
We wish to show that the map $\tau_R$ is a lattice isomorphism, and
then $\supp_R$ is its inverse.

First observe that the maps of the top row are isomorphisms by
Corollary~\ref{co:BGG-dga}. Also, the vertical map on the right is an
isomorphism by Lemma~\ref{le:ext-alg-homeom}. The map $\tau_R$ is
injective by Corollary~\ref{co:hopkins}. On the other hand, the map
$\Thick(\iota_*)$ is surjective, since
\[\Thick(\iota_*)\circ \Thick(\iota^*)=\id\] by
Lemma~\ref{le:thick-ind-res}.  It remains to show that the diagram
commutes, using the maps from right to left, because that would imply
that all maps are bijections and even lattice isomorphisms. Pick an ideal $\fa$ of
$S$. Then
\[\tau_S(V(\fa))=\bfD^\fin(B))_{V(\fa)}=\thick(\kos{k}{\fa})\] 
by Proposition~\ref{pr:koszul},
and \[(\Spec \iota_k)^{-1}(V(\fa))=V(\iota_k(\fa)).\] We have
$\iota_*(\kos{k}{\fa})=\kos{k}{\iota_k(\fa)}$; see
Remark~\ref{re:iota-koszul} below. Thus
\[\Thick(\iota_*)\tau_S(V(\fa))=\thick(\kos{k}{\iota_k(\fa}))=\tau_R(\Spec \iota_k)^{-1}(V(\fa)).\]
This yields the commutativity of the diagram since all maps in the diagram preserve
arbitrary joins.
\end{proof}

\begin{remark}\label{re:iota-koszul}
  The action of $R$ on $\bfD^\fin(A)$ depends on its monoidal
  structure, and therefore on the comultiplication of $A$, which is
  induced by the group structure of $E$ and given by
\[\Delta(z_i)=z_i\otimes 1 +z_i\otimes z_i + 1\otimes z_i.\]
  There is another
  comultiplication on $A$ that is compatible via
  $\iota\colon A\to B$ with a comultiplication on $B$; it
  corresponds to the multiplication of $S$ and is given by
\[\Delta(z_i)=z_i\otimes 1 + 1\otimes z_i.\] This yields a second
  action of $R$ on $\bfD^\fin(A)$; so for each object
  $X\in \bfD^\fin(A)$ there is a pair of homomorphisms
  $R\to\End^*_{\bfD(A)}(X)$. It can be shown that both agree when
  restricted to $S$ via $\iota_k$; see
  \cite[Theorem~4.4]{Carlson/Iyengar:2017a}. For that reason the above
  equality $\iota_*(\kos{k}{\fa})=\kos{k}{\iota_k(\fa)}$ holds.
\end{remark}

\subsection*{Finite groups}

Let $k$ be a field of characteristic $p>0$ and $G$ a finite group. The
comultiplication of the group algebra $kG$ yields a monoidal structure
on the category of $kG$-modules, which is given by the tensor product
over $k$ and therefore exact. Thus $\bfD^\fin(kG)$ is a
tensor triangulated category with unit $k$ and a canonical action
of \[H^*(G,k)=\Ext_{kG}^*(k,k).\] We write
\[V_G\colonequals\Spec H^*(G,k)\]
and for $X\in\bfD^\fin(kG)$ we set
\[V_G(X)\colonequals\supp_{H^*(G,k)}(X)=V(\Ker\phi_X)\]
where $\phi_X\colon H^*(G,k)\to\Ext^*_{kG}(X,X)$ is given by $\alpha\mapsto\alpha\otimes X$.
Note that $H^*(G,k)$ is a noetherian ring, by a theorem of Evens, Venkov, and
Golod. In particular, the cohomological support $\supp_{H^*(G,k)}$ is finite.

In this section we establish the classification of thick tensor ideals
of  $\bfD^\fin(kG)$. The proof reduces this to the corresponding
classification for elementary abelian $p$-groups. This reduction uses
some results from modular representation theory. 

Let $H\le G$ be a subgroup and consider the induced homomorphism
$kH\to kG$. This yields a bimodule structure $_{kH}(kG)_{kG}$
which gives rise to \emph{restriction} and \emph{induction}. For a
$kG$-module $X$ and a $kH$-module $Y$ we set
\[X\da_H\colonequals\Hom_{kG}(kG,X) \qquad\text{and}\qquad
  Y\ua^G\colonequals Y\otimes_{kH}kG,\]
where the actions of $kH$ and $kG$ are given by the above bimodule
structure of $kG$.
The following natural
isomorphism is known as \emph{Frobenius reciprocity} (or
\emph{projection formula}) and follows from the associativity of the
tensor product:
\[(X\da_H\otimes_k Y)\ua^G\cong X\otimes_k Y\ua^G.\]
When specialising $Y=k$ we get
\[X\da_H\ua^G\cong  X\otimes_k k\da_H\ua^G.\]
Restriction and
induction yield exact functors which extend to complexes of modules
and their derived categories. Note that the induction functor is faithful.

Restriction provides a homomorphism
\[\res_{G,H}\colon H^*(G,k)\lto H^*(H,k)\]
which induces a map
\[\res_{G,H}^*\colon V_H\lto V_G.\]

The next result is known as \emph{subgroup theorem} and is based on
Quillen's stratification of group cohomology.
 
\begin{theorem}\label{th:subgroup-res-thm}
  For  $X\in\bfD^\fin(kG)$ we have
  \[V_H(X\da_H)=(\res^*_{G,H})^{-1}V_G(X).\]
\end{theorem}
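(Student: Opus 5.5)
The inclusion $V_H(X\da_H)\subseteq(\res^*_{G,H})^{-1}V_G(X)$ is formal, so I would prove it first. Restriction $-\da_H\colon\bfD^\fin(kG)\to\bfD^\fin(kH)$ is a tensor functor, and it is linear with respect to $\res_{G,H}\colon H^*(G,k)\to H^*(H,k)$. This gives a commutative square
\[
\begin{tikzcd}
H^*(G,k)\ar[r,"\res_{G,H}"]\ar[d,swap,"\phi_X"] & H^*(H,k)\ar[d,"\phi_{X\da_H}"]\\
\Ext^*_{kG}(X,X)\ar[r] & \Ext^*_{kH}(X\da_H,X\da_H).
\end{tikzcd}
\]
Hence $\Ker\phi_{X\da_H}\supseteq\res_{G,H}(\Ker\phi_X)H^*(H,k)$. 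Taking zero sets, $V_H(X\da_H)=V(\Ker\phi_{X\da_H})$ is contained in the preimage of $V(\Ker\phi_X)=V_G(X)$.

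For the reverse inclusion I would first reduce to Koszul objects. Since $H^*(G,k)$ is noetherian, $V_G(X)=V(\fa)$ for an ideal $\fa$, and every closed subset has this form. Assume there is $\fq\in V_H$ with $\fp\colonequals\res^*_{G,H}(\fq)\in V_G(X)$ but $\fq\notin V_H(X\da_H)$. Choose homogeneous generators $\mathbf r$ of $\fp$ and set $Y\colonequals\kos X{\mathbf r}$. By Lemma~\ref{le:koszul-supp}, $V_G(Y)=V_G(X)\cap V(\fp)$. Restriction commutes with Koszul objects, because it is exact and $H^*(G,k)$-linear via $\res$, so $Y\da_H=\kos{X\da_H}{\res(\mathbf r)}$. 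Again by Lemma~\ref{le:koszul-supp}, $V_H(Y\da_H)=V_H(X\da_H)\cap V(\res(\fp))$. This reduces the theorem to a statement about a single prime: if $\fp\in V_G(X)$, then some prime of $V_H$ lying over $\fp$ lies in $V_H(X\da_H)$, and in fact every such prime does.

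The main obstacle is this statement, and here I would use Quillen's stratification. For a support variety, Quillen says $V_G$ is the union of the images $\res^*_{G,E}(V_E)$ over elementary abelian $p$-subgroups $E\le G$, and the analogous description holds for $V_G(X)$: $\fp\in V_G(X)$ if and only if $\fp=\res^*_{G,E}(\fq')$ for some elementary abelian $E$ and some $\fq'\in V_E(X\da_E)$. Projectivity detection of Chouinard type is the ingredient that makes this work for $X$. For the elementary abelian case I would use Theorem~\ref{th:hopkins-elem-ab} and the rank-variety or BGG description, where restriction to subgroups is computed explicitly through the maps $S\to R$ of Lemma~\ref{le:ext-alg-homeom}.

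Given this, I would argue as follows. Take $\fq\in V_H$ over $\fp$. By Quillen applied to $H$, $\fq$ comes from an elementary abelian subgroup $E\le H$, say $\fq=\res^*_{H,E}(\fq_E)$. Then $\fp=\res^*_{G,E}(\fq_E)$ with $E\le G$. It remains to show $\fq_E\in V_E(X\da_E)$; this uses the $G$-conjugation invariance in Quillen's theorem (primes over $\fp$ from different elementary abelian subgroups differ by conjugation, and $X\da$ is conjugation-stable) together with the subgroup theorem for elementary abelian groups, where it can be checked directly. The formal inclusion for $E\le H$ then gives $\fq\in V_H(X\da_H)$.

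I expect the conjugation and inseparability step, namely that all primes lying over $\fp$ behave the same with respect to $X$, to be the hardest part. I would first try to cite Quillen's theorem in the refined form for modules given by Avrunin–Scott.
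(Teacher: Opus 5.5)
Your strategy is sound, and it is essentially the classical Avrunin--Scott argument. The paper does not reproduce this argument; it simply cites Corollary~9.3.3 of Evens. The closest argument the paper actually carries out is the proof of Theorem~\ref{th:subgroup-elem-abelian}, which uses exactly your three inputs:
detection on elementary abelian subgroups (Theorem~\ref{th:filtration-finite-group}, applied to $\Gamma_\fp(Y)\neq 0$);
uniqueness of origin pairs up to $G$-conjugacy (Theorem~\ref{th:origin});
and the elementary abelian case, Theorem~\ref{th:subgroup}. That theorem covers compact objects, because for them the $\Gamma_\fp$-support equals the cohomological support by Proposition~\ref{pr:supp-square}.
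So your route would let one replace the external citation by tools proved in the paper, at the price of passing through $\bfK(\Inj kG)$ even for compact objects. The citation keeps the treatment of $\bfD^\fin(kG)$ independent of the big category; there the elementary abelian case is handled classically via rank varieties.

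Three points need repair.

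First, the claim that primes over $\fp$ coming from different elementary abelian subgroups are $G$-conjugate is false. If $E_0<E$ and $\fq_E=\res^*_{E,E_0}(\fq_0)$, then $(E_0,\fq_0)$ and $(E,\fq_E)$ both lie over $\fp$ but have different orders. Theorem~\ref{th:origin} only concerns pairs in which $\fp$ originates. So you must replace $(E,\fq_E)$ by such a pair $(E_0,\fq_0)$ with $E_0\le E$ and $\res^*_{E,E_0}(\fq_0)=\fq_E$, as in the proof of Theorem~\ref{th:subgroup-elem-abelian}. Likewise, replace the pair $(E_1,\fq_1)$ with $\fq_1\in V_{E_1}(X\da_{E_1})$, supplied by your description of $V_G(X)$, by an origin pair $(E_1',\fq_1')$ below it. Keeping $\fq_1'$ inside $V_{E_1'}(X\da_{E_1'})$ is precisely where the non-formal half of the elementary abelian case is needed. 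Conjugacy then gives $\fq_0\in V_{E_0}(X\da_{E_0})$, and the formal inclusion for $E_0\le H$ gives $\fq\in V_H(X\da_H)$.

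Second, Lemma~\ref{le:ext-alg-homeom} compares $\Ext^*_B(k,k)$ with $\Ext^*_A(k,k)$ for a single group. It says nothing about restriction to subgroups, so use Theorem~\ref{th:subgroup} instead.

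Third, projectivity detection for finitely generated modules is not the form of detection that gives $V_G(X)=\bigcup_E\res^*_{G,E}V_E(X\da_E)$; you need nilpotence detection. Suppose $\res_{G,E}(\zeta)$ acts nilpotently on $X\da_E$ for every $E$. By Frobenius reciprocity (natural in the first variable), $\zeta$ then acts nilpotently on each $X\da_{E_i}\ua^G$. The objects on which $\zeta$ is nilpotent form a thick subcategory, so $\zeta$ is nilpotent on $X$ by Theorem~\ref{th:filtration-elem-abelian}. Apply this to every homogeneous $\zeta$ lying in all primes of the closed set $\bigcup_E\res^*_{G,E}V_E(X\da_E)$. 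This shows that $V_G(X)$ is contained in that set, which is the description you need. Your preliminary Koszul reduction is then unnecessary.
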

\begin{proof}
See  for example Corollary~9.3.3 in \cite{Evens:1991a}.
\end{proof}

The following result is due to Carlson and based on a theorem of Serre
on group cohomology involving the Bockstein map \cite{Serre:1965a}.

\begin{proposition}\label{pr:filtration-elem-abelian}
  There exists a $kG$-module $V$ and a filtration
\[0=V_0\subseteq V_1\subseteq\cdots \subseteq V_t=k\oplus V\]
where for every $i = 1,\ldots,t$ there is an elementary abelian
$p$-subgroup $E_i\le G$ and
a finitely generated $kE_i$-module $W_i$ such that $V_i/V_{i-1}\cong (W_i)\ua^G$.
\end{proposition}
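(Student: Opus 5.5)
The plan is to reduce to $p$-groups and then argue by induction on the order of the group, with Serre's theorem on products of Bocksteins as the essential input. Throughout, induction $(-)\ua^G$ is exact and transitive, and Frobenius reciprocity $(X\da_H\otimes_k Y)\ua^G\cong X\otimes_k Y\ua^G$ holds. So if a $kH$-module has a filtration whose subquotients are induced from elementary abelian subgroups of $H$, then inducing it, or tensoring it with a $kG$-module restricted to $H$ and then inducing, gives a filtration of the same type over $G$. Call a module \emph{good} if it admits such a filtration; the statement asks that $k\oplus V$ be good for some $V$.

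\emph{Step 1: reduction to a Sylow subgroup.} Let $P\le G$ be a Sylow $p$-subgroup. Since $[G:P]$ is invertible in $k$, the trivial module $k$ is a direct summand of $k\da_P\ua^G$, say $k\da_P\ua^G\cong k\oplus V'$. Suppose $k\oplus V_P$ is good over $P$. Then inducing the filtration shows that $k\da_P\ua^G\oplus V_P\ua^G\cong k\oplus V'\oplus V_P\ua^G$ is good over $G$, so we may take $V=V'\oplus V_P\ua^G$. Hence I may assume $G$ is a $p$-group.

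\emph{Step 2: induction for $p$-groups.} If $G$ is elementary abelian, take $V=0$, $t=1$ and $W_1=k$. Otherwise, Serre's theorem gives nonzero homomorphisms $x_1,\dots,x_n\colon G\to\bbZ/p$ whose Bocksteins satisfy $\zeta_1\cdots\zeta_n=0$ in $H^*(G,k)$, where $\zeta_i=\beta(x_i)\in H^2(G,k)$. Let $H_i=\Ker x_i$, a maximal subgroup. Write $L_{\zeta}$ for the kernel of a cocycle $\Omega^2k\to k$ representing $\zeta$, so that $L_\zeta$ is a Koszul object of $\zeta$ on $k$ up to shift, as in the definition of Koszul objects. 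I will use the following:
\begin{enumerate}
\item For $\zeta=\beta(x)$ with $H=\Ker x$, there is Carlson's four-term exact sequence $0\to k\to k\da_H\ua^G\to k\da_H\ua^G\to k\to 0$, and its class is $\beta(x)$. Consequently $L_\zeta$ is, up to projective summands, an extension built from $\Omega$-shifts of $k\da_H\ua^G$. Since $\Omega^m(k_H)\ua^G\cong \Omega^m(k_H\ua^G)$ up to projectives, and projectives are induced from the trivial subgroup, $L_\zeta$ is good relative to proper subgroups.
\item Since $\zeta_1\cdots\zeta_n=0$, the octahedral argument used in the proof of Lemma~\ref{le:union} shows that $\Omega^{m}k$ is a direct summand of $L_{\zeta_1}\otimes\cdots\otimes L_{\zeta_n}$ for a suitable $m$. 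Indeed, a Koszul object on an element acting as zero splits. Applying $\Omega^{-m}$, and replacing the tensor factors by their shifts, gives a module $k\oplus V''$ that has a filtration with subquotients of the form $\bigotimes_i\Omega^{m_i}(k_{H_i}\ua^G)$.
\end{enumerate}

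\emph{Step 3: refining the filtration.} By Frobenius reciprocity and the Mackey formula, each such tensor product is a direct sum of modules induced from subgroups contained in some $H_i$, hence from proper subgroups $K<G$. Each of these is of the form $(\Omega^{m}k_K)\ua^G$ up to projectives. By the induction hypothesis, $k_K\oplus V_K$ is good over $K$. Tensoring with $\Omega^m k_K$ and inducing shows that $(\Omega^m k_K)\ua^G\oplus(\text{extra})$ is good over $G$. I then add all these extra summands to $V$. The filtrations can be spliced together, because adding a direct summand $Y$ to a filtered module in which $Y$ occurs as a subquotient only requires adjoining a further good filtration of $Y$'s complement; I will do this with direct sums of filtrations throughout. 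One point needs care here. The shifts $\Omega^{m}$ must be handled so that the subquotients are honest modules $W\ua^G$, with $W$ a finitely generated $kE$-module. This works because $\Omega$ commutes with induction up to projectives, and $kG$ is itself $k\ua^G$ from the trivial group.

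\emph{Expected obstacle.} The main obstacle is item (1): identifying $L_{\beta(x)}$ with a module built from $k_H\ua^G$. This rests on the explicit four-term sequence and on checking that its Yoneda class is exactly the Bockstein. The rest is bookkeeping with direct summands, and Serre's theorem itself I take from \cite{Serre:1965a}.
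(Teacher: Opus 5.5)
Your overall strategy (reduce to a Sylow subgroup, apply Serre's theorem on products of Bocksteins, use Carlson's four-term sequence for $L_{\beta(x)}$, induct on $|G|$) is the right kind of argument. The paper itself only cites Carlson's Theorem~2.1, which rests on Serre's theorem, so there is no internal proof to compare against. However, item (2) of Step~2 is false as stated, and it is the step that is supposed to produce $k$.

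Up to shift and projective summands, $L_{\zeta_1}\otimes\cdots\otimes L_{\zeta_n}$ is the Koszul object $\kos{k}{(\zeta_1,\ldots,\zeta_n)}$ on the \emph{sequence}, because $\kos{x}{r}=\kos{\one}{r}\otimes x$. By Lemma~\ref{le:koszul-supp} its support is $V(\zeta_1)\cap\cdots\cap V(\zeta_n)$. A summand $\Omega^m k$ would therefore force every $\zeta_i$ to be nilpotent. Here is a concrete counterexample. For $G=D_8$ and $p=2$ one has $H^*(G,\mathbb F_2)=\mathbb F_2[x,y,w]/(xy)$, and Serre's relation is $\beta(x)\beta(y)=x^2y^2=0$. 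Yet $L_{x^2}\otimes L_{y^2}$ has variety $V(x,y)$, which is a line inside the two-dimensional $V_G$. Your justification, that a Koszul object on an element acting as zero splits, applies to the Koszul object on the \emph{single} element $\zeta_1\cdots\zeta_n$. It does not apply to the iterated Koszul object: $\zeta_2$ need not act as zero on $\kos{k}{\zeta_1}$, and in this example it does not.

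What you need is the \emph{join} statement from the proof of Lemma~\ref{le:union}, not a meet. By the octahedral axiom, $\kos{k}{(\zeta_1\cdots\zeta_n)}$ is an iterated extension of shifts of the objects $\kos{k}{\zeta_i}$. Since $\zeta_1\cdots\zeta_n=0$, it splits as $\Si k\oplus\Si^{2n}k$. In module language, $L_{\zeta_1\cdots\zeta_n}\cong\Omega k\oplus\Omega^{2n}k$ modulo projectives. After adding projectives, it has a filtration whose subquotients are of the form $\Omega^{m_i}L_{\zeta_i}$.

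Combined with your item (1), which is correct, each subquotient is then, up to projectives, a single module $\Omega^{m}(k\da_{H_i}\ua^G)\cong(\Omega^m k_{H_i})\ua^G$. This is already induced from the proper subgroup $H_i$, so the Mackey-formula detour in Step~3 becomes unnecessary. The rest of your bookkeeping then goes through:
\begin{itemize}
\item apply the induction hypothesis for $H_i$ and tensor with $\Omega^m k_{H_i}$;
\item use Frobenius reciprocity;
\item absorb projectives as modules induced from the trivial subgroup;
\item splice the filtrations of direct sums.
\end{itemize}
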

\begin{proof}
  See Theorem~2.1 in \cite{Carlson:2000c}.
\end{proof}

\begin{theorem}\label{th:filtration-elem-abelian}
  There are elementary abelian subgroups $E_1,\ldots,E_t$ such that
  for each $X\in \bfD^\fin(kG)$
  \[\thick_\otimes(X)=\thick\left(\bigoplus_{i=1}^t X\da_{E_i}\ua^G\right).\]
\end{theorem}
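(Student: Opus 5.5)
We take the elementary abelian subgroups $E_1,\ldots,E_t$ from Proposition~\ref{pr:filtration-elem-abelian}. The aim is to show two inclusions between $\thick_\otimes(X)$ and $\thick(Y)$, where $Y\colonequals\bigoplus_{i}X\da_{E_i}\ua^G$.

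For $\supseteq$, we use Frobenius reciprocity in the form $X\da_{E_i}\ua^G\cong X\otimes_k k\da_{E_i}\ua^G$. Each summand of $Y$ is therefore a tensor product of $X$ with an object of $\bfD^\fin(kG)$, and so lies in $\thick_\otimes(X)$. Hence $\thick(Y)\subseteq\thick_\otimes(X)$.

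For $\subseteq$, two things must be shown: $X\in\thick(Y)$, and $\thick(Y)$ is a tensor ideal. Tensoring the filtration of Proposition~\ref{pr:filtration-elem-abelian} with $X$ gives a filtration of $X\oplus (X\otimes_k V)$. Its subquotients are
\[X\otimes_k (W_i)\ua^G\cong (X\da_{E_i}\otimes_k W_i)\ua^G,\]
again by Frobenius reciprocity. Tensoring over $k$ is exact, so the short exact sequences of the filtration become exact triangles in $\bfD^\fin(kG)$. It therefore suffices to show that each $(X\da_{E_i}\otimes_k W_i)\ua^G$ lies in $\thick(X\da_{E_i}\ua^G)$. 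Then $X$, being a direct summand of $X\oplus(X\otimes_k V)$, lies in $\thick(Y)$.

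Since induction is exact, it is enough to prove
\[X\da_{E_i}\otimes_k W_i\in\thick(X\da_{E_i})\quad\text{in }\bfD^\fin(kE_i).\]
Here we use that $kE_i$ is a local ring, so $\bfD^\fin(kE_i)=\thick(k)$ and every finitely generated module $W_i$ has a finite composition series with trivial factors $k$. Applying the exact functor $X\da_{E_i}\otimes_k-$ to this series shows $X\da_{E_i}\otimes_k W_i\in\thick(X\da_{E_i})$.

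The same argument, applied to an arbitrary $Z\in\bfD^\fin(kG)$ in place of $W_i$, gives $Z\otimes_k X\da_{E_i}\ua^G\cong (Z\da_{E_i}\otimes_k X\da_{E_i})\ua^G\in\thick(X\da_{E_i}\ua^G)$. Hence $\thick(Y)$ is closed under tensoring with objects of $\bfD^\fin(kG)$, since the class of $Z'$ with $Z\otimes Z'\in\thick(Y)$ is thick and contains $Y$. Combining this with $X\in\thick(Y)$ gives $\thick_\otimes(X)\subseteq\thick(Y)$.

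The main obstacle is the step showing that $\thick(Y)$ is a tensor ideal, i.e.\ the interaction of Frobenius reciprocity with unipotence of $kE_i$. It needs care about passing from module filtrations to triangles in the derived category. It also needs the observation that $E_1,\ldots,E_t$ do not depend on $X$, which holds because they come from the universal filtration of $k\oplus V$.
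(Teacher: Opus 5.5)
Your proposal is correct and follows essentially the same route as the paper. You take the subgroups from Carlson's filtration of $k\oplus V$; Frobenius reciprocity gives the inclusion $\supseteq$, and together with $\bfD^\fin(kE_i)=\thick(k)$ it shows that $\thick\bigl(\bigoplus_i X\da_{E_i}\ua^G\bigr)$ is a tensor ideal; tensoring the filtration with $X$ then places $X$ in it. The only cosmetic difference is in the tensor-ideal step: you write it as $Z\otimes_k X\da_{E_i}\ua^G\cong(Z\da_{E_i}\otimes_k X\da_{E_i})\ua^G$, whereas the paper writes $X\otimes_k Z\da_{E_i}\ua^G$ and uses $Z\da_{E_i}\ua^G\in\thick(k\da_{E_i}\ua^G)$.
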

\begin{proof}
  Given $Y\in \bfD^\fin(kG)$, we have
  \[X\da_{E_i}\ua^G\otimes_k Y\cong X\otimes_k
    (k\da_{E_i}\ua^G)\otimes_k Y\cong X\otimes_k (Y\da_{E_i}\ua^G)\]
  by Frobenius reciprocity. This yields one inclusion when we
  specialise $Y=k$, and it shows that
  \[\thick\left(\bigoplus_{i=1}^t X\da_{E_i}\ua^G\right)\] is a tensor
  ideal, since   \[Y\da_{E_i}\ua^G\in\thick(k\da_{E_i}\ua^G).\]

  On the other
  hand, Proposition~\ref{pr:filtration-elem-abelian} yields a
  filtration
\[0=X\otimes_k V_0\subseteq X\otimes_k V_1\subseteq\cdots \subseteq
  X\otimes_k V_t=X\otimes_k (k\oplus V)=X\oplus (X\otimes_k V)\] with quotients
\[(X\otimes_k V_i)/(X\otimes_k V_{i-1}) \cong X\otimes_k (V_i/V_{i-1})\cong
  X \otimes_k(W_i)\ua^G\cong (X\da_{E_i}\otimes_k W_i)\ua^G.
\]
We have \[X\da_{E_i}\otimes_k W_i\in\thick(X\da_{E_i})\]
and therefore
\[(X\da_{E_i}\otimes_k W_i)\ua^G\in\thick(X\da_{E_i}\ua^G).\] Thus,
\[X\in \thick\left(\bigoplus_{i=1}^t (X\da_{E_i}\otimes_k W_i)\ua^G
  \right)\subseteq \thick\left(\bigoplus_{i=1}^t X\da_{E_i}\ua^G
  \right)\]
and this yields the other inclusion.
\end{proof}

The following result is due to Benson, Carlson, and Rickard
\cite{Benson/Carlson/Rickard:1997a}, and the proof given here follows
\cite{Carlson/Iyengar:2015a}.

\begin{theorem}\label{th:BCR}
  For objects $X,Y$ in $\bfD^\fin(kG)$ we have
  \[V_G(X)\subseteq V_G(Y)\qquad\implies\qquad
    \thick_\otimes(X)\subseteq\thick_\otimes(Y)\,.\]
Therefore cohomological support induces a lattice isomorphism
  \begin{equation*}
    \Thick_\otimes(\bfD^\fin(kG))\longiso\{\text{Thomason subsets of }\Spec
    H^*(G,k)\}\,.
  \end{equation*}
\end{theorem}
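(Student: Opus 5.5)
We reduce to elementary abelian subgroups via Theorem~\ref{th:filtration-elem-abelian}, treat those with Theorem~\ref{th:hopkins-elem-ab}, and get the lattice isomorphism from Corollary~\ref{co:HNT}.

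\textbf{Step 1: reduction.} Fix $X,Y\in\bfD^\fin(kG)$ with $V_G(X)\subseteq V_G(Y)$. Choose elementary abelian subgroups $E_1,\ldots,E_t$ as in Theorem~\ref{th:filtration-elem-abelian}. Then
\[\thick_\otimes(X)=\thick\Bigl(\bigoplus_i X\da_{E_i}\ua^G\Bigr)\qquad\text{and}\qquad \thick_\otimes(Y)=\thick\Bigl(\bigoplus_i Y\da_{E_i}\ua^G\Bigr).\]
Induction $\ua^G$ is an exact functor, so it maps $\thick(Y\da_{E_i})$ into $\thick(Y\da_{E_i}\ua^G)$. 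It therefore suffices to show, for each $i$,
\[X\da_{E_i}\in\thick(Y\da_{E_i})\quad\text{in } \bfD^\fin(kE_i).\]

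\textbf{Step 2: transfer the support inclusion to $E=E_i$.} The subgroup theorem (Theorem~\ref{th:subgroup-res-thm}) gives
\[V_E(X\da_E)=(\res^*_{G,E})^{-1}V_G(X)\subseteq(\res^*_{G,E})^{-1}V_G(Y)=V_E(Y\da_E).\]
This step is where Quillen stratification enters, and all of it is hidden inside the cited theorem.

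\textbf{Step 3: the elementary abelian case.} Theorem~\ref{th:hopkins-elem-ab} says cohomological support for $R=H^*(E,k)$ gives a lattice isomorphism $\Thick(\bfD^\fin(kE))\iso\{\text{Thomason subsets}\}$. Under this isomorphism $\thick(Z)$ corresponds to $V_E(Z)$. Hence the inclusion from Step~2 gives $\thick(X\da_E)\subseteq\thick(Y\da_E)$. Two points need checking here:
\begin{itemize}
\item[(a)] the $R$-action used in Theorem~\ref{th:hopkins-elem-ab} is the canonical one via $-\otimes_k$, which matches $V_E$;
\item[(b)] supports of thick closures agree, $\supp_R(\thick(Z))=\supp_R(Z)$, since support is preserved by suspensions, summands and extensions.
\end{itemize}
Point (b) is routine. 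Combined with Step~1, this proves the implication.

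\textbf{Step 4: the lattice isomorphism.} Apply Corollary~\ref{co:HNT}(3)$\Rightarrow$(1) with $\cat T=\bfD^\fin(kG)$, which is rigid, and $R=H^*(G,k)$. Its three hypotheses hold as follows:
\begin{itemize}
\item[(i)] $\supp_R$ is finite because $H^*(G,k)$ is noetherian (Evens--Venkov--Golod).
\item[(ii)] The implication between supports is what Steps~1--3 proved.
\item[(iii)] $\supp_R(k)=\Spec H^*(G,k)$: the map $\phi_k$ is the identity of $H^*(G,k)$, so $\Ker\phi_k=0$ and $V(\Ker\phi_k)$ is everything.
\end{itemize}

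The main obstacle is the compatibility in Step~3(a): the action of $R$ on $\bfD^\fin(kE)$ must be the tensor-induced one. The proof of Theorem~\ref{th:hopkins-elem-ab} handles this through Remark~\ref{re:iota-koszul}, so I would simply cite it.
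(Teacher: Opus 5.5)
Your proposal is correct and follows the paper's proof essentially step for step. Theorem~\ref{th:subgroup-res-thm} transfers the support inclusion to each elementary abelian $E$, Theorem~\ref{th:hopkins-elem-ab} then gives $\thick(X\da_E)\subseteq\thick(Y\da_E)$, and induction together with Theorem~\ref{th:filtration-elem-abelian} yields $\thick_\otimes(X)\subseteq\thick_\otimes(Y)$. The only deviation is at the end, where you invoke Corollary~\ref{co:HNT} while the paper cites Proposition~\ref{pr:hopkins}. Your citation is the more accurate one, since the statement concerns thick tensor ideals. Your explicit checks that $\supp_{H^*(G,k)}(k)=\Spec H^*(G,k)$ and that support is finite supply details the paper leaves implicit.
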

\begin{proof}
  Suppose that \[V_G(X)\subseteq V_G(Y)\]
  holds, and fix an elementary abelian $p$-subgroup $E\le G$.  Then
  Theorem~\ref{th:subgroup-res-thm} implies
 \[V_E(X\da_E)\subseteq V_E(Y\da_E)\]
 and therefore \[\thick(X\da_E)\subseteq\thick(Y\da_E)\] by
 Theorem~\ref{th:hopkins-elem-ab}. This implies
 \[\thick(X\da_E\ua^G)\subseteq \thick(Y\da_E\ua^G)\]
 and then Theorem~\ref{th:filtration-elem-abelian} implies
 \[\thick_\otimes(X)=\thick\left(\bigoplus_{i=1}^t X\da_{E_i}\ua^G\right)
   \subseteq \thick\left(\bigoplus_{i=1}^t
     Y\da_{E_i}\ua^G\right)=\thick_\otimes(Y).\] It remains to apply
 Proposition~\ref{pr:hopkins}, which establishes the isomorphism
 between $\Thick_\otimes(\bfD^\fin(kG))$ and the Thomason subsets of
 $\Spec H^*(G,k)$.
\end{proof}

\begin{remark}
  The full subcategory of objects in $\bfD^\fin(kG)$ that are supported
  at the maximal ideal $H^+(G,k)$ of positive degree elements
  identifies with the category of perfect complexes $\bfD^\perf(kG)$.
  On the other hand, the inclusion
  \[\mod kG\lto \bfD^{\mathrm b}(\mod kG)=\bfD^\fin(kG)\] induces a
  triangle equivalence
\[\stmod kG\longiso \bfD^\fin(kG)/\bfD^\perf(kG),\]
up to direct summands. It follows that cohomological support induces a lattice isomorphism
  \begin{equation*}
    \Thick_\otimes(\stmod kG)\longiso\{\text{Thomason subsets of }\Proj
    H^*(G,k)\}
  \end{equation*}
where
  \[\Proj H^*(G,k)=\Spec H^*(G,k)\smallsetminus\{H^+(G,k)\}.\]
\end{remark}

\section{Stratification for finite groups}

For a group algebra we consider the category of complexes of injective
modules up to homotopy. This is the appropriate compactly generated
tensor triangulated category, because its subcategory of compact
objects identifies with the bounded derived category of finitely
generated modules over the group algebra. The strategy for the
stratification of this tensor triangulated category is completely
analogous to the one for the bounded derived category. So we study
first the case of an elementary abelian $p$-group using a BGG
correspondence, and then we consider an arbitrary finite group. The
reduction uses a subgroup theorem for cohomological support, and the
way Quillen's stratification enters is carefully explained.

References are \cite{Benson/Iyengar/Krause:2011b,Quillen:1971b,Quillen:1971c}.

\subsection*{Graded-injective dg modules}

For a dg algebra $A$ let $A^\sharp$ denote its underlying graded
algebra. A dg $A$-module is called \emph{graded-injective} if it is an
injective object in the category of graded $A^\sharp$-modules. We
write $\bfK(\Inj A)$ for the homotopy category of graded-injective dg
$A$-modules. The objects of this category are graded-injective dg
$A$-modules and morphisms between such dg modules are identified if
they are homotopic. The category $\bfK(\Inj A)$ is given the usual
structure of a triangulated category, where the exact triangles
correspond to exact sequences of graded-injective dg modules.

For a dg algebra $A$ over a field $k$ we consider the following properties:
\begin{enumerate}
\item[(A1)] $A^n=0$ for $n>0$ and $A$ is finite dimensional over $k$. 
\item[(A2)] $A^0$ is a local ring with residue field $k$.
\item[(A3)] $A$ has a structure of a cocommutative dg Hopf $k$-algebra.
\end{enumerate}

Given a dg Hopf algebra $A$ over $k$ and dg $A$-modules $X$ and $Y$, there is a
dg $A$-module structure on $X\otimes_k Y$, obtained by restricting the
natural action of $A\otimes_k A$ along the comultiplication
$A\to A\otimes_k A$.  This is the \emph{diagonal action} of $A$ on
$X\otimes_k Y$.

\begin{proposition}
  Let $A$ be a dg algebra over a field $k$ satisfying
  \emph{(A1)--(A3)}. Then the triangulated category $\bfK(\Inj A)$ is
  compactly generated and the canonical functor
  $\bfK(\Inj A)\to\bfD(A)$ restricts to an equivalence
  \[\bfK^{\mathrm c}(\Inj A)\iso\bfD^\fin(A).\] Moreover, the tensor product
  $\otimes_k$ with diagonal $A$-action endows $\bfK(\Inj A)$ with a
  structure of a rigidly-compactly generated tensor triangulated
  category.
\end{proposition}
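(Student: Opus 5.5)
The plan has three parts: show that $\bfK(\Inj A)$ is compactly generated with compacts the injective resolutions of $\bfD^\fin(A)$, identify $\bfK^{\mathrm c}(\Inj A)$ with $\bfD^\fin(A)$, and then transport the diagonal tensor product and check rigidity. Let $ik$ denote a graded-injective (semi-injective) resolution of the trivial module $k$, which exists since $k$ is an $A$-module via the augmentation $A\to A^0\to k$ given by (A1)--(A2). The key computation is that for every graded-injective dg module $I$ there is an isomorphism
\[\Hom_{\bfK(\Inj A)}(ik,I)\cong H^0\Hom_A(ik,I),\]
and that $\Hom_{\bfK(\Inj A)}(-,I)$ applied to $ik$ commutes with coproducts. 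Coproducts of graded-injectives are graded-injective because $A^\sharp$ is finite dimensional, hence noetherian, by (A1). Compactness of $ik$ then reduces to showing that $ik$ can be chosen as a union of finitely built pieces, so that $\Hom(ik,-)$ is computed by a finite complex. Here I would use (A1)--(A2): $A$ is finite dimensional and local, so the injective envelope of $k$ is the finite dimensional module $DA=\Hom_k(A,k)$, and a minimal injective resolution is bounded in the relevant direction.

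Next comes generation. Suppose $I\in\bfK(\Inj A)$ satisfies $\Hom(\Si^n ik,I)=0$ for all $n$. I would show that $I$ is contractible by filtering $I$ via its socle: every graded-injective over the local finite-dimensional algebra $A^\sharp$ is a coproduct of shifts of $DA$. Maps from $ik$ detect the socle $\Hom_A(k,I)$, so vanishing forces $I$ to be homotopically trivial. This is the standard Krause argument for $\bfK(\Inj)$ of a noetherian algebra, adapted to the dg setting via the grading.

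For the equivalence, the canonical functor $Q\colon\bfK(\Inj A)\to\bfD(A)$ has a fully faithful left adjoint on semi-injective resolutions, and for $X$ with semi-injective resolution $iX$ one gets $\Hom_{\bfK}(iX,I)\cong\Hom_{\bfD}(X,QI)$ when $X$ is in $\bfD^\fin(A)$. Using (A2), $\bfD^\fin(A)=\thick(k)$: filter $H^*(X)$ by the radical layers, which are finite dimensional, and lift this to $X$. Consequently $\thick(ik)$ maps isomorphically onto $\bfD^\fin(A)$. By Lemma~\ref{le:comp-gen} the compacts are $\thick(ik)$, which gives $\bfK^{\mathrm c}(\Inj A)\iso\bfD^\fin(A)$.

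For the tensor structure, (A3) makes $X\otimes_k Y$ with the diagonal action a dg $A$-module. I need it to be graded-injective when $X$ is, and this follows from the Hopf algebra isomorphism $X\otimes_k Y\cong X\otimes_k Y^{\mathrm{triv}}$, in which $Y$ is restricted to $k$. The underlying map is the shearing map using the antipode, and the right-hand side is a coproduct of shifts of $X$. The tensor product is exact in each variable and preserves coproducts, and $\fHom$ exists by Brown representability. The unit is $ik$, which is compact. Rigidity of compacts holds because $\thick(ik)$ is generated by $ik$, and dualisable objects form a thick subcategory containing the unit.

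I expect the main obstacle to be the generation and compactness step in the dg setting, where $A$ is not concentrated in degree zero. Writing graded-injectives as coproducts of shifts of $DA$ requires $A^\sharp$ to be local and noetherian, but controlling the differential needs care. I would first reduce to the Koszul-type examples in use, where $A$ is quasi-isomorphic to a finite-dimensional algebra. Failing that, I would argue via the recollement $\bfK_{\mathrm{ac}}(\Inj A)\to\bfK(\Inj A)\to\bfD(A)$.
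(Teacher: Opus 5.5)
The paper only cites Benson--Iyengar--Krause here, so your sketch has to carry the whole proof. Its architecture is right: $ik$ as compact generator, $\thick(ik)\simeq\thick(k)=\bfD^\fin(A)$, the shearing isomorphism, and rigidity because dualisable objects form a thick subcategory containing the unit. But the two steps with real content, compactness and generation, are missing, and the mechanism you propose for compactness does not work.

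The isomorphism $\Hom_{\bfK(\Inj A)}(ik,I)\cong H^0\Hom_A(ik,I)$ is just the definition of morphisms in a homotopy category. Exhausting $ik$ by finitely built pieces only presents $\Hom(ik,-)$ as an inverse limit, which has no reason to commute with coproducts; note $ik$ has infinitely many nonzero terms, e.g.\ for $A=kG$ with $G$ a nontrivial $p$-group. Your later remark that maps from $ik$ detect $\Hom_A(k,I)$ is the right idea, but it must be proved:
\begin{itemize}
\item Choose $ik$ with $(ik)^n=0$ for $n<0$. The cone $C$ of $k\to ik$ is then acyclic and bounded below.
\item Since $A^{>0}=0$, the truncations $\tau^{\le n}C$ are dg submodules. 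Their two-term subquotients are contractible, because the inverse of the differential is $A$-linear.
\item For graded-injective $I$, the exact functor $\Hom_A(-,I)$ turns this filtration into a tower of surjections with acyclic kernels. So $\Hom_A(C,I)$ is acyclic, giving $\Hom_{\bfK}(ik,I)\cong H^0\Hom_A(k,I)$.
\item Compactness then follows because $k$ is finitely presented over the finite-dimensional algebra $A^\sharp$.
\end{itemize}

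Generation is likewise only asserted, and it is not formal: acyclic graded-injectives need not be contractible (complete resolutions over $kG$). One argument runs as follows.
\begin{itemize}
\item Let $\mathfrak m$ be the maximal ideal of $A^0$. Then $J=\mathfrak m\oplus A^{<0}$ is a nilpotent dg ideal with $A/J=k$; this uses $d(A^{-1})\subseteq\mathfrak m$, which holds because $k$ is a dg module.
\item For $M\neq0$ finite-dimensional, the socle $\{m\mid mJ=0\}$ is a nonzero dg submodule on which $A$ acts through $k$. Iterating gives a finite filtration of $M$ by dg submodules with subquotients $\Si^n k$.
\item Hence if $\Hom_A(k,I)$ is acyclic, so is $\Hom_A(M,I)$ for every such $M$.
\item Write $I$ as a continuous union of dg submodules with finite-dimensional subquotients. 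Then $\Hom_A(I,I)$ is acyclic, so $\id_I$ is null-homotopic.
\end{itemize}
Your fallbacks do not close this gap. The statement concerns every $A$ satisfying (A1)--(A3), and quasi-isomorphism invariance of $\bfK(\Inj -)$ is not available beforehand. The recollement with the acyclic complexes is normally deduced from compact generation, so using it here is circular.

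There are further errors. The K-injective resolution functor $\bfD(A)\to\bfK(\Inj A)$ is \emph{right} adjoint to $Q$, not left adjoint. Your formula $\Hom_{\bfK}(iX,I)\cong\Hom_{\bfD}(X,QI)$ fails for $X=k$: take $A=kG$ with $G$ a nontrivial $p$-group and $I$ a complete resolution of $k$. Then $QI=0$, but $\Hom_{\bfK}(ik,I)\cong H^0\Hom_{kG}(k,I)\cong\widehat H^0(G,k)\neq 0$. (The formula does hold for perfect $X$.) Full faithfulness of $Q$ on $\thick(ik)$ should instead come from $\Hom_{\bfK}(I,iY)\cong\Hom_{\bfD}(QI,Y)$, since all objects of $\thick(ik)$ are K-injective.

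The equality $\bfD^\fin(A)=\thick(k)$ should also be proved with the same truncations, reducing to cohomology in a single degree. Lifting a filtration of $H^*(X)$ directly to $X$ does not work for dg modules.

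Finally, you never check that $ik$ is a unit. The cone of $X\to ik\otimes_k X$ is acyclic and graded-injective, which again does not make it zero in $\bfK(\Inj A)$. Argue instead that the $X$ for which this map is invertible form a localising subcategory containing $ik$. Indeed $ik\to ik\otimes_k ik$ is a quasi-isomorphism between bounded below graded-injectives. These are K-injective by the same truncation argument, so the map is a homotopy equivalence.
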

\begin{proof}
  See \cite[\S4]{Benson/Iyengar/Krause:2011b}.
\end{proof}

\begin{remark}
  An injective resolution of $k$ provides the tensor unit in
  $\bfK(\Inj A)$, and by abuse of notation we denote this by $k$. The
  graded endomorphism algebra of $k$ identifies with  $\Ext^*_A(k,k)$,
  and  there is a canonical action of this algebra on $\bfK(\Inj A)$.
\end{remark}

\subsection*{Stratification for elementary abelian $p$-groups}

Let $E=(\bbZ/p)^r$ be an elementary abelian $p$-group of rank $r$ and
$k$ a field of characteristic $p$. In this section we extend for the
group algebra $A=kE$ the classification of thick subcategories of
$\bfD^\fin(A)$ to a classification of all localising subcategories of
$\bfK(\Inj A)$.

As before, we use the exterior algebra
$\Lambda=k\langle\xi_1,\ldots,\xi_r\rangle$ and the Koszul complex $B$
on the sequence of generators of $A$.  We view those as dg algebras
and note that $\Lambda$, $A$, and $B$ satisfy the conditions
(A1)--(A3). A \emph{d\'evisage argument} shows that the pair of
triangle equivalences
\[\bfD^\fin(S)\iso\bfD^\fin(\Lambda)\iso \bfD^\fin(B)\] extends
to a pair of triangle equivalences
\[\bfD(S)\iso\bfK(\Inj\Lambda)\iso \bfK(\Inj B).\]
More precisely, the above equivalences are explicitly given and extend
to exact and coproduct preserving functors between compactly generated
triangulated categories, which are necessarily equivalences because
they restrict to equivalences on the level of compact objects.

\begin{proposition}\label{pr:BGG-dga-strat}
  The compactly generated triangulated category $\bfK(\Inj B)$ is
  stratified via the canonical action of $S=\Ext_B^*(k,k)$. In
  particular, cohomological support induces a lattice isomorphism
  \begin{equation*}
    \Loc(\bfK(\Inj B))\longiso\{\text{Subsets of }\Spec
    S\}\,.
  \end{equation*}
\end{proposition}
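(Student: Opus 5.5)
The plan is to transport the statement along the triangle equivalences
\[\bfD(S)\iso\bfK(\Inj\Lambda)\iso \bfK(\Inj B)\]
and then apply Theorem~\ref{th:Neeman-dga} to $S$. The ring $S=k[x_1,\ldots,x_r]$ is a commutative graded polynomial ring, hence noetherian. Viewed as a dg algebra with zero differential, it is trivially formal. So Theorem~\ref{th:Neeman-dga} shows that $\bfD(S)$ is stratified via the canonical action of $H^*(S)=S$. By Proposition~\ref{pr:stratification-ring}, this means that $\supp_S$ induces an isomorphism $\Loc_\otimes(\bfD(S))\iso P(\supp_S(\bfD(S)))$.

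Since $S$ generates $\bfD(S)$ as a localising subcategory, every localising subcategory is a tensor ideal, so $\Loc_\otimes(\bfD(S))=\Loc(\bfD(S))$. Moreover, $\supp_S(S)=\Spec S$, because for compact objects the support agrees with the cohomological support by Proposition~\ref{pr:supp-square}, and $S_\fp\neq0$ for every $\fp$. Together these give $\Loc(\bfD(S))\iso P(\Spec S)$.

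Next, transport the $S$-action and the local cohomology functors. The composite equivalence $F\colon\bfD(S)\to\bfK(\Inj B)$ is exact and preserves coproducts and products, being an equivalence. On compacts it sends $S$ to $k$ and induces the isomorphism $S\iso\Ext^*_B(k,k)$ of Theorem~\ref{th:BGG-dga} and Lemma~\ref{le:ext-alg-qis}.

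I would verify that $F$ is $S$-linear, where $\bfK(\Inj B)$ carries the canonical action through $\End^*(k)=\Ext^*_B(k,k)$. This is the main obstacle. The canonical action on $\bfK(\Inj B)$ is defined via the tensor structure, $\alpha\mapsto\alpha\otimes_k-$ with diagonal action. In $\bfD(S)$ it is the natural central action. One must check that these agree under $F$, at least up to an automorphism of $S$ that fixes the spectrum.

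To do this, first note that any two actions of $S$ on a category generated by a single compact object $k$ which agree on $\End^*(k)$ induce the same Koszul objects $\kos k\fa$ up to thick closure. By Proposition~\ref{pr:koszul}, they therefore define the same subcategories $\cat T_{V(\fa)}$. Hence the fibrations $\tau_S$, and with them the functors $\Gamma_\fp$, coincide, even if the central actions differ on other objects; this is the same subtlety as in Remark~\ref{re:iota-koszul}. With this in hand, Lemma~\ref{le:change-cats} gives $F\Gamma_\fp\cong\Gamma_\fp F$, and hence $\supp_S(F(X))=\supp_S(X)$.

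Finally, stratification is a statement purely about localising subcategories and the functors $\Gamma_\fp$. Both are preserved by $F$, so the local-global principle and the minimality of each $\Gamma_\fp\cat T$ transfer from $\bfD(S)$ to $\bfK(\Inj B)$.

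In $\bfK(\Inj B)$ every localising subcategory is again a tensor ideal, since the unit $k$ is a compact generator: $\bfK^{\mathrm c}(\Inj B)=\bfD^\fin(B)=\thick(k)$. So Proposition~\ref{pr:stratification-ring}, applied with $\supp_S(\bfK(\Inj B))=\Spec S$, yields the lattice isomorphism $\Loc(\bfK(\Inj B))\iso\{\text{subsets of }\Spec S\}$.
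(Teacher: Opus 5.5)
Your proposal is correct and follows the same route as the paper, which transports the stratification of $\bfD(S)$ from Theorem~\ref{th:Neeman-dga} along the equivalence $\bfD(S)\iso\bfK(\Inj B)$. Your extra checks fill in details the paper leaves implicit: the transported and canonical actions agree on the compact generator $k$, so by Proposition~\ref{pr:koszul} both define the same subcategories $\thick(\kos{k}{\fa})$ and hence the same functors $\Gamma_\fp$; and $\Loc=\Loc_\otimes$ on both sides because the unit is a compact generator.
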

\begin{proof}
  We use the above equivalence $\bfD(S)\iso \bfK(\Inj B)$, and then the
  assertion follows from the stratification of $\bfD(S)$ via the
  canonical action of $S$; see Theorem~\ref{th:Neeman-dga}.
\end{proof}

Next consider the dg algebra homomorphism $\iota\colon A\to B$ which
induces an adjoint pair of
exact functors
\begin{equation*}
\begin{tikzcd}[column sep = huge]
  \bfK(\Inj A) \ar[rr,yshift=2.5,"\iota^*=-\otimes_A B"] &&
  \ar[ll,yshift=-2.5,"{\iota_*=\Hom_B(B,-)}"] \bfK(\Inj B).
\end{tikzcd}
\end{equation*}
This restricts to a pair of functors
\begin{equation*}
\begin{tikzcd}
\bfK^{\mathrm c}(\Inj A) \ar[rr, yshift=2.5] &&  \ar[ll, yshift=-2.5,]
\bfK^{\mathrm c}(\Inj B).
\end{tikzcd}
\end{equation*}
The proof of Theorem~\ref{th:hopkins-elem-ab} shows that there is an
induced pair of lattice isomorphisms
\begin{equation*}
\begin{tikzcd}
\Thick(\bfK^{\mathrm c}(\Inj A)) \ar[rr,yshift=2.5] &&  \ar[ll,yshift=-2.5]
\Thick(\bfK^{\mathrm c}(\Inj B)).
\end{tikzcd}
\end{equation*}

The above properties of the pair $(\iota^*,\iota_*)$ imply that
$\iota_*$ preserves the localising subcategories which are given by a
single prime.

For a localising subcategory $\cat U$ of $\bfK(\Inj B)$
set \[\Loc(\iota_*)(\cat U)\colonequals\loc(\iota_*(\cat U)).\]

\begin{lemma}\label{le:minimal-loc-ind-res}
 Let $\fp\in\Spec R$ correspond to $\fq\in\Spec S$ via the
 homomorphism $S\to R$. Then
\[\Loc(\iota_*)(\Gamma_\fq\bfK(\Inj B))=\Gamma_\fp\bfK(\Inj A).\]
\end{lemma}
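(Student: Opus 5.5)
We compare the local cohomology functors on both sides through $\iota_*$, and then use the fact that the thick subcategories of compacts correspond.

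\textbf{Step 1: $\iota_*$ intertwines the local cohomology functors.} The functor $\iota_*=\Hom_B(B,-)\colon\bfK(\Inj B)\to\bfK(\Inj A)$ is restriction along $\iota$, so it is exact and preserves all products and coproducts. By Remark~\ref{re:iota-koszul}, it is $S$-linear when $\bfK(\Inj A)$ is given the $S$-action induced via $\iota_k\colon S\to R$. Proposition~\ref{pr:change-cats-rings}, applied to the pair $(\iota_*,\iota_k)$, then gives
\[\iota_*\Gamma_{Y}\cong\Gamma_{(\Spec\iota_k)^{-1}Y}\,\iota_*\]
for every locally closed $Y\subseteq(\Spec S)^\vee$. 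By Lemma~\ref{le:ext-alg-homeom}, $\Spec\iota_k$ is a homeomorphism, so $(\Spec\iota_k)^{-1}\{\fq\}=\{\fp\}$. Hence $\iota_*\Gamma_\fq\cong\Gamma_\fp\iota_*$, and in particular $\iota_*(\Gamma_\fq\bfK(\Inj B))\subseteq\Gamma_\fp\bfK(\Inj A)$. Since the latter is localising, we get the inclusion $\subseteq$.

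\textbf{Step 2: the reverse inclusion.} Lemma~\ref{le:tensor-ideal-gamma} gives $\Gamma_\fp\bfK(\Inj A)=\loc_\otimes(\Gamma_\fp(k))$. We would instead show that $\Gamma_\fp\bfK(\Inj A)=\loc(\Gamma_\fp(C))$ for the compact generator $C=k$. Indeed, $\Gamma_\fp\bfK(\Inj A)$ is the image of $\Gamma_\fp$, and $\Gamma_\fp$ preserves coproducts and triangles. Because $\bfK(\Inj A)=\loc(k)$ (recall $\bfD^\fin(A)=\thick(k)$), we get $\Gamma_\fp\bfK(\Inj A)=\loc(\Gamma_\fp k)$.

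Next, by the proof of Lemma~\ref{le:thick-ind-res} and its extension to $\bfK(\Inj A)$, we have $\thick(k)=\thick(\iota_*\iota^*k)$, so $k\in\loc(\iota_*\iota^*k)$. Applying $\Gamma_\fp$ and Step 1 yields
\[\Gamma_\fp k\in\loc(\Gamma_\fp\iota_*\iota^* k)=\loc(\iota_*\Gamma_\fq\iota^*k)\subseteq\loc(\iota_*(\Gamma_\fq\bfK(\Inj B))).\]
This gives $\supseteq$.

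\textbf{Main obstacle.} The delicate point is Step 1: we need that $\iota_*$ is genuinely $R$-linear with respect to the $S$-action, since the diagonal Hopf structures differ. This is exactly what Remark~\ref{re:iota-koszul} supplies. On compacts it is the identity $\iota_*(\kos{k}{\fa})=\kos{k}{\iota_k(\fa)}$ used in Theorem~\ref{th:hopkins-elem-ab}. If full linearity is unavailable, we would argue directly instead. Using Lemma~\ref{le:loc-commute} together with the Koszul identity, $\iota_*$ sends $\bfK(\Inj B)_{V(\fa)}$ into $\bfK(\Inj A)_{V(\iota_k\fa)}$, and its left adjoint $\iota^*$ sends the compact part of $\bfK(\Inj A)_{V(\iota_k\fa)}$ into $\bfK(\Inj B)_{V(\fa)}$. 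This is the argument of Lemma~\ref{le:change-cats}, and it suffices to commute $\iota_*$ with $\Gamma_V$ and $L_V$, hence with $\Gamma_\fq=\Gamma_{V(\fq)}L_{Z}$ for the Thomason set $Z=\{\fq'\mid\fq'\not\subseteq\fq\}$.
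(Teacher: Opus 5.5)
Your argument is correct, but it is organised differently from the paper's.

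The paper works purely with generators. It observes that $\Gamma_\fq\bfK(\Inj B)$ is generated by localised Koszul objects; these are compact in $\Gamma_\fq\bfK(\Inj B)$, though in general not in $\bfK(\Inj B)$. It then uses that $\iota_*$ carries them to a generating set of $\Gamma_\fp\bfK(\Inj A)$, via $\iota_*(\kos{k}{\fa})=\kos{k}{\iota_k(\fa)}$ and $(\Spec\iota_k)^{-1}V(\fq)=V(\fp)$. Both inclusions then follow at once.

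You instead prove the functorial intertwining $\iota_*\Gamma_\fq\cong\Gamma_\fp\iota_*$ from Proposition~\ref{pr:change-cats-rings}, which gives $\subseteq$ immediately. You obtain $\supseteq$ without naming any generators of $\Gamma_\fq\bfK(\Inj B)$: you use $\Gamma_\fp\bfK(\Inj A)=\loc(\Gamma_\fp k)$, and pull $\Gamma_\fp k$ into $\loc(\iota_*\Gamma_\fq\iota^*k)$ using $k\in\loc(\iota_*\iota^*k)$ from Lemma~\ref{le:thick-ind-res}.

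The paper's version is shorter. However, its assertion that the images of the generators generate tacitly needs $\iota_*$ to commute with localisation at $\fq$ and $\fp$, and your Step~1 makes exactly this explicit. Your route also spares you from checking that the images generate, at the cost of the heavier change-of-rings machinery. Both ultimately rest on the same input, Remark~\ref{re:iota-koszul}.

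Two small points:
\begin{enumerate}
\item In your last paragraph you mean that $\iota_*$ must be $S$-linear for the $S$-action on $\bfK(\Inj A)$ induced via $\iota_k$, not $R$-linear.
\item Remark~\ref{re:iota-koszul} is formulated only for $\bfD^\fin(A)$. Still, the argument of Lemma~\ref{le:change-cats} uses linearity only on compact objects, so Step~1 stands without your fallback.
\end{enumerate}
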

\begin{proof}
The construction of $\Gamma_\fq\bfK(\Inj B)$ shows that it
is generated by compact objects which are mapped by $\iota_*$ to
compact generators of $\Gamma_\fp\bfK(\Inj A)$.
\end{proof}

The following is the analogue of Lemma~\ref{le:thick-ind-res} for
localising subcategories.

\begin{lemma}\label{le:loc-ind-res}
  For $X\in \bfK(\Inj A)$ one has \[\loc(X)=\loc(\iota_*\iota^*(X)).\]
\end{lemma}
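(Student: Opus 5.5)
The plan is to mirror the proof of Lemma~\ref{le:thick-ind-res}: first show that $\iota_*\iota^*(X)\cong X\otimes_A B$, viewed as an $A$-module, is obtained from $X$ by tensoring with the Koszul object. Then show that $\loc$ does not change under this operation.

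\textbf{Step 1: reduce to the unit.} Both $\iota^*=-\otimes_A B$ and $\iota_*$ are exact and preserve coproducts; $\iota_*$ does so because $\iota^*$ preserves compacts, by Lemma~\ref{le:comp-gen-adjoints}. So $\iota_*\iota^*(X)\cong X\otimes_A B$, where $B$ is regarded as a dg $A$-module. Since $B$ is the Koszul complex on $\fz=(z_1,\ldots,z_r)$, we get $\iota_*\iota^*(X)\cong\kos{X}{\fz}$, now using the $A$-linear structure of $\bfK(\Inj A)$. This structure comes from the centre: $A$ is commutative and acts by multiplication on every object.

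\textbf{Step 2: prove the inclusions.} The inclusion $\loc(\kos X\fz)\subseteq\loc(X)$ is immediate, because a Koszul object is an iterated cone of maps between suspensions of $X$. For the reverse inclusion I would use Lemma~\ref{le:Koszul-r} one generator at a time. The category $\loc(X)$ is fibred via the $A$-action, and its localising subcategory $\loc(X)_{V(z_i)}$ is generated by Koszul objects $\kos{Y}{z_i}$. Since each $z_i$ is nilpotent ($z_i^p=0$), we have $V(z_i)=\Spec A$, so every object is $z_i$-torsion and $\Gamma_{V(z_i)}=\id$. Concretely, there is a filtration of $X$ by $\kos{X}{z_i^p}$: because $z_i^p=0$, one has $\kos{X}{z_i^p}\cong X\oplus\Si X$. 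The octahedral axiom then shows that $\kos X{z_i^p}$ is built in $p$ steps from $\kos X{z_i}$. Hence $X\in\thick(\kos X{z_i})\subseteq\loc(\kos X{z_i})$. Iterating over $i=1,\ldots,r$ gives $X\in\loc(\kos X\fz)$, and in fact $X\in\thick(\kos X\fz)$, exactly as in the compact case.

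\textbf{Expected obstacle.} The main difficulty is Step~1: identifying $\iota_*\iota^*(X)$ with $\kos X\fz$ inside $\bfK(\Inj A)$, rather than in $\bfD(A)$. I would handle it in one of two ways. The first option is to note that $X\otimes_A B$ is the totalisation of the Koszul complex $X\otimes_A K(\fz)$; as a graded $A^\sharp$-module this is a finite sum of shifts of $X$, hence graded-injective, and it is built by $r$ successive mapping cones of the maps $z_i$. The second option is to argue that both sides are coproduct-preserving exact functors of $X$ that agree on compacts by Lemma~\ref{le:thick-ind-res}; the class of $X$ satisfying the claim is then localising and contains the compact generators, which gives the result.
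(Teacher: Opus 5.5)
Your proof is correct, but it is organised differently from the paper's. The paper does not work with $X$ directly. It takes $\thick(A)=\thick(B)$ from Lemma~\ref{le:thick-ind-res}, where this comes from Proposition~\ref{pr:koszul} applied to $B=\kos{A}{\fz}$ using $V(\fz)=\Spec A$. It then transports this equality along the exact functor $X\otimes_A-$, which sends $A$ to $X$ and $B$ to $\iota_*\iota^*(X)$.

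You instead carry out the Koszul argument on $X$ itself inside $\bfK(\Inj A)$. First, the central action of the commutative ring $A$ identifies $X\otimes_A B$ with an iterated mapping cone $\kos{X}{\fz}$. Your first option is right here: the cones of $z_i\colon X\to X$ are again graded-injective, so they give exact triangles in $\bfK(\Inj A)$. Second, you replace Proposition~\ref{pr:koszul} by an elementary argument. Since $z_i^p=0$ you have $\kos{X}{z_i^p}\cong X\oplus\Si X$, and the octahedral axiom puts $\kos{X}{z_i^p}$ in $\thick(\kos{X}{z_i})$.

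The paper's route is shorter because it recycles the compact case. Yours is self-contained. The paper's one-liner tacitly needs $X\otimes_A-$ to be an exact functor on a triangulated category where $\thick(A)=\thick(B)$ holds, for instance bounded complexes of finitely generated free $A$-modules. There that equality is exactly your argument for $X=A$. Both routes in fact give the stronger equality $\thick(X)=\thick(\iota_*\iota^*(X))$.

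Two small corrections.

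First, the sentence treating $\loc(X)$ as fibred with $\Gamma_{V(z_i)}=\id$ does not fit the framework, since $\loc(X)$ need not be compactly generated. The correct small-category version is Lemma~\ref{le:Koszul-r} applied to $\thick(X)$ with $S=\{z_i^n\mid n\ge 0\}$. There $\thick(X)_S=\thick(X)$ because $z_i$ is nilpotent. In any case your concrete argument makes this sentence redundant.

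Second, drop your second option for Step~1. Lemma~\ref{le:thick-ind-res} gives equality of thick closures, not an isomorphism $\iota_*\iota^*(c)\cong\kos{c}{\fz}$ for compact $c$. Moreover, the class of $X$ with $\loc(X)=\loc(\iota_*\iota^*(X))$ is not evidently closed under cones, so the localising-class argument is not available.
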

\begin{proof}
  From Lemma~\ref{le:thick-ind-res} we know that
  $\thick(A)=\thick(B)$. Then the assertion follows by applying $X\otimes_A-$, since
  $X\otimes_AA=X$ and $X\otimes_AB=\iota_*\iota^*(X)$.
\end{proof}

The following result extends Theorem~\ref{th:hopkins-elem-ab} and its
proof uses a similar strategy.

\begin{theorem}\label{th:strat-elem-abelian}
  The compactly generated triangulated category $\bfK(\Inj A)$ is
  stratified via the canonical action of $R=\Ext_A^*(k,k)$. In
  particular, cohomological support induces a lattice isomorphism
  \begin{equation*}
    \Loc(\bfK(\Inj A))\longiso\{\text{Subsets of }\Spec
    R\}\,.
  \end{equation*}
\end{theorem}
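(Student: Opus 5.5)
We mimic the proof of Theorem~\ref{th:hopkins-elem-ab}, now at the level of localising subcategories. Consider the square
\begin{equation*}
  \begin{tikzcd}
    \Loc(\bfK(\Inj B)) \ar[rr,"\supp_S"]\ar[d,swap,"\Loc(\iota_*)"] &&
    \{\text{Subsets of }\Spec S\}\ar[d,"(\Spec \iota_k)^{-1}"]\\
    \Loc(\bfK(\Inj A)) \ar[rr,"\supp_R"] && \{\text{Subsets of }\Spec R\}
\end{tikzcd}
\end{equation*}
The top map is a bijection by Proposition~\ref{pr:BGG-dga-strat}. The right map is a bijection because $\Spec R\to\Spec S$ is a homeomorphism by Lemma~\ref{le:ext-alg-homeom}. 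The left map is surjective: for $\cat S\in\Loc(\bfK(\Inj A))$, Lemma~\ref{le:loc-ind-res} gives $\loc(\iota_*\iota^*\cat S)=\cat S$, so $\cat S=\Loc(\iota_*)(\loc(\iota^*\cat S))$.

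Next we verify commutativity. Since every subset is a union of points and all four maps preserve joins, it suffices to check it on the subcategories $\Gamma_\fq\bfK(\Inj B)$. These generate all of $\Loc(\bfK(\Inj B))$ by the stratification via $S$. For these, Lemma~\ref{le:minimal-loc-ind-res} gives $\Loc(\iota_*)(\Gamma_\fq\bfK(\Inj B))=\Gamma_\fp\bfK(\Inj A)$, where $\fp\leftrightarrow\fq$. It remains to see that $\Gamma_\fp\bfK(\Inj A)$ has $R$-support exactly $\{\fp\}$, or equivalently that it is nonzero. Nonvanishing follows because its compact part corresponds, under the lattice isomorphism of thick subcategories of compacts recorded above, to that of $\Gamma_\fq\bfK(\Inj B)$, which is nonzero; alternatively use Lemma~\ref{le:support-loc-closed} and Proposition~\ref{pr:supp-square}. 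Hence $\supp_R$ composed with $\Loc(\iota_*)$ equals $(\Spec\iota_k)^{-1}\circ\supp_S$, and the composite along the top and right is a bijection.

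From commutativity, $\supp_R\circ\Loc(\iota_*)$ is a bijection. Combined with surjectivity of $\Loc(\iota_*)$, this shows $\Loc(\iota_*)$ is injective, hence bijective, and then $\supp_R$ is bijective onto all subsets of $\Spec R$. Here we use that $\supp_R(\bfK(\Inj A))=\Spec R$, which holds since every point of $\Spec S$ lies in the support. So $\supp_R$ is a lattice isomorphism $\Loc(\bfK(\Inj A))\iso P(\Spec R)$. Every localising subcategory of $\bfK(\Inj A)$ is a tensor ideal, since the unit $k$ generates (because $A$ is local, $\bfD^\fin(A)=\thick(k)$). Therefore Proposition~\ref{pr:stratification-ring}, (2)$\Rightarrow$(1), yields that $\bfK(\Inj A)$ is stratified via $R$.

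The main obstacle is making the commutativity step rigorous for arbitrary (non-compactly generated) localising subcategories. Here we need that $\Loc(\iota_*)$ preserves joins and that each localising subcategory of $\bfK(\Inj B)$ is the join of its pieces $\Gamma_\fq$. This also requires the compatibility of the two $R$-actions from Remark~\ref{re:iota-koszul}, so that $\iota_*$ is linear along $\iota_k$. With that, Proposition~\ref{pr:change-cats-rings} gives $\iota_*\Gamma_\fq\cong\Gamma_\fp\iota_*$, which I would use to justify Lemma~\ref{le:minimal-loc-ind-res} and the support computation.
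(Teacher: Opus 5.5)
Your proposal is correct and follows the paper's proof essentially step for step. It uses the same square, gets surjectivity of $\Loc(\iota_*)$ from Lemma~\ref{le:loc-ind-res}, and checks commutativity on the generators $\Gamma_\fq\bfK(\Inj B)$ via Lemma~\ref{le:minimal-loc-ind-res}. Your additional remarks usefully fill in points the paper leaves implicit: every localising subcategory is a tensor ideal because $k$ generates, and $\iota_*\Gamma_\fq\cong\Gamma_\fp\iota_*$ together with Lemma~\ref{le:loc-ind-res} justifies Lemma~\ref{le:minimal-loc-ind-res}.

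One caution: your first argument for $\Gamma_\fp\bfK(\Inj A)\neq 0$, via its compact part, fails for non-maximal $\fp$. The support of a nonzero compact object is a nonempty closed set $V(\fa)$, which contains primes strictly above $\fp$, so that compact part is zero. Rely instead on your alternative: $\Gamma_\fp(k)\neq 0$ because $\supp_R(k)=\Spec R$, by Proposition~\ref{pr:supp-square}.
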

\begin{proof}
  We consider the following diagram
\begin{equation*}
  \begin{tikzcd}
    \Loc(\bfK(\Inj B)) \ar[rr, "\supp_S"]\ar[d,swap,"\Loc(\iota_*)"]
    &&    \{\text{Subsets of }\Spec S\}\ar[d,"(\Spec \iota_k)^{-1}"]\\
    \Loc(\bfK(\Inj A)) \ar[rr,"\supp_R"] && \{\text{Subsets of }\Spec
    R\}
\end{tikzcd}
\end{equation*}
and wish to show that the map $\supp_R$ is a lattice isomorphism.

First observe that the top horizontal map is an isomorphism by
Proposition~\ref{pr:BGG-dga-strat}. Likewise, the vertical map on the
right is an isomorphism by Lemma~\ref{le:ext-alg-homeom}. Furthermore,
the map $\Loc(\iota_*)$ is surjective, since
\[\Loc(\iota_*)\circ \Loc(\iota^*)=\id\] by
Lemma~\ref{le:loc-ind-res}.  It remains to show that the diagram
commutes, because that would imply that all maps are bijections and
even lattice isomorphisms. All maps in the diagram preserve
arbitrary joins. So it suffices to pick $\fp\in\Spec R$ corresponding
to $\fq\in\Spec S$. Then Lemma~\ref{le:minimal-loc-ind-res} tells us that
\[\Loc(\iota_*)(\Gamma_\fq\bfK(\Inj B))=\Gamma_\fp\bfK(\Inj A).\]
From
$\supp_R(\Gamma_\fp\bfK(\Inj A))=\{\fp\}$ and
$\supp_S(\Gamma_\fq\bfK(\Inj B))=\{\fq\}$ it follows that
\[\supp_R\circ\Loc(\iota_*)=(\Spec \iota_k)^{-1}\circ\supp_S.\]
This completes the proof.
\end{proof}

\subsection*{Quillen stratification}

In \cite{Quillen:1971b,Quillen:1971c} Quillen described the
\emph{maximal} ideal spectrum\index{maximal ideal spectrum} of
$H^*(G,k)$ in terms of elementary abelian subgroups.  We are really
interested in \emph{prime} ideals, but let us first describe Quillen's
original theorem. Note that in a finitely generated graded-commutative
$k$-algebra such as $H^*(G,k)$ every prime ideal is the intersection
of the maximal ideals containing it, by a version of Hilbert's
Nullstellensatz.

Let $G$ be a finite group, and $k$ a field of characteristic $p$. If
$H$ is a subgroup of $G$, there is a \emph{restriction
  map}  $H^*(G,k)\to H^*(H,k)$ which is a ring
homomorphism. Writing \[\bar V_G\colonequals\MaxSpec H^*(G,k)\] this induces a map of
varieties $\bar V_H\to \bar V_G$.

\begin{theorem}
An element $u\in H^*(G,k)$ is nilpotent if and only if $\res_{G,E}(u)$ is
nilpotent for every elementary abelian $p$-subgroup $E\le G$. \qed
\end{theorem}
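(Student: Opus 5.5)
The plan is Quillen's classical argument. Necessity is immediate, since restriction is a ring homomorphism. For sufficiency I would establish two facts and combine them: a descent statement through a suitable module on which $G$ acts freely, and Serre's theorem that products of Bocksteins vanish.

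The key device is Quillen's construction. Embed $G$ into a unitary group $U(n)$ via a faithful representation, say the regular representation. Let $G$ act freely on the flag manifold $F=U(n)/T$, where $T$ is a maximal torus. The stabilisers of points are then conjugates of $G\cap gTg^{-1}$, and each of these is abelian. The equivariant cohomology $H^*_G(F)$ is finite over $H^*(G,k)$. Quillen's key lemma (via Borel's computation $H^*(BU(n))\to H^*(BT)$ free) says the map $H^*(G,k)\to H^*_G(F;k)$ is injective modulo nilpotents. More precisely, its kernel is nilpotent because $H^*_G(F)\cong H^*(G,k)\otimes_{H^*(BU(n))}H^*(BT)$ roughly, and $H^*(BT)$ is free over $H^*(BU(n))$. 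So $u$ is nilpotent if its image in $H^*_G(F)$ is nilpotent.

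Next, the localisation or spectral sequence step. Filter $F$ by $G$-CW skeleta with cells of type $G/A\times D^m$, where $A$ is abelian. A Mayer--Vietoris/induction argument on cells shows the following. If $u$ restricts nilpotently to each $H^*(A,k)$ for the stabilisers $A$, then the image of $u$ in $H^*_G(F)$ is nilpotent, with exponent bounded by the number of cells times the local exponent. This uses that the product of classes vanishing on each piece of an $N$-step filtration vanishes on the whole.

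This reduces the statement to abelian $p$-groups, or more precisely to abelian groups $A$. Sylow restriction is injective via transfer, so we may pass to the $p$-part. The final step is that for an abelian $p$-group $P$, an element whose restriction to the maximal elementary abelian subgroup $\Omega_1(P)$ is nilpotent is itself nilpotent. I would do this directly from the Künneth description of $H^*(\bbZ/p^a,k)$. There the degree-two polynomial generator restricts to a nonzero multiple of the Bockstein generator on $\bbZ/p$ only when $a=1$; in general the comparison is via the map $H^*(P)\to H^*(\Omega_1P)$, whose kernel is nilpotent because it is the kernel of the even-degree polynomial parts modulo nilpotent exterior classes. Alternatively, one can invoke Serre's theorem, as in Proposition~\ref{pr:filtration-elem-abelian}.

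The main obstacle is the first step: making precise that $H^*(G,k)\to H^*_G(F;k)$ has nilpotent kernel. The $\mathbb F_p$-coefficient Borel argument needs care. I would try to bypass it by using Carlson's filtration from Proposition~\ref{pr:filtration-elem-abelian} directly. Suppose $u$ restricts nilpotently to each $E_i$. By Frobenius reciprocity, some power $u^m$ acts as zero on each $(W_i)\ua^G$, since the action there factors through $\res_{G,E_i}(u)$. Multiplication by $u^{mt}$ then kills $k\oplus V$ by the filtration of length $t$, hence kills $k$, so $u^{mt}=0$. This second route is cleaner and only uses results stated earlier.
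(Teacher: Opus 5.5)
The notes give no proof of this statement: it is quoted from Quillen, whose argument is the equivariant-cohomology one you sketch first. Your second route is correct, self-contained given Proposition~\ref{pr:filtration-elem-abelian}, and it is the one to keep. It needs two facts:
\begin{enumerate}
\item The natural Frobenius isomorphism $u\otimes_k W\ua^G\cong(\res_{G,E}(u)\otimes_k W)\ua^G$. This makes a common power $u^m$ annihilate every section $(W_i)\ua^G$.
\item If $u^a$ and $u^b$ annihilate the outer terms of an exact triangle $M'\to M\to M''\to$, then $u^{a+b}$ annihilates $M$, because $u^b\otimes M$ factors through a suspension of $M'\to M$.
\end{enumerate}
Together these show that $u^{mt}$ annihilates $k\oplus V$, so $u^{mt}=0$ in $\Ext^*_{kG}(k,k)$. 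Equivalently, the objects of $\bfD^\fin(kG)$ on which $u$ acts nilpotently form a thick subcategory containing every $k\da_{E_i}\ua^G$, hence containing $k$ by Theorem~\ref{th:filtration-elem-abelian}. There is no circularity: Carlson's filtration is derived from Serre's theorem on products of Bocksteins, not from Quillen's theorem.

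Compared with Quillen's proof, your argument rests on the same input as the algebraic Quillen--Venkov proof, namely Serre's theorem plus induction on the group, here packaged inside Carlson's theorem. It is short, purely module-theoretic, gives an explicit exponent $mt$, and uses the same mechanism the notes later exploit for thick and localising subcategories. Quillen's topological argument needs neither Serre's nor Carlson's theorem. It also belongs to a machine that yields the surjectivity part (F2) of the $F$-isomorphism and the origin statement of Theorem~\ref{th:origin}, which your route does not provide.

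Do not rely on your first sketch as written, since several of its assertions are false.
\begin{enumerate}
\item $G$ does not act freely on $U(n)/T$. The stabiliser of $gT$ is $G\cap gTg^{-1}$, which is abelian but in general nontrivial, and that is the whole point of the construction.
\item The map $H^*(G,k)\to H^*_G(F;k)$ is in fact injective, not just injective modulo nilpotents. By Leray--Hirsch, $H^*_G(F;k)$ is a free $H^*(G,k)$-module with $1$ in a basis.
\item Most seriously, your claim about cyclic groups is wrong. For $a\ge 2$ the degree-two polynomial generator of $H^*(\bbZ/p^a,k)$ is the reduction of the first Chern class of a faithful character. It restricts to a non-nilpotent class in $H^2(\bbZ/p,k)$, while the degree-one class restricts to zero. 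This is exactly why the kernel of $H^*(P,k)\to H^*(\Omega_1P,k)$ is generated by degree-one classes and is nilpotent. If your claim were true, the polynomial generator would be a non-nilpotent element of that kernel, and the reduction to $\Omega_1P$ would break down.
\end{enumerate}
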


It thus makes sense to look at the product of the restriction maps
\[ 
H^*(G,k) \lto \prod_E H^*(E,k). 
\]
The theorem implies that the kernel of this map is nilpotent.  What is the image?\medskip

If an element $(u_E)$ is in the image then
\begin{enumerate}
\item[(C1)] for each conjugation $c_g\colon E' \to E$ in $G$, 
$c_g^*(u_E)=u_{E'}$ and
\item[(C2)] for each inclusion $i\colon E' \to E$ in $G$, $i^*(u_E)=u_{E'}$.
\end{enumerate}

Conversely, if $(u_E)$ satisfies these conditions, Quillen showed
that for some $t\ge 0$ the element $(u_E^{p^t})$ is in the image.

\begin{definition}
We define $\varprojlim_{E} H^*(E,k)$ to be the elements $(u_E)$ of the direct product $\prod_E H^*(E,k)$ satisfying
conditions (C1) and (C2) above.
\end{definition}

\begin{definition}
A homomorphism $\phi\colon R\to S$ of graded-commutative $k$-algebras 
is an \emph{$F$-isomorphism} or 
\emph{inseparable isogeny} if
\begin{enumerate}
\item[(F1)] the kernel of $\phi$ consists of nilpotent elements, and
\item[(F2)] for each $s\in S$ there exists $t\ge 0$ such that $s^{p^t}\in\Im \phi$.
\end{enumerate}
\end{definition}

We can now rephrase Quillen's theorem as follows:

\begin{theorem}
\pushQED{\qed}
The restriction maps induce an $F$-isomorphism
\[ 
H^*(G,k)\lto\varprojlim_{E} H^*(E,k)\,.\qedhere
\]
\end{theorem}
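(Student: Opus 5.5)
We prove (F1) and (F2) separately for the map $\phi\colon H^*(G,k)\to\varprojlim_E H^*(E,k)$ given by the product of the restriction maps $\res_{G,E}$. First we check that $\phi$ is well defined, i.e.\ that its image satisfies (C1) and (C2). Restriction is transitive, $\res_{E,E'}\circ\res_{G,E}=\res_{G,E'}$, which gives (C2). Conjugation by $g\in G$ acts trivially on $H^*(G,k)$, so $c_g^*\circ\res_{G,E}=\res_{G,E'}$, which gives (C1).

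\textbf{Condition (F1).} The kernel of $\phi$ is the intersection of the kernels of the restrictions to elementary abelian subgroups. By the preceding nilpotence theorem every such element is nilpotent, so (F1) follows at once.

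\textbf{Condition (F2).} This is the substantive part, and I would follow Quillen's transfer-free argument via the flag variety. First reduce to the case $G\le U=U_n(\mathbb F_p)$ or $GL_n(\mathbb F_p)$, using a faithful representation $G\hookrightarrow GL_n(\mathbb F_p)$. Let $X$ be the variety of complete flags, a finite $G$-set, or more precisely use the equivariant cohomology $H^*_G(X)=\bigoplus_{x\in X/G}H^*(G_x,k)$. The key steps are the following.
\begin{enumerate}
\item There is a map $H^*(G,k)\to H^*_G(X)$ which is an $F$-isomorphism onto the equalizer of $H^*_G(X)\rightrightarrows H^*_G(X\times X)$. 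Quillen proves this by embedding into $H^*_{G}$ of a product of a ``Chern class'' sphere bundle and comparing spectral sequences.
\item The stabilisers $G_x$ are subgroups of unipotent groups, and one reduces inductively in $|G|$. For $p$-groups one shows directly that an element $u$ compatible on all proper maximal elementary abelian subgroups has $u^{p^t}$ lifting. This uses a Serre-type argument: for a non-elementary-abelian $p$-group, the product of Bocksteins $\prod\beta(x_i)$ over degree-one classes vanishes, which kills the obstruction after raising to $p$-th powers.
\item Glue the compatible families using (C1) and (C2), via induction on the poset of subgroups and a Mayer--Vietoris/equalizer argument. Each gluing step costs a Frobenius power $p^{t}$, and finiteness of the poset bounds the total power.
\end{enumerate}

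The main obstacle is step 2. It requires a genuine lifting statement, namely that the equalizer sequence becomes exact up to $F$-isomorphism. Here I would use the result that a map of finitely generated graded-commutative $k$-algebras is an $F$-isomorphism iff the induced map on $\MaxSpec$ over $\bar k$ is a bijection, i.e.\ a homeomorphism of varieties. With that criterion, (F2) reduces to showing that $\varinjlim_E\bar V_E\to\bar V_G$ is bijective. Surjectivity follows from finite generation over the image (Evens--Venkov) together with (F1). Injectivity amounts to showing that two points coming from $E,E'$ with the same image are identified via conjugation and inclusion. I would prove injectivity by applying Serre's theorem to the centraliser of a maximal torus-like elementary abelian subgroup, and I would flag this as the delicate point.
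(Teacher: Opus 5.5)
Your verification of (C1), (C2) and of (F1) is correct and matches the paper. The paper gives no proof of its own: it gets (F1) from the preceding nilpotence theorem, and it takes (F2) from Quillen's result that for every compatible family $(u_E)$ some $(u_E^{p^t})$ lies in the image.

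Your argument for (F2), however, has a genuine gap.

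\textbf{The finite flag set gives no reduction.} Every $p$-subgroup of $GL_n(\mathbb F_p)$ fixes a complete flag, so for a $p$-group $G$ some stabiliser $G_x$ is all of $G$, and the induction on $|G|$ never starts. For general $G$, your step 1 only recovers the passage to a Sylow subgroup via stable elements, which uses transfer, so it is not ``transfer-free''.

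What makes Quillen's argument work is the \emph{complex} flag manifold $F=U(n)/T$ for a faithful embedding $G\subseteq U(n)$. By Leray--Hirsch, $H^*_G(F)$ is free over $H^*(G,k)$, so $H^*(G,k)\to H^*_G(F)\rightrightarrows H^*_G(F\times F)$ is an honest equalizer. Moreover all isotropy groups $G\cap gTg^{-1}$ are abelian, and $H^*_G(F)\cong H^*_T(G\backslash U(n))$. One then needs the theorem for torus actions on spaces, which comes from the localisation theorem. That is the actual source of preimages up to $p^t$-th powers, and it is missing from your plan.

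\textbf{Step 2 cannot substitute for it.} Serre's vanishing of $\prod\beta(x_i)$ for a non-elementary-abelian $p$-group is the engine of the Quillen--Venkov proof of \emph{nilpotence}. It shows that classes restricting to zero on maximal subgroups are nilpotent, so it controls the kernel. It manufactures no class in $H^*(G,k)$ from compatible data on subgroups. Step 3 then merely asserts the gluing statement that would be needed, namely F-surjectivity of $H^*(G,k)\to\varprojlim_{H<G}H^*(H,k)$.

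\textbf{The fallback via varieties does not close the gap either.} Reducing (F2) to bijectivity of $\varinjlim_E\bar V_E\to\bar V_G$ can be made to work, but only after two observations:
\begin{enumerate}
\item $\varprojlim_E H^*(E,k)$ is finite over $H^*(G,k)$ by Evens--Venkov; your criterion needs this integrality and fails without it.
\item $\bigsqcup_E\bar V_E\to\MaxSpec\varprojlim_E H^*(E,k)$ is surjective.
\end{enumerate}
Once these are in place, surjectivity of $\varinjlim_E\bar V_E\to\bar V_G$ is easy from (F1), and the entire content sits in its injectivity. That injectivity is Quillen's stratification. In the paper, this bijectivity and the uniqueness of origins in Theorem~\ref{th:origin} are \emph{consequences} of the F-isomorphism theorem, not inputs to it. Your one-line suggestion, applying Serre's theorem to a centraliser of a ``maximal torus-like'' elementary abelian subgroup, is not an argument.

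As written, you have proved (F1) but not (F2). Either cite Quillen for (F2), as the paper does, or carry out the equivariant localisation argument.
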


Now if $R\to S$ is an $F$-isomorphism of finitely generated 
graded-commutative $k$-algebras then the induced map $\MaxSpec S \to \MaxSpec R$
is a \emph{bijection}.

For the moment, let us assume that $k$ is algebraically
closed. Then this says that
$\varinjlim_{E}\bar V_E\to \bar V_G$, as a map of varieties, is \emph{bijective}
(but not necessarily invertible).

Here, $\varinjlim_{E}\bar V_E$ is the quotient of the disjoint union
of the $\bar V_E$ by the equivalence relation induced
by the conjugations and inclusions.

\subsection*{Prime ideals and subgroup theorems}

Now we discuss prime ideals in $H^*(G,k)$, and we drop the assumption
that $k$ is algebraically closed.  We write
\[V_G\colonequals\Spec H^*(G,k)\]
and for $X\in\bfK(\Inj kG)$ we set
\[V_G(X)\colonequals\supp_{H^*(G,k)}(X).\]

By Quillen's theorem, for each
$\fp \in V_G$ there exists an elementary abelian $p$-subgroup
$E\le G$ such that $\fp$ is in the image of $\res_{G,E}^*$.  We say
that $\fp$ \emph{originates} in such an $E$ if there
does not exist a proper subgroup $E'$ of $E$ such that $\fp$ is in the
image of $\res_{G,E'}^*$. In this language, \cite[Theorem~10.2]{Quillen:1971c} reads:

\begin{theorem}
\label{th:origin}
For each $\fp \in V_G$, the pairs $(E,\fq)$ where $\fq\in V_E$,  $\fp=\res_{G,E}^*(\fq)$ and
such that $\fp$ originates in $E$ are all $G$-conjugate. \qed
\end{theorem}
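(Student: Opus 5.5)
The plan is to deduce the statement from Quillen's $F$-isomorphism $H^*(G,k)\to\varprojlim_E H^*(E,k)$ by passing to prime spectra. I would work throughout up to $F$-isomorphism, so nilpotents and $p^t$-th powers can be ignored.

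\emph{Step 1: $F$-isomorphisms are homeomorphisms on $\Spec$.} Let $\phi\colon R\to S$ be an $F$-isomorphism of graded-commutative $k$-algebras. By (F1) every prime of $R$ contains $\Ker\phi$, so $\Spec\phi$ hits all of $\Spec R$. For injectivity, take $\fq\in\Spec S$; by (F2) it is recovered from $\phi^{-1}(\fq)$ as
\[\fq=\{s\in S\mid s^{p^t}\in\phi(\phi^{-1}(\fq))\text{ for some }t\ge 0\}.\]
Closedness follows because $S$ is integral over $\Im\phi$. Applied to Quillen's theorem, this identifies $V_G$ with $\Spec\varprojlim_E H^*(E,k)$.

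\emph{Step 2: the set of origins.} For an elementary abelian $E$, let $\rho_E\in H^*(E,k)$ be the product of all nonzero elements of $H^2(E,k)$ lying in the image of the Bockstein, i.e.\ the nonzero linear forms in the polynomial part. A prime $\fq\in V_E$ lies in the image of $\res^*_{E,E'}$ for some proper $E'<E$ exactly when $\rho_E\in\fq$. Indeed, the restriction of each linear form $\beta(x)$ to $\ker x$ is zero, giving one direction. Conversely, if some linear form lies in $\fq$, then $\fq$ is pulled back from the corresponding hyperplane subgroup, using that $H^*(E')\to H^*(E)/(\ldots)$ is an $F$-isomorphism. Hence $V_E^+\colonequals V_E\smallsetminus V(\rho_E)$ is precisely the set of primes of $V_E$ originating in $E$. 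Here I use that restriction to $E'$ factors through $E''$ whenever $E'\le E''$.

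\emph{Step 3: separating non-conjugate origins.} Suppose $(E,\fq)$ and $(E',\fq')$ both originate and lie over the same $\fp$. For a candidate pair I would build a compatible family $(u_F)_F\in\varprojlim_F H^*(F,k)$ by setting
\[u_F\colonequals \prod_{g}\ c_g^*\bigl(\text{restriction of }\rho_E\bigr)\]
when $F$ is $G$-conjugate to a subgroup of $E$ via $F\le {}^gE$, and $u_F\colonequals 0$ otherwise. The product runs over the relevant double coset representatives, so that conditions (C1) and (C2) hold. Some power $u^{p^t}$ then comes from an element $w\in H^*(G,k)$.
\begin{enumerate}
\item If $E'$ is not conjugate to a subgroup of $E$, then $w$ restricts to zero on $E'$, so $w\in\fp$. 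But the restriction of $w$ to $E$ is not in $\fq$, since $\fq\in V_E^+$ and we may arrange that no factor lies in $\fq$; so $w\notin\fp$, a contradiction.
\item By symmetry, $E$ and $E'$ are therefore $G$-conjugate, and after conjugating we may assume $E=E'$.
\end{enumerate}

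\emph{Step 4: one fibre is one orbit.} It remains to see that two primes $\fq,\fq'\in V_E^+$ over $\fp$ are conjugate under $N_G(E)$. Suppose $\fq'\ne n\fq$ for all $n$. Prime avoidance in the noetherian ring $H^*(E,k)$ gives $a\in\fq'$ with $a\notin n\fq$ for all $n\in N_G(E)$. Form the norm $\prod_n c_n^*(a)\cdot\rho_E$. This extends by the same recipe to an element of $\varprojlim$, which vanishes on subgroups not conjugate to $E$ and is compatible with (C1) on $E$ by $N_G(E)$-invariance. A $p^t$-th power lifts to $H^*(G,k)$, and the lift lies in $\fp$ (via $\fq'$) but not in $\fp$ (via $\fq$), a contradiction.

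I expect the main obstacle to be Steps 3 and 4, namely checking that the proposed $u_F$ really satisfy (C1) and (C2). The point is that for $F\le E$ conjugate into $E$ in several ways, the different restrictions must be multiplied together consistently, and $\rho_E$ restricted to any proper subgroup vanishes, which is what makes the "zero elsewhere" definition compatible. I would first treat the inclusion conditions using exactly this vanishing of $\rho_E$ on proper subgroups, and then handle conjugation by averaging over $N_G(E)$.
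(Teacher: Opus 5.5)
The paper does not prove this statement; it quotes Quillen's Theorem~10.2, so your argument has to stand on its own. Steps~1 and~2 are fine. One small correction: define $\rho_E$ as $\prod\beta(\zeta)$ over the nonzero $\zeta\in H^1(E,\mathbb F_p)$. Taking all $k$-rational Bockstein classes gives infinitely many factors when $k$ is infinite.

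\textbf{The gap is in Steps~3 and~4: the families you write down are not elements of $\varprojlim_F H^*(F,k)$.}

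\emph{Overgroups of $E$.} Condition (C2) must hold for every inclusion, including ${}^gE\le F$ with $F$ an elementary abelian subgroup strictly larger than ${}^gE$. Such an $F$ is not conjugate to a subgroup of $E$, so your recipe sets $u_F=0$. Then (C2) forces $u_{{}^gE}=\res_{F,{}^gE}(u_F)=0$, whereas you need $u_{{}^gE}$ to be a nonzero power of $\rho_{{}^gE}$. The vanishing of $\rho_E$ on proper subgroups only handles inclusions going \emph{down} from conjugates of $E$; nothing in your recipe handles overgroups. So your families lie in the inverse limit only when $E$ is a maximal elementary abelian subgroup. The theorem, however, concerns arbitrary $E$, and origins are typically not maximal.

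\emph{Extending $b$ in Step~4.} To place $\prod_n c_n^*(a)\cdot\rho_E$ in the inverse limit, you must also extend the $N_G(E)$-invariant class $b=\prod_n c_n^*(a)$ from $E$ to every overgroup of every conjugate of $E$, compatibly with restriction. There is no canonical such extension, since different retractions $F\to E$ pull $b$ back to different classes. Separating non-equivalent pairs by elements of the limit is exactly the substantial part of Quillen's work, namely bijectivity of $\varinjlim_E V_E\to V_G$.

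\textbf{How to repair your route.}

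\emph{Step~3.} For a conjugate $E''\le F$ of $E$, put $\rho_{F,E''}=\prod\beta(\zeta)$ over $\zeta\in H^1(F,\mathbb F_p)$ with $\zeta|_{E''}\neq0$. This restricts to $0$ on every $F'\le F$ not containing $E''$, and to $\rho_{F',E''}^{p^{r_F-r_{F'}}}$ otherwise, where $r_F$ is the rank. Hence the \emph{sum} $u_F=\sum_{E''\le F}\rho_{F,E''}^{p^{N-r_F}}$, over the $G$-conjugates of $E$ inside $F$ with $N$ fixed, is a homogeneous element of $\varprojlim$. It satisfies $u_E=\rho_E^{p^{N-r_E}}$, and $u_{E'}=0$ whenever $E'$ contains no conjugate of $E$. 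The remaining case, where $E'$ properly contains a conjugate of $E$, is ruled out directly by the definition of origination.

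\emph{Step~4.} Multiply each summand by a compatible extension of $c_g^*(b)$ to $F$. One choice is the product of $\pi^*c_g^*(b)$ over all retractions $\pi\colon F\to E''={}^gE$, raised to a suitable $p$-power.

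\emph{Prime avoidance.} Prime avoidance only gives $\fq'\subseteq n\fq$ for some $n$, not an element $a$ directly. You still need incomparability for the finite extension $H^*(G,k)\to H^*(E,k)$, or a symmetric argument, to conclude $\fq'=n\fq$ in that case.
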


For any subgroup $H\le G$, formal properties of restriction and induction
prove the following.

\begin{lemma}\label{le:supp-res-ind}
Let $\fp \in V_G$ and set $U=(\res_{G,H}^*)^{-1}\{\fp \}$. If $X\in
\bfK(\Inj kG)$ and $Y\in \bfK(\Inj kH)$ then
\[\Gamma_\fp(X)\da_H\cong \Gamma_U(X\da_H)
  \qquad\text{and}\qquad\Gamma_\fp (Y\ua^G)\cong \Gamma_U (Y)\ua^G.\]
\end{lemma}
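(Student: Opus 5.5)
Both isomorphisms follow from Proposition~\ref{pr:change-cats-rings}, applied once to restriction and once to induction. In each case the ring homomorphism is $\alpha=\res_{G,H}\colon H^*(G,k)\to H^*(H,k)$. The locally closed set is $Y=\{\fp\}\subseteq (V_G)^\vee$. This is locally closed since $H^*(G,k)$ is noetherian, so that $\{\fp\}=V(\fp)\smallsetminus(\text{the Thomason set of primes strictly containing }\fp\text{ or not containing }\fp)$. With these choices $(\alpha^*)^{-1}Y=U$, so the proposition gives exactly $F\Gamma_\fp\cong\Gamma_U F$.

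For restriction, take $F=(-)\da_H\colon\bfK(\Inj kG)\to\bfK(\Inj kH)$. Restriction is exact and preserves products and coproducts, because it preserves injectives ($kG$ is free over $kH$) and is computed degreewise. For the linearity hypothesis, the action of $H^*(G,k)$ on $X$ is $\zeta\mapsto\zeta\otimes X$. Restriction is a tensor functor, so it sends $\zeta\otimes X$ to $\res_{G,H}(\zeta)\otimes X\da_H$. The square in Proposition~\ref{pr:change-cats-rings} therefore commutes, and we get $\Gamma_\fp(X)\da_H\cong\Gamma_U(X\da_H)$.

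For induction, take $F=(-)\ua^G\colon\bfK(\Inj kH)\to\bfK(\Inj kG)$. Here the roles are reversed. The $H^*(G,k)$-action on $\bfK(\Inj kH)$ is the one induced via $\alpha$, while $\bfK(\Inj kG)$ carries its own action. I would therefore apply Lemma~\ref{le:change-rings} on the $kH$-side, giving $\Gamma_U=\Gamma_{\{\fp\}}$ for the induced action, and then Lemma~\ref{le:change-cats} to $F$ as an $H^*(G,k)$-linear functor. Induction coincides with coinduction for finite groups, so it preserves products and coproducts. It also preserves injectives, being both left and right adjoint to restriction.

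The main obstacle is checking that induction is $H^*(G,k)$-linear. Concretely, for $\zeta\in H^*(G,k)$ and $Y\in\bfK(\Inj kH)$, we need $(\res_{G,H}(\zeta)\otimes Y)\ua^G$ to agree with $\zeta\otimes Y\ua^G$ as endomorphisms of $Y\ua^G$. I would deduce this from naturality of the Frobenius reciprocity isomorphism $(X\da_H\otimes_k Y)\ua^G\cong X\otimes_k Y\ua^G$ in $X$, applied to the morphism $\zeta\colon k\to\Si^n k$ (that is, to injective resolutions of $k$). With linearity in hand, Lemma~\ref{le:change-cats} yields $\Gamma_\fp(Y\ua^G)\cong\Gamma_U(Y)\ua^G$.
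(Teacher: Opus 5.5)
Your proof is correct. For the first isomorphism it is the paper's argument: Proposition~\ref{pr:change-cats-rings} applied to restriction along $\res_{G,H}$.

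For the second isomorphism you take a genuinely different route. The paper applies no change-of-categories result to induction. It derives the second isomorphism from the first by the chain
\[\Gamma_\fp(Y\ua^G)\cong\Gamma_\fp(k)\otimes_k Y\ua^G\cong(\Gamma_\fp(k)\da_H\otimes_k Y)\ua^G\cong(\Gamma_U(k)\otimes_k Y)\ua^G\cong\Gamma_U(Y)\ua^G.\]
This uses three ingredients: the tensor description $\Gamma_Z\cong\Gamma_Z(k)\otimes_k-$ on both the $G$-side and the $H$-side, the object-level projection formula, and the first isomorphism at $X=k$.

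You instead treat induction on the same footing as restriction. You pull the $H^*(G,k)$-action back to $\bfK(\Inj kH)$ along $\res_{G,H}$ and identify $\Gamma_{\{\fp\}}$ for that action with $\Gamma_U$ via Lemma~\ref{le:change-rings}. You then apply Lemma~\ref{le:change-cats} to $(-)\ua^G$. You rightly note that Proposition~\ref{pr:change-cats-rings} itself does not fit here, since the ring map points against the functor, so your opening sentence overstates this slightly.

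Your route costs two checks the paper avoids. You must show that induction preserves products, which you get from induction $\cong$ coinduction. You must also show that induction is $H^*(G,k)$-linear, which needs Frobenius reciprocity as an isomorphism natural in $X$, not merely on objects. In exchange, you never use the tensor description of $\Gamma_Z$. Your argument would therefore work for any exact, product- and coproduct-preserving functor that is linear in this twisted sense.

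One small slip: the set of primes ``strictly containing $\fp$ or not containing $\fp$'' is just the complement of $\{\fp\}$. That set is not Thomason unless $\fp$ is minimal. Use $V(\fp)\smallsetminus\{\fp\}$ instead, which is specialisation closed since $H^*(G,k)$ is noetherian. Then $\{\fp\}=V(\fp)\smallsetminus\bigl(V(\fp)\smallsetminus\{\fp\}\bigr)$ is locally closed in $(V_G)^\vee$.
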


\begin{proof}
 The first isomorphism follows from Proposition~\ref{pr:change-cats-rings}.
  The second isomorphism follows from the first and Frobenius
  reciprocity, since
  \begin{align*}
    \Gamma_\fp (Y\ua^G)&\cong \Gamma_\fp (k) \otimes_k Y\ua^G\\
                       &\cong(\Gamma_\fp (k)\da_H \otimes_k Y)\ua^G\\
                       &\cong(\Gamma_U (k) \otimes_k Y)\ua^G\\
&\cong\Gamma_U(Y)\ua^G.\qedhere
  \end{align*}
\end{proof}

Using Mayer--Vietoris triangles one can actually show that
$\Gamma_U=\coprod_{\fq\in U}\Gamma_\fq$ since $U$ is discrete; see
Proposition~\ref{pr:loc-closed-MV}. But this is not needed for the
following.

\begin{proposition}\label{pr:V-res-ind}
  For $Y\in \bfK(\Inj kH)$ we have
  \[V_G(Y\ua^G)=\res_{G,H}^* V_H(Y).\]
\end{proposition}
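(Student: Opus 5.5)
Fix $\fp\in V_G$ and put $U=(\res_{G,H}^*)^{-1}\{\fp\}$. By definition, $\fp\in V_G(Y\ua^G)$ means $\Gamma_\fp(Y\ua^G)\neq 0$. The second isomorphism of Lemma~\ref{le:supp-res-ind} gives $\Gamma_\fp(Y\ua^G)\cong\Gamma_U(Y)\ua^G$. The statement then reduces to the chain
\[
\Gamma_U(Y)\ua^G\neq 0\iff\Gamma_U(Y)\neq 0\iff U\cap V_H(Y)\neq\varnothing .
\]
The last condition says precisely that $\fp\in\res_{G,H}^*V_H(Y)$.

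For the first equivalence we use that induction is faithful on objects. The functor $(-)\ua^G=-\otimes_{kH}kG$ is exact and takes complexes of injective $kH$-modules to complexes of injective $kG$-modules. As a complex of $kH$-modules, the restriction $Z\ua^G\da_H$ contains $Z$ as a direct summand, since $kG$ is free over $kH$ with $kH$ as a summand. Hence $Z\ua^G\cong 0$ in $\bfK(\Inj kG)$ forces $Z\cong 0$ in $\bfK(\Inj kH)$. Here we also use that restriction is well defined on homotopy categories, which holds because $kG$ is free over $kH$.

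The second equivalence is the main point. By Lemma~\ref{le:support-loc-closed} we have $\supp(\Gamma_U(Y))=V_H(Y)\cap U$; this applies because $U$ is locally closed, being the preimage of a locally closed point under a continuous map. The implication $\Gamma_U(Y)\neq 0\Rightarrow U\cap V_H(Y)\neq\varnothing$ requires that support detects nonzero objects in $\bfK(\Inj kH)$. This holds by Proposition~\ref{pr:noeth-supp}, since $H^*(H,k)$ is noetherian by Evens--Venkov. The converse is immediate: if $\fq\in U\cap V_H(Y)$, then $\Gamma_\fq\Gamma_U(Y)\cong\Gamma_\fq(Y)\neq 0$ by Proposition~\ref{pr:loc-closed}.

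The step I expect to need the most care is the identification $\Gamma_\fp$ on $G$ with $\Gamma_U$ on $H$ through Lemma~\ref{le:supp-res-ind}, together with the detection property. Both are available from earlier results, so the argument is essentially formal once they are combined.
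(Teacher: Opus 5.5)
Your proof is correct and follows the paper's argument. The paper's proof uses the second isomorphism of Lemma~\ref{le:supp-res-ind} and faithfulness of induction to reduce $\fp\in V_G(Y\ua^G)$ to $\Gamma_U(Y)\neq 0$, then passes to some $\fq\in U$ with $\Gamma_\fq(Y)\neq 0$; it leaves that last step implicit. You supply that step correctly, via Lemma~\ref{le:support-loc-closed} and the detection property of Proposition~\ref{pr:noeth-supp}, which is the mechanism of Lemma~\ref{le:compare-supp}. For faithfulness, the splitting you actually need is $kG=kH\oplus k[G\smallsetminus H]$ as $kH$-bimodules, not just as one-sided modules.
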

\begin{proof}
It follows from
  Lemma~\ref{le:supp-res-ind} and faithfulness of induction that the condition $\fp\in V_G(Y\ua^G)$
  is equivalent to: there exists $\fq\in V_H$ such that
  $\res^*_{G,H}(\fq) = \fp$ and $\Gamma_\fq(Y)\neq 0$.
\end{proof}

The following proof of the subgroup theorem for a pair $H\le G$ uses
that $\bfK(\Inj kG)$ is stratified; so at this stage it works when $G$
is elementary abelian.

\begin{theorem}
\label{th:subgroup}
Let $H\le G$ and suppose that $G$ is an elementary abelian $p$-group. For  $X$ in $\bfK(\Inj
kG)$ we have
\[ 
 V_{H}(X{\da_{H}})=(\res_{G,H}^*)^{-1} V_G(X)\,. 
\]
\end{theorem}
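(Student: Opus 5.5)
The plan is to prove the two inclusions separately. Only the inclusion $\supseteq$ uses that $\bfK(\Inj kG)$ is stratified (Theorem~\ref{th:strat-elem-abelian}). The basic tool is Lemma~\ref{le:supp-res-ind}. For $\fp\in V_G$ set $U=(\res_{G,H}^*)^{-1}\{\fp\}$. Then $\Gamma_\fp(X)\da_H\cong\Gamma_U(X\da_H)$.

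For the inclusion $\subseteq$, let $\fq\in V_H(X\da_H)$ and put $\fp=\res^*_{G,H}(\fq)$. Then $\fq\in U$, so $\Gamma_U(X\da_H)\neq 0$ by Lemma~\ref{le:support-loc-closed}, since $\supp(\Gamma_U(X\da_H))=V_H(X\da_H)\cap U\ni\fq$. Hence $\Gamma_\fp(X)\da_H\neq 0$, and therefore $\Gamma_\fp(X)\neq0$. This gives $\fp\in V_G(X)$, that is, $\fq\in(\res^*_{G,H})^{-1}V_G(X)$. This direction is formal and holds for any finite group.

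For the inclusion $\supseteq$, let $\fq\in V_H$ with $\fp=\res^*_{G,H}(\fq)\in V_G(X)$. Since $G$ is elementary abelian, Theorem~\ref{th:strat-elem-abelian} applies, and the minimality condition gives $\loc(\Gamma_\fp(X))=\loc(\Gamma_\fp(k))=\Gamma_\fp\bfK(\Inj kG)$. Restriction is exact and preserves coproducts, so applying $\da_H$ yields
\[\Gamma_U(k\da_H)\cong\Gamma_\fp(k)\da_H\in\loc(\Gamma_\fp(X)\da_H)=\loc(\Gamma_U(X\da_H)).\]
Here $k\da_H=k$ is the tensor unit of $\bfK(\Inj kH)$, and $\supp_{H^*(H,k)}(k)=V_H$, because $\supp_R$ of the unit is the whole spectrum when $R=\End^*(\one)$ (Proposition~\ref{pr:supp-square}). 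So $\fq\in V_H(\Gamma_U(k))$ by Lemma~\ref{le:support-loc-closed}. Support is monotone along $\loc$, since each $\Gamma_\fq$ is exact and preserves coproducts. It follows that $\fq\in V_H(\Gamma_U(X\da_H))=V_H(X\da_H)\cap U\subseteq V_H(X\da_H)$.

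The main obstacle is justifying $\loc(\Gamma_\fp(X))=\loc(\Gamma_\fp(k))$ when $\fp\in V_G(X)$. Stratification gives minimality of $\Gamma_\fp\bfK(\Inj kG)$ among localising subcategories in the elementary abelian case. Since $\Gamma_\fp(X)$ is a nonzero object there, it generates the whole of it, and this whole contains $\Gamma_\fp(k)$. One should also check that $U$ is locally closed in $V_H$, so that $\Gamma_U$ makes sense. This holds because $\{\fp\}$ is locally closed and preimages of locally closed sets under the continuous map $\res^*_{G,H}$ remain locally closed, as noted before Lemma~\ref{le:change-rings}.
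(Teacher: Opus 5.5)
Your proof is correct, and it takes a genuinely different route from the paper's.

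\textbf{The paper's route.} The paper works prime by prime and through induction. It uses Proposition~\ref{pr:V-res-ind} to get $V_G(\Gamma_\fq(k)\ua^G)=\{\fp\}$; that proposition rests on the second isomorphism of Lemma~\ref{le:supp-res-ind}, and hence on Frobenius reciprocity. Stratification of $\bfK(\Inj kG)$ then gives $\loc(\Gamma_\fq(k)\ua^G)=\loc(\Gamma_\fp(k))$. Finally it runs the chain $\Gamma_\fq(X\da_H)\neq 0\iff(\Gamma_\fq(k)\otimes_k X\da_H)\ua^G\neq 0\iff\Gamma_\fq(k)\ua^G\otimes_k X\neq 0\iff\Gamma_\fp(X)\neq 0$, using faithfulness of induction and Frobenius reciprocity. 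Both inclusions come out of this single chain of equivalences.

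\textbf{Your route.} You use only restriction. The first isomorphism $\Gamma_\fp(X)\da_H\cong\Gamma_U(X\da_H)$ of Lemma~\ref{le:supp-res-ind} gives $\subseteq$ at once. This makes the paper's remark that this inclusion is formal, and valid for any finite group, fully explicit. For $\supseteq$ you push $\loc(\Gamma_\fp(X))=\loc(\Gamma_\fp(k))$ down along the exact, coproduct-preserving functor $\da_H$. You then read off supports over the whole fibre $U$ rather than a single prime $\fq$.

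\textbf{What each buys.} Your argument avoids induction, Frobenius reciprocity, faithfulness of induction and Proposition~\ref{pr:V-res-ind} entirely. It also shows directly that the entire fibre over $\fp$ lies in $V_H(X\da_H)$. The paper's argument yields, along the way, that $\Gamma_\fq(k)\ua^G$ generates $\Gamma_\fp\bfK(\Inj kG)$, a statement about induced objects of independent interest. It also fits the induction-based pattern used throughout the rest of the section.
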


The proof shows that the inclusion $\subseteq$  is purely
formal, while there are other tensor triangulated categories  where the
analogue of $\supseteq$ does not hold.

\begin{proof}
Let $\fq \in V_{H}$ and $\fp=\res_{G,H}^*(\fq)$. Then
Proposition~\ref{pr:V-res-ind} yields
\[ 
 V_G(\Gamma_\fq(k)\ua^G)=\{\fp \}= V_G(\Gamma_\fp(k)),
\] 
and the classification of localising subcategories for $kG$
(Theorem~\ref{th:strat-elem-abelian})  implies 
\[ 
\loc((\Gamma_\fq k)\ua^G)=\loc(\Gamma_\fp(k)). 
\]
Thus,
\begin{align*} 
\Gamma_\fq(X\da_{H})\ne 0 
&\iff \Gamma_\fq(k)\otimes_k X\da_{H}\ne 0 \\
&\iff (\Gamma_\fq(k)\otimes_k X\da_{H})\ua^G\ne 0 \\
&\iff \Gamma_\fq(k)\ua^G\otimes_k X \ne 0 \\
&\iff \Gamma_\fp(k)\otimes_k X\ne 0 \\
&\iff \Gamma_\fp (X) \ne 0,
\end{align*}
where the second equivalence uses that $(-)\ua^G$ is faithful,  the
third equivalence uses Frobenius reciprocity, and  the fourth
equivalence uses the above equality of localising subcategories.
\end{proof}

The following result is the analogue of
Theorem~\ref{th:filtration-elem-abelian} for bounded complexes of
finitely generated $kG$-modules.

\begin{theorem}\label{th:filtration-finite-group}
  There are elementary abelian subgroups $E_1,\ldots,E_t$ such that
  for each $X\in \bfK(\Inj kG)$
  \[\thick_\otimes(X)=\thick\left(\bigoplus_{i=1}^t X\da_{E_i}\ua^G\right).\]
\end{theorem}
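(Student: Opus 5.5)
The plan is to copy the proof of Theorem~\ref{th:filtration-elem-abelian} word for word, now inside $\bfK(\Inj kG)$ instead of $\bfD^\fin(kG)$. I take the subgroups $E_1,\ldots,E_t$ from Carlson's filtration in Proposition~\ref{pr:filtration-elem-abelian}. This filtration does not depend on $X$, so the same subgroups work for every object. I read $\thick_\otimes(X)$ as the smallest thick subcategory of $\bfK(\Inj kG)$ that contains $X$ and is closed under $-\otimes_k Y$ for compact $Y$, that is, for $Y\in\bfK^{\mathrm c}(\Inj kG)\simeq\bfD^\fin(kG)$. For arbitrary $Y$ the ideal property below is not clear to me, and I only claim this reading.

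First I need the formal ingredients at the level of $\bfK(\Inj kG)$. Restriction $(-)\da_H$ and induction $(-)\ua^G$ act degreewise on complexes of injectives and preserve injectivity, since $kG$ is free over $kH$ and group algebras are self-injective. Both are therefore exact functors between the homotopy categories. Frobenius reciprocity $(X\da_H\otimes_k Y)\ua^G\cong X\otimes_k Y\ua^G$ holds degreewise, naturally in both variables, and so passes to $\bfK(\Inj)$.

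Next I need tensoring with finite dimensional modules. For a finite dimensional $kG$-module $W$ and $X\in\bfK(\Inj kG)$, the complex $X\otimes_k W$ again consists of injectives. A short exact sequence $0\to W'\to W\to W''\to 0$ gives a degreewise exact sequence $0\to X\otimes_k W'\to X\otimes_k W\to X\otimes_k W''\to 0$ of complexes of injectives. This sequence is degreewise split, so it yields an exact triangle in $\bfK(\Inj kG)$.

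For the inclusion $\supseteq$, specialise Frobenius reciprocity to $X\da_{E_i}\ua^G\cong X\otimes_k(k\da_{E_i}\ua^G)$. The right hand side lies in $\thick_\otimes(X)$, because $k\da_{E_i}\ua^G$ is a finitely generated module and hence compact.

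For the ideal property, let $Y$ be compact. Then
\[X\da_{E_i}\ua^G\otimes_k Y\cong X\otimes_k Y\da_{E_i}\ua^G\cong \bigl(X\da_{E_i}\otimes_k Y\da_{E_i}\bigr)\ua^G.\]
Here $Y\da_{E_i}$ lies in $\bfD^\fin(kE_i)=\thick(k)$. Tensoring with $X\da_{E_i}$ and then inducing are exact functors, so $X\da_{E_i}\otimes_k Y\da_{E_i}$ lies in $\thick(X\da_{E_i})$, and its induction lies in $\thick(X\da_{E_i}\ua^G)$. Hence $\thick\bigl(\bigoplus_i X\da_{E_i}\ua^G\bigr)$ is a tensor ideal in the above sense.

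For the inclusion $\subseteq$, tensor Carlson's filtration $0=V_0\subseteq\cdots\subseteq V_t=k\oplus V$ with $X$. By the triangles constructed above, $X\oplus(X\otimes_k V)$ is an iterated extension of the objects $X\otimes_k (W_i)\ua^G\cong(X\da_{E_i}\otimes_k W_i)\ua^G$. Since $W_i$ is a finite dimensional $kE_i$-module, $W_i\in\thick(k)$ in $\bfD^\fin(kE_i)$, so $X\da_{E_i}\otimes_k W_i\in\thick(X\da_{E_i})$. Inducing and passing to the direct summand $X$ gives $X\in\thick\bigl(\bigoplus_i X\da_{E_i}\ua^G\bigr)$.

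I expect the main obstacle to be justifying the homotopy-category versions of the ingredients rather than the combinatorics. Specifically, one must check that $X\otimes_k W$ stays graded-injective, that short exact sequences of such complexes give triangles, and that Frobenius reciprocity is an isomorphism in $\bfK(\Inj)$ and not merely in the derived category. The first point I would handle by noting that $X^n\otimes_k W$ is a summand of copies of $X^n\otimes_k kG$, which is free. The second follows from degreewise splitting, which holds because injectives split off.
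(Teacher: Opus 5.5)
Your proposal is correct and is the paper's argument: the paper's proof is the single remark that the proof of Theorem~\ref{th:filtration-elem-abelian} carries over without change, and you carry it over the same way, using Carlson's filtration, Frobenius reciprocity and degreewise split sequences of complexes of injectives, while also supplying the $\bfK(\Inj kG)$-level checks that the paper leaves implicit. Your reading of $\thick_\otimes(X)$ as closure under tensoring with compact objects is exactly what that proof uses (there $Y$ ranges over $\bfD^\fin(kG)$), and it is in fact forced: if arbitrary $Y$ were allowed, then for $X=k$ the left side would be all of $\bfK(\Inj kG)$, while the right side consists only of compact objects.
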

\begin{proof}
The proof of Theorem~\ref{th:filtration-elem-abelian} carries over
without any changes.
\end{proof}

We continue with a variant of the subgroup theorem which is sufficient
for us; it uses Quillen's stratification of group cohomology, as
formulated in Theorem~\ref{th:origin}.

\begin{theorem}\label{th:subgroup-elem-abelian}
Let $E\le G$ be an elementary abelian $p$-subgroup. For $X,Y$ in
$\bfK(\Inj kG)$ we have
\[V_G(X)\subseteq V_G(Y)\qquad\implies\qquad V_E(X\da_E)\subseteq V_E(Y\da_E).\]
\end{theorem}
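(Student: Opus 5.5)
The plan is to reduce the statement to a pointwise claim about a single prime, using Quillen's Theorem~\ref{th:origin} to make that claim independent of choices. Fix $\fp\in V_G$. The claim is: for every elementary abelian $E\le G$ and every $\fq\in V_E$ with $\res^*_{G,E}(\fq)=\fp$,
\[\fq\in V_E(X\da_E)\iff \fp\in V_G(X).\]
The theorem follows at once from the claim. Indeed, if $\fq\in V_E(X\da_E)$, put $\fp=\res^*_{G,E}(\fq)$. The claim for $X$ gives $\fp\in V_G(X)\subseteq V_G(Y)$, and the claim for $Y$ then gives $\fq\in V_E(Y\da_E)$.

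The forward implication of the claim is formal. By Lemma~\ref{le:supp-res-ind}, with $U=(\res^*_{G,E})^{-1}\{\fp\}$, we have $\Gamma_\fp(X)\da_E\cong\Gamma_U(X\da_E)$. Moreover $\fq\in U$, so $\Gamma_\fq\Gamma_U\cong\Gamma_\fq$ by Proposition~\ref{pr:loc-closed}. Hence $\Gamma_\fq(X\da_E)\neq0$ forces $\Gamma_U(X\da_E)\ne0$, so $\Gamma_\fp(X)\da_E\neq0$ and therefore $\Gamma_\fp(X)\neq 0$.

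For the converse I first reduce to pairs at which $\fp$ originates. Given $(E,\fq)$ over $\fp$, choose a subgroup $E'\le E$ minimal such that $\fq=\res^*_{E,E'}(\fq')$ for some $\fq'\in V_{E'}$. By Theorem~\ref{th:subgroup}, applied to $E'\le E$ (legitimate because $E$ is elementary abelian), we get $\fq\in V_E(X\da_E)$ if and only if $\fq'\in V_{E'}(X\da_{E'})$. Iterating this, and also passing through proper subgroups of $E$ whenever $\fp$ is in the image of $\res^*_{G,E''}$ for some $E''<E$, one arrives at a pair at which $\fp$ originates. By Theorem~\ref{th:origin} any two such pairs are $G$-conjugate. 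The next ingredient is conjugation invariance: since $X$ is a $kG$-object, $c_g$ identifies $X\da_{E}$ with $X\da_{gEg^{-1}}$, so $V_{gEg^{-1}}(X\da_{gEg^{-1}})=(c_g^{*})^{-1}V_E(X\da_E)$. Together these say that membership of $\fq$ in $V_E(X\da_E)$ depends only on $\fp$, not on the pair chosen over it.

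It remains to show that if $\Gamma_\fp(X)\ne0$, then some pair over $\fp$ lies in the support. Take any $E$ with $\fp$ in the image of $\res^*_{G,E}$ and let $U$ be the fibre over $\fp$. Then $\Gamma_U(X\da_E)\cong\Gamma_\fp(X)\da_E\neq0$, since restriction to $E$ is faithful on nonzero objects. The fibre $U$ is finite and discrete, because $H^*(E,k)$ is finite over the image of $H^*(G,k)$. Hence Proposition~\ref{pr:loc-closed-MV} splits $\Gamma_U$ as $\bigoplus_{\fq\in U}\Gamma_\fq$, so $\Gamma_\fq(X\da_E)\neq0$ for some $\fq\in U$. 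By the independence established above, every pair over $\fp$ then lies in the support.

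The main obstacle is making the reduction to an origin pair rigorous, in particular checking that the minimality within $E$ matches Quillen's notion of origin in $G$. When $\fp$ comes from a proper subgroup $E''<E$, one must relate the chosen $\fq$ to a point over $\fp$ in $V_{E''}$. I would handle this by always passing through the fibre decomposition just described rather than tracking individual points. I would also double-check the finiteness of fibres, which is what makes Proposition~\ref{pr:loc-closed-MV} applicable.
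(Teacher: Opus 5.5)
You have a genuine gap in the existence step. You pick an \emph{arbitrary} elementary abelian $E$ with $\fp$ in the image of $\res^*_{G,E}$ and assert $\Gamma_\fp(X)\da_E\neq 0$ because ``restriction to $E$ is faithful on nonzero objects''. That is false in general. By Lemma~\ref{le:supp-res-ind}, $\Gamma_{\fp'}(k)\da_E\cong\Gamma_{U}(k\da_E)$ with $U=\varnothing$, so it is zero for every $\fp'\in V_G$ outside the image of $\res^*_{G,E}$, even though $\Gamma_{\fp'}(k)\neq 0$.

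In your situation $\fp$ does lie in the image, and $\Gamma_\fp(X)\da_E\cong\Gamma_U(X\da_E)$. By Lemma~\ref{le:support-loc-closed} and Proposition~\ref{pr:noeth-supp}, this is nonzero if and only if $U\cap V_E(X\da_E)\neq\varnothing$. So your assertion is exactly the non-formal half of your claim for this $E$. This is the inclusion that, as the paper remarks, fails in other tensor triangulated categories, so the argument is circular here.

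The missing ingredient is Theorem~\ref{th:filtration-finite-group}. Set $Z=\Gamma_\fp(X)\neq 0$. Since $Z$ lies in $\thick(\bigoplus_i Z\da_{E_i}\ua^G)$, some $Z\da_{E_i}$ is nonzero, and you must work with \emph{that} $E_i$, not an arbitrary subgroup. Then $V_{E_i}(Z\da_{E_i})\neq\varnothing$ by Proposition~\ref{pr:noeth-supp}, and your formal direction places it inside $U_i=(\res^*_{G,E_i})^{-1}\{\fp\}$. Since $\Gamma_{\fq'}\Gamma_{U_i}\cong\Gamma_{\fq'}$, this gives a pair $(E_i,\fq')$ over $\fp$ with $\fq'\in V_{E_i}(X\da_{E_i})$. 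This is exactly how the paper produces its pair $(E',\fq')$ after reducing to $Y=\Gamma_\fp(Y)$.

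With this repair your independence step finishes the proof. You in fact get the equality $V_E(X\da_E)=(\res^*_{G,E})^{-1}V_G(X)$ for every elementary abelian $E$, slightly more than the implication the paper proves directly, using the same ingredients.

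As for the obstacle you flag, the fibre decomposition $\Gamma_U\cong\bigoplus_{\fq\in U}\Gamma_\fq$ does not help, because it says nothing about \emph{which} points of the fibre lie in the support. What you need is a consequence of Quillen's stratification (the strata of $V_G$ attached to non-conjugate elementary abelian subgroups are disjoint). It says: if $\fq_0\in V_{E_0}$ is not in the image of $\res^*_{E_0,E''}$ for any $E''<E_0$, then $\fp=\res^*_{G,E_0}(\fq_0)$ originates in $E_0$. With this, your minimisation inside $E$ relative to $\fq$ lands directly on an origin pair, and Theorem~\ref{th:origin} applies. Drop the clause about passing to $E''<E$ when $\fq$ does not come from $E''$, since in that case you cannot keep track of $\fq$. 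The paper uses the same fact implicitly when it chooses $E_0\le E$ with $\fp$ originating in $E_0$ and $\fq=\res^*_{E,E_0}(\fq_0)$.
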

\begin{proof}
  Let $\fq\in V_E(X\da_E)$ and set $\fp=\res^*_{G,E}(\fq)$. Then
  $\fp\in V_G(X)$ by Theorem~\ref{th:subgroup}. Let $E_0\leq E$
  be a subgroup such that $\fp$ originates in $E_0$, and pick
  $\fq_0\in V_{E_0}$ such that $\fq=\res^*_{E,E_0}(\fq_0)$. Then
  $\fq_0\in V_{E_0}(X\da_{E_0})$, again by Theorem~\ref{th:subgroup}.

  We wish to show that $\fq\in V_E(Y\da_E)$ and may assume that
  $Y=\Gamma_\fp(Y)$, since
  $V_E((\Gamma_\fp(Y))\da_E)\subseteq V_E(Y\da_E)$. Choose an
  elementary abelian $p$-subgroup $E'\leq G$ such that
  $Y\da_{E'}\neq 0$; this exists by
  Theorem~\ref{th:filtration-finite-group}. Let
  $\fq'\in V_{E'}(Y\da_{E'})$. Then $\res^*_{G,E'}(\fq')=\fp$ by
  Theorem~\ref{th:subgroup}.  Choose a subgroup $E'_0\leq E'$ such that
  $\fp$ originates in $E'_0$, together with a prime
  $\fq'_0\in V_{E'_0}(Y\da_{E'_0})$ such that
  $\fq'=\res^*_{E',E'_0}(\fq'_0)$.  The pairs $(E_0,\fq_0)$ and
  $(E'_0,\fq'_0)$ are $G$-conjugate by Theorem~\ref{th:origin}. Thus
  $\fq_0\in V_{E_0}(Y\da_{E_0})$. Here, one uses that conjugation by
  an element $g\in G$ induces an automorphism of $\bfK(\Inj kG)$ which
  takes $Y$ to $Y^g$, and that multiplication with $g$ induces an
  isomorphism $Y\iso Y^g$.  With Theorem~\ref{th:subgroup} we
  conclude that $\fq\in V_{E}(Y\da_{E})$.
\end{proof}

\subsection*{Stratification for finite groups}

We are ready to establish the stratification of $\bfK(\Inj kG)$ for an
arbitrary finite group, which is due to Benson, Iyengar, and Krause
\cite{Benson/Iyengar/Krause:2011b}.  The proof uses the same strategy
as the one for the classification of thick tensor ideals of
$\bfD^\fin(kG)$ (Theorem~\ref{th:BCR}).

\begin{theorem}\label{th:strat-finite-group}
  For a finite group $G$ and a field $k$, the rigidly-compactly generated tensor triangulated category $\bfK(\Inj kG)$ is
  stratified via the canonical action of $H^*(G,k)$. In
  particular, cohomological support induces a lattice isomorphism
  \begin{equation*}
    \Loc_\otimes(\bfK(\Inj kG))\longiso\{\text{Subsets of }\Spec H^*(G,k)\}\,.
  \end{equation*}
\end{theorem}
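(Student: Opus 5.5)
The plan is to verify criterion (3) of Proposition~\ref{pr:stratification-ring}, just as Theorem~\ref{th:BCR} was proved. Fix $X,Y\in\bfK(\Inj kG)$ with $V_G(X)\subseteq V_G(Y)$; the goal is $\loc_\otimes(X)\subseteq\loc_\otimes(Y)$. Proposition~\ref{pr:stratification-ring} then gives the stratification and the lattice isomorphism $\Loc_\otimes(\bfK(\Inj kG))\iso P(\supp_{H^*(G,k)}(\bfK(\Inj kG)))$. To upgrade the target to all subsets of $\Spec H^*(G,k)$, it remains to show that $\supp(k)=V_G(k)$ is everything. This follows because $k$ is compact with $\End^*(k)=H^*(G,k)$, so $\supp(k)=V(\Ker\phi_k)=\Spec H^*(G,k)$ by Proposition~\ref{pr:supp-square} and the description of cohomological support of compacts.

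The key steps are as follows. First, fix any elementary abelian $p$-subgroup $E\le G$. Theorem~\ref{th:subgroup-elem-abelian} gives $V_E(X\da_E)\subseteq V_E(Y\da_E)$. Since $\bfK(\Inj kE)$ is stratified by Theorem~\ref{th:strat-elem-abelian}, with all localising subcategories classified by support, this yields $\loc(X\da_E)\subseteq\loc(Y\da_E)$. Second, induction $(-)\ua^G$ is exact and preserves coproducts, so $\loc(X\da_E\ua^G)\subseteq\loc(Y\da_E\ua^G)$. Third, I need a localising analogue of Theorem~\ref{th:filtration-finite-group}, namely
\[\loc_\otimes(X)=\loc\Bigl(\bigoplus_{i=1}^t X\da_{E_i}\ua^G\Bigr)\]
for the elementary abelian subgroups $E_i$ of Proposition~\ref{pr:filtration-elem-abelian}. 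I would prove it by the same argument as Theorem~\ref{th:filtration-elem-abelian}.

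In that argument, Frobenius reciprocity gives $X\da_{E_i}\ua^G\otimes Z\cong X\otimes Z\da_{E_i}\ua^G$. The object $Z\da_{E_i}\ua^G$ lies in $\loc(k\da_{E_i}\ua^G)$, since every $kE_i$-object lies in $\loc(kE_i\text{-compacts})=\loc_\otimes(k)$ and induction preserves this. This shows the right-hand side is a tensor ideal containing $X$. For the containment of $X$, I tensor the Carlson filtration of $k\oplus V$ with $X$. This exhibits $X$ as a summand of an iterated extension of the objects $(X\da_{E_i}\otimes W_i)\ua^G$, each of which lies in $\loc(X\da_{E_i}\ua^G)$.

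Combining these steps gives $\loc_\otimes(X)\subseteq\loc_\otimes(Y)$.

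I expect the main obstacle to be the third step. The earlier results are stated with $\thick$, and one must check that the tensor-ideal argument works with $\loc$ in the big category. The delicate point is justifying that $Z\da_{E_i}\otimes W_i\in\loc(Z\da_{E_i})$ for arbitrary (noncompact) $Z$. This follows because $W_i$ is finite dimensional, so it lies in $\thick(k)$ in $\bfK(\Inj kE_i)$, and then $Z\da_{E_i}\otimes-$ preserves this. A secondary point is that Theorem~\ref{th:strat-elem-abelian} classifies all localising subcategories, so no tensor-ideal hypothesis is needed on the $E$ side.
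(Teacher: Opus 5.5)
Your proposal is correct and follows the paper's proof step for step: you verify criterion~(3) of Proposition~\ref{pr:stratification-ring} by passing through Theorem~\ref{th:subgroup-elem-abelian}, Theorem~\ref{th:strat-elem-abelian}, induction, and the filtration by objects induced from elementary abelian subgroups. You are more careful than the paper in writing out the $\loc$-version of Theorem~\ref{th:filtration-finite-group} and in checking $\supp(\one)=\Spec H^*(G,k)$; one correction there is that the step $Z\da_{E_i}\ua^G\in\loc(k\da_{E_i}\ua^G)$ needs $\bfK(\Inj kE_i)=\loc(k)$, not merely $\loc_\otimes(k)$, and this holds because $kE_i$ is local, so its compacts are $\thick(k)$.
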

\begin{proof}
Fix $X,Y$ in $\bfK(\Inj kG)$ and suppose that \[V_G(X)\subseteq V_G(Y)\]
  holds. For an elementary abelian $p$-subgroup $E\le G$,
 Theorem~\ref{th:subgroup-elem-abelian} implies
 \[V_E(X\da_E)\subseteq V_E(Y\da_E)\]
 and therefore \[\loc(X\da_E)\subseteq\loc(Y\da_E)\] by
 Theorem~\ref{th:strat-elem-abelian}. This implies
 \[\loc(X\da_E\ua^G)\subseteq \loc(Y\da_E\ua^G)\]
 and then Theorem~\ref{th:filtration-finite-group} implies
 \[\loc_\otimes(X)=\loc\left(\bigoplus_{i=1}^t X\da_{E_i}\ua^G\right)
   \subseteq \loc\left(\bigoplus_{i=1}^t
     Y\da_{E_i}\ua^G\right)=\loc_\otimes(Y).\] It remains to apply
 Proposition~\ref{pr:stratification-ring} which establishes the
 isomorphism between $\Loc_\otimes(\bfK(\Inj kG))$ and the subsets of
 $\Spec H^*(G,k)$.
\end{proof}

\begin{remark}
  The full subcategory of objects in $\bfK(\Inj kG)$ that are supported
  at the maximal ideal $H^+(G,k)$ of positive degree elements
  identifies with the derived category $\bfD(\Mod kG)$. On the other
  hand, the stable category $\StMod kG$ of the module category over
  $kG$ identifies with the full subcategory of acyclic complexes in
  $\bfK(\Inj kG)$, by sending a module to a \emph{complete resolution}, which
  is obtained by splicing together a projective resolution and an injective
  coresolution. Thus the tensor triangulated category $\StMod kG$ is
  stratified via
  \[\Proj H^*(G,k)=\Spec H^*(G,k)\smallsetminus\{H^+(G,k)\}.\]
\end{remark}

\end{document}